\newtheorem{theorem}{Theorem}
\newtheorem{cor}[theorem]{Corollary}
\theoremstyle{proposition}
\newtheorem{proposition}{Proposition}
\newtheorem{lemma}[proposition]{Lemma}
\theoremstyle{definition}
\newtheorem{fact}{Fact}
\newtheorem{definition}[fact]{Definition}
\newtheorem{remark}{Remark}
\renewcommand{\equiv}{\overset{\rm d}{=}}
\renewcommand{\i}{\mathrm{i}}
\newcommand{\1}{\mathbf{1}}
\newcommand{\A}{\mathcal{A}}
\newcommand{\X}{\mathbf{X}}
\newcommand{\bL}{\mathbf{L}}
\newcommand{\bR}{\mathbb{R}}
\newcommand{\bN}{\mathbb{N}}
\newcommand{\bC}{\mathbb{C}}
\newcommand{\bD}{\mathbb{D}}
\newcommand{\bU}{\mathbb{U}}
\newcommand{\bT}{\mathbb{T}}
\newcommand{\bZ}{\mathbf{Z}}
\newcommand{\bP}{\mathbf{P}}
\newcommand{\cB}{\mathcal{B}}
\newcommand{\bE}{\mathbf{E}}
\renewcommand{\O}{\mathcal{O}}
\newcommand{\F}{\mathcal{F}}
\newcommand{\bphi}{\breve{\varphi}}
\newcommand{\bmu}{\boldsymbol{\mu}}
\newcommand{\hvarphi}{\hat{\varphi}}
\newcommand{\hpi}{\hat{\varpi}}
\newcommand{\hpsi}{\hat{\psi}}
\newcommand{\hchi}{\hat{\chi}}
\newcommand{\dd}{\mathrm{d}}
\newcommand{\blue}{}
\author{Gaultier Lambert}
\address[Gaultier Lambert]{KTH Royal Institute of technology, Matematik} 
\thanks{G.L. acknowledges the support of the
starting grant 2022-04882 from the Swedish Research Council and of the starting grant from the Ragnar S\"oderbergs Foundation.}
\email{glambert@kth.se}
\author{Joseph Najnudel}
\address[Joseph Najnudel]{University of Bristol, School of Mathematics}
\email{joseph.najnudel@bristol.ac.uk}
\title{Subcritical multiplicative chaos and the characteristic polynomial of the C$\beta$E}
\begin{document}

\maketitle

\begin{abstract}
The goal of this article is to expand on the relationship between random matrix and multiplicative chaos theories using the integrability properties of the circular $\beta$-ensembles.
We give a comprehensive proof of the multiplicative chaos convergence for the characteristic polynomial and eigenvalue counting function of the circular $\beta$-ensembles throughout the subcritical phase, including negative powers. 
This generalizes recent results in the unitary case, \cite{NSW20,BF}, to any $\beta>0$ and for the  eigenvalue counting field. 
\end{abstract}

\tableofcontents

\section{Intro}

\subsection{Main result}
Let $\bT=\bR/2\pi\mathbb Z$. 
The \ul{circular $\beta$-ensembles (C$\beta$E)} is the family of probability measures, for $\beta>0$ and $n\in\bN$, 
\begin{equation} \label{CbE}
\frac{1}{Z_n(\beta)} \prod_{1\le k<j \le n} |e^{\i\theta_k} - e^{\i\theta_j}|^\beta  \prod_{1\le k \le n} \frac{d\theta_k}{2\pi} , 
\qquad (\theta_1,\dots,\theta_n) \in \bT^n . 
\end{equation}
This model was introduced by Dyson \cite{Dyson}, it generalizes the circular unitary ensembles (CUE -- for $\beta=2$, \eqref{CbE} corresponds to the law of the eigenvalues of a Haar-distributed random matrix on the group $\mathbb U_n$) and it is a fundamental model in random matrix theory with a rich analytic structure. 
For any $\beta>0$, \eqref{CbE} can be viewed as the equilibrium distribution of a Coulomb gas {\blue on the unit circle $\bU=\{z\in\bC:|z|=1\}$} or as the distribution of the eigenvalues of some random unitary matrices, and the partition function $Z_n(\beta) $  is known explicitly by \ul{Selberg's integral}.
We refer to Section~\ref{sec:rmt} for additional background and motivations on circular ensembles.

\medskip

In this paper, we take a random matrix perspective and the goal is to study a central object; the large $n$ limit of characteristic polynomials. 
Such limits are expected to be universal for $\beta$-ensembles or log-gases.
Our analysis exploits the fact that \eqref{CbE} can be realized consistently as the eigenvalues of a sequence of (sparse) unitary matrices  on the same probability space $(\Omega,\F,\bP)$. These matrices have been introduced in \cite{KN04}, using \ul{CMV representation}, and they depend only on a sequence of random coefficients  $\{\alpha_k\}_{k=0}^\infty$ in $\bD=\{z\in\bC:|z|<1\}$ and an independent uniform random variable $\eta$ in $\bT$. We review the main details of the underlying theory in Section~\ref{sec:opuc}. 
{\blue In particular, the specific law of the coefficients  $\{\alpha_k\}_{k=0}^\infty$ is given in Fact~\ref{fact:Verb} below.}
The focal point is that this provides an explicit coupling to study the asymptotics as $n\to\infty$ of the eigenvalues and characteristic polynomials of the circular $\beta$-ensembles. 
{\blue We record the following description of the C$\beta$E from \cite{KN04,KS09}.}

\begin{proposition}[Pr\"ufer phases] \label{prop:phase}
For $n\in\bN_0$, we define the quantity $\varpi_n : [0,2\pi] \to \bR$ by $\varpi_0(\theta) =\theta$ and  
\begin{equation} \label{Phase}
\varpi_{n+1}(\theta)  = \theta + \varpi_{n}(\theta) -2\Im\log\big(1-\alpha_n e^{\i  \varpi_{n}(\theta) } \big) ,\qquad \theta \in [0,2\pi]. 
\end{equation}
For every $n\in\bN$, the function $\theta\in[0,2\pi] \mapsto \varpi_{n-1}(\theta)$ is continuously increasing, $ \varpi_{n-1}(2\pi) =  \varpi_{n-1}(0)+2\pi n$, and the set
\[
\mathfrak{Z}_n = \big\{ \theta \in [0,2\pi]:  \varpi_{n-1}(\theta)  = -\eta [2\pi] \big\}  
\] 
is distributed according to \eqref{CbE}. 
In the sequel, we write $\mathfrak{Z}_n = (\theta_j)_{j=1}^n$ with the order $0<\theta_1<\cdots<\theta_n<2\pi$. 
\end{proposition}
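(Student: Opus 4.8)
The plan is to recognise $\varpi_n$ as the \emph{relative Pr\"ufer phase} of the monic orthogonal polynomials on the unit circle (OPUC) $\Phi_0,\Phi_1,\dots$ attached to the Verblunsky coefficients $\alpha_0,\alpha_1,\dots$, and then to read off the three assertions from the Szeg\H{o} recursion and from the CMV/paraorthogonal description of the C$\beta$E in \cite{KN04,KS09}. Recall that $\Phi_0\equiv1$, that the Szeg\H{o} recursion reads $\Phi_{k+1}(z)=z\Phi_k(z)-\overline{\alpha_k}\,\Phi_k^*(z)$ with reversed polynomial $\Phi_k^*(z)=z^k\overline{\Phi_k(1/\bar z)}$, and that since $|\alpha_k|<1$ all zeros of $\Phi_k$ lie in $\bD$ while $\Phi_k^*$ has no zero on $\overline{\bD}$; in particular, on $\bT$ the ratio $b_k(e^{\i\theta})=\Phi_k^*(e^{\i\theta})/\Phi_k(e^{\i\theta})$ is well defined and unimodular. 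First I would prove that, with the correct continuous branch,
\[
e^{\i\varpi_k(\theta)}=e^{\i\theta}\,\frac{\Phi_k(e^{\i\theta})}{\Phi_k^*(e^{\i\theta})},\qquad k\ge 0 .
\]
To do so, divide the Szeg\H{o} recursions for $\Phi_{k+1}$ and $\Phi_{k+1}^*$ by $\Phi_k(e^{\i\theta})$ to obtain $b_{k+1}=e^{-\i\theta}(b_k-\alpha_k e^{\i\theta})/(1-\overline{\alpha_k}e^{-\i\theta}b_k)$, and compute the argument of this unimodular M\"obius transformation; writing $\varpi_k=\theta-\arg b_k$ and using $\Im\log\bar w=-\Im\log w$ — legitimate here because $1-\alpha_k e^{\i\varpi_k}$ stays in the half-plane $\{\Re>0\}$, which also provides a globally smooth choice of branch — should reproduce exactly \eqref{Phase}, with the correct initialisation $\varpi_0(\theta)=\theta$ since $\Phi_0=\Phi_0^*\equiv1$.

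Given this dictionary, I would obtain the \emph{monotonicity} by differentiating \eqref{Phase}, which I expect to yield
\[
\varpi_{k+1}'(\theta)=1+\varpi_k'(\theta)\,\Re\frac{1+\alpha_k e^{\i\varpi_k(\theta)}}{1-\alpha_k e^{\i\varpi_k(\theta)}}=1+\varpi_k'(\theta)\,\frac{1-|\alpha_k|^2}{|1-\alpha_k e^{\i\varpi_k(\theta)}|^2},
\]
so that, by induction from $\varpi_0'\equiv1$, every $\varpi_k$ is smooth with $\varpi_k'\ge1$, hence continuous and strictly increasing on $[0,2\pi]$. For the \emph{winding}, I would read $\varpi_{n-1}(2\pi)-\varpi_{n-1}(0)$ as $2\pi$ times the winding number of $\theta\mapsto e^{\i\theta}\Phi_{n-1}(e^{\i\theta})/\Phi_{n-1}^*(e^{\i\theta})$, and evaluate it by the argument principle as $2\pi\,(1+(n-1)-0)=2\pi n$: the $1$ comes from $e^{\i\theta}$, the $n-1$ from the $n-1$ zeros of the degree-$(n-1)$ polynomial $\Phi_{n-1}$ inside $\bD$, and the $0$ from $\Phi_{n-1}^*$ having no zero in $\overline{\bD}$.

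For the \emph{distributional identity}, note that by the dictionary above $\theta\in\mathfrak Z_n$ exactly when $e^{\i\theta}\Phi_{n-1}(e^{\i\theta})/\Phi_{n-1}^*(e^{\i\theta})=e^{-\i\eta}$, i.e. when $e^{\i\theta}$ is a zero of the paraorthogonal polynomial $z\Phi_{n-1}(z)-e^{-\i\eta}\Phi_{n-1}^*(z)$; by the strict monotonicity of $\varpi_{n-1}$ together with $\varpi_{n-1}(2\pi)=\varpi_{n-1}(0)+2\pi n$ (and since $e^{\i\varpi_{n-1}(0)}\ne e^{-\i\eta}$ almost surely) this set consists of exactly $n$ distinct points in $(0,2\pi)$. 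I would then invoke the CMV representation of the C$\beta$E in \cite{KN04,KS09}: the zeros of that paraorthogonal polynomial are the eigenvalues of the $n\times n$ truncated CMV matrix associated with the coefficients $\alpha_0,\dots,\alpha_{n-2}\in\bD$ and a boundary phase in $\bT$ determined by $\eta$, and, by the Killip--Nenciu theorem, once $\alpha_0,\dots,\alpha_{n-2}$ have the law recorded in Fact~\ref{fact:Verb} and $\eta$ is uniform and independent, this eigenvalue point process has exactly the law \eqref{CbE}; hence $\mathfrak Z_n$ is C$\beta$E-distributed.

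The one genuinely delicate point is bookkeeping: one must fix, once and for all, mutually compatible conventions for the reversed polynomial $\Phi_k^*$, for the conjugation and sign in the Szeg\H{o} recursion, and for the boundary phase of the paraorthogonal polynomial, so that \eqref{Phase} comes out with the signs as written and so that the condition ``$\varpi_{n-1}(\theta)=-\eta\,[2\pi]$'' translates into a \emph{uniform} boundary point on $\bT$ (since $\eta$ is uniform, the precise bijection between $\eta$ and that boundary point is immaterial here). The compatibility in $n$ of the laws of $(\alpha_0,\dots,\alpha_{n-2})$ is already part of the content of Fact~\ref{fact:Verb}. Beyond these conventions, every step above is elementary.
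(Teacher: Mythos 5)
Your proposal is correct and follows essentially the same route as the paper's proof in Appendix~\ref{A:model}: identify $e^{\i\varpi_k(\theta)}$ with $B_k(e^{\i\theta})=e^{\i\theta}\Phi_k(e^{\i\theta})/\Phi_k^*(e^{\i\theta})$, deduce \eqref{Phase} from the Szeg\H{o} recursion, obtain strict monotonicity from the positivity of the Poisson kernel $\frac{1-|\alpha_k|^2}{|1-\alpha_k e^{\i\varpi_k}|^2}$, and invoke the Killip--Nenciu CMV representation for the law of the zero set. The only cosmetic differences are that you compute the winding $\varpi_{n-1}(2\pi)-\varpi_{n-1}(0)=2\pi n$ via the argument principle while the paper gets it from an inductive diffeomorphism argument on the relative phase, and you differentiate \eqref{Phase} directly rather than through the auxiliary map $\Upsilon_\alpha$; both variants are sound.
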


The quantities $\varpi_n $ are called the \ul{Pr\"ufer phases}, they play the role of the eigenvalue counting functions for the circular $\beta$-ensembles and their behavior can be analyzed  via  the recursion\footnote{Observe that for  $\alpha \in \bD$, the map $\omega\in\bR\mapsto \Im\log(1-\alpha_n e^{\i  \omega }) $ is smooth. Then, the recursion \eqref{Phase} can be extended for all $\theta\in\bR$.} \eqref{Phase}. 
This type of analysis has been initiated in the seminal paper \cite{KS09} and further continued in e.g.~\cite{CMN,CN,VV20,VV22,NPS23,PZ}.

\medskip

We define the C$\beta$E \ul{characteristic polynomials}; for every $n\in\bN$, 
\begin{equation}  \label{charpoly0}
\mathcal{X}_n(z) = {\textstyle\prod_{j=1}^n}\big(1-ze^{-\i \theta_j}\big) . 
\end{equation}
Then, we can define {\blue $\log\mathcal{X}_n(z)=  {\textstyle\sum_{j=1}^n}\log(1-ze^{-\i \theta_j}) $ for $z\in\bD$, using the principle branch of $\log(\cdot)$,} and by continuity,  $\mathcal{Y}_n : \theta \in\bT \mapsto \Im\log\mathcal{X}_n(e^{\i\theta})$.
We verify that for $n\in\bN$, 
\begin{equation}  \label{Y0}
\mathcal{Y}_n(\theta) ={\textstyle \sum_{j=1}^n}\mathrm{h}(\theta-\theta_j)
\end{equation}
where $\mathrm{h}: \bT \mapsto [-\frac\pi2, \frac\pi2]$ is defined by $\mathrm{h}(\theta) =\frac{\theta-\pi}{2}$ for $\theta\in(0,2\pi)$, \eqref{Y2}.    
In particular, the function $\mathcal{Y}_n$ is piecewise linear on $\bT$ with slope $\frac n2$ and it jumps by $-\pi$ on $\mathfrak{Z}_n$. Then, $\mathcal{Y}_n$ is called the \ul{eigenvalue counting function} and we verify that 
for any function $f \in C^1(\bT)$, 
\[
\frac1\pi \int_{\bT} f'(\theta) \mathcal{Y}_n(\theta)  \dd\theta  = {\textstyle \sum_{j=1}^n } f(\theta_j)  -  n\int_{\bT} f(\theta) \frac{\dd\theta}{2\pi}
=  {\textstyle \sum_{j=1}^n } f(\theta_j)-\bE\big( {\textstyle \sum_{j=1}^n } f(\theta_j)\big)
\]
where the last identity follows by rotation-invariance of \eqref{CbE}. 

\medskip

An important property of the C$\beta$E and other log-gases is that the eigenvalues are \ul{logarithmically correlated}. 
At first, this is a consequence of the central limit theorem (CLT) for eigenvalue statistics.
Given $f \in C^{1+\epsilon}(\bT,\bR)$ for a $\epsilon>0$, it holds in distribution as $n\to\infty$, 
\begin{equation} \label{Joh}
\frac1\pi \int_{\bT} f'(\theta) \mathcal{Y}_n(\theta) \dd\theta   \Rightarrow \mathcal N\big(0,\tfrac2\beta\Sigma(f)\big) 
\end{equation}
where the variance
\[
\Sigma(f) = \iint_{\bT^2} f'(\theta) f'(\vartheta) \log|1-e^{\i(\theta-\vartheta)}|^{-1} \frac{\dd\theta}{2\pi}\frac{\dd\vartheta}{2\pi}
= \sum_{k\ge 1}  k |f_k|^2 
\]
and $(f_k)_{k\in\mathbb Z}$ denotes the Fourier coefficients of the test function. 
The interpretation of this result is that the random field $\{ \mathcal{Y}_n(\theta) :\theta\in\bT\}$ converges weakly  towards a Gaussian generalized function with mean-zero and correlation kernel, $(\theta,\vartheta)\in \bT^2 \mapsto \beta^{-1} \log|e^{\i\theta}-e^{\i\vartheta}|^{-1}$. 
A similar convergence result also holds for $\theta\in\bT \mapsto \log|\mathcal{X}_n(e^{\i\theta})|$, which is a continuous function taking values $-\infty$ on~$\mathfrak{Z}_n$.  
Theorem \eqref{Joh} was first obtained for $\beta=2$ by Johansson \cite{Joh88} (although the same proof can be applied to any $\beta>0$, \cite{L21}) 
and the interpretation in terms of a log-correlated field is due to \cite{HKO01}.
The result for general $\beta>0$ follows from \cite{JM15} (see also \cite{Killip,FTW19}).
In particular, the parameter $\beta$ simply controls the variance of limiting Gaussian field.
{\blue To further elaborate on these results, we recall the following property from \cite[Sect 3]{CN}.}

\begin{proposition} \label{prop:GAF}
For a fixed $\beta>0$, within the coupling from \cite{KN04}, it holds almost surely as $n\to\infty$, 
\begin{equation*} 
\mathcal{X}_n(z) \to e^{\varphi(z)} \qquad\text{locally uniformly for $z\in\bD$,}
\end{equation*}
where $\{\varphi(z):z\in\bD\}$ is a mean-zero \ul{Gaussian analytic function (GAF)} with covariances
\begin{equation} \label{GAF}
\bE\big[\varphi (z)\overline{\varphi(w)} \big]= -\tfrac2\beta \log(1-z\overline{w})   , \qquad
\bE\big[\varphi(z)\varphi(w)\big]= 0. 
\end{equation}
\end{proposition}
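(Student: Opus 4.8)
The plan is to work inside the CMV coupling of \cite{KN04}, where $\mathcal X_n$ is realised as the reversed monic orthogonal polynomial $\Phi_n^*$ on the unit circle associated to the Verblunsky coefficients $\alpha_0,\dots,\alpha_{n-2}$ (whose law is recalled in Fact~\ref{fact:Verb}) together with a terminal coefficient $-\bar\eta$ on the unit circle. Setting $b_k:=\Phi_k/\Phi_k^*$, the Szeg\H{o} recursion $\Phi_{k+1}^*=\Phi_k^*-\alpha_k z\Phi_k$ telescopes into
\[
\mathcal X_n(z)=e^{N_{n-1}(z)}\big(1+\bar\eta\,z\,b_{n-1}(z)\big),\qquad N_m(z):=\sum_{k=0}^{m-1}\log\big(1-\alpha_k z\,b_k(z)\big),
\]
where each $b_k$ is a finite Blaschke product depending only on $\alpha_0,\dots,\alpha_{k-1}$, bounded by $1$ on $\bD$, with $b_0=1$ and $b_{k+1}(z)=\frac{z b_k(z)-\overline{\alpha_k}}{1-\alpha_k z b_k(z)}$. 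On $|z|=1$ one checks that $z b_k(z)=e^{\i\varpi_k(\theta)}$ for $z=e^{\i\theta}$, so this product is exactly the multiplicative counterpart of the Pr\"ufer recursion \eqref{Phase}, and the GAF $\varphi$ will be $\lim_m N_m$.

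First I would show that $(N_m(z))_{m\ge0}$ is a martingale for $\F_m=\sigma(\alpha_0,\dots,\alpha_{m-1})$: $b_k$ is $\F_k$-measurable, $\alpha_k$ is independent of $\F_k$, and the rotation invariance of the law of $\alpha_k$ gives $\bE[\log(1-\alpha_k w)]=0$ for every deterministic $w\in\bD$ (expand the logarithm and use $\bE[\alpha_k^j]=0$ for $j\ge1$); the increments are therefore pairwise orthogonal in $L^2$. The key quantitative input is the decay $\bE|b_k(z)|^2=O(1/k)$, uniformly for $|z|\le r$: this follows from the recursion for $b_k$ by a direct computation using rotation invariance, which yields $\bE|b_{k+1}(z)|^2\le r^2\,\bE|b_k(z)|^2+O(\bE|\alpha_k|^2)$, combined with $\bE|\alpha_k|^2=O(1/k)$ from Fact~\ref{fact:Verb}. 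Since $\alpha_k$ is independent of $b_k$ and $|z b_k(z)|\le r$, expanding the logarithm once more gives
\[
\bE\big|\log\big(1-\alpha_k z\, b_k(z)\big)\big|^2\le C_r\,\bE|\alpha_k|^2\,\bE|b_k(z)|^2=O(1/k^2),
\]
so $\sup_m\bE|N_m(z)|^2<\infty$; integrating over a circle $|z|=r'$ with $r<r'<1$ and using analyticity upgrades this to $\sup_m\bE\,\|N_m\|_{H^2(r'\bD)}^2<\infty$. By the martingale convergence theorem for $H^2$-valued martingales, $N_m\to\varphi$ almost surely in $H^2(r'\bD)$, hence uniformly on $\{|z|\le r\}$; since $r<1$ is arbitrary, $N_m\to\varphi$ a.s.\ locally uniformly on $\bD$. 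Finally, the recursion for $b_k$ is a contraction with additive noise that tends to $0$ in $L^2$, so $b_k\to0$ a.s.\ locally uniformly, the terminal factor $1+\bar\eta z b_{n-1}(z)$ tends to $1$, and $\mathcal X_n\to e^{\varphi}$ a.s.\ locally uniformly.

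It remains to identify $\varphi$ as the stated GAF. Since $\log\mathcal X_n(z)=-\sum_{k\ge1}\frac{z^k}{k}\,\overline{p_k^{(n)}}$ with $p_k^{(n)}=\sum_{j=1}^n e^{\i k\theta_j}$, and $\log\mathcal X_n\to\varphi$ a.s.\ locally uniformly, the Taylor coefficients converge, so $p_k:=\lim_n p_k^{(n)}$ exists a.s.\ and $\varphi(z)=-\sum_{k\ge1}\frac{z^k}{k}\,\overline{p_k}$. By the classical central limit theorem for power traces of the C$\beta$E (Diaconis--Shahshahani for $\beta=2$; Jiang--Matsumoto and \cite{JM15}-type results for general $\beta>0$), the family $(p_k^{(n)})_{k\ge1}$ converges jointly in law to independent centred complex Gaussians with $\bE p_k^2=0$ and $\bE|p_k|^2=2k/\beta$; combined with the a.s.\ convergence above, this forces the limits $(p_k)_{k\ge1}$ to be independent centred complex Gaussians with the same covariances. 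Hence $\varphi$ is a mean-zero GAF with $\bE[\varphi(z)\varphi(w)]=0$ and $\bE[\varphi(z)\overline{\varphi(w)}]=\sum_{k\ge1}\frac{\bE|p_k|^2}{k^2}(z\overline w)^k=\frac2\beta\sum_{k\ge1}\frac{(z\overline w)^k}{k}=-\tfrac2\beta\log(1-z\overline w)$, as claimed. (Alternatively, Gaussianity of $\varphi$ can be obtained directly from a martingale central limit theorem applied to $N_m(z)$.)

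The main obstacle is the convergence step: establishing $\bE|b_k(z)|^2=O(1/k)$ and hence the locally uniform $L^2$-boundedness of the martingale $(N_m)$. This is precisely where the sharp $1/k$ decay of $\bE|\alpha_k|^2$ and the independence and rotation invariance of the Verblunsky coefficients enter; by contrast, once this is in place the identification of the limit as the GAF in \eqref{GAF} is comparatively soft, being an immediate consequence of the classical trace central limit theorem together with the a.s.\ convergence already proved.
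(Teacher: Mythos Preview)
Your argument is correct, and its overall architecture matches the paper's: decompose $\mathcal X_n(z)=\Phi_{n-1}^*(z)\big(1-e^{\i\eta}B_{n-1}(z)\big)$, show $\varphi_k=\log\Phi_k^*$ converges a.s.\ locally uniformly to some analytic $\varphi$, show $B_k\to 0$ locally uniformly, and then identify the law of $\varphi$ via the power-trace CLT \eqref{Joh} exactly as you do.

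The genuine difference is in how you establish the convergence of $\varphi_k$. The paper does not attempt an $L^2$ estimate on the increments; it simply invokes Proposition~\ref{prop:cvg} (taken from \cite{CN}), whose hypothesis is the third-moment summability condition $\bE\exp\big(\gamma\sum_n|\alpha_n|^3\big)<\infty$, and then quotes \cite[Thm~1.7.4]{Sim04} for $B_k\to 0$ once $\alpha_k\to 0$. Your route is more self-contained: the recursion $b_{k+1}=T_{\alpha_k}(zb_k)$ together with rotation invariance (via the Poisson-kernel identity $\bE_\eta|1-|\alpha|e^{\i\eta}w|^{-2}=(1-|\alpha|^2|w|^2)^{-1}$) does yield the clean inequality $\bE\big[|b_{k+1}(z)|^2\,\big|\,\F_k\big]\le |z|^2|b_k(z)|^2+C_r\,\bE|\alpha_k|^2$, from which $\bE|b_k(z)|^2=O(1/k)$ and hence summable increment variances $O(1/k^2)$ follow. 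This buys you an explicit $L^2$ martingale bound without appealing to \cite{CN}; the paper's approach, by contrast, imports a stronger convergence statement (with exponential moments) at the cost of a black-box citation.

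One small point to tighten: from $\bE|b_k(z)|^2=O(1/k)$ you only get $b_k(z)\to 0$ in $L^2$, not a.s.\ locally uniformly, and ``contraction with additive noise tending to $0$ in $L^2$'' does not by itself give a.s.\ convergence. The cleanest fix is the one the paper uses: $\bP[|\alpha_k|>\epsilon]=(1-\epsilon^2)^{\beta_k}$ is summable, so $\alpha_k\to 0$ a.s.\ by Borel--Cantelli, and then \cite[Thm~1.7.4]{Sim04} gives $b_k\to 0$ locally uniformly on $\bD$ deterministically.
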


For completeness, we provide the main steps of the proof of  
Proposition~\ref{prop:GAF} in the Appendix~\ref{A:model}. 
We emphasize again that in this framework, the GAF $\{\varphi(z):z\in\bD\}$ and the 
C$\beta$E$_n$ for $n\in\bN$ are defined on the same probability space.

\medskip

{\blue The limits $\chi(u) : u \in \bU \mapsto  \displaystyle\lim_{r\to1} \Re\varphi (ru)$ and $\psi : u \in \bU \mapsto \displaystyle\lim_{r\to1} \Im\varphi(ru)$ exist}, for instance, in the Sobolev space $H^{-\epsilon}(\bU,\bR)$ for any $\epsilon>0$ and $(\chi,\psi)$ are independent Gaussian log-correlated fields on~$\bU$.  
These fields are identically distributed with covariance kernel
\[
(u,v)\in \bU^2 \mapsto \beta^{-1} \log|u-v|^{-1} .
\]
Then, we can associate to these fields two families of \ul{Gaussian multiplicative chaos measures (GMC)}. 
The proof of the next proposition follow directly\footnote{Since $\varphi$ is a GAF in $\bD$, $\Re\varphi(z)= P_z\chi$ (and similarly for $\Im\varphi$) where $P$  is the Poisson kernel of $\bD$ ($P_z$ is a smooth mollifier on $\bU$ for $z\in\bD$).} from \cite[Thm~1.1]{B15}.

\begin{proposition}[GMC] \label{prop:GMC}
Let $\gamma\in\bR$ and $\hat\gamma := \gamma/\sqrt{2\beta}$. 
Suppose that $|\hat\gamma|<1$.
There are two random measures on $\bT$ such that 
\begin{equation} \label{GMCdef}
\mu^{\hat\gamma}(\dd\theta) := \lim_{r\to1} \frac{| e^{\gamma \varphi(re^{\i\theta})}|^\gamma}{\bE| e^{\gamma \varphi(r)}|^\gamma}\frac{\dd\theta}{2\pi}, \qquad\qquad
\nu^{\hat\gamma}(\dd\theta) := \lim_{r\to1} \frac{e^{\gamma\Im\varphi(re^{\i\theta})}}{\bE e^{\gamma \Im\varphi(r)}}\frac{\dd\theta}{2\pi} .
\end{equation}
Both limits hold in  probability with respect to the topology of weak convergence for non-negative measures on $\bT$. 
Moreover, the GMC  $\mu^{\hat\gamma},\nu^{\hat\gamma}$   are independent, with the same law. {\blue The distribution of  the random measure $\mu^{\hat\gamma}$ depends on $(\gamma,\beta)$ only via $\hat\gamma$
and $0<\mu^{\hat\gamma}(\bT)<\infty$ for $|\hat\gamma|<1$.}
\end{proposition}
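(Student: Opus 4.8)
The plan is to reduce Proposition~\ref{prop:GMC} to the Berestycki--type existence and uniqueness theorem for GMC on a metric measure space, applied to the fields $\chi$ and $\psi$ living on $\bT$ with the log-correlated kernel $\beta^{-1}\log|u-v|^{-1}$. First I would record the key input from Proposition~\ref{prop:GAF}: since $\varphi$ is a GAF on $\bD$ with $\bE[\varphi(z)\overline{\varphi(w)}]=-\tfrac2\beta\log(1-z\bar w)$ and $\bE[\varphi(z)\varphi(w)]=0$, writing $\varphi=\frac{1}{\sqrt2}(\chi^{\rm harm}+\i\psi^{\rm harm})$ in terms of harmonic extensions, one checks that $\Re\varphi(re^{\i\theta})=P_{re^{\i\theta}}\chi$ and $\Im\varphi(re^{\i\theta})=P_{re^{\i\theta}}\psi$ where $P$ is the Poisson kernel of $\bD$, $\chi,\psi$ are the boundary fields, and $\bE[\chi(u)\chi(v)]=\bE[\psi(u)\psi(v)]=\beta^{-1}\log|u-v|^{-1}$ while $\bE[\chi(u)\psi(v)]=0$ (the vanishing of the pseudo-covariance $\bE[\varphi\varphi]$ is exactly what decouples $\chi$ and $\psi$). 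Since $z\mapsto P_z$ is, for each $z\in\bD$, a smooth probability density on $\bU$, the mollified fields $P_{re^{\i\theta}}\chi$ form a convolution approximation of $\chi$ whose covariance converges to the log-kernel with the right rate; this is precisely the setting of \cite[Thm~1.1]{B15}.

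Next I would apply \cite[Thm~1.1]{B15} (or the formulation in \cite{B15} for general mollifications) to obtain, for $\gamma^2/(2\beta)=\hat\gamma^2<2$ — equivalently $|\hat\gamma|<\sqrt2$, which in particular covers $|\hat\gamma|<1$ — the convergence in probability, in the weak topology of non-negative measures on $\bT$, of
\[
\frac{e^{\gamma\Im\varphi(re^{\i\theta})}}{\bE\,e^{\gamma\Im\varphi(r)}}\frac{\dd\theta}{2\pi}
\]
to a non-trivial limiting measure $\nu^{\hat\gamma}$, and similarly with $\Re\varphi$ in place of $\Im\varphi$ to get $\mu^{\hat\gamma}$; note $|e^{\gamma\varphi}|^\gamma=e^{\gamma^2\Re\varphi}$ so the two normalizations in \eqref{GMCdef} are of the same exponential type. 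The normalizing constant $\bE\,e^{\gamma\Im\varphi(r)}=e^{\frac{\gamma^2}{2}\bE[\Im\varphi(r)^2]}$ and $\bE[\Im\varphi(re^{\i\theta})^2]=-\tfrac1\beta\log(1-r^2)\to+\infty$, so this is the standard Wick normalization and Kahane's convexity inequalities give uniform integrability along $r\to1$. Independence of $\mu^{\hat\gamma}$ and $\nu^{\hat\gamma}$ follows from independence of $\chi$ and $\psi$ (Proposition~\ref{prop:GAF}), since each GMC is a measurable functional of its underlying field; equality in law follows because $\chi\overset{\rm d}{=}\psi$. That the law of $\mu^{\hat\gamma}$ depends on $(\gamma,\beta)$ only through $\hat\gamma$ is immediate: rescaling shows the field $\gamma\chi$ has covariance $\hat\gamma^2\cdot 2\log|u-v|^{-1}$, i.e. $\sqrt2\,\hat\gamma\,G$ with $G$ a fixed unit log-correlated field on $\bU$, and the Wick-renormalized exponential of $\sqrt2\,\hat\gamma\,G$ has a law depending only on $\hat\gamma$.

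For the final assertion $0<\mu^{\hat\gamma}(\bT)<\infty$ a.s.\ when $|\hat\gamma|<1$: finiteness of the total mass is part of the existence statement (the limit is a genuine finite measure since $\bE\,\mu^{\hat\gamma}(\bT)=1$ by Fatou combined with the martingale-type normalization, so $\mu^{\hat\gamma}(\bT)<\infty$ a.s.), and positivity of the total mass follows from non-triviality of the GMC together with a zero-one law: the event $\{\mu^{\hat\gamma}(\bT)=0\}$ is measurable with respect to the tail $\sigma$-algebra of the field $\chi$ (equivalently, invariant under adding an independent bounded continuous function via Cameron--Martin/Girsanov absolute continuity arguments on $\bU$), hence has probability $0$ or $1$; since $\bE\,\mu^{\hat\gamma}(\bT)=1>0$ it must be $0$. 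I would cite \cite{B15} for non-triviality throughout $|\hat\gamma|<\sqrt2$, which comfortably includes the regime $|\hat\gamma|<1$ asserted here.

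The main obstacle is a bookkeeping one rather than a conceptual one: verifying that the Poisson-kernel mollification of $\chi$ and $\psi$ exactly matches the hypotheses of \cite[Thm~1.1]{B15} — in particular that $\bE[(P_{re^{\i\theta}}\chi)(P_{re^{\i\vartheta}}\chi)] = \beta^{-1}\log\frac{1}{|e^{\i\theta}-e^{\i\vartheta}|} + g_r(\theta,\vartheta)$ with $g_r$ converging uniformly to a continuous function off the diagonal and with the correct logarithmic blow-up rate as $r\to1$ on the diagonal — and that the decoupling $\bE[\varphi\varphi]=0$ indeed yields \emph{independent} (not merely uncorrelated) boundary fields $\chi,\psi$, which is automatic here because they are jointly Gaussian. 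Once these identifications are in place, the proposition is a direct invocation of the cited theorem, so I would keep this part brief and relegate the Poisson-kernel covariance computation — which is classical and follows from $-\log(1-z\bar w)$ being the reproducing kernel — to a one-line remark or to Appendix~\ref{A:model}.
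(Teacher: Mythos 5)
Your approach is the same as the paper's: Proposition~\ref{prop:GMC} is proved there by a direct invocation of \cite[Thm~1.1]{B15}, with exactly the observation you make (recorded in a footnote) that $\Re\varphi(z)=P_z\chi$ and $\Im\varphi(z)=P_z\psi$ with $P$ the Poisson kernel, so the radial limits are convolution approximations of the independent boundary fields $\chi,\psi$; your discussion of independence, equality in law, and the reduction to the parameter $\hat\gamma$ matches what the paper leaves implicit.

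One concrete error to fix: you locate the subcritical threshold at $|\hat\gamma|<\sqrt2$, claiming non-triviality of the limit ``throughout $|\hat\gamma|<\sqrt2$.'' This is a normalization slip. The field $\gamma\chi$ has covariance $\gamma^2\beta^{-1}\log|u-v|^{-1}=2\hat\gamma^2\log|u-v|^{-1}$, i.e.\ it is $\sqrt2\,\hat\gamma\,G$ with $G$ a unit log-correlated field on the circle ($d=1$), so Berestycki's condition $\gamma_{\rm eff}^2<2d$ reads $2\hat\gamma^2<2$, that is $|\hat\gamma|<1$ exactly --- consistent with the paper's remark that the limits vanish for $|\hat\gamma|\ge1$. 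Since the proposition only asserts the conclusion for $|\hat\gamma|<1$, your application of \cite{B15} is still valid on the relevant range, but the claim of non-triviality up to $\sqrt2$ is false and should be deleted. (Separately, the expression $|e^{\gamma\varphi}|^\gamma$ in \eqref{GMCdef} is a typo for $|e^{\varphi}|^\gamma=e^{\gamma\Re\varphi}$, as the rest of the paper --- e.g.\ Theorem~\ref{thm:phi} and \eqref{GMC} --- makes clear; taking it literally as $e^{\gamma^2\Re\varphi}$ would change the GMC parameter, so do not paper over this with ``same exponential type.'')
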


Gaussian multiplicative chaos has many crucial applications in modern probability and we refer to the survey \cite{RV14} for an overview of the theory. 
For now, we recall that these random measures  
have exact Hausdorff dimension $1-\hat\gamma^2$ (almost surely). They are continuous with respect to the parameter $\hat\gamma$ in the appropriate topology and 
$\bE\mu^{\hat\gamma}(\dd\theta) = \mu^{0}(\dd\theta) = \frac{\dd\theta}{2\pi}$. 
For $|\hat\gamma|\ge1$, the limits \eqref{GMCdef} also exists in probability, but they are equal to~0. 
The non-trivial regime $|\hat\gamma|<1$ is called the \ul{subcritical phase}.

\medskip

Proposition~\ref{prop:GAF} raises the question whether the C$\beta$E characteristic polynomials also give raise to suitable approximations of the GMC measures \eqref{GMCdef}.  
Our main goal  is to obtain the following results. 

\begin{theorem} \label{thm:charpoly}
Recall \eqref{charpoly0} and the subsequent definition of $(\mathcal{Y}_n)_{n\in\bN}$. 
Let $\gamma \in\bR$, $\hat\gamma := \gamma/\sqrt{2\beta}$ such that $|\hat\gamma|<1$.\\
If $\gamma >-1$, it holds in probability as $n\to\infty$, 
\[
\frac{|\mathcal{X}_n(e^{\i\theta})|^\gamma}{\bE|\mathcal{X}_n(1)|^\gamma} \frac{\dd\theta}{2\pi} \to \mu^{\hat\gamma}(\dd\theta) . 
\]
It holds in probability as $n\to\infty$, 
\[
\frac{e^{\gamma \mathcal{Y}_n(\theta)}}{\bE e^{\gamma \mathcal{Y}_n(0)}}\frac{\dd\theta}{2\pi}\to \nu^{\hat\gamma}(\dd\theta) . 
\]
\end{theorem}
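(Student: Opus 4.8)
The plan is to reduce the convergence of the characteristic-polynomial and counting-field measures to the already-available convergence $\mathcal{X}_n \to e^{\varphi}$ of Proposition~\ref{prop:GAF} together with the GMC convergence of Proposition~\ref{prop:GMC}, via a second-moment / uniform-integrability argument. The natural strategy is to introduce the ``mollified'' GMC approximants $\mu^{\hat\gamma}_r(\dd\theta) = |e^{\varphi(re^{\i\theta})}|^\gamma / \bE|e^{\varphi(r)}|^\gamma \, \tfrac{\dd\theta}{2\pi}$ (and similarly $\nu^{\hat\gamma}_r$), which by Proposition~\ref{prop:GMC} converge to $\mu^{\hat\gamma}$ (resp. $\nu^{\hat\gamma}$) in probability as $r\to1$. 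One then couples $r=r_n\to1$ with $n\to\infty$ at a suitable rate and shows that the difference between $\tfrac{|\mathcal{X}_n(e^{\i\theta})|^\gamma}{\bE|\mathcal{X}_n(1)|^\gamma}\tfrac{\dd\theta}{2\pi}$ and $\mu^{\hat\gamma}_{r_n}$ tends to $0$ in probability, tested against a fixed $f\in C(\bT)$.

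First I would establish the requisite first- and second-moment estimates. Using Selberg-type/Fisher–Hartwig asymptotics one has $\bE|\mathcal{X}_n(1)|^\gamma \asymp n^{\hat\gamma^2/2}$ and, more importantly, a two-point bound $\bE\big[|\mathcal{X}_n(e^{\i\theta})|^\gamma|\mathcal{X}_n(e^{\i\vartheta})|^\gamma\big] \lesssim \bE|\mathcal{X}_n(1)|^\gamma\,{}^2 \cdot |e^{\i\theta}-e^{\i\vartheta}|^{-\hat\gamma^2}$ uniformly in $n$, valid in the subcritical range $\hat\gamma^2<1$ (this is where $|\hat\gamma|<1$ enters, and where $\gamma>-1$ is needed to control the singularity of $|\mathcal{X}_n|^\gamma$ near the eigenvalues — the mollified field $\Re\varphi$ never sees this, but $\mathcal{X}_n$ does). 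These moment bounds give $L^1$-boundedness of the total masses and, by a standard chaining/Kahane argument, a uniform bound on $\bE\big|\int f \,\tfrac{|\mathcal{X}_n(e^{\i\theta})|^\gamma}{\bE|\mathcal{X}_n(1)|^\gamma}\tfrac{\dd\theta}{2\pi} - \int f\, \mu^{\hat\gamma}_{r_n}\big|$ in terms of an error that vanishes as $r_n\to1$ provided $n(1-r_n)\to\infty$; the key input is that on the event where $\mathcal{X}_n(z)$ is close to $e^{\varphi(z)}$ for $|z|\le r_n$ (Proposition~\ref{prop:GAF}, with quantitative control on the rate), the ratio $|\mathcal{X}_n(r_ne^{\i\theta})|^\gamma/|e^{\varphi(r_ne^{\i\theta})}|^\gamma$ is uniformly close to $1$, while the ``radial boundary layer'' $r_n<|z|<1$ contributes negligibly after integrating against $f$ — this last point again using the second-moment estimate to bound $\bE\big|\int f(|\mathcal{X}_n(e^{\i\theta})|^\gamma/\bE|\mathcal{X}_n(1)|^\gamma - |\mathcal{X}_n(r_ne^{\i\theta})|^\gamma/\bE|\mathcal{X}_n(r_n)|^\gamma)\tfrac{\dd\theta}{2\pi}\big|$.

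For the counting-field statement one argues identically, replacing $|\mathcal{X}_n(e^{\i\theta})|^\gamma = e^{\gamma\Re\log\mathcal{X}_n(e^{\i\theta})}$ with $e^{\gamma\mathcal{Y}_n(\theta)} = e^{\gamma\Im\log\mathcal{X}_n(e^{\i\theta})}$ and $\Re\varphi$ with $\Im\varphi$; here no restriction $\gamma>-1$ is required since $\mathcal{Y}_n$ is bounded (it is piecewise linear with bounded jumps), so the exponential moments $\bE e^{\gamma\mathcal{Y}_n(\theta)}$ and the two-point functions are controlled for all real $\gamma$ with $|\hat\gamma|<1$ by the same Fisher–Hartwig / CLT-type inputs. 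The independence of $\mu^{\hat\gamma}$ and $\nu^{\hat\gamma}$ plays no role in the proof of convergence itself; it is a statement about the limiting objects already contained in Proposition~\ref{prop:GMC}.

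The main obstacle, I expect, is the quantitative coupling: Proposition~\ref{prop:GAF} gives only qualitative a.s. locally uniform convergence $\mathcal{X}_n\to e^{\varphi}$, whereas the argument above needs an explicit rate — something like $\sup_{|z|\le r}|\log\mathcal{X}_n(z) - \varphi(z)| = O(\text{(something)}\to0)$ with a rate that can be played off against $1-r_n$. Extracting such a rate requires going back into the Prüfer-phase recursion \eqref{Phase} and the coupling of \cite{KN04}, estimating $\bE\sup_{|z|\le r}|\log\mathcal{X}_n(z)-\varphi(z)|^2$ in terms of $n$ and $r$ (essentially controlling the tail contributions $\sum_{k\ge n}$ of the random-coefficient expansion of $\varphi$ against the discrete recursion). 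Making this uniform in $r$ up to $r_n = 1 - n^{-1+\delta}$, and simultaneously keeping the moment bounds of the second paragraph uniform, is the technical heart of the matter; everything else is the by-now-standard GMC ``$L^2$ (or $L^q$, $q<1$, in the negative-moment case $-1<\gamma<0$) plus uniform integrability'' machinery.
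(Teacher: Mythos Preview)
Your plan diverges from the paper's proof in a way that is not merely stylistic: it relies on an input that is unavailable for general $\beta>0$. The two-point estimate
\[
\bE\big[|\mathcal{X}_n(e^{\i\theta})|^\gamma|\mathcal{X}_n(e^{\i\vartheta})|^\gamma\big]\ \lesssim\ \big(\bE|\mathcal{X}_n(1)|^\gamma\big)^2\,|e^{\i\theta}-e^{\i\vartheta}|^{-\hat\gamma^2}
\]
that you invoke is a Fisher--Hartwig type asymptotic. For $\beta=2$ such bounds are known, but the paper explicitly records (in the open-problems list of Section~\ref{sec:rmt}) that Fisher--Hartwig asymptotics for C$\beta$E with general $\beta$ are \emph{open}. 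So your second-moment backbone is simply not available, and without it neither the uniform integrability nor the ``radial boundary layer'' comparison you sketch can be carried through. A second structural difficulty: $\mathcal{X}_n$ has $n$ zeros on $\bU$, so for $\gamma<0$ the passage from $|\mathcal{X}_n(e^{\i\theta})|^\gamma$ to $|\mathcal{X}_n(r_ne^{\i\theta})|^\gamma$ is not controlled by any sup-norm bound on $\log\mathcal{X}_n$ inside $\bD$; the singularities sit exactly at the boundary you are trying to reach, and no rate in Proposition~\ref{prop:GAF} touches them.

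The paper avoids both obstacles by never treating $\mathcal{X}_n$ as a single object. It uses the exact factorization
\[
\mathcal{X}_{n+1}(e^{\i\theta})=\Phi_n^*(e^{\i\theta})\big(1-e^{\i\eta+\i\varpi_n(\theta)}\big),
\]
which separates a zero-free piece $\Phi_n^*$ from the oscillatory factor carrying all the eigenvalues. For the first piece one proves directly that $\mu_n^\gamma(f)=\int f\,|\Phi_n^*|^\gamma/\bE|\Phi_n^*(1)|^\gamma$ is a \emph{martingale} equal to $\bE_n\mu^{\hat\gamma}(f)$ (Proposition~\ref{prop:phi}); convergence to $\mu^{\hat\gamma}$ then follows from martingale convergence, with no two-point moment input at all. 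For the second piece one shows a Riemann--Lebesgue statement (Proposition~\ref{prop:RL}): $\int e^{\i\kappa\varpi_n(\theta)}f(\theta)\,\mu_n^\gamma(\dd\theta)\to 0$ for each $\kappa\in\bN$, proved by a conditional second-moment argument that exploits approximate independence of Pr\"ufer-phase increments at separated angles. Combining the two, $|1-e^{\i\eta+\i\varpi_n}|^\gamma$ behaves against $\mu_n^\gamma$ like its average $\mathrm f_0$, and the characteristic-polynomial measure inherits the limit of $\mu_n^\gamma$. The quantitative rate in Proposition~\ref{prop:GAF} that you flag as the ``main obstacle'' is never needed.
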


Both results hold with respect to the topology of weak convergence for non-negative measures on $\bT$ and they cover the whole subcritical phase as in Proposition~\ref{prop:GMC}. Observe that the convergence to $\mu^{\hat \gamma}$ only makes sense for $\gamma>-1$, since obviously $\bE|\mathcal{X}_n(1)|^\gamma<\infty$ if and only if $\gamma>-1$. 
Prior to Theorem~\ref{thm:charpoly}, the only complete results on multiplicative chaos in random matrix theory are due to \cite{Webb15,NSW20,BF} for the modulus of the CUE characteristic polynomial with $\gamma\ge 0$. 
These results rely on the determinantal structure of the model ($\beta=2$) 
and more analytic methods.
In contrast, the proof of Theorem~\ref{thm:charpoly} is probabilistic, based on martingale convergence arguments, albeit being specific to circular $\beta$-ensembles, it yields stronger convergence results. 

\medskip 

This article is devoted to the proof of Theorem~\ref{thm:charpoly} and the arguments are organized as follows;

\begin{itemize}[leftmargin=*] \setlength\itemsep{.5em}
\item In Subsection~\ref{sec:CbE}, we review in details the results from \cite{KN04}, further developed in \cite{CMN,CN}. 
This section relies on the theory of \ul{orthogonal polynomials on the unit circle} (OPUC) \cite{Sim04}. 
In particular, this yields a martingale approximation $\varphi_n = \bE(\varphi|\F_n)$, with $\F_n = \sigma(\alpha_0,\dots, \alpha_{n-1})$ for the GAF from Proposition~\ref{prop:GAF}.
We also discuss some additional results which follow from our analysis, included an alternative elementary proof of the main result from \cite{CN}.

\item In Subsection~\ref{sec:rmt}, we review the main results related to Theorem~\ref{thm:charpoly} in random matrix theory.
Given the extend of the literature, we focus mostly on  the log-correlated structure of the eigenvalues, multiplicative chaos, circular $\beta$-ensembles and the Fyodorov Bouchaud conjecture.

\item In Section~\ref{sec:phi}, we obtain a GMC convergence for the martingale sequence $(\varphi_n)_{n\in\bN_0}$; see Theorem~\ref{thm:phi} below. 
The method is analogous for both real and imaginary part of the martingale $(\varphi_n)_{n\in\bN_0}$ so we explain the main steps for the real part.
Define $  \mu^{\gamma}_n(\dd\theta)  := \frac{e^{\gamma \Re\varphi_n(e^{\i\theta})}}{\bE e^{\gamma \Re\varphi_n(1)}}\frac{\dd\theta}{2\pi}$ for $n\in\bN$. 
Formally, the goal is to prove that for $\gamma \in\bR$ with $|\hat\gamma|<1$ and $f\in C(\bT,\bR_+)$, 
\begin{equation} \label{exch}\begin{aligned}
\lim_{n\to\infty} \mu^{\gamma}_n(f)
&=\lim_{n\to\infty} \int \bigg(\lim_{r\to1} \frac{e^{\gamma \Re\varphi_n(re^{\i\theta})}}{\bE e^{\gamma \Re\varphi_n(r)}}\bigg) f(\theta)\frac{\dd\theta}{2\pi} \\
&=\lim_{r\to1}  \int \bigg(\lim_{n\to\infty} \frac{e^{\gamma \Re\varphi_n(re^{\i\theta})}}{\bE e^{\gamma \Re\varphi_n(r)}}\bigg) f(\theta)\frac{\dd\theta}{2\pi}
=  \lim_{r\to1}  \int  \frac{e^{\gamma \Re\varphi(re^{\i\theta})}}{\bE e^{\gamma \Re\varphi(r)}} f(\theta)\frac{\dd\theta}{2\pi} = \mu^{\hat\gamma}(f) . 
\end{aligned}
\end{equation}
The first step follows by continuity and the third step from a martingale convergence theorem (the last step is Proposition~\ref{prop:GMC}). The main technical challenge is to justify exchanging the limits \eqref{exch}.\
A similar analysis has been performed in \cite{CN} with $\gamma=-2$ (in the regime $\beta>2$). However, we give a new and \emph{elementary} proof of \eqref{exch} based on martingale arguments (Proposition~\ref{prop:phi}). 

\item The final step is to relate the asymptotics of $\frac{|\mathcal{X}_n(e^{\i\theta})|^\gamma}{\bE|\mathcal{X}_n(1)|^\gamma} \frac{\dd\theta}{2\pi}$ to that of $  \mu^{\gamma}_n$.  
In Section~\ref{sec:char}, we show that these measures have the same limit, in probability as $n\to\infty$. The starting point is the relationship between the characteristic polynomial $\mathcal{X}_{n+1}$ and $\varphi_n$; one has for $n\in\bN_0$, with $u=e^{\i\theta}$, 
\begin{equation*}
\mathcal{X}_{n+1}(u)=  e^{\varphi_n(u)}\big(1-e^{\i \eta+\i  \varpi_{n}(\theta)}\big) , \qquad \theta\in\bT , 
\end{equation*}
with $\varpi_n$ and $\eta$ as in Proposition~\ref{prop:phase}. 
If $\gamma >-1$, the function $\mathrm f :\theta \in\bT \mapsto |1-e^{\i\theta}|^\gamma$ is $L^1$ with mean $\mathrm f_0>0$, so the mass of the random measure associated with the characteristic polynomial is well-defined; 
\[
\int_{\bT} \frac{|\mathcal{X}_{n+1}(e^{\i\theta})|^\gamma}{\bE|\mathcal{X}_{n+1}(1)|^\gamma} \frac{\dd\theta}{2\pi}
= \mathrm f_0^{-1} \int_{\bT}  \mathrm f(\eta+\varpi_{n}(\theta)) \mu^{\gamma}_n(\dd\theta) . 
\]
By density of the trigonometric polynomials in $L^1(\bT)$, it will suffice to show that for any $\kappa \in\bN$, it holds in probability as $n\to\infty$,  
\begin{equation} \label{Fourier}
\int_{\bT}  \mathrm e^{i\kappa\varpi_{n}(\theta)} \mu^{\gamma}_n(\dd\theta) \to 0 . 
\end{equation}
This implies convergence of the mass; $\displaystyle\lim_{n\to\infty}\int_{\bT} \frac{|\mathcal{X}_{n+1}(e^{\i\theta})|^\gamma}{\bE|\mathcal{X}_{n+1}(1)|^\gamma} \frac{\dd\theta}{2\pi}= \displaystyle\lim_{n\to\infty}\mu_n^\gamma(\bT)= \mu^{\hat\gamma}(\bT)$ in probability.
By standard results, such arguments imply Theorem~\ref{thm:charpoly} (an analogous result holds for the imaginary part).
The property \eqref{Fourier} is formulated as Proposition~\ref{prop:RL} below and its proof relies on a \emph{second moment method} using the \emph{branching properties} of the Pr\"ufer phases; this strategy is explained in details in Subsection~\ref{sec:strat}. 
\end{itemize} 

For convenience, the main notations used throughout this article are gathered in Section~\ref{sec:not} and we review some previews results on C$\beta$E  in the Appendix~\ref{A:model}. 
In the Appendix~\ref{A:conc}, we gather some concentration inequalities for martingales that we use in the proofs. 
We emphasize that our proof is independent from previous works on this model, including \cite{CN} and the arguments are self-contained.

\subsection{C$\beta$E coupling; martingale approximations} \label{sec:CbE}
In this section, we give a short introduction to the theory of orthogonal polynomials on the unit circle (OPUC) and explain how this relates to the circular $\beta$-ensembles. As already emphasized, this idea originates from \cite{KN04} and also \cite{Sim04}. 
Starting from a probability measure $\bmu$ on $\bT$, with infinite support, by applying the Gram-Schmidt procedure to the sequence $1,z,z^2,\dots$, one obtains a sequence of (analytic) polynomials $(\Phi_k)_{k\ge0}$, with $\Phi_k(z) =z^k+\cdots$  for $k\in\bN_0$, orthogonal with respect to $\mu$. 
By \cite[Thm~1.5.2]{Sim04}, there is a sequence of coefficients $\{\alpha_k\}_{k=0}^\infty$ in $\bD$, such that the sequence $(\Phi_k)_{k\ge 0}$ satisfies the recursion
\begin{equation} \label{S0}
\begin{cases} \Phi_{k+1}(z) =  z\Phi_{k}(z) - \overline{\alpha_k}  \Phi_{k}^*(z) \\
\Phi_{k}^*(z) = z^k  \overline{\Phi_{k}(1/\overline{z})}
\end{cases}
\qquad z\in\bC , k\in\bN_0. 
\end{equation}

Conversely, one can associate to $\{\alpha_k\}_{k=0}^\infty$, {\blue a unitary operator $\mathcal U_\alpha$ whose  \ul{spectral measure} is $\bmu$.}
The sequence $\{\alpha_k\}_{k=0}^\infty$ are called \ul{Verblunsky coefficients} and they characterize the measure~$\bmu$; see \cite[Chap.~4]{Sim04}. 
Moreover, the regularity properties of $\bmu$ are intimately related to the properties of the sequence $\{\alpha_k\}_{k=0}^\infty$, \cite[Part 2]{Sim04}.  
An alternative way to reconstruct the probability measure $\bmu$ is using the sequence of orthogonal polynomials, via the so-called
\ul{Bernstein-Szeg\H{o}} approximation \cite[Thm~1.7.8]{Sim04}; for the weak convergence of measures on~$\bT$, 
\begin{equation} \label{BS}
\bmu(\dd\theta) = \lim_{n\to\infty} \mathbf{c}_n^2 |\Phi_{n}^*(e^{\i\theta})|^{-2} \frac{\dd\theta}{2\pi} , \qquad 
\mathbf{c}_n^2 = \|\Phi_n^*\|^2_{L^2(\bT)}. 
\end{equation} 

In this framework, the following description of the circular $\beta$-ensembles follows from \cite[Thm~1]{KN04}, see also \cite[(10)]{KS09}. 
We will use these conventions throughout this article. 

\begin{fact} \label{fact:Verb}
Let $\beta>0$ be a fixed parameter. Consider the Verblunsky coefficients 
$\alpha_k = |\alpha_k| e^{\i \eta_k}$, for $k\in\bN_0$,  where $\{\eta_k\}_{k\in\bN_0}$ and $\{|\alpha_k|\}_{k\in\bN_0}$ are independent;
\begin{itemize}[leftmargin=*] \setlength\itemsep{.3em}
\item $\eta_k$ are i.i.d.~uniform in $\bT$.
\item $|\alpha_k|^2$ are independent Beta-distributed random variables with distribution function, for $\beta_k := \beta \tfrac{k+1}{2}$, 
\begin{equation}  \label{verb0}
\bP\left[ |\alpha_k|^2 \ge r\right] = (1-r)^{\beta_k} ,\qquad r\in[0,1]  ,\, k\in\bN_0. 
\end{equation}
\end{itemize}
Let $\bmu_\beta$ be the spectral measure associated with these Verblunsky coefficients.  For this model, the Verblunsky coefficients $\alpha_k\to0$ as $k\to\infty$ (almost surely) and $\bE|\alpha_k|^2 = (1+\beta_k)^{-1}$. 
\end{fact}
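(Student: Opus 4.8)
The statement packages a definition --- the joint law of the Verblunsky coefficients $\{\alpha_k\}$ --- with three verifiable claims: that the spectral measure $\bmu_\beta$ is well-defined and nontrivial, so that the OPUC correspondence applies; that $\bE|\alpha_k|^2=(1+\beta_k)^{-1}$; and that $\alpha_k\to0$ almost surely. My plan is to quote the genuinely deep input --- that $\bmu_\beta$ carries the spectral data of the C$\beta$E in the sense of Proposition~\ref{prop:phase} --- from \cite[Thm~1]{KN04}, and to establish the three claims above directly.

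For well-definedness: since $\bP(|\alpha_k|^2\ge1)=0$ by \eqref{verb0}, almost surely $|\alpha_k|<1$ for every $k$, so $\{\alpha_k\}_{k\ge0}$ is a.s.\ an infinite sequence in $\bD$; by Verblunsky's theorem (\cite[Chap.~4]{Sim04}) it is then the Verblunsky sequence of a unique nontrivial probability measure $\bmu_\beta$ on $\bT$, which thus has infinite support, so that the OPUC correspondence, including \eqref{S0}, applies. For the first moment, the layer-cake formula and \eqref{verb0} give
\[
\bE|\alpha_k|^2=\int_0^1\bP\big(|\alpha_k|^2\ge r\big)\,\dd r=\int_0^1(1-r)^{\beta_k}\,\dd r=\frac{1}{1+\beta_k}.
\]
For $\alpha_k\to0$: fix $\epsilon\in(0,1)$; since $\beta_k=\beta\tfrac{k+1}{2}$, the bound \eqref{verb0} makes the tails summable, $\sum_{k\ge0}\bP(|\alpha_k|^2\ge\epsilon)=\sum_{k\ge0}(1-\epsilon)^{\beta_k}<\infty$ (a geometric series in $(1-\epsilon)^{\beta/2}$), so the Borel--Cantelli lemma gives $|\alpha_k|^2<\epsilon$ for all large $k$, a.s.; intersecting over $\epsilon=1/m$, $m\in\bN$, yields $\alpha_k\to0$ a.s.

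The only real obstacle --- were one to prove the Fact from scratch rather than cite it --- lies in that deep input: the Killip--Nenciu identification, where one computes the Jacobian of the change of variables from the eigenvalues (together with the CMV spectral weights) to the Verblunsky coefficients and checks that it factorizes, so that integrating out the weights leaves precisely the independent Beta laws in \eqref{verb0}. The remaining content, as carried out above, is elementary, which is why the statement is recorded here as a Fact.
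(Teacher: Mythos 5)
Your proposal is correct and matches the paper's treatment: the paper records this as a Fact quoted from \cite[Thm~1]{KN04} (see also \cite[(10)]{KS09}) without proof, and the only parts amenable to verification are exactly the elementary ones you supply — the layer-cake computation of $\bE|\alpha_k|^2$ and the Borel--Cantelli argument for $\alpha_k\to0$, both of which are carried out correctly.
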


Let $\eta$ be a random variable, uniform in $\bT$, independent of $\{\alpha_k\}_{k=0}^\infty$. 
{\blue One can construct a sequence of unitary matrices $\{\mathcal U_\alpha^{(n)}\}_{n=1}^\infty$, called \ul{CMV matrices}, such that for every $n\in\bN$,  $\mathcal U_\alpha^{(n)}\in \mathbb U_n$ is a function of $(\alpha_0,\dots, \alpha_{n-2},\eta)$, its eigenvalue set $\mathfrak{Z}_n$ is distributed according to \eqref{CbE}, and 
$\mathcal U_\alpha = \varprojlim\mathcal U_\alpha^{(n)}$ as ${n\to\infty}$.}
Moreover, the characteristic polynomials satisfy
\begin{equation} \label{charpoly1}
\mathcal{X}_n(z) := \det(1-z\mathcal U_\alpha^{(n)*}) =  \Phi_{n-1}^*(z) - e^{\i\eta} z \Phi_{n-1}(z) \qquad  z\in\bC,
\end{equation}

{\blue
In the sequel, the probability space $(\Omega,\F,\bP)$ carries the random variables $\{\alpha_k\}$, $\eta$ from Fact~\ref{fact:Verb} and we consider the filtration}
\begin{equation} \label{filt}
\F_{k} := \sigma\big(\alpha_{0},\cdots, \alpha_{k-1}\big) , \qquad k\in\bN_0. 
\end{equation}

\medskip

One of the goal of this article is to understand, just as \eqref{BS}, the asymptotics of powers $ |\Phi_{n}^*(e^{\i\theta})|^{\gamma}$  for the C$\beta$E. We establish that for suitable values of $\gamma\in\bR$, after an appropriate renormalization, these powers converge to a family of random measures, which are Gaussian multiplicative chaos; see Theorem~\ref{thm:phi} below.
In particular, the asymptotics of the characteristic polynomial (Theorem~\ref{thm:charpoly}) will be obtained via the recursion \eqref{S1} rather than using the CMV matrices. 
In the sequel, we will study the sequence of polynomials $(\Phi_k^*)_{k\ge 0}$ (or rather its logarithm), instead of $(\Phi_k)_{k\ge 0}$, since one has the following properties (see \cite[Sec~1.7]{Sim04});
\begin{itemize}[leftmargin=*] \setlength\itemsep{.3em}
\item[1.] For every $k\in\bN_0$, $\Phi_k^*(0)=1$, $\Phi_k^*$ has no zeros in $\overline\bD$  so one can define
\begin{equation} \label{phase1}
\varphi_k(z) := \log \Phi_{k}^*(z) , \qquad    z\in\overline{\bD} 
\end{equation}
{\blue in such a way that the functions $\varphi_k$ are analytic in a neighborhood of $\overline\bD$  with $\varphi_k(0)=0$ for all  $k\in\bN_0$.}
\item[2.] For $k\in\bN_0$, define $B_k(z) : = z  \Phi_{k}(z)/ \Phi_{k}^*(z) $ for $ z\in\overline{\bD}$. 
One has $|B_k(z)|=1$ for $z\in \bU$ and (by the maximum principle) $|B_k(z)| \le |z|$ for $z\in \overline{\bD}$. 
Then, {\blue with the principal branch of $\log(\cdot)$}, one can rewrite the recursion \eqref{S0} as; 
\begin{equation} \label{S1}
\varphi_{k+1}(z) =  \varphi_{k}(z) + \log(1-\alpha_k B_k(z)) 
, \qquad z\in\overline{\bD},\, k\in\bN_0. 
\end{equation}
In particular, since $|\alpha_k|<1$ for $k\in\bN_0$, the quantity $\log(1-\alpha_k B_k(z)) $ is analytic for $z\in\overline{\bD}$ and it vanishes at $z=0$. 
\item[3.] The Pr\"ufer phase from Proposition~\ref{prop:phase} are defined by $B_n(e^{\i\theta}) :=e^{\i\varpi_n(\theta)}$  for an appropriate determination of $\varpi_n(\theta)$ for  $\theta\in[0,2\pi]$. 
\item[3.] The characteristic polynomial \eqref{charpoly1} satisfies
\begin{equation*} 
\mathcal{X}_n(z) = \Phi_{n-1}^*(z)(1-e^{\i\eta}B_{n-1}(z)) , \qquad z\in\bC.
\end{equation*}  
For every $n\in\bN_0$, since $\Phi_{n}^*$ has no zero in $\overline\bD$, we recover that eigenvalues of $\mathcal U_\alpha^{(n)}$ are given by 
$\mathfrak{Z}_n= \big\{ \theta \in \bT:  \varpi_{n-1}(\theta)  = -\eta [2\pi] \big\} $; see Proposition~\ref{prop:phase}. 
\end{itemize}

\medskip

Properties 1 and 2 hold for general OPUC whose Verblunsky coefficients  $\alpha_k \in\bD$ for $k\in\bN_0$, while Property 4 is specific to rotation-invariant models (see Remark~\ref{rk:GS} below).
A fundamental observation from \cite{KS09}, also used in subsequent work on this model, is that the sequence $(\varphi_k)_{k\ge 0}$ from \eqref{phase1} is a  martingale (uniformly integrable inside $\bD$).
This property follows directly from the recursion \eqref{S1} and the fact that $\alpha_k$ is independent of $\F_k$ and $\bE[\log(1-\alpha_k B_k(z)| \F_k]=0$ by rotation-invariance. Then,  by a martingale convergence theorem (see Proposition~\ref{prop:cvg} in appendix), almost surely  $\varphi_k \to \varphi $ locally uniformly on $\bD$ as $k\to\infty$, and
\[
\varphi_k(z)= \bE_k\varphi(z) , \qquad   k\in\bN_0,\, z\in \bD. 
\] 

The limit $\varphi$ is a GAF as in Proposition~\ref{prop:GAF}  and this generates two independent Gaussian multiplicative chaos $(\mu , \nu )$ associated with $(\Re\varphi,\Im\varphi)$, measurable with respect to $\F_\infty = \sigma(\alpha_{k} ; k\in\bN_0)$, defined as in Proposition~\ref{prop:GMC}. 
In Section~\ref{sec:phi}, we prove that these GMC can be directly approximated in terms of $\varphi_n := \log \Phi_{n}^*$ as $n\to\infty$.  

\begin{theorem} \label{thm:phi}
For $n\in\bN$, let  
$\psi_{n}(\theta)=\Im\varphi_n(e^{\i\theta})$ for $\theta\in\bT$.  
Let $\gamma \in\bR$ with $\hat\gamma = \gamma/\sqrt{2\beta}$ such that $|\hat\gamma|<1$.\\
It holds almost surely  $($with respect to the topology of weak convergence for non-negative measures on $\bT)$, as $n\to\infty$, 
\[ 
\frac{|\Phi_n^*(e^{\i \theta})|^\gamma}{\bE |\Phi_n^*(1)|^\gamma} \frac{\dd\theta}{2\pi} \to \mu^{\hat\gamma}(\dd\theta) 
\qquad\text{and}\qquad
\frac{e^{\gamma \psi_n(\theta)}}{\bE e^{\gamma \psi_n(0)}}\frac{\dd\theta}{2\pi}\to \nu^{\hat\gamma}(\dd\theta) . 
\]
\end{theorem}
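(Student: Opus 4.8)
The plan is to prove Theorem~\ref{thm:phi} via the exchange-of-limits scheme sketched in \eqref{exch}, treating the real part $\Re\varphi_n$ (hence $\mu^{\hat\gamma}$) and the imaginary part $\psi_n$ (hence $\nu^{\hat\gamma}$) by the same argument; I will write out the real-part case. Set $\mu_n^\gamma(\dd\theta) = \frac{e^{\gamma\Re\varphi_n(e^{\i\theta})}}{\bE e^{\gamma\Re\varphi_n(1)}}\frac{\dd\theta}{2\pi}$ and, for $r<1$, the mollified versions $\mu_{n,r}^\gamma(\dd\theta) = \frac{e^{\gamma\Re\varphi_n(re^{\i\theta})}}{\bE e^{\gamma\Re\varphi_n(r)}}\frac{\dd\theta}{2\pi}$ and $\mu_r^\gamma(\dd\theta) = \frac{e^{\gamma\Re\varphi(re^{\i\theta})}}{\bE e^{\gamma\Re\varphi(r)}}\frac{\dd\theta}{2\pi}$. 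Since $\varphi_n$ is analytic in a neighborhood of $\overline{\bD}$, for fixed $n$ we have $\mu_{n,r}^\gamma \to \mu_n^\gamma$ as $r\to1$ (uniformly, by continuity of $\Re\varphi_n$ on $\bU$); and by Proposition~\ref{prop:GMC}, $\mu_r^\gamma \to \mu^{\hat\gamma}$ in probability as $r\to1$. For each fixed $r<1$, since $\varphi_n(re^{\i\theta}) = P_r\Re\varphi_n$ in the sense that $\Re\varphi_n(re^{\,\cdot\,})$ is a smooth bounded function converging a.s. and in every $L^p$ to $\Re\varphi(re^{\,\cdot\,})$ (martingale convergence, locally uniformly in $\bD$), one gets $\mu_{n,r}^\gamma \to \mu_r^\gamma$ a.s. as $n\to\infty$. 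So the heart of the matter is the uniform (in $n$) control that lets us commute $\lim_n$ and $\lim_r$.

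The key mechanism I would use is a \textbf{uniformly integrable martingale structure in $n$ for the total masses}, tested against a fixed continuous $f\in C(\bT,\bR_+)$. Observe from \eqref{S1} that
\[
\Re\varphi_{n+1}(e^{\i\theta}) = \Re\varphi_n(e^{\i\theta}) + \Re\log(1-\alpha_n B_n(e^{\i\theta})),
\]
and since $\alpha_n$ is independent of $\F_n$ with uniform phase, one computes $\bE_n e^{\gamma\Re\log(1-\alpha_n B_n(e^{\i\theta}))}$ explicitly: it depends only on $|B_n(e^{\i\theta})|=1$ and on the law \eqref{verb0} of $|\alpha_n|$, hence is a \emph{deterministic} constant $c_n(\gamma)$ independent of $\theta$ (this uses $|B_n|=1$ on $\bU$, which is exactly Property~2). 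Therefore
\[
M_n(f) := \frac{1}{\prod_{k<n}c_k(\gamma)}\int_\bT e^{\gamma\Re\varphi_n(e^{\i\theta})}f(\theta)\frac{\dd\theta}{2\pi}
\]
is a non-negative $\F_n$-martingale, and $\bE M_n(f) = \int_\bT f\,\frac{\dd\theta}{2\pi}$; moreover $\bE e^{\gamma\Re\varphi_n(1)} = \prod_{k<n}c_k(\gamma)$ by the same computation (here one also checks $c_n(\gamma)>0$ and that $\prod c_k(\gamma)$ has the expected asymptotic growth). So $\mu_n^\gamma(f) = M_n(f)/\bE M_n(f)$ is a closed non-negative martingale iff $(M_n(f))$ is uniformly integrable, and to run the scheme I need: (i) an $L^p$ bound, $\sup_n \bE M_n(f)^p < \infty$ for some $p>1$, which gives UI of $(M_n(f))_n$, convergence $\mu_n^\gamma(f)\to \mu_\infty^\gamma(f)$ a.s. and in $L^1$, and identifies the limit with the $\F_\infty$-measurable object; and (ii) the same $L^p$ bound \emph{uniformly in $r$} for $\mu_{n,r}^\gamma(f)$, i.e. $\sup_{n,r}\bE\big(\mu_{n,r}^\gamma(f)\big)^p<\infty$. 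Given (ii), a standard uniform-integrability-in-$r$ argument (e.g. Vitali) justifies exchanging $\lim_n$ and $\lim_r$ in \eqref{exch} and yields $\mu_\infty^\gamma(f) = \mu^{\hat\gamma}(f)$ a.s. Running this for a countable dense family of $f\in C(\bT,\bR_+)$ and using a.s. convergence of masses upgrades weak-* convergence in probability to a.s. convergence.

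The \textbf{main obstacle} — and where the "elementary martingale" input of Proposition~\ref{prop:phi} enters — is establishing the $L^p$ moment bound $\sup_n \bE\big(\mu_n^\gamma(f)\big)^p<\infty$ throughout the \emph{entire} subcritical range $\hat\gamma^2<1$, including negative $\gamma$ and $p$ close to but larger than $1$. The naive second-moment ($p=2$) bound only covers $\hat\gamma^2<1/2$ (the "$L^2$ phase"), so for $1/2\le\hat\gamma^2<1$ one needs a finer argument: I would expand $\bE\big(\int e^{\gamma\Re\varphi_n}f\big)^p$ and control it by the branching/tree structure of the recursion \eqref{S1} — the increments $\Re\log(1-\alpha_k B_k(e^{\i\theta}))$ decorrelate at scale $\sim e^{-k}$ in $\theta$, so $\Re\varphi_n$ behaves like a branching random walk with $n$ levels — and then invoke standard multiplicative-chaos moment estimates (e.g. a Kahane-type convexity/interpolation inequality, or a direct fractional-moment computation à la the existence proofs of subcritical GMC, controlling the near-diagonal singularity of $|e^{\i\theta}-e^{\i\vartheta}|^{-\gamma^2/(2\beta)p}$, integrable precisely when $\hat\gamma^2 p<1$, which holds for some $p>1$ when $\hat\gamma^2<1$). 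For negative $\gamma$ one must additionally control the event that $\Re\varphi_n$ is very negative (equivalently $|\Phi_n^*|$ small near $\bU$); since $\Phi_n^*$ has no zeros in $\overline{\bD}$ this is a lower-tail estimate on $\log|\Phi_n^*|$ on the boundary, handled via the same recursion together with the martingale concentration inequalities collected in Appendix~\ref{A:conc}. Once the single bound $\sup_{n,r}\bE(\mu_{n,r}^\gamma(f))^p<\infty$ is in hand, the rest of the proof is the soft exchange-of-limits and a.s.-upgrade described above.
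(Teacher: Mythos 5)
Your martingale observation is correct and is a genuine ingredient: since $\alpha_n$ has uniform phase independent of $\F_n$ and $|B_n|=1$ on $\bU$, the quantity $\bE_n|1-\alpha_nB_n(e^{\i\theta})|^\gamma$ is a deterministic constant, so $\mu_n^\gamma(f)=M_n(f)$ is indeed a non\-negative $\F_n$-martingale with mean $f_0$. The problem is that the entire difficulty of the theorem is then concentrated in your step (i)--(ii), the bound $\sup_{n,r}\bE\big(\mu_{n,r}^\gamma(f)\big)^{p}<\infty$ for some $p>1$ throughout $|\hat\gamma|<1$, and your proposal does not actually prove it. The tools you invoke --- Kahane's convexity inequality and the fractional-moment/Cameron--Martin computations from the existence proofs of subcritical GMC --- are Gaussian tools; $\Re\varphi_n$ is not Gaussian (cf.\ Remark~\ref{rk:Gauss}), and making the branching-random-walk heuristic quantitative enough to get $L^p$ moments for $p$ slightly above $1$ when $1/2\le\hat\gamma^2<1$, plus the lower-tail control of $\log|\Phi_n^*|$ on $\bU$ needed for $\gamma<0$, is precisely the hard technical content. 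As written, this step is a citation to results that do not apply to the object at hand, so there is a genuine gap. A secondary gap: even granting the uniform $L^p$ bound, uniform integrability alone does not justify commuting $\lim_n$ with $\lim_{r\to1}$ (Vitali upgrades modes of convergence along a single limit; a double-limit exchange needs one of the two convergences to be uniform in the other parameter, or a quantitative comparison between $\mu_{n,r}^\gamma$ and $\mu_n^\gamma$).

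For contrast, the paper inverts your logic. Rather than proving uniform integrability first and then identifying the limit, it proves the exact closed-martingale identity $\mu_n^\gamma(f)=\bE_n\mu^{\hat\gamma}(f)$ (Proposition~\ref{prop:phi}) directly, after which all moment bounds are inherited for free from the known Gaussian limit $\mu^{\hat\gamma}(f)\in\bL^{1+\delta}$ via Jensen, and Theorem~\ref{thm:phi} follows from the convergence theorem for closed martingales. The identity itself is obtained from the algebraic decoupling $\Phi_\infty^*=\Phi_n^*\,\Phi_{n,\infty}^{\lambda_n*}\big(1+\tfrac{1-\rho_n}{2}(\Psi_{n,\infty}^{\lambda_n*}/\Phi_{n,\infty}^{\lambda_n*}-1)\big)$ of Lemma~\ref{lem:decoup}, where the second factor is independent of $\F_n$ up to a multiplicative error controlled by $1-\rho_n(z)\lesssim \dd(z)$; rotation invariance and the explicit functions $\mathrm F_\gamma,\mathrm G_\gamma$ give a one-sided inequality between $\mu_n^\gamma(f)$ and $\liminf_{r\to1}\bE_n\mu_r^\gamma(f)=\bE_n\mu^{\hat\gamma}(f)$, and equality of expectations forces the identity. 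If you want to salvage your route, you would need to supply a self-contained non-Gaussian $L^p$ moment estimate for $\mu_n^\gamma$ in the full subcritical range --- which is a substantial piece of work, not a black-box citation --- and replace the Vitali step by a quantitative $r$-uniform comparison such as the one provided by Lemma~\ref{lem:bdreal}.
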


As a consequence of Theorem~\ref{thm:phi} with $\gamma=-2$, using \eqref{BS}, we recover the spectral measure $\mu_\beta$ in the \emph{subcritical phase}~$\beta>2$. 
For C$\beta$E, also for $\beta>2$, the asymptotics of \eqref{BS} have been investigated in \cite{CN}, using the same coupling but different arguments. \cite[Thm~2.1]{CN} establishes that the spectral measure $\bmu_\beta$ is a (normalized) GMC with index $\hat\gamma= -\sqrt{2/\beta}$  for $\beta \ge 2$ (by continuity). 
This recovers the fact that in this regime, $\bmu_\beta$ is singular continuous on $\bT$ with   exact dimension $1-\hat\gamma^2$ and this yields a proof of the Fyodorov-Bouchaud formula \cite{FB08} for the mass of the (classical) GMC on $\bT$. 
We review these results, which are also consequences of Theorem~\ref{thm:phi}. 

\begin{cor} \label{cor:phi}
For $\beta>2$, the spectral measure of the C$\beta$E is a normalized GMC measure on $\bT$, $\bmu_\beta=\mathbf{c}_\beta^{-1} \mu^{\hat \gamma}$, with index $\hat\gamma=-\sqrt{2/\beta}$. Moreover the mass $\mathbf{c}_\beta=  \mu^{\hat \gamma}(\bT)$ satisfies almost surely
\begin{equation} \label{FB}
\mathbf{c}_\beta^{-1}= \lim_{n\to\infty}  \mathbf{c}_n^2 \bE |\Phi_{n}^*(1)|^{-2} . 
\end{equation}
It follows that $\mathbf{c}_\beta^{-1} \equiv \Gamma(1-\hat\gamma^2) \mathbf{e}^{\hat\gamma^2}$, where $\Gamma$ denotes the Gamma function and $\mathbf{e}$  is a standard exponential
random variable.
\end{cor}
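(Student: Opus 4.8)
The plan is to derive Corollary~\ref{cor:phi} as a direct consequence of Theorem~\ref{thm:phi} specialized to $\gamma=-2$, combined with the Bernstein--Szeg\H{o} approximation~\eqref{BS}. The first task is to reconcile the two renormalizations: the Bernstein--Szeg\H{o} formula writes $\bmu_\beta(\dd\theta) = \lim_n \mathbf{c}_n^2 |\Phi_n^*(e^{\i\theta})|^{-2}\frac{\dd\theta}{2\pi}$, while Theorem~\ref{thm:phi} gives the convergence of $\frac{|\Phi_n^*(e^{\i\theta})|^{-2}}{\bE|\Phi_n^*(1)|^{-2}}\frac{\dd\theta}{2\pi}$ to $\mu^{\hat\gamma}(\dd\theta)$ with $\hat\gamma=-\sqrt{2/\beta}$ (note $|\hat\gamma|<1$ precisely for $\beta>2$, the subcritical regime). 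Both convergences hold weakly, almost surely --- the first is deterministic-looking but holds for almost every realization of the Verblunsky coefficients since $\bmu_\beta$ is the fixed (random) spectral measure. Dividing one by the other, the ratio of normalizing constants $\mathbf{c}_n^2 \,\bE|\Phi_n^*(1)|^{-2}$ must converge (along the full sequence) to a finite positive limit, which one identifies by testing both sides against the constant function $\1$ on $\bT$: since $\bmu_\beta(\bT)=1$ and $\mu^{\hat\gamma}(\bT)\in(0,\infty)$ almost surely, we get $\lim_n \mathbf{c}_n^2\,\bE|\Phi_n^*(1)|^{-2} = \mu^{\hat\gamma}(\bT)^{-1} =: \mathbf{c}_\beta^{-1}$, which is~\eqref{FB}, and $\bmu_\beta = \mathbf{c}_\beta^{-1}\mu^{\hat\gamma}$ follows. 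The singular-continuity and exact-dimension $1-\hat\gamma^2$ statements are then inherited from the standard properties of subcritical GMC recalled after Proposition~\ref{prop:GMC}.

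The remaining, genuinely arithmetic, step is the explicit identification of the law of the mass $\mathbf{c}_\beta = \mu^{\hat\gamma}(\bT)$. The idea is to compute the limit in~\eqref{FB} directly from the Verblunsky law in Fact~\ref{fact:Verb}. From the recursion~\eqref{S1} evaluated at $z=0$ one has $\varphi_k(0)=0$ trivially, but at $z=1$ one instead uses $\Phi_n^*(1) = \prod_{k=0}^{n-1}(1-\overline{\alpha_k}B_k^*)$-type telescoping; more efficiently, the quantity $\mathbf{c}_n^2 = \|\Phi_n^*\|_{L^2}^2$ satisfies the well-known OPUC identity $\mathbf{c}_n^2 = \prod_{k=0}^{n-1}(1-|\alpha_k|^2)$. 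Combined with an exact evaluation of $\bE|\Phi_n^*(1)|^{-2}$ --- which, by the martingale/recursive structure and the independence of the $\alpha_k$, factorizes as a product of one-step expectations involving the Beta$(1,\beta_k)$ laws $\bP[|\alpha_k|^2\ge r]=(1-r)^{\beta_k}$ with $\beta_k = \beta(k+1)/2$ --- one obtains $\mathbf{c}_n^2\,\bE|\Phi_n^*(1)|^{-2}$ as an explicit product of ratios of Gamma factors in $\beta_k$. Passing to the limit $n\to\infty$ via Stirling's formula collapses this product to $\Gamma(1-\hat\gamma^2)$ times a Gamma-function constant, while the residual randomness (the fluctuation of $|\Phi_n^*(1)|^{-2}$ around its mean, which does \emph{not} wash out since $\gamma=-2$ is subcritical but not zero) converges in law to $\mathbf{e}^{\hat\gamma^2}$ for a standard exponential $\mathbf{e}$. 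Concretely, one shows $|\Phi_n^*(1)|^2 \to |e^{\varphi(1)}|^2$ where $\Re\varphi(1)$ is a real Gaussian of variance $\tfrac2\beta\cdot(\text{divergent})$ --- so the correct statement is that the \emph{ratio} $\mathbf{c}_n^2|\Phi_n^*(1)|^{-2}/(\mathbf{c}_n^2\bE|\Phi_n^*(1)|^{-2})$ has a nontrivial limit, and tracking the exact Gaussian/exponential identity is where the constant $\mathbf{e}^{\hat\gamma^2}$ emerges.

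Alternatively, and more cleanly, I would invoke the Fyodorov--Bouchaud formula~\cite{FB08} as a known result for the total mass of the classical subcritical GMC on $\bT$ with covariance $\hat\gamma^2\log|u-v|^{-1}$, which asserts exactly $\mu^{\hat\gamma}(\bT) \equiv \big(\Gamma(1-\hat\gamma^2)\mathbf{e}^{\hat\gamma^2}\big)^{-1}$ in distribution; Proposition~\ref{prop:GMC} already tells us $\mu^{\hat\gamma}$ has this classical law (its distribution depends only on $\hat\gamma$), so the distributional identity for $\mathbf{c}_\beta^{-1}=\mu^{\hat\gamma}(\bT)$ is immediate. However, since part of the stated purpose of this circle of ideas is to \emph{prove} the Fyodorov--Bouchaud formula rather than cite it, the honest route is the direct computation of the limit in~\eqref{FB} sketched above, yielding the formula as output.

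The main obstacle I anticipate is justifying the interchange of the almost-sure weak limit with evaluation against $\1$: weak convergence of measures does not in general preserve total mass unless one has tightness of the masses and no escape of mass, but on the compact torus $\bT$ testing against the continuous function $\1$ is legitimate provided one knows a priori that $\mathbf{c}_n^2\,\bE|\Phi_n^*(1)|^{-2}$ stays bounded away from $0$ and $\infty$ --- which is exactly what the explicit Gamma-product computation provides, so the two halves of the argument are mutually supporting. The secondary technical point is pinning down the \emph{joint} limit (the normalizing constant times the random ratio) rather than the two factors separately, since only the product is asserted to converge in~\eqref{FB}; this requires care with the mode of convergence (almost sure for the constant, distributional for the exponential factor) but is handled by the martingale convergence of $\varphi_n(1)$ together with an explicit second-moment/variance computation.
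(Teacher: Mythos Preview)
Your first paragraph is correct and is exactly the argument the paper uses: Theorem~\ref{thm:phi} with $\gamma=-2$ combined with the Bernstein--Szeg\H{o} approximation~\eqref{BS}, then test both limits against $\1$. The paper does not give a detailed proof of the distributional identity either; it states that it follows elementarily from~\eqref{FB} and the explicit Verblunsky law, deferring to \cite[Sec~2.4]{CN}.

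Your description of that explicit computation is muddled, however. In~\eqref{FB} the randomness of $\mathbf{c}_\beta^{-1}$ comes entirely from $\mathbf{c}_n^2 = \prod_{k<n}(1-|\alpha_k|^2)$; the factor $\bE|\Phi_n^*(1)|^{-2}$ is a deterministic number (it is an expectation, not the random quantity $|\Phi_n^*(1)|^{-2}$). There is no ``fluctuation of $|\Phi_n^*(1)|^{-2}$ around its mean'' entering the formula, and the claim that $|\Phi_n^*(1)|^2 \to |e^{\varphi(1)}|^2$ is false: $\varphi_n(1)$ does not converge, its variance grows like $\tfrac{2}{\beta}\log n$ (Lemma~\ref{lem:mom}). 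The actual computation writes
\[
\mathbf{c}_n^2 \,\bE|\Phi_n^*(1)|^{-2} = \prod_{k<n}(1-|\alpha_k|^2)\,\bE|1-\alpha_k|^{-2}
\]
as a product of independent factors, where $(1-|\alpha_k|^2) = U_k^{1/\beta_k}$ for i.i.d.\ uniforms $U_k$ on $[0,1]$ (from Fact~\ref{fact:Verb}) and $\bE|1-\alpha_k|^{-2}$ is an explicit ratio of Gamma values by~\eqref{expmom}; the almost-sure limit and its law are then read off from this product.

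Your final paragraph raises a non-issue. On the compact space $\bT$, weak convergence of finite measures is precisely convergence against $C(\bT)$, and $\1\in C(\bT)$, so total masses converge automatically; there is no escape of mass to control. No a priori bound on $\mathbf{c}_n^2\,\bE|\Phi_n^*(1)|^{-2}$ is needed and there is no circularity: the two weak limits (from~\eqref{BS} and from Theorem~\ref{thm:phi}) hold independently, testing each against $\1$ yields convergence of the respective masses to $1$ and to $\mu^{\hat\gamma}(\bT)\in(0,\infty)$, and~\eqref{FB} is their quotient. Likewise, no ``martingale convergence of $\varphi_n(1)$'' is used (or available).
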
 

The distribution of $\mathbf{c}_\beta^{-1}$ is known as the Fyodorov-Bouchaud formula and it is elementarily deduced from \eqref{FB} using the explicit law of the C$\beta$E Verblunsky coefficients; see \cite[Sec~2.4]{CN} for details. 
There is an alternative proof based on ideas from conformal field theory to compute negative moments of the GMC mass on $\bT$, \cite{Remy}.


\medskip

The proof of Theorem~\ref{thm:phi} amounts to the exchange of limits \eqref{exch} as described above. This is the strategy used in \cite{CN} for $\gamma=-2$. In Section~\ref{sec:phi}, we follow a different route and obtain the following martingale approximations;

\begin{proposition}\label{prop:phi}
Let $f\in L^1(\bT,\bR)$, $\gamma \in\bR$ with $\hat\gamma = \gamma/\sqrt{2\beta}$ such that $|\hat\gamma|<1$. 
Then, for any $n\in\bN$, 
\begin{equation*} 
\bE_n\mu^{\hat\gamma}(f)
=  \mu_{n}^{\gamma}(f) =\int_{\bT} \frac{|\Phi_n^*(e^{\i \theta})|^\gamma}{\bE |\Phi_n^*(1)|^\gamma} f(\theta)\frac{\dd\theta}{2\pi} ,
\end{equation*}
and
\begin{equation*} 
\bE_n\nu^{\hat\gamma}(f)
=  \nu_{n}^{\gamma}(f) =\int_{\bT} \frac{e^{\gamma \psi_n(\theta)}}{\bE e^{\gamma \psi_n(0)}} f(\theta)\frac{\dd\theta}{2\pi} . 
\end{equation*}
\end{proposition}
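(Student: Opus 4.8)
The plan is to establish $\bE_n\mu^{\hat\gamma}(f)=\mu^\gamma_n(f)$ and $\bE_n\nu^{\hat\gamma}(f)=\nu^\gamma_n(f)$ in three steps: a routine reduction to $f\in C(\bT)$, the martingale property of $\big(\mu^\gamma_n(f)\big)_n$ and $\big(\nu^\gamma_n(f)\big)_n$, and --- the substantive point --- the identification of the limit of these martingales with $\mu^{\hat\gamma}(f)$ via the radial regularization of the GAF. First observe that $\Phi_n^*$ is analytic and zero-free on $\overline\bD$ (Property~1), so $|\Phi_n^*|^\gamma$ is bounded on $\bT$ for every $\gamma\in\bR$; together with the rotation invariance of the model this gives $\bE|\Phi_n^*(e^{\i\theta})|^\gamma=\bE|\Phi_n^*(1)|^\gamma<\infty$, and $\bE\,e^{\gamma\psi_n(\theta)}=\bE\,e^{\gamma\psi_n(0)}<\infty$. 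Combined with $\bE\mu^{\hat\gamma}(\dd\theta)=\tfrac{\dd\theta}{2\pi}$ (Proposition~\ref{prop:GMC}), this yields $\bE|\mu^{\hat\gamma}(f)|\le\|f\|_{L^1(\bT)}$, $\bE|\mu^\gamma_n(f)|\le\|f\|_{L^1(\bT)}$, and likewise for $\nu$, so both sides of the claimed identity are $L^1(\Omega)$-continuous in $f\in L^1(\bT)$; it therefore suffices to treat $f\in C(\bT)$, or even just the characters $f=e^{\i\kappa\,\cdot}$, $\kappa\in\mathbb Z$.

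\textbf{Martingale property.} By \eqref{S1} one has $\Phi_{n+1}^*(e^{\i\theta})=(1-\alpha_nB_n(e^{\i\theta}))\Phi_n^*(e^{\i\theta})$ with $B_n(e^{\i\theta})=e^{\i\varpi_n(\theta)}$ of modulus $1$ and $\F_n$-measurable; since $\alpha_n$ is rotation invariant and independent of $\F_n$, $\alpha_nB_n(e^{\i\theta})\equiv\alpha_n$ conditionally on $\F_n$, hence $\bE_n\big[|\Phi_{n+1}^*(e^{\i\theta})|^\gamma\big]=c_n\,|\Phi_n^*(e^{\i\theta})|^\gamma$ where $c_n:=\bE|1-\alpha_n|^\gamma$ does not depend on $\theta$. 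This constant is finite precisely in the subcritical regime: $\bE|1-\alpha_n|^\gamma<\infty$ iff $\gamma>-1-\beta_n$, and $|\hat\gamma|<1$ forces $|\gamma|<\sqrt{2\beta}\le1+\tfrac\beta2\le1+\beta_n$. Taking full expectations gives $\bE|\Phi_{n+1}^*(1)|^\gamma=c_n\,\bE|\Phi_n^*(1)|^\gamma$, so the normalizing constants cancel and $\bE_n[\mu^\gamma_{n+1}(f)]=\mu^\gamma_n(f)$; in particular $\bE[\mu^\gamma_n(f)]=\int f\,\tfrac{\dd\theta}{2\pi}$ for $f\ge0$. For $\nu$, \eqref{S1} gives $\psi_{n+1}(\theta)=\psi_n(\theta)+\Im\log\!\big(1-\alpha_ne^{\i\varpi_n(\theta)}\big)$, and the same argument yields $\bE_n[e^{\gamma\psi_{n+1}(\theta)}]=c'_n\,e^{\gamma\psi_n(\theta)}$ with $c'_n:=\bE\,e^{\gamma\Im\log(1-\alpha_n)}<\infty$ for every $\gamma$ (since $\Im\log(1-\alpha_n)\in(-\tfrac\pi2,\tfrac\pi2)$); hence $\big(\nu^\gamma_n(f)\big)_n$ is a martingale as well.

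\textbf{Identification of the limit.} By Proposition~\ref{prop:GMC} (subcritical GMC, via \cite{B15}), $M_r(f):=\int(1-r^2)^{\hat\gamma^2}e^{\gamma\Re\varphi(re^{\i\theta})}f(\theta)\tfrac{\dd\theta}{2\pi}\to\mu^{\hat\gamma}(f)$ in $L^1(\Omega)$ for $f\in C(\bT)$, so $\bE_n\mu^{\hat\gamma}(f)=\lim_{r\to1}\bE_nM_r(f)$. Writing $\Phi^*_\infty:=e^\varphi=\lim_m\Phi_m^*$ and $e^{\gamma\Re\varphi(z)}=|\Phi^*_\infty(z)|^\gamma$, conditioning on $\F_n$ gives
\[
\bE_nM_r(f)=\int(1-r^2)^{\hat\gamma^2}\,|\Phi_n^*(re^{\i\theta})|^\gamma\,R_n(re^{\i\theta})\,f(\theta)\,\tfrac{\dd\theta}{2\pi},\qquad R_n(z):=\bE_n\Big[\big|\Phi^*_\infty(z)/\Phi_n^*(z)\big|^\gamma\Big],
\]
and, $\Phi_n^*$ being zero-free and analytic on $\overline\bD$, $|\Phi_n^*(re^{\i\theta})|^\gamma\to|\Phi_n^*(e^{\i\theta})|^\gamma$ uniformly in $\theta$. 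Everything thus reduces to showing that $(1-r^2)^{\hat\gamma^2}R_n(re^{\i\theta})$ converges, as $r\to1$ and uniformly in $\theta$, to a \emph{deterministic} constant $c_n$. The value of $c_n$ is then forced for free: taking the expectation of $(1-r^2)^{\hat\gamma^2}|\Phi_n^*(re^{\i\theta})|^\gamma R_n(re^{\i\theta})=(1-r^2)^{\hat\gamma^2}\bE_n|\Phi^*_\infty(re^{\i\theta})|^\gamma$ and using $\bE|\Phi^*_\infty(re^{\i\theta})|^\gamma=\bE\,e^{\gamma\Re\varphi(r)}=(1-r^2)^{-\hat\gamma^2}$ gives $c_n\,\bE|\Phi_n^*(1)|^\gamma=1$, i.e.\ $c_n=1/\bE|\Phi_n^*(1)|^\gamma$, and the proposition follows. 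The real task is thus the \emph{concentration} of the normalized conditional tail. For this I would use the Prüfer recursion \eqref{Phase}: $\Phi^*_\infty(z)/\Phi_n^*(z)=\prod_{k\ge n}(1-\alpha_kB_k(z))$ depends on $\F_n$ only through the inner function $B_n$ (each $B_k(z)$, $k\ge n$, is determined by $z$, $B_n(z)$ and $\alpha_n,\dots,\alpha_{k-1}$), and successive conditioning expresses $R_n(z)$ through the averages $\rho_k(|B_k(z)|)$, $\rho_k(s):=\bE|1-\alpha_ks|^\gamma=1+\hat\gamma^2 s^2/k+o(1/k)$; as $z=re^{\i\theta}\to u\in\bU$ every $|B_k(z)|\to1$, and quantitatively $1-|B_k(re^{\i\theta})|^2$ grows like $(k-n)(1-r^2)$, so the phases stay near the circle for $k\lesssim(1-r^2)^{-1}$ and the product ``decouples'' into $\prod_{k\ge n}\rho_k$ --- which is asymptotically phase-independent, hence deterministic --- the deviation being controlled by a second-moment (branching) estimate of the kind developed in Section~\ref{sec:char} for Proposition~\ref{prop:RL}. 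The analogous analysis applied to $\psi_n=\Im\varphi_n$ yields $\bE_n\nu^{\hat\gamma}(f)=\nu^\gamma_n(f)$. The main obstacle is precisely this last concentration estimate for $R_n$, uniformly in $\theta$ and throughout the subcritical range $|\hat\gamma|<1$ --- notably for negative $\gamma$, where even the unconditional second moment of the radial approximation $M_r$ diverges, so that the estimate has to be carried out conditionally and with care.
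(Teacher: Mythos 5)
Your skeleton is sound: the reduction to continuous $f$, the martingale property of $\mu_n^\gamma(f)$ and $\nu_n^\gamma(f)$, and the observation that the constant $c_n$ is forced to equal $1/\bE|\Phi_n^*(1)|^\gamma$ once one knows $(1-r^2)^{\hat\gamma^2}R_n(re^{\i\theta})$ tends to a deterministic limit, all match the true structure of the result. The gap is that this last convergence \emph{is} the proposition, and the mechanism you sketch for it does not close. Peeling off factors of $\prod_{k\ge n}|1-\alpha_kB_k(z)|^\gamma$ by successive conditioning leaves, at each step, the random quantity $|B_k(z)|$ (which depends on $\alpha_n,\dots,\alpha_{k-1}$), so you get a telescoping with error terms rather than the product $\prod_k\bE|1-\alpha_k|B_k(z)||^\gamma$; and the tool you invoke to control the deviation --- a second-moment, branching-type estimate --- cannot cover the full subcritical range, since already $\bE[\mu^{\hat\gamma}(f)^2]=\infty$ once $\hat\gamma^2\ge 1/2$ (see \eqref{GMC}). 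The second-moment machinery of Section~\ref{sec:char} is built to kill oscillatory integrals of $e^{\i\kappa\varpi_n}$ against $\mu_n^\gamma$ on a truncated scale, not to establish concentration of the conditional tail $R_n$ uniformly up to the boundary, and it does not transfer to this purpose.

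The paper closes the argument with two devices you do not use. First, instead of the infinite product, Lemma~\ref{lem:decoup} gives an \emph{exact} identity from the transfer-matrix form of the Szeg\H{o} recursion together with the second-kind polynomials: $\Phi_\infty^*=\Phi_n^*\,\Phi_{n,\infty}^{\lambda_n*}\big(1+\tfrac{1-\rho_n}{2}(\Psi_{n,\infty}^{\lambda_n*}/\Phi_{n,\infty}^{\lambda_n*}-1)\big)$, where conditionally on $\F_n$ the pair $(\Phi_{n,\infty}^{\lambda_n*},\Psi_{n,\infty}^{\lambda_n*})$ is an independent copy of $(\Phi_{n,\infty}^{*},\Psi_{n,\infty}^{*})$ and the correction is a \emph{single} multiplicative factor of size $(1-\rho_n(z))|\mathfrak{g}_n(z)|$, with $\mathfrak{g}_n$ controlled through the Carath\'eodory function. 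Second, since $\bE\mu_n^\gamma(f)=\bE[\bE_n\mu^{\hat\gamma}(f)]=f_0$, a \emph{one-sided} inequality between the two random variables already forces equality; this removes the need for uniform two-sided convergence of $R_n$ altogether. For $\gamma\ge0$ the one-sided bound follows from monotonicity and Lipschitz continuity of $\mathrm{F}_\gamma(r)=\int_\bU|1+ur|^\gamma\dd u$; for $\gamma<0$ --- exactly where you flag the obstacle --- one only proves the lower bound restricted to the event $\{(1-\rho_n)|\mathfrak{g}_n|\le\delta\}$, and the complement is shown negligible not by a second moment but by a change of measure shifting the Verblunsky sequence (Lemma~\ref{lem:bdshift}) combined with the moment bound of Lemma~\ref{lem:gmom}. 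I would rebuild your proof around these two points rather than the decoupling of the infinite product.
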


Observe that in Proposition~\ref{prop:phi}, the condition $|\hat\gamma|<1$ is necessary, otherwise the random variable $\mu^{\hat\gamma}=0$.
Then, there is $\delta>0$ so that for $f\in L^1(\bT,\bR)$,
\begin{equation} \label{cvg}
\mu_n^\gamma(f) \to \mu^{\hat\gamma}(f) \qquad\text{as $n\to\infty$ almost surely and in $\bL^{1+\delta}$},
\end{equation}  
and similarly for $\nu$. 
Here, the random variable $\mu^{\hat\gamma}(f)\in\bL^{1+\delta}$ provided that  $|\hat\gamma|<1$; see \eqref{GMC}.
Hence, we deduce Theorem~\ref{thm:phi} by standard arguments 
(the topological space $C(\bT)$ is separable).
\medskip

We close this section by several remarks concerning the C$\beta$E model.

\begin{remark}[OPUC theory] \label{rk:GS}
We focus on the case where the support of the probability measure $\bmu$  is infinite.
This condition guarantees that the Verblunsky coefficients are defined for every $k\in\bN_0$ with $\alpha_k \in\bD$. In contrast, if $\bmu$ has a finite support, with say $n$ points on $\bT$, then the Gram-Schmidt procedure stops at step $n$ and $\alpha_0,\dots, \alpha_{n-2} \in \bD$ while $\alpha_{n-1}\in\bT$ -- this is the case for the spectral measure of the matrix $\mathcal U_\alpha^{(n)}$. In particular, for C$\beta$E, by rotation-invariance, $\alpha_{n-1}$ is uniformly distributed on $\bT$. 
Then, observe that for any $n\in\bN$,  upon replacing $\alpha_{n-1}\leftarrow e^{\i\eta}$ and $\Phi_n^* \leftarrow \mathcal{X}_n$ in the recursion \eqref{S0}, this yields formula \eqref{charpoly1} for the C$\beta$E characteristic polynomials. 
\end{remark}

\begin{remark}[Non-universality]\label{rk:Gauss}
The convergence of the sequence $\{\varphi_k\}$ follows from the martingale convergence theorem using the properties of the Verblunsky coefficients. However, the fact that the limit $\varphi$ is a GAF in $\bD$ (Proposition~\ref{prop:GAF}) is an exceptional property of the C$\beta$E model. 
This property cannot be directly deduced from the specific law of the Verblunsky coefficients and it is a consequence of the CLT \eqref{Joh}; see Appendix~\ref{A:model}.   
For another CMV model with independent, rotation-invariant, Verblunsky coefficients with  $\bE |\alpha_k|^2 \sim \beta_k^{-1}$ as $k\to\infty$, we expect that the limit $\varphi$ is a non-Gaussian analytic function in~$\bD$. 
\end{remark}

\begin{remark}[Non-Gaussian multiplicative chaos]
Our analysis relies crucially on three properties of the Verblunsky coefficients $\{\alpha_k\}$; independence, rotation-invariance and a specific decay rate.  
Even though, we rely on the specific distributions of the C$\beta$E Verblunsky coefficients at different stages of the proof for simplicity, these arguments can be adapted if the sequence $\{\alpha_k\}$ satisfies these three properties. 
Then, by \cite{AN22}, the spectral measure $\bmu_\beta$ also has Hausdorff dimension exactly $1-2/\beta$ for $\beta>2$ (subcritical phase), but it is not expected to be a normalized GMC. The question whether it is absolutely continuous with respect to a GMC is of interest for future research. 
\end{remark}

\begin{remark}[Deterministic case]\label{rk:uni}
The limit $\beta=\infty$ corresponds (by continuity) to the case where the Verblunsky coefficients $\alpha_k=0$ for all $k\in\bN$. 
Then, by \eqref{S0}, $\Phi_{k}^*(z) =1$ for all $k\in\bN$ (that is, the orthogonal polynomials are $\Phi_k(z)=z^k$ for ${k\ge 0}$) and the spectral measure $\mu^0 = \frac{\dd\theta}{2\pi}$ is the uniform measure on $\bT$.  
In this case, by Proposition~\ref{prop:phase}, the point configuration is  $\mathfrak{Z}_n= \big\{ \frac{2\pi k +\eta}{n} : k\in [n] \big\} $ for $n\in\bN$, where $\eta$ is uniform in $[0,2\pi]$.
Observe that this configuration is \emph{the minimizer} of the Coulomb energy from \eqref{CbE}: 
\[
(\theta_1,\dots,\theta_n) \in \bT^n \mapsto \sum_{1\le k<j \le n} \log |e^{\i\theta_k} - e^{\i\theta_j}|^{-1}  . 
\] 
\end{remark}

\medskip

\subsection{Related results and state of the art.} \label{sec:rmt}
There are many important works on fluctuations of eigenvalues  and characteristic polynomials of $\beta$-ensembles and we focus on the most relevant recent results in the context of this paper, that is, in relation to log-correlated fields and multiplicative chaos. 

\paragraph{\bf CUE}
An important motivation to study the measure \eqref{CbE} is the case $\beta=2$, which corresponds to the distribution of the eigenvalues of a Haar distributed random matrix in $\bU_n$, this is known as the circular unitary ensemble (CUE).
This is arguably the most basic model in random matrix theory  \cite{Meckes}, it can be analyzed via many different methods and there are notable connections with functional analysis, through Toeplitz determinants, representation theory and probabilistic model for the Riemann $\zeta$ function.
In addition, the heuristics of \cite{FB08,FHK12} which treats the CUE characteristic polynomial as a log-correlated landscapes to make predictions about its extreme values by analogy with the statistical mechanics of random energy models have stimulated a lot of recent developments in random matrix theory.
 

\medskip

\paragraph{\bf Gaussian fluctuations}
The central limit theorem \eqref{Joh} was first obtained in \cite{Joh88} using the \emph{Coulomb gas method}. 
At first, this result is surprising  because the eigenvalue field is asymptotically Gaussian without renormalization, this is due to the log range correlations of log-gases. 
Johansson's method is written for $\beta=2$, but it is easily generalized to arbitrary $\beta>0$, \cite{L21}. 
There is an alternative approach to the CLT based on the moment method and representation theory \cite{DE01,JM15}. 
{\blue This approach is explained in a concise way in \cite[Appendix~A]{CN}.} 
Moreover, another remarkable property of the CUE is that moments of trace in polynomials of Haar-distributed random matrix exactly match moments of Gaussian random variables (see Remark~\ref{rk:traces}).
In fact, these traces approximate Gaussians with a super-exponential rate, we refer to \cite{JL21,CJL24} for quantitative results. 
We refer to \cite[Chap.6]{Sim04} for a comprehensive discussion of the CLT for CUE eigenvalues and related \emph{Szeg\H{o}'s asymptotics}, including several different proofs. 
There are also an alternative approach based on Stein's method and transport which applies to general $\beta$-ensembles \cite{Webb16,LLW19,BLS18,Poly24}. 
It is also worth to mention that some non-integrable generalizations have been studied recently, this includes the fluctuations of $\beta$-ensembles on a regular curve in $\bC$ \cite{CJ23} and the circular Riesz gases \cite{Boursier21,Boursier22}. 

\medskip

\paragraph{\bf Sine$_\beta$ process}
For general $\beta>0$, the measure  \eqref{CbE} was introduced by  Dyson as a simple statistical model for a one-dimensional gas with long range interaction \cite{Dyson}. It can be interpreted as the equilibrium distribution of a two-dimensional Coulomb gas confined on $\bU$, which  corresponds to the stationary measure for
the Dyson Brownian motion on $\bT$, where the strength of the interaction term is determined by $\beta$. 
In many body quantum mechanics,
\eqref{CbE} also corresponds to the ground state of the Calogero-Sutherland Hamiltonian. 
The CMV matrix models for \eqref{CbE} have been introduced in \cite{KN04}.
As explained in Section~\ref{sec:CbE}, these models depend on a single sequence of independent random variables $\{\alpha_k\}$ and this provides a useful coupling to study the asymptotic properties of \eqref{CbE} as the dimension $n\to\infty$. 
This framework has been used in the seminal paper \cite{KS09} to describe the microscopic scaling limit of $\beta$-ensembles. 
This is a stationary point process on $\bR$, called the sine$_\beta$ process, which is universal, e.g.~\cite{BEY14}. 
It can be simply described in terms of the Pr\"ufer phases, 
scaling $\theta \leftarrow \frac\lambda n$ and approximated $\{\alpha_k\}$ by complex Gaussians with variance $\frac2{\beta k}$ (Lemma~\ref{lem:trunc1}) in \eqref{Phase}, one obtains a diffusive limit;
\[\begin{cases}
\dd w_t(\lambda)= \lambda \dd t - \tfrac{2}{\sqrt{\beta t}} \Im(\dd W_t e^{\i w_t(\lambda)}) , &t\in(0,1] \\
w_0(\lambda) = 0 &\lambda\in\bR
\end{cases}\]
where $\{W_t\}_{t\in[0,1]}$ is a complex Brownian motion. It is established in \cite{KS09} that this SDE system has a unique solution with $\bE w_t(\lambda)= \lambda t$ and the sine$_\beta$ process is the point process
\(
\mathfrak{Z}= \big\{ \lambda\in\bR :  w_1(\lambda)  = -\eta [2\pi] \big\}  
\) 
where the random random variable $\eta$ is uniform in $\bT$.
One can also view the CMV operators has a a discrete form
of one-dimensional Dirac operator 
and taking this perspective, one can construct an operator whose spectrum is the sine$_\beta$ point process \cite{VV20} and the scaling limit of the C$\beta$E characteristic polynomial \cite{VV22,NN}. 

\medskip

\paragraph{\bf Fyodorov-Hiary-Keating Conjecture}
The CLT for eigenvalue statistics can be interpreted as the convergence of the log characteristic polynomial in a Sobolev space of generalised functions to a log-correlated field \cite{HKO01}. 
Based on the statistical mechanic property of random energy models and log-correlated landscapes \cite{FB08,FHK12}, this perspective allows to predict the limits from Theorem~\ref{thm:charpoly} as well as the asymptotic behavior of \emph{extreme values} of the characteristic polynomial. 
Significant progress on these conjectures have been achieved recently for C$\beta$E \cite{CMN,PZ} based on the framework described in Section~\ref{sec:CbE}. The state of the art \cite[Thm 1.1]{PZ} gives a distributional convergence for the \emph{centred maximum}; as $n\to\infty$
\begin{equation} \label{FHK}
\Big(\max_{\theta\in\bT}\log|\mathcal{X}_n(e^{\i\theta})| 
- \sqrt{\tfrac2\beta}\big(\log N - \tfrac34 \log\log N\big)\Big) 
\Rightarrow C_\beta + \mathcal{G}_\beta + \frac{\log \mathcal{B}_\beta}{\sqrt{2\beta}}
\end{equation}
where $C_\beta$ is a (deterministic) constant,  $\mathcal{G}_\beta$ is a Gumbel with parameter $1/{\sqrt{2\beta}}$, independent of  $\mathcal{B}_\beta$.
Similar asymptotics hold for the eigenvalue field $\mathcal{Y}_n$. The random variable  $\mathcal{B}_\beta$ is constructed in \cite[Sect 1.2]{PZ} has the limit of a \emph{derivative martingale}, conjecturally it relates to \emph{critical multiplicative chaos} (the counterpart of Theorem~\ref{thm:charpoly} for $\hat\gamma=1$) and one expects that  $\mathcal{B}_\beta = c\mu'(\bT)$ where 
$\mu'(\bT) = \lim_{\hat\gamma \to 1} \frac{\mu^{\hat\gamma}(\bT)}{1-\hat\gamma}$ and $c>0$ is an explicit constant.  
Then, it follows from the Fyodorov--Bouchaud formula (\cite{FB08} and Corollary~\ref{cor:phi}) that $\mathcal{B}_\beta \equiv c^{-1}\mathbf{e}^{-1}$, with $\mathbf{e}$ a standard exponential random variable.
In particular, up to an additive constant, 
$\mathcal{G}_\beta \equiv \frac{\log \mathcal{B}_\beta}{\sqrt{2\beta}} \equiv \frac{-\log\mathbf{e}}{\sqrt{2\beta}}$ are independent Gumbel random variables.
Using the CUE characteristic polynomial as a probabilistic model, \cite{FHK12} also proposed asymptotics for the maximum of the Riemann $\zeta$ function in a typical short interval on the critical line. 
The precise tails for the maximum have been obtained in a series of work 
\cite{Harper} and \cite{ABR} combining methods from analytic number theory and the theory of branching processes. 

\medskip

\paragraph{\bf Multiplicative chaos}
For CUE, part of Theorem~\ref{thm:charpoly}, for $|\mathcal{X}_n|^\gamma$ with $0<\gamma<2$, is due to \cite{Webb15,NSW20}.
The proofs rely on the determinantal structure of the CUE to compute asymptotics of joint moments of the characteristic polynomial. 
The approach involves the asymptotics of Riemann-Hilbert problems with Fisher-Hartwig singularities. 
In \cite{CFLW}, these results have been generalized to other Hermitian unitary-invariant matrix ensembles, including the GUE, and application to \emph{eigenvalue rigidity} are discussed. 
\emph{Moments of moments} of CUE characteristic polynomials have also been studied in several regimes, we refer to \cite{BK22} for an overview. 
Recently, the characteristic polynomial a Brownian motion on $\bU_n$ has been consider in \cite{BF}. The authors obtained multi-time Fisher-Hartwig asymptotics, as well as the convergence to a two-dimensional GMC on a cylinder.
In contrast to previous works, Theorem~\ref{thm:charpoly} is the only GMC result in random matrix theory which does not rely on Fisher-Hartwig type asymptotics. It is also the only result valid for any $\beta>0$ in the whole subcritical regime, including for $\gamma<0$. 

A related concept of \emph{Holomorphic chaos} has been introduced in \cite{NPS23} to describe the limiting random field $\{ \Phi_\infty^*(u) ; u\in\bU\}$.
In contrast to Theorem~\ref{thm:charpoly}, this field is well-defined, without renormalization, as a complex-valued generalised function on $\bU$. 
This property has been used to derive asymptotics for the (Fourier) coefficients of the characteristic polynomial \eqref{charpoly0}; see \cite[Thm~1.10]{NPS23}

Finally, it has been established in \cite[Thm~2.1]{CN} (see Corollary~\ref{cor:phi}) that 
the spectral measure of the CMV operator for C$\beta$E is a normalized GMC measure in the subcritical phase $\beta>2$. 
The exact Hausdorff dimension of these measure have been computed in \cite{AN22} for all $\beta>0$. In particular, there is a similar \emph{freezing transition} as for GMC; one has almost surely 
\begin{itemize} \setlength\itemsep{.2em}
\item $\bmu_\beta$ is singular continuous with Hausdorff dimension   $1-2/\beta$ if $\beta\ge 2$. 
\item $\bmu_\beta$ is purely atomic if $\beta<2$. 
\end{itemize}
This raises the question on how to describe $\bmu_\beta$ in the supercritical phase ($\beta<2$); see \cite[Sec~2.5]{CN}.

\medskip

\paragraph{\bf Open questions}
There are several natural continuations of this work and, to conclude this section, we collect some open problems in the field. 
\begin{itemize}[leftmargin=*] \setlength\itemsep{.3em}
\item By \eqref{FHK}, the final key step to establish the Fyodorov--Bouchaud asymptotics for the maximum of the C$\beta$E characteristic polynomial is to prove that the \emph{derivative martingale} $\mathcal{B}_\beta = c\mu'(\bT)$. This goes beyond the scope of this paper as it pertains to \emph{critical multiplicative chaos}, which involves a different renormalisation scheme, and we intend to return to this problem in a subsequent~work.
\item For CUE, asymptotics of joint moments of characteristic polynomials are known using Toeplitz determinant with Fisher-Hartwig singularities; see \cite{Fahs21} for optimal results, and \cite{BF} for an alternative proof. 
Such Fisher-Hartwig asymptotics are still an open problem for C$\beta$E.

\item The GMC measures from Theorem~\ref{thm:charpoly} are supported on the \emph{thick points of the characteristic polynomial}. 
By \cite[Thm 1.5]{JLW}, for $\gamma >0$ with $
\hat\gamma<1$, and $g\in C(\bT,\bR)$, it holds in probability as $r\to1$,
\[
 \frac{\1\{ \Re \varphi(re^{\i\theta}) \ge \gamma \log N - g(\theta)/\sqrt{2} \}}{N^{-\hat\gamma^2}\sqrt{\pi/\hat\gamma^2 \log N} } \dd\theta
 \to e^{\hat\gamma g(\theta)} \mu^{\hat\gamma}(\dd\theta). 
\]
One also expects that an analogous result holds directly for the characteristic polynomial. 
\item The GMC $\mu^{\hat\gamma}$ from Proposition~\ref{prop:GMC} are random analytic functions, taking values in the space of Schwartz distributions, for $\hat\gamma \in \mathcal{L}$ where $\mathcal{L} \subset \bC$ is a deterministic domain containing $(-1,1)$, see \cite{Lac22}. 
It is of interest to generalize the results of Theorem~\ref{thm:charpoly} to complex GMC in the subcritical domain $\mathcal{L}$. 

\item Viewing the C$\beta$E as a Coulomb gas, $z\in\bU \mapsto -\log|\mathcal{X}_n(z)|$ corresponds to the electric potential generated by the configuration of charges and Theorem~\ref{thm:charpoly} gives the asymptotics of the corresponding Gibbs measures in the subcritical phase.  
Such question are important for two-dimensional Coulomb system at equilibrium in a background potential. 
There are strong motivations, coming from the connection with the two-dimensional Gaussian free field and  two-dimensional quantum gravity to consider the asymptotics of characteristic polynomials of two-dimensional Coulomb gases. 
\end{itemize}




\subsection{Notations} \label{sec:not}
We collect the main notations that will be used in this article.

\medskip 
\noindent
Let $\bD = \{z\in\bC : |z|<1\} $, $\bU = \{z\in\bC : |z|=1\}$ be the boundary of $\bD$, and let $\bT=\bR/2\pi\mathbb Z$. 
Let $\frac{\dd\theta}{2\pi}$ denotes the uniform (Lebesgue) measure on $\bT$ and let $\dd u$ denotes the uniform  measure on $\bU$; 
$\dd u$ is the pushforward of $\frac{\dd\theta}{2\pi}$ by the map $\theta\in\bT \mapsto e^{\i\theta}$. 
In the sequel, we identify measures on $\bT$ and $\bU$.

\medskip 
\noindent
We always consider the (principal) branch of $w\mapsto\log(1-w)$ which is analytic for $w\in\bD$ and vanishes at $w=0$. 
By continuity, we define  $\mathrm{h}(\theta) = \displaystyle\lim_{r\to1}\Im\log(1-re^{\i\theta})$ for $\theta\in\bT$. Then 
$\mathrm{h}: \bT \mapsto [-\frac\pi2, \frac\pi2]$ 
\begin{equation} \label{Y2}
\mathrm{h}(0)=0, \qquad
\mathrm{h}(\theta) =\frac{\theta-\pi}{2} \quad\text{for $\theta\in(0,2\pi)$}. 
\end{equation}
We also let 
\[
\dd(z) := 1-|z|^2, \qquad z\in \bD.
\]

\medskip 
\noindent
Throughout this article, $\beta>0$ is any fixed parameter and, for $\gamma\in\bR$, we write $\hat\gamma:=\gamma/\sqrt{2\beta}$. 
In the context of Proposition~\ref{prop:GAF}, for any $\gamma\in\bR$,\begin{equation} \label{mom}
\bE e^{\gamma\Re\varphi(z)}  = \bE e^{\gamma\Im\varphi(z)}  =\dd(z)^{-\hat\gamma^2} ,\qquad z\in\bD. 
\end{equation}
Then, $\Re\varphi$, $\Im\varphi$ are identically distributed, rotation-invariant, Gaussian fields in $\bD$.
Recall that with the C$\beta$E coupling,  
\[
\Phi^*_{\infty}(z) = \lim_{n\to\infty} \Phi^*_{n}(z) = e^{\varphi(z)}, \qquad z\in\bD.
\]
In the context of Proposition~\ref{prop:GMC}, if $ \hat\gamma^2<1$, then  for any $f\in L^1(\bU,\bR)$, it holds in $\bL^{q}$ as $r\to1$, 
\begin{equation} \label{GMC}
\int_{\bU} \frac{|\Phi^*_{\infty}(ru)|^\gamma}{ \bE |\Phi^*_{\infty}(ru)|^\gamma} f(u) \dd u \to \mu^{\hat\gamma}(f) ,\qquad 
\int  \frac{\bE_n e^{\gamma\Im \varphi (ru)}}{\bE e^{\gamma\Im \varphi(r)}} f(u) \dd u \to \nu^{\hat\gamma}(f)
\end{equation}
for any $q\in\bR$ with $q \hat\gamma^2<1$ -- see e.g.~\cite{RV14}.
In particular,
$\bE \mu^{\hat\gamma}(f) = \mu^0(f) =f_0 $, the mean of $f$, in the subcritical regime.
 
\medskip
\noindent
Throughout the article, the Verblunsky coefficients $\{\alpha_k\}_{k\in\bN_0}$ and $\eta$ are as in Fact~\ref{fact:Verb}. We endow the probability space with the filtration 
\[
\F_{k} = \sigma\big(\alpha_{0},\cdots, \alpha_{k-1}\big) , \qquad k\in\bN_0. 
\]
We will use the shorthand notation $\bE_k = \bE[\,\cdot\,|\F_k]$ for $k\in\bN_0$.  

\medskip
\noindent
Recall that for $n\in\bN_0$, 
\[
\varphi_n(z) = \log \Phi_n^*(z) = \bE_n\varphi(z) , \qquad z\in\bD,
\] 
are well-defined  analytic functions in $\bD$, continuous on $\overline{\bD}$.
Moreover, by  \eqref{S1}, it holds for $n\in\bN_0$,
\begin{equation}\label{S3}
\varphi_{n}(z) =  {\textstyle \sum_{k=0}^{n-1}} \log(1-\alpha_k B_k(z)) , \qquad z\in\overline\bD,
\end{equation}
where $B_k(z) : = z  \Phi_{k}(z)/ \Phi_{k}^*(z) $. One has  $|B_k(z)| \le |z|$ for $z\in \overline{\bD}$ with $|B_k(z)|=1$ for $z\in \bU$.

\medskip
\noindent
The Pr\"ufer phase from Proposition~\ref{prop:phase} are given by, for $n\in\bN_0$, 
\begin{equation} \label{phase0}
B_n(e^{\i\theta}) = e^{\i \varpi_n(\theta)} , \qquad 
\varpi_n(\theta) = (n+1)\theta-2 \psi_n(\theta),  \qquad \theta\in[0,2\pi] .
\end{equation}

\medskip
\noindent
We let $\psi_{n}(\theta)=\Im\varphi_n(e^{\i\theta})$ for $\theta\in\bT$. In Section~\ref{sec:CbE}, we considered the measures
\begin{equation} \label{mm} 
\mu_{n}^{\gamma}(\dd u) 
=  \frac{e^{\gamma\Re \varphi_n(u)}}{\bE e^{\gamma\Re\varphi_n(1)}} \dd u 
=  \frac{|\Phi_n^*(e^{\i\theta})|^\gamma}{\bE |\Phi_n^*(1)|^\gamma} \frac{\dd\theta}{2\pi} ,
\qquad 
\nu_{n}^{\gamma}(\dd u) =  \frac{e^{\gamma\Im\varphi_n(u)}}{\bE e^{\gamma\Im\varphi_n(1)}} \dd u = \frac{e^{\gamma\psi_n(\theta)}}{\bE e^{\gamma\psi_n(0)}}\frac{\dd\theta}{2\pi} .
\end{equation}

\medskip
\noindent
For $n\in\bN$, the C$\beta$E$_n$ characteristic polynomial \eqref{charpoly0} is given, in terms of \eqref{S3}, by
\begin{equation}  \label{charpoly3}
\mathcal{X}_n(z) = e^{\varphi_{n-1}(z)}(1-e^{\i\eta}B_{n-1}(z)) , \qquad z\in\overline{\bD}.
\end{equation}
Then, for $n\in\bN_0$, one has on the unit circle, 
\begin{equation}  \label{charpoly2}
\mathcal{X}_{n+1}(e^{\i\theta})= e^{\varphi_{n-1}(e^{\i\theta})}\big(1-e^{\i \eta+\i  \varpi_{n}(\theta)}\big) , \qquad \theta\in\bT .
\end{equation}

\medskip
\noindent
We write 
${\rm X} \lesssim {\rm Z}$ if there is constant $C>0$, possibly depending on $\beta>0$,  such that 
$0<{\rm X} \le C {\rm Z}$. Such constant $C>0$ are deterministic and vary from line to line.

\medskip
\noindent
We write $\epsilon_n \ll\delta_n$ if $\{\epsilon_n\}$ and $\{\delta_n\}$ are two deterministic sequences of positive numbers so that $\displaystyle\lim_{n\to\infty} \epsilon_n  \delta_n^{-1}=0$. 

\smallskip
\noindent
If $X,X'$ are two random variables, we write $X\equiv X'$ if both have the same distribution.

\section{Martingale convergence; Proof of Theorem~\ref{thm:phi}} \label{sec:phi}

We rely on the framework from Section~\ref{sec:CbE} and the main goal is to prove Proposition~\ref{prop:phi}. In the end, Theorem~\ref{thm:phi} is a direct consequence of the martingale approximation. 
Recall that $\beta>0$ is fixed, the Verblunsky coefficients  $\{\alpha_k\}$ are as in Fact~\ref{fact:Verb} and the notation \eqref{mm}.  
The proof is organized as follows; 

\begin{itemize}[leftmargin=*] \setlength\itemsep{.5em}
\item In Section~\ref{sec:opuc}, we review some additional basic facts from OPUC theory that we will need. In particular, what happens while \emph{shifting} and \emph{rotating} the Verblunsky coefficients. 
\item In Section~\ref{sec:real}, we show that for the real part,  $\bE_n\mu^{\hat\gamma} =  \mu_{n}^{\gamma} $ when $\gamma>0$ with $\hat\gamma<1$ by a simple arguments. 
\item In Section~\ref{sec:neg}, we adapt the arguments to prove that $\bE_n\mu^{-\hat\gamma} =  \mu_{n}^{-\gamma} $ when $\gamma>0$ with $\hat\gamma<1$. This requires more delicate estimates to obtain the \emph{uniform integrality}. This is of particular interest when $\gamma=2$ in the context of the Bernstein-Szeg\H{o} approximation \eqref{BS}. 
\item In Section~\ref{sec:imag}, we adapt the arguments of Section~\ref{sec:real} to the imaginary part to show that $\bE_n\nu^{\hat\gamma} =  \nu_{n}^{\gamma} $ when  $|\hat\gamma|<1$. This is mainly a case of checking that the relevant quantities are well-defined. 
\end{itemize}

\subsection{OPUC theory} \label{sec:opuc}
To set up the analysis for this theorem, we review some properties of OPUC, following \cite[Chap.~3.2]{Sim04}. 
We will use the following conventions;
\begin{itemize}[leftmargin=*]   \setlength\itemsep{0.5em}
\item For $k\in\bN_0$ and $\lambda\in\bU$, let $(\Phi_{k,n}^{\lambda})_{n\ge 0}$ the OPUC family  associated with the Verblunsky coefficients $(\lambda\alpha_{n+k})_{n\in\bN_0}$.
In particular, 
$(\Phi_{k,n})_{n\ge 0}$  denotes the OPUC family associated with the Verblunsky coefficients $(\alpha_{n+k})_{n\in\bN_0}$ and one has $(\Phi_n)_{n\ge 0} =(\Phi_{0,n}^1)_{n\ge 0}$. 
\item For $k\in\bN_0$ and $\lambda\in\bU$, we denote by $(\Psi_{k,n}^\lambda)_{n\ge 0} = (\Phi_{k,n}^{-\lambda})_{n\ge 0}$.
The sequence $(\Psi_n)_{n\ge 0}=(\Phi_{0,n}^{-1})_{n\ge 0}$ are called the second kind polynomials (dual to  $(\Phi_n)_{n\ge 0}$). Then, the Szeg\H{o} recursion \eqref{S0} implies that for $n\in\bN_0$, 
\begin{equation} \label{S2}
\left(\begin{matrix} \Psi_{n+1}(z) & \Phi_{n+1}(z) \\  - \Psi^*_{n+1}(z) & \Phi^*_{n+1}(z) \end{matrix} \right) = 
\left(\begin{matrix} z &  -\bar{\alpha}_n \\  - \alpha_n z & 1 \end{matrix} \right)
\left(\begin{matrix} \Psi_{n}(z) & \Phi_{n}(z) \\  - \Psi^*_{n}(z) & \Phi^*_{n}(z) \end{matrix} \right) , \qquad z\in\bC. 
\end{equation}
\item By linearity of the Szeg\H{o} recursion, one has for any $\lambda\in\bU$, $k \in\bN_0$, 
\begin{equation} \label{S4}
\Phi_{k,n}^{\lambda*} = \Phi_{k,n}^{*}\tfrac{1+\lambda}{2} +  \Psi_{k,n}^{*}\tfrac{1-\lambda}{2} \qquad n\in\bN_{\ge k}. 
\end{equation}
\item We denote for $k\in\bN$,
\begin{equation} \label{ratio}
\frac{ \Phi_k(z)}{\Phi^*_k(z)} = \rho_k(z) \lambda_k(z) \qquad z\in\overline{\bD},
\end{equation}
where  $\rho_k : \overline{\bD} \to [0,1]$ is a continuous function, {\blue the phase $\lambda_k:\overline{\bD} \to \bU$ can be defined pointwise, e.g.~taking the value $\lambda_k(z)=1$ for $z\in\{ \Phi_k = 0\}$.}
This decomposition is related to $B_k$ (Property 2 in Section~\ref{sec:CbE}) and $\Phi_k/\Phi^*_k $ is analytic in a neighborhood of $\overline{\bD}$ with $|\Phi_k/\Phi^*_k | =1$ on $\partial \overline{\bD}$. So, the modulus $\rho_k=1$ on~$\bU$ and $\rho_k < 1$ on $\bD$, by the maximum principle for subharmonic functions. 
\item Consequently the functions $\rho_k$ are 
Lipschitz-continuous on $\bD$. 
That is, there are random constant $\mathcal C_k$  ($\F_k$-measurable) so that 
\begin{equation} \label{Lip}
|1-\rho_k(z)| \le \mathcal C_k \dd(z) , \qquad z\in  \overline{\bD}  . 
\end{equation}


\end{itemize}

\medskip

The starting point for our analysis and the proof of Proposition~\ref{prop:phi} is the following (determinstic) relationship between  $\Phi_\infty^* = e^{\varphi}$ and $\Phi_k^*$ for $k\in\bN$.

\begin{lemma} \label{lem:decoup}
Almost surely, for every $\lambda\in\bU$ and $k\in\bN_0$, $\displaystyle\Phi_{k,\infty}^{\lambda*} :=\lim_{n\to\infty}  \Phi_{k,n}^{\lambda*}$ exists, and  $\Phi_{k,\infty}^{\lambda*}$ is an analytic function without zero in $\bD$. 
In addition, for any $\gamma>0$, $\bE\exp\big(\gamma|\log\Phi_{k,\infty}^{\lambda*}(z)|\big) <\infty$ for $z\in\bD$.
Then, one has for every $k\in\bN$, 
\begin{equation}\label{decoup2}
\begin{aligned}
\Phi_{\infty}^* & =  \Phi^*_k\bigg( \frac{\Psi^*_{k,\infty}+\Phi^*_{k, \infty} }{2}+ \rho_k\lambda_k  \frac{\Phi^*_{k, \infty}-\Psi^*_{k, \infty}}{2} \bigg)  \\
& = \Phi^*_k \Phi^{\lambda_k*}_{k,\infty}\bigg(1+ {\blue\frac{1-\rho_k}2}\bigg(\frac{\Psi^{\lambda_k *}_{k,\infty}}{\Phi^{\lambda_k *}_{k, \infty}}-1\bigg)  \bigg) 
\end{aligned}
\end{equation}
where 
$(\Phi^*_k ,\rho_k, \lambda_k)$ are $\F_k$-measurable and 
$(\Phi_{k,\infty}^*, \Psi_{k,\infty}^*)$ are independent of~$\F_k$. 
\end{lemma}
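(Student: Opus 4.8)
The statement decomposes into three assertions: (i) existence of the limit $\Phi_{k,\infty}^{\lambda*}$ with the claimed analyticity and no-zero property, (ii) the finite exponential moment bound for $\log\Phi_{k,\infty}^{\lambda*}(z)$ at interior points, and (iii) the algebraic identities \eqref{decoup2}, together with the independence/measurability structure. I would organize the proof around first establishing (i) and (ii) as an instance of the general martingale machinery from Section~\ref{sec:CbE} applied to the \emph{shifted and rotated} sequence of Verblunsky coefficients, and then deriving (iii) purely algebraically from the Szeg\H{o} recursion.

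\textbf{Step 1: existence and properties of $\Phi_{k,\infty}^{\lambda*}$.} The sequence $(\lambda\alpha_{n+k})_{n\in\bN_0}$ is, by Fact~\ref{fact:Verb} and the rotation-invariance of the law of each $\alpha_j$, again a sequence of independent, rotation-invariant coefficients in $\bD$; the decay rate $\bE|\lambda\alpha_{n+k}|^2=\bE|\alpha_{n+k}|^2=(1+\beta_{n+k})^{-1}$ is of the same form (just shifted). Therefore the exact argument that makes $(\varphi_n)_{n\ge0}$ a uniformly integrable martingale converging locally uniformly in $\bD$ --- namely, writing $\log\Phi_{k,n}^{\lambda*}(z)=\sum_{j=0}^{n-1}\log(1-\lambda\alpha_{k+j}B^{\lambda}_{k,j}(z))$ via \eqref{S1}--\eqref{S3}, noting $|B^\lambda_{k,j}(z)|\le|z|$, and using $\bE[\log(1-\lambda\alpha_{k+j}B^\lambda_{k,j}(z))\mid\F_{k+j}]=0$ by rotation-invariance --- applies verbatim to the shifted sequence. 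This gives almost sure local uniform convergence of $\Phi_{k,n}^{\lambda*}$ to an analytic limit; since each $\Phi_{k,n}^{\lambda*}$ is zero-free on $\overline\bD$ with value $1$ at $0$, Hurwitz's theorem gives that $\Phi_{k,\infty}^{\lambda*}$ is zero-free on $\bD$. For (ii), I would invoke the standard GMC-type moment control: $\log\Phi_{k,\infty}^{\lambda*}(z)$ is (the limit of) a sum of independent mean-zero bounded contributions, and at a fixed interior point $z$ one gets Gaussian-type tails (or at worst sub-exponential tails), so $\bE\exp(\gamma|\log\Phi_{k,\infty}^{\lambda*}(z)|)<\infty$ for all $\gamma>0$. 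One can also deduce this from \eqref{mom} together with the fact (to be used below) that $\Phi_{k,\infty}^{\lambda*}$ has the same law as $\Phi_\infty^*$ (both arise from C$\beta$E-type Verblunsky sequences with a possibly shifted decay profile, and an interior exponential moment depends only on the variance being finite).

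\textbf{Step 2: the algebraic identity.} This is where I expect the only real bookkeeping. Iterating the transfer-matrix form \eqref{S2} from level $k$ onward expresses $(\Psi_n,\Phi_n,\Psi_n^*,\Phi_n^*)$ for $n\ge k$ in terms of $(\Psi_k,\Phi_k,\Psi_k^*,\Phi_k^*)$ and the polynomials built from the tail coefficients $(\alpha_{k+j})_{j\ge0}$, i.e.\ the $\Phi_{k,m}$ and $\Psi_{k,m}$. Taking the $*$-operation and letting $n\to\infty$ (using Step 1 for convergence) yields
\[
\Phi_\infty^* = \Phi_k^*\,\Phi_{k,\infty}^* \;-\; \Phi_k\,\Psi_{k,\infty}^*
\]
up to getting the precise signs from \eqref{S2}; rewriting $\Phi_k = \Phi_k^*\rho_k\lambda_k$ via \eqref{ratio} and collecting terms gives the first line of \eqref{decoup2} after combining the $\tfrac12(\Psi^*+\Phi^*)$ and $\tfrac12(\Phi^*-\Psi^*)$ pieces (here \eqref{S4} with $\lambda=\lambda_k$ identifies $\Phi^*_{k,\infty}\tfrac{1+\lambda_k}2+\Psi^*_{k,\infty}\tfrac{1-\lambda_k}2 = \Phi^{\lambda_k*}_{k,\infty}$, and similarly $\tfrac{\Psi^*_{k,\infty}-\Phi^*_{k,\infty}}2$ relates to $\Psi^{\lambda_k*}_{k,\infty}-\Phi^{\lambda_k*}_{k,\infty}$). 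Factoring $\Phi_k^*\Phi^{\lambda_k*}_{k,\infty}$ out and isolating the correction term $\tfrac{1-\rho_k}{2}\big(\tfrac{\Psi^{\lambda_k*}_{k,\infty}}{\Phi^{\lambda_k*}_{k,\infty}}-1\big)$ produces the second line; $\Phi^{\lambda_k*}_{k,\infty}$ is zero-free on $\bD$ by Step 1 applied with the (random but $\F_k$-measurable) phase $\lambda_k$, so the division is legitimate. The measurability claims are immediate: $\Phi_k^*,\Phi_k,\rho_k,\lambda_k$ are polynomials in (resp.\ functions of) $\alpha_0,\dots,\alpha_{k-1}$ hence $\F_k$-measurable, while $\Phi^*_{k,\infty},\Psi^*_{k,\infty}$ are built from $(\alpha_{k+j})_{j\ge0}$ hence independent of $\F_k$ by Fact~\ref{fact:Verb}.

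\textbf{Main obstacle.} The only delicate point is not conceptual but organizational: correctly tracking signs and the $*$-conjugation through the iterated transfer matrix \eqref{S2} down to level $k$, and correctly matching the resulting combination $a\Phi^*_{k,\infty}+b\Psi^*_{k,\infty}$ with the rotated polynomial $\Phi^{\lambda_k*}_{k,\infty}$ via \eqref{S4}. A secondary (minor) subtlety is that $\lambda_k$ is only defined pointwise in $z$ and is merely measurable rather than continuous where $\Phi_k$ vanishes; one should check the identity \eqref{decoup2} holds for every fixed $z\in\bD$ (which is all that is needed downstream, since $\Phi_k$ has only finitely many zeros in $\bD$) rather than as an identity of analytic functions. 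Everything else --- existence, zero-freeness, the exponential moment --- follows from the already-established martingale convergence for C$\beta$E-type Verblunsky sequences, now merely shifted by $k$ and rotated by $\lambda$.
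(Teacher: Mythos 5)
Your proposal follows essentially the same route as the paper: existence, zero-freeness and the exponential moment are obtained by applying the general martingale convergence result (Proposition~\ref{prop:cvg}, valid for any independent, rotation-invariant Verblunsky sequence satisfying \eqref{mom3}) to the shifted and rotated coefficients, and the identity \eqref{decoup2} is derived by iterating the transfer matrix \eqref{S2}, passing to the limit, substituting $\Phi_k=\Phi_k^*\rho_k\lambda_k$ from \eqref{ratio}, and invoking \eqref{S4} and \eqref{ulinearity}. The only imprecision is your parenthetical alternative claiming $\Phi_{k,\infty}^{\lambda*}$ has the same law as $\Phi_\infty^*$ --- by rotation-invariance it has the same law as $\Phi_{k,\infty}^*$, not as $\Phi_{0,\infty}^*$, since the Beta parameters of the shifted coefficients differ --- but your primary argument does not rely on this.
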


\begin{proof}
Proposition~\ref{prop:cvg} from \cite{CN} implies that almost surely, for every $k\in\bN_0$, the limits 
$(\Phi_{k,\infty}^{*},\Psi_{k,\infty}^{*})$ exist and are analytic functions without zero in $\bD$. 
It also yields the bound  $\bE|\Phi_{k,\infty}^{*}(z)|^\gamma <\infty$ for $\gamma\ge 1$, $k\in\bN_0$, $z\in\bD$.
Moreover, by rotation-invariance (of the Verblunsky coefficients), 
$\Phi_{k,\infty}^{\lambda*}\equiv \Phi_{k,\infty}^{*}$
for $\lambda\in\bU$, $k\in\bN_0$. 

Then, by \eqref{S4}, one can define (almost surely) for any $\lambda\in\bU$ and $k \in\bN_0$, 
\begin{equation*}
\Phi_{k,\infty}^{\lambda*} = \Phi_{k,\infty}^{*}\tfrac{1+\lambda}{2} +  \Psi_{k,\infty}^{*}\tfrac{1-\lambda}{2} . 
\end{equation*}
In particular, we have the relationships; for $k \in\bN_0$ and $\lambda\in\bU$,
\begin{equation} \label{ulinearity}
\frac{\Phi_{k,\infty}^{\lambda*}+\Psi_{k,\infty}^{\lambda*}}{2} =  \frac{\Phi_{k,\infty}^{*}+ \Psi_{k,\infty}^{*}}{2} ,\qquad 
\frac{\Phi_{k,\infty}^{\lambda*}-\Psi_{k,\infty}^{\lambda*}}{2}  = \lambda \frac{\Phi_{k,\infty}^{*}-\Psi_{k,\infty}^{*}}{2}  .
\end{equation}

Now, fix $k\in\bN_0$.
Using \eqref{S2}, one has for any $n\ge k$,  
\[ 
\left(\begin{matrix} \Psi_{n} & \Phi_{n} \\  - \Psi^*_{n} & \Phi^*_{n} \end{matrix} \right) =
\left(\begin{matrix} \Psi_{k,n-k} & \Phi_{k,n-k} \\  - \Psi^*_{k,n-k} & \Phi^*_{k,n-k} \end{matrix} \right) 
\left(\begin{matrix} 1& 1 \\  - 1& 1 \end{matrix} \right)^{-1}
\left(\begin{matrix} \Psi_{k} & \Phi_{k} \\  - \Psi^*_{k} & \Phi^*_{k} \end{matrix} \right) . 
\]
Taking the limit as $n\to\infty$, this implies that almost surely,  
\[
\Phi_\infty^* = \Psi^*_{k,\infty}  \frac{\Phi^*_k- \Phi_k}{2}  + \Phi^*_{k, \infty}  \frac{\Phi^*_k+ \Phi_k}{2}  , \qquad\text{in $\bD$}. 
\]

Then, rearranging the previous formula using \eqref{ratio}, we obtain
\[\begin{aligned}
\Phi_\infty^* &= \Phi^*_k\bigg( \Psi^*_{k,\infty} \frac{1-\rho_k\lambda_k}{2} + \Phi^*_{k, \infty}  \frac{1+\rho_k\lambda_k}{2} \bigg) \\
&=  \Phi^*_k\bigg( \frac{\Psi^*_{k,\infty}+\Phi^*_{k, \infty} }{2}+ \rho_k\lambda_k  \frac{\Phi^*_{k, \infty}-\Psi^*_{k, \infty}}{2} \bigg) .
\end{aligned}\]
Obviously, $(\Phi^*_k ,\rho_k, \lambda_k)$ are $\F_k$-measurable and for $u\in\bU$, 
$(\Phi_{k,\infty}^{u *}, \Psi_{k,\infty}^{u*})$ are independent of~$\F_k$, since they are defined in terms of $(\alpha_{k})_{k\ge n}$ which are independent of $(\alpha_{k})_{k< n}$.  
Finally, using \eqref{ulinearity}, we conclude that
\[\begin{aligned}
\Phi_\infty^* 
&=  \Phi^*_k\bigg( \frac{\Psi^{\lambda_k *}_{k,\infty}+\Phi^{\lambda_k *}_{k, \infty}}{2}+ \rho_k  \frac{\Phi^{\lambda_k *}_{k, \infty}-\Psi^{\lambda_k *}_{k, \infty}}{2} \bigg) \\
&= \Phi^*_k \Phi^{\lambda_k*}_{k,\infty} \bigg(1+ \frac{1-\rho_k}2\bigg(\frac{\Psi^{\lambda_k *}_{k,\infty}}{\Phi^{\lambda_k *}_{k, \infty}}-1\bigg)  \bigg)
\end{aligned}\]
Note that  $\Phi^{\lambda *}_{k, \infty}$ have no zero in $\bD$, for any $\lambda\in\bU$,  so the last formula is well-posed in $\bD$. 
\end{proof}

We illustrate how Lemma~\ref{lem:decoup} is used to prove Proposition~\ref{prop:phi} in a simplified case. 
By \eqref{decoup2}, conditioning on $\F_k$, we obtain
\[
\bE_k \Phi_{\infty}^*  =  \Phi^*_k\bigg(  \frac{1-\rho_k\lambda_k}{2} \bE\Psi^*_{k,\infty} +  \frac{1+\rho_k\lambda_k}{2} \bE\Phi^*_{k, \infty} \bigg) 
\]
since $(\Phi_{k,\infty}^*, \Psi_{k,\infty}^*)$ are independent of~$\F_k$. 
Then, by rotation-invariance $\bE\Phi_{k,\infty}^*=\bE\Psi_{k,\infty}^*$, we obtain for $k\in\bN$, 
\[
\bE_k \Phi_{\infty}^*  =  \Phi^*_k \bE\Phi_{k,\infty}^* \qquad\text{on $\bD$}. 
\]
Taking expectation, one has  $\bE \Phi^*_k \bE\Phi_{k,\infty}^* = \bE  \Phi_{\infty}^* =1$ in $\bD$, since $\Phi_\infty^* = e^{\varphi}$ with  $\varphi$ the GAF from \eqref{GAF}. 
This implies that for $r\in[0,1)$, $k\in\bN$, 
\[
\bE_k \Phi_{\infty}^*(r u)  =  \frac{\Phi^*_k(r u)}{\bE\Phi_k^*(r)} \qquad u\in\bU. 
\]
This gives a \emph{martingale approximation} for 
\(
\int \Phi_{\infty}^*(r u)  f(u) \dd u
\)
where $f\in C^\infty(\bU)$ is a test function.
Consequently, if there is a random distribution $\mathcal L$ so that
\(
\int {\Phi_{\infty}^*(ru)}  f(u) \dd u\to \mathcal{L}(f)
\)
in $\bL^1$ as $r\to 1$, then one has for any $k\in\bN_0$
\[
\bE_k  \mathcal{L}(f) = \int \frac{\Phi^*_k(ru)}{\bE\Phi_k^*(r)} f(u) \frac{\dd u}{2\pi} . 
\]
{\blue In \cite{NPS23}, the random distribution $\mathcal L$ is called a \emph{holomorphic multiplicative chaos}, it is well-defined in a Sobolev space $H^{-\alpha}$ for $\alpha$ sufficiently large (depending on $\beta$).
Note that the convergence occurs without renormalization and this argument is therefore much simpler than the proof of Proposition~\ref{prop:phi}.
}

\medskip

We will also need two additional facts about OPUC. 
By \cite[Thm~3.2.4]{Sim04}, if $\boldsymbol\mu$ is the spectral measure associated with the Verblunsky coefficients $\{\alpha_k\}$, then 
\begin{equation} \label{cara}
\int_{\bU} \frac{u +z}{u-z} d\boldsymbol\mu(u) = \frac{\Psi_\infty^*(z)}{\Phi_\infty^*(z)} , \qquad z\in\bD. 
\end{equation}
This function is called the  \emph{Carath\'eodory function} (or \emph{Schur function}, or \emph{m-function}) of $\boldsymbol\mu$, it plays an important role in the spectral theory of CMV operators. 
By convention, it is analytic and takes value 1 at $z=0$ and for $z\in\bD$, 
\[
\Re\bigg( \frac{\Psi_\infty^*(z)}{\Phi_\infty^*(z)}\bigg) = \int_{\bU} \frac{1-|z|^2}{|u-z|^2} d\boldsymbol\mu(u) > 0
\]

Moreover, we compute with $\mathfrak{M} =\Psi_{\infty}^{*}/ \Phi_{\infty}^{*}$, 
\[
\bigg| \frac{\Phi_{\infty}^{*}-\Psi_{\infty}^{*}}{\Phi_{\infty}^{*}+\Psi_{\infty}^{*}}\bigg|^2
{\blue = \bigg| \frac{1-\mathfrak{M}}{1+\mathfrak{M}}\bigg|^2
=\frac{1+|\mathfrak{M}|^2-2\Re \mathfrak{M}}{1+|\mathfrak{M}|^2+2\Re \mathfrak{M}}
<1}
\]
since $\Re \mathfrak{M}>0$ in $\bD$. 
Consequently, for the OPUC sequence $(\Phi_{k,n}^{\lambda *})_{n\ge 0}$, with $k\in\bN$, $\lambda\in\bT$, we record the following estimates. 

\begin{fact} \label{fact:cara}
$\bullet$ For $k\in\bN$, one has (almost surely)
\begin{equation*} \label{carabound}
\big|\tfrac{\Phi_{k,\infty}^{*}-\Psi_{k,\infty}^{*}}{\Phi_{k,\infty}^{*}+\Psi_{k,\infty}^{*}}\big| <1 \qquad\text{ on $\bD$}. 
\end{equation*}
$\bullet$ We define for $k\in\bN$ and $\lambda\in\bT$, 
\[  
\mathfrak{g}_k^\lambda(z) :=\frac1{2}\bigg(\frac{\Psi^{\lambda *}_{k,\infty}}{\Phi^{\lambda *}_{k, \infty}}(z)-1 \bigg) , \qquad z\in\bD.
\]
We also let $\mathfrak{g}_k= \mathfrak{g}_k^1$ for $k\in\bN$. 
$\mathfrak{g}_k^\lambda$ is analytic on $\bD$ with 
$\mathfrak{g}_k^\lambda(0)=0$ and 
\(
\Re\mathfrak{g}_k^\lambda(z)  >-\frac12. 
\)
for $z\in\bD$. 
\end{fact}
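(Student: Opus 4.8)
\textbf{Proof plan for Fact~\ref{fact:cara}.}
The plan is to derive everything from the Carath\'eodory representation \eqref{cara} applied to the shifted and rotated Verblunsky coefficients. Fix $k\in\bN$ and $\lambda\in\bT$. First I would note that the sequence $(\lambda\alpha_{n+k})_{n\in\bN_0}$ is again an admissible Verblunsky sequence (each term lies in $\bD$), so by Lemma~\ref{lem:decoup} the limit $\Phi^{\lambda*}_{k,\infty}$ exists, is analytic and zero-free in $\bD$, and the corresponding second-kind limit $\Psi^{\lambda*}_{k,\infty}=\Phi^{-\lambda*}_{k,\infty}$ exists as well. Hence $\mathfrak{M}^\lambda_k:=\Psi^{\lambda*}_{k,\infty}/\Phi^{\lambda*}_{k,\infty}$ is a well-defined analytic function on $\bD$, and \eqref{cara} (with $\{\alpha_j\}$ replaced by $\{\lambda\alpha_{j+k}\}$) gives
\[
\mathfrak{M}^\lambda_k(z) = \int_{\bU}\frac{u+z}{u-z}\,\dd\bmu^\lambda_k(u),\qquad z\in\bD,
\]
where $\bmu^\lambda_k$ is the spectral measure of those coefficients. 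Since $\Re\frac{u+z}{u-z}=\frac{1-|z|^2}{|u-z|^2}>0$ for $z\in\bD$, $u\in\bU$, we get $\Re\mathfrak{M}^\lambda_k(z)>0$ on $\bD$, and $\mathfrak{M}^\lambda_k(0)=1$ because the measure is a probability measure.

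For the first bullet (the case $\lambda=1$, abbreviating $\mathfrak{M}=\mathfrak{M}_k^1$): the Cayley-type identity
\[
\Bigl|\tfrac{\Phi^{*}_{k,\infty}-\Psi^{*}_{k,\infty}}{\Phi^{*}_{k,\infty}+\Psi^{*}_{k,\infty}}\Bigr|^2
=\Bigl|\tfrac{1-\mathfrak{M}}{1+\mathfrak{M}}\Bigr|^2
=\frac{1+|\mathfrak{M}|^2-2\Re\mathfrak{M}}{1+|\mathfrak{M}|^2+2\Re\mathfrak{M}}
\]
is valid pointwise on $\bD$ since $1+\mathfrak{M}$ is zero-free there (its real part is positive), and the right-hand side is $<1$ exactly because $\Re\mathfrak{M}>0$. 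This gives the claimed strict bound almost surely on $\bD$.

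For the second bullet, write $\mathfrak{g}^\lambda_k=\tfrac12(\mathfrak{M}^\lambda_k-1)$. Analyticity on $\bD$ is immediate since $\Phi^{\lambda*}_{k,\infty}$ is analytic and zero-free; $\mathfrak{g}^\lambda_k(0)=\tfrac12(\mathfrak{M}^\lambda_k(0)-1)=0$; and $\Re\mathfrak{g}^\lambda_k(z)=\tfrac12(\Re\mathfrak{M}^\lambda_k(z)-1)>-\tfrac12$ because $\Re\mathfrak{M}^\lambda_k>0$. This completes the proof.

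\textbf{Main obstacle.} There is essentially no analytic difficulty; the only point requiring a little care is the bookkeeping of which limiting objects exist and are zero-free, i.e.\ that Lemma~\ref{lem:decoup} (via Proposition~\ref{prop:cvg}) applies verbatim to the shifted, rotated sequence $(\lambda\alpha_{n+k})_{n\in\bN_0}$ so that $\Phi^{\lambda*}_{k,\infty}$, $1\pm\mathfrak{M}^\lambda_k$ are zero-free on $\bD$ and the Carath\'eodory formula \eqref{cara} is legitimately invoked for them. Once that is in place, the positivity of the real part of the Carath\'eodory function does all the work.
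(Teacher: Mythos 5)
Your proposal is correct and follows essentially the same route as the paper: the paper derives Fact~\ref{fact:cara} from the Carath\'eodory representation \eqref{cara}, the positivity of $\Re\mathfrak{M}$ on $\bD$, and the identity $\bigl|\tfrac{1-\mathfrak{M}}{1+\mathfrak{M}}\bigr|^2=\tfrac{1+|\mathfrak{M}|^2-2\Re\mathfrak{M}}{1+|\mathfrak{M}|^2+2\Re\mathfrak{M}}$, exactly as you do. Your extra remark about applying the convergence results to the shifted, rotated sequence is the same bookkeeping the paper handles via Lemma~\ref{lem:decoup} and rotation-invariance.
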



\medskip

Finally, we record the following  estimates for moments of $\mathfrak{g}_k$. 

\begin{lemma} \label{lem:gmom}
Fix $\epsilon>0$ and recall that $\dd(z) := 1-|z|^2$ for $z\in \bD$. 
For any $k\in\bN_0$ and for any $z\in\bD$ 
\[
\bE \big| \mathfrak{g}_k(z)\big|^{1+\epsilon} \le C_\epsilon \dd(z)^{-\epsilon}  . 
\]
\end{lemma}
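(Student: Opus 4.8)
The plan is to estimate $\bE|\mathfrak{g}_k(z)|^{1+\epsilon}$ by relating $\mathfrak{g}_k$ to a Carath\'eodory function and using the moment bound \eqref{mom} available for the GAF. Recall from Fact~\ref{fact:cara} that $2\mathfrak{g}_k(z) = \frac{\Psi^*_{k,\infty}}{\Phi^*_{k,\infty}}(z) - 1$, and that (after rotation-invariance, which identifies the law of the Verblunsky coefficients $(\alpha_{n+k})_{n\ge0}$ with $(\alpha_n)_{n\ge0}$ up to the deterministic shift in the index of the Beta parameters) the function $\mathfrak{M}_k := \Psi^*_{k,\infty}/\Phi^*_{k,\infty}$ is, by \eqref{cara}, the Carath\'eodory function of the spectral measure $\boldsymbol\mu^{(k)}$ attached to $(\alpha_{n+k})_{n\ge0}$; in particular $\Re\mathfrak{M}_k(z) = \int_\bU \frac{1-|z|^2}{|u-z|^2}\,d\boldsymbol\mu^{(k)}(u) > 0$ for $z\in\bD$, and $\mathfrak{M}_k(0)=1$. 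So $\Re\mathfrak{g}_k > -\tfrac12$ and it remains to control the size of $\mathfrak{g}_k$ from above.

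First I would reduce the moment bound to a bound on $\Re\mathfrak{g}_k(z)$. Since $\Re\mathfrak{M}_k > 0$, $\mathfrak{M}_k$ maps $\bD$ into the right half-plane with $\mathfrak{M}_k(0)=1$, and the Herglotz/Schwarz–Pick inequality gives the pointwise deterministic bound $|\mathfrak{M}_k(z)| \le \frac{1+|z|}{1-|z|} \le \frac{2}{\dd(z)}$, hence $|\mathfrak{g}_k(z)| \lesssim \dd(z)^{-1}$ deterministically — but this alone only yields $\bE|\mathfrak{g}_k(z)|^{1+\epsilon}\lesssim \dd(z)^{-(1+\epsilon)}$, which is too weak. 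To gain the correct exponent $\dd(z)^{-\epsilon}$ one must exploit that $|\mathfrak{g}_k(z)|$ is \emph{typically} of order $1$: combine the deterministic bound with a second moment or first moment estimate on $\Re\mathfrak{g}_k(z) = \tfrac12(\Re\mathfrak{M}_k(z)-1)$. The key identity is $\bE\,\Re\mathfrak{M}_k(z) = \int_\bU \frac{1-|z|^2}{|u-z|^2}\,\bE\,d\boldsymbol\mu^{(k)}(u)$; since the C$\beta$E model is rotation-invariant, $\bE\,\boldsymbol\mu^{(k)} = \frac{d\theta}{2\pi}$ is the uniform measure, and the Poisson integral of the constant $1$ is $1$, so $\bE\,\Re\mathfrak{M}_k(z) = 1$, i.e.\ $\bE\,\Re\mathfrak{g}_k(z) = 0$. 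This is the crucial cancellation.

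Now I would use a classical inequality relating $|h|$ to $\Re h$ for functions $h = \mathfrak{g}_k$ with $\Re h > -\tfrac12$ bounded below: writing $h = \mathfrak g_k$, the Borichev–type or elementary estimate gives, for the analytic function $h+\tfrac12$ taking values in the right half-plane, a pointwise bound of the shape $|h(z)| \lesssim (\tfrac12+\Re h(z))\cdot\frac{1+|z|}{1-|z|}$ away from the region where $h$ is large, more precisely one can always write $|h(z)|^2 \le |h(z)|\cdot|h(z)|$ and bound one factor deterministically by $C\dd(z)^{-1}$ and split the expectation according to whether $|h(z)| \le \dd(z)^{-1/2}$ or not. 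On the event $\{|h(z)|\le \dd(z)^{-1/2}\}$ we get $\bE[|h|^{1+\epsilon}\1] \le \dd(z)^{-\epsilon/2}\cdot\bE|h|$; and $\bE|\mathfrak{g}_k(z)| \lesssim \bE(\tfrac12 + \Re\mathfrak g_k(z)) + \bE|\Im\mathfrak g_k(z)| \lesssim 1$ after using $\bE\Re\mathfrak g_k = 0$ together with a bound on $\bE|\Im\mathfrak g_k|$ (which follows, e.g., from a Hilbert-transform/conjugate-function bound or directly from the moment bounds on $\Phi^*_{k,\infty}, \Psi^*_{k,\infty}$ provided by Proposition~\ref{prop:cvg}, as quoted in Lemma~\ref{lem:decoup}). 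On the complementary event, Markov plus the deterministic bound $|h(z)| \lesssim \dd(z)^{-1}$ gives $\bE[|h|^{1+\epsilon}\1_{|h|>\dd^{-1/2}}] \lesssim \dd(z)^{-(1+\epsilon)}\cdot\bP(|h(z)|>\dd(z)^{-1/2})$, and the probability is controlled by Chebyshev: $\bP(|h(z)| > \dd(z)^{-1/2}) \le \dd(z)\,\bE|h(z)|^2$, provided $\bE|\mathfrak g_k(z)|^2 \lesssim 1$, again available from the $\bL^q$ bounds on $\Phi^*_{k,\infty},\Psi^*_{k,\infty}$ for $q\ge2$ in Lemma~\ref{lem:decoup}. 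Assembling the two pieces yields $\bE|\mathfrak g_k(z)|^{1+\epsilon} \lesssim \dd(z)^{-\epsilon}$ as claimed (one should optimize the split threshold, e.g.\ at $\dd(z)^{-1/2}$ or $\dd(z)^{-(1-\epsilon)}$, to land precisely on the exponent $-\epsilon$; the exponent is robust to the choice).

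The main obstacle I anticipate is obtaining the uniform-in-$k$ second moment bound $\sup_k\bE|\mathfrak g_k(z)|^2 \lesssim 1$ (equivalently a uniform bound on $\bE|\mathfrak M_k(z)|^2$, or on $\bE\int_\bU \frac{1-|z|^2}{|u-z|^2}\,d\boldsymbol\mu^{(k)}$-type quantities with a square), since $\boldsymbol\mu^{(k)}$ becomes singular and heavy-tailed for small $\beta_k$; one has to check that the relevant moments are finite and bounded uniformly in $k$, using the explicit Beta law of the $|\alpha_j|$'s from Fact~\ref{fact:Verb} (the decay $\bE|\alpha_j|^2 \sim \beta_j^{-1}$ is what makes $\Phi^*_{k,\infty}$ well-behaved in $\bL^q$). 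The relevant estimate is exactly the content of Proposition~\ref{prop:cvg}, so invoking it should close the gap, but care is needed that the bound is uniform in $k$ and holds with the correct dependence on $\dd(z)$ — this bookkeeping is the most delicate part.
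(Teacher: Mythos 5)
There is a genuine gap, and it sits exactly where you flagged your ``main obstacle'': the uniform second moment bound $\bE|\mathfrak{g}_k(z)|^2\lesssim 1$ that your Chebyshev step on the bad event requires is false. Since $\Re\mathfrak{M}_k(z)=\int_{\bU}P_z\,d\boldsymbol\mu_k$ is the Poisson smoothing of the (random, singular, GMC-type) spectral measure at scale $\dd(z)$, its second moment blows up as $|z|\to1$ (for $\beta<2$ the measure is purely atomic, so the diagonal of the two-point function already forces $\bE|\mathfrak{g}_k(z)|^2\gtrsim \dd(z)^{-1}$; the correct order is $\dd(z)^{-1}$, consistent with the lemma itself at $\epsilon=1$). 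Plugging $\bE|\mathfrak g_k(z)|^2\lesssim\dd(z)^{-1}$ into your bad-event estimate gives $\dd^{-(1+\epsilon)}\cdot\dd\cdot\dd^{-1}=\dd^{-(1+\epsilon)}$, which is no better than the trivial deterministic bound. Proposition~\ref{prop:cvg} cannot rescue this: its moment bounds are for fixed $z\in\bD$ and degenerate as $z\to\partial\bD$ (indeed $\bE|\Phi_\infty^*(z)|^{-\gamma}=\dd(z)^{-\hat\gamma^2}$). A second, smaller inaccuracy: $\bE|\mathfrak g_k(z)|$ is not $O(1)$ either. Your cancellation $\bE\,\Re\mathfrak g_k=0$ controls only the real part; the imaginary part is the conjugate Poisson integral of $\boldsymbol\mu_k$, and $\int_{\bU}\big|\Im\tfrac{u+z}{u-z}\big|\,du\asymp\log\tfrac1{\dd(z)}$, so $\bE|\mathfrak g_k(z)|\lesssim\log\tfrac1{\dd(z)}$ at best. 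With the threshold pushed to $T=\dd^{-1}$ (where the bad event is empty by your Herglotz bound) your scheme salvages only $\bE|\mathfrak g_k|^{1+\epsilon}\lesssim\dd^{-\epsilon}\log\tfrac1\dd$, which misses the stated estimate by a logarithm.

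The paper's argument avoids all of this and is worth internalizing: it exploits not just $\bE\boldsymbol\mu_k(du)=du$ but the fact that $\boldsymbol\mu_k$ is a \emph{probability} measure pathwise. Writing $\mathfrak g_k(z)=z\int_{\bU}\frac{\boldsymbol\mu_k(du)}{u-z}$ and applying Jensen's inequality to the convex function $x\mapsto x^{1+\epsilon}$ \emph{inside} the Cauchy integral gives the pathwise bound $|\mathfrak g_k(z)|^{1+\epsilon}\le\int_{\bU}|u-z|^{-(1+\epsilon)}\boldsymbol\mu_k(du)$; taking expectations and using $\bE\boldsymbol\mu_k(du)=du$ reduces everything to the deterministic integral $\int_{\bU}|u-z|^{-(1+\epsilon)}du\lesssim\dd(z)^{-\epsilon}$. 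No second moments of the random measure, no splitting, and no conjugate-function loss. If you want to keep your architecture, you would need to replace the Chebyshev input by a tail bound of the form $\bP(|\mathfrak g_k(z)|>T)\lesssim 1/T$ (weak-$L^1$ for the Cauchy transform of a probability measure), which after integration again only recovers the estimate with a logarithmic loss; the Jensen route is the one that lands exactly on $\dd(z)^{-\epsilon}$.
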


\begin{proof}
Here $k\in\bN_0$ is fixed. 
From \eqref{cara},
\[
\mathfrak{g}_k(z) = z \int_{\bU} \frac{\boldsymbol\mu_k(\dd u)}{u-z}  , \qquad z\in\bD ,
\]
where $\boldsymbol\mu_k$ is the spectral measure for the Verblunsky coefficients $\{\alpha_{k+j}\}_{j\ge 0}$. 
Since $\boldsymbol\mu_k$ is a (random) probability measure on $\bU$, by Jensen's inequality, almost surely for any $\gamma\ge 1$ and $|z|=r <1$, 
\[
|\mathfrak{g}_k(z)|^\gamma \le r^\gamma  \int_{\bU} \frac{\boldsymbol\mu_k(\dd u) }{|u-r|^\gamma}.
\]
Observe that by rotation-invariance, the spectral measures  $\boldsymbol\mu_k$ satisfy  $\bE \boldsymbol\mu_k(\dd u) = \dd u$, then
\[\begin{aligned}
\bE |\mathfrak{g}_k(z)|^\lambda  & \le   r^\lambda\int_{\bT}  |e^{\i\theta}-r|^{-\lambda} \frac{\dd\theta}{2\pi} \\
&= \int_{\bT}  \bigg(\frac{r^2}{(1-r)^2+2r(1-\cos \theta)}\bigg)^{\lambda/2} \frac{\dd\theta}{2\pi}  \\
& = \frac2\pi \int_0^{\pi/2} \bigg(\frac{r}{(1-r)^2/r+4\sin^2(\theta)}\bigg)^{\lambda/2} \dd\theta .
\end{aligned}\]
Using that $(1-r)\ge \dd(r)/2$ and $16\sin^2\theta \ge \theta^2 $, we obtain
\[
\bE |\mathfrak{g}_k(z)|^\lambda\lesssim  \dd(z)^{-\lambda} \int_0^{\pi/2} \bigg(\frac{1}{1 +{\blue(\theta/2\dd(z))^2}}\bigg)^{\lambda/2} \dd\theta 
\lesssim \dd(z)^{1-\lambda}
\]
where the implied constants depend only on $\lambda>1$. 
\end{proof}

\subsection{Convergence for real part in case $ \gamma \ge 0$} \label{sec:real}
The goal of this section is to show that for 
$0\le \hat\gamma = \gamma/\sqrt{2\beta}<1$ and $f\in L^1(\bU)$, 
\begin{equation} \label{mart1}
\bE_n\mu^{\hat\gamma}(f)
=  \mu_{n}^{\gamma}(f) =\int \frac{|\Phi_n^*(u)|^\gamma}{\bE |\Phi_n^*(1)|^\gamma} f(u) \dd u. 
\end{equation}
Without loss of generality, we assume that $f\ge 0$. 

\medskip

We start by some estimates which follow directly from Lemma~\ref{lem:decoup}. 

\begin{lemma} \label{lem:bdreal}
For $\gamma \ge 0$, there is a (deterministic) constant $c_\gamma$  so that (almost surely) for any $z\in\bD$ and $n\in\bN$,
\begin{equation*} 
0\le 1- \frac{\bE_n |\Phi_{\infty}^*(z)|^\gamma}{|\Phi_n^*(z)|^\gamma \bE  |\Phi_{n,\infty}^{*}(z)|^\gamma} \le  c_\gamma (1-\rho_n(z)) . 
\end{equation*}
\end{lemma}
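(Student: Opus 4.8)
The plan is to work directly from the decomposition \eqref{decoup2} in Lemma~\ref{lem:decoup}. Write $M := \Psi_{k,\infty}^{\lambda_n *}/\Phi_{k,\infty}^{\lambda_n *}$ evaluated at $z$ (with $k = n$), so that \eqref{decoup2} reads
\[
\Phi_\infty^*(z) = \Phi_n^*(z)\, \Phi_{n,\infty}^{\lambda_n *}(z)\,\Big(1 + \tfrac{1-\rho_n(z)}{2}(M-1)\Big).
\]
Since $\Phi_n^*(z)$ and $\rho_n(z),\lambda_n(z)$ are $\F_n$-measurable while $(\Phi_{n,\infty}^*, \Psi_{n,\infty}^*)$ — hence $M$ and $\Phi_{n,\infty}^{\lambda_n *}(z)$ once we condition on the value of $\lambda_n(z)$ — are independent of $\F_n$, taking $\gamma$-th powers of the modulus and conditioning on $\F_n$ gives
\[
\bE_n |\Phi_\infty^*(z)|^\gamma = |\Phi_n^*(z)|^\gamma \cdot \bE\Big[\big|\Phi_{n,\infty}^{\lambda *}(z)\big|^\gamma \big|1 + \tfrac{1-\rho}{2}(M-1)\big|^\gamma\Big]_{\lambda = \lambda_n(z),\,\rho = \rho_n(z)},
\]
where the outer expectation is over the $\F_n$-independent randomness with $\lambda,\rho$ frozen at their ($\F_n$-measurable) values. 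The ratio in the statement is therefore
\[
R := \frac{\bE_n|\Phi_\infty^*(z)|^\gamma}{|\Phi_n^*(z)|^\gamma\,\bE|\Phi_{n,\infty}^*(z)|^\gamma}
= \frac{\bE\big[|\Phi_{n,\infty}^{\lambda *}(z)|^\gamma\,|1 + \tfrac{1-\rho}{2}(M-1)|^\gamma\big]}{\bE|\Phi_{n,\infty}^*(z)|^\gamma},
\]
and by the rotation-invariance noted in the proof of Lemma~\ref{lem:decoup} ($\Phi_{n,\infty}^{\lambda*} \equiv \Phi_{n,\infty}^*$ in law), the denominator equals $\bE|\Phi_{n,\infty}^{\lambda*}(z)|^\gamma$, so $R$ is a weighted average of $|1 + \tfrac{1-\rho}{2}(M-1)|^\gamma$.

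\textbf{The two bounds.} For the upper bound $R \le 1$: write $1 + \tfrac{1-\rho}{2}(M-1) = (1-t) + tM$ with $t = \tfrac{1-\rho}{2} \in [0, \tfrac12]$, a convex combination. Since $\Phi_{n,\infty}^{\lambda*} = \tfrac{1+\lambda}{2}\Phi_{n,\infty}^* + \tfrac{1-\lambda}{2}\Psi_{n,\infty}^*$ and $M\Phi_{n,\infty}^{\lambda*} = \Psi_{n,\infty}^{\lambda*}$, one has the key identity $\Phi_{n,\infty}^{\lambda*}\cdot((1-t)+tM) = (1-t)\Phi_{n,\infty}^{\lambda*} + t\Psi_{n,\infty}^{\lambda*}$, and using \eqref{ulinearity} (moving $\lambda$ out of the difference and keeping it in the sum) this is again of the form $\tfrac{1+\lambda'}{2}\Phi_{n,\infty}^* + \tfrac{1-\lambda'}{2}\Psi_{n,\infty}^*$ for an appropriate $\lambda'\in\bU$ depending on $\lambda, t$ — that is, it equals $\Phi_{n,\infty}^{\lambda'*}(z)$. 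Hence $\bE\big|\Phi_{n,\infty}^{\lambda*}(z)((1-t)+tM(z))\big|^\gamma = \bE|\Phi_{n,\infty}^{\lambda'*}(z)|^\gamma = \bE|\Phi_{n,\infty}^*(z)|^\gamma$ by rotation-invariance, giving $R = 1$ when $t$ is deterministic; since here $t=\tfrac{1-\rho_n(z)}{2}$ is $\F_n$-measurable but the outer expectation has $\rho$ frozen, this argument shows $R = 1$ exactly... so in fact the upper bound $R\le 1$ (equivalently the left inequality $0 \le 1-R$) needs re-examination. The correct reading is that \eqref{ulinearity} lets us pull $\lambda'$ only out of the \emph{difference}, so $\Phi_{n,\infty}^{\lambda*} + $ correction is \emph{not} literally $\Phi_{n,\infty}^{\lambda'*}$ unless the coefficients match up; the honest statement is $\bE|(1-t)\Phi^{\lambda*} + t\Psi^{\lambda*}|^\gamma \le \bE|\Phi^*|^\gamma$ for $\gamma\ge 1$ by convexity of $x\mapsto|x|^\gamma$ and Jensen (writing the left side as an average over a random sign/rotation), while for $0\le\gamma<1$ one uses that $|\cdot|^\gamma$ is subadditive. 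Either way one extracts $R\le 1$.

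\textbf{Linear lower estimate and the main obstacle.} For the lower bound $1 - R \le c_\gamma(1-\rho_n(z))$: since $t = \tfrac{1-\rho}{2}$, it suffices to show $1 - R \le c_\gamma' \, t$. Write $R = \bE|\Phi^{\lambda*}|^\gamma w / \bE|\Phi^*|^\gamma$ where $w := |1 + t(M-1)|^\gamma / |\,\cdot\,|$... more cleanly, bound $\big|\,|1+t(M-1)|^\gamma - 1\,\big| \le \gamma\, t\,|M-1|\,(1+t|M-1|)^{\gamma-1}$ for $\gamma\ge 1$ (mean value theorem) and a matching Hölder-type bound for $0\le\gamma<1$, then insert into $1 - R = \bE|\Phi^{\lambda*}|^\gamma(1 - |1+t(M-1)|^\gamma)/\bE|\Phi^*|^\gamma$ and apply Hölder's inequality to separate the $|\Phi^{\lambda*}|^\gamma$ factor from the $|M-1|$ factor. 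This requires finite moments of $\Phi_{n,\infty}^*(z)$ (available from Proposition~\ref{prop:cvg}/Lemma~\ref{lem:decoup}, which gives $\bE\exp(\gamma|\log\Phi_{n,\infty}^{\lambda*}(z)|) < \infty$) and of $M - 1 = 2\mathfrak g_n^{\lambda}(z)$, controlled by Lemma~\ref{lem:gmom} — but crucially these moment bounds blow up like $\dd(z)^{-\epsilon}$ as $|z|\to 1$. \textbf{This is the main obstacle}: the constant $c_\gamma$ must be \emph{uniform in} $z\in\bD$, so one cannot afford a $\dd(z)^{-\epsilon}$ loss from Hölder. The resolution is to exploit that $\Re\,\mathfrak g_k^\lambda(z) > -\tfrac12$ (Fact~\ref{fact:cara}), i.e. $\Re(1 + t(M-1)) = \Re((1-t) + tM) \ge 1 - t - t\cdot(\text{something}) $ stays bounded below by a positive constant for $t\le\tfrac12$ (indeed $\Re(tM) = t(1 + 2\Re\mathfrak g) \ge 0$, so $\Re(1+t(M-1)) \ge 1-t \ge \tfrac12$); combined with $|1 + t(M-1)| \le 1 + t|M-1|$, one gets pointwise $\big|\,|1+t(M-1)|^\gamma - 1\big| \lesssim_\gamma t|M-1|$ on the event $\{t|M-1|\le\tfrac12\}$ and a crude bound $|1+t(M-1)|^\gamma \le (1+t|M-1|)^\gamma$ on the complement, whose probability is small. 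Taking the expectation and using that $\bE|M-1|$ and $\bE|M-1|^{1+\epsilon}$ against the (exponentially integrable, but $z$-dependent) weight $|\Phi^{\lambda*}|^\gamma$ — here one uses a uniform-in-$z$ bound for the normalized quantity, which is exactly what the statement of Lemma~\ref{lem:decoup} provides after normalizing by $\bE|\Phi_{n,\infty}^*(z)|^\gamma$ — yields $1 - R \le c_\gamma t = \tfrac{c_\gamma}{2}(1-\rho_n(z))$ with $c_\gamma$ depending only on $\gamma$ (and $\beta$). Tracking the constants through the two regimes $\gamma\ge 1$ and $0\le\gamma<1$ and verifying the $z$-uniformity is the only delicate point; everything else is bookkeeping with Lemma~\ref{lem:decoup}, \eqref{ulinearity}, and Fact~\ref{fact:cara}.
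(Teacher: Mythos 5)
Your proposal correctly sets up the decomposition from Lemma~\ref{lem:decoup} and correctly identifies the ratio $R$ to be bounded, but both halves of the estimate have genuine gaps, and you have in fact flagged the second one yourself without resolving it.

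For the upper bound $R\le 1$: your Jensen argument works for $\gamma\ge 1$, since $(1-t)\Phi_{n,\infty}^{\lambda*}+t\Psi_{n,\infty}^{\lambda*}$ is a genuine real convex combination and $\Phi_{n,\infty}^{\lambda*},\Psi_{n,\infty}^{\lambda*}$ each have the law of $\Phi_{n,\infty}^{*}$. But for $0\le\gamma<1$ subadditivity of $|\cdot|^\gamma$ gives $\bE|(1-t)\Phi^{\lambda*}+t\Psi^{\lambda*}|^\gamma\le((1-t)^\gamma+t^\gamma)\,\bE|\Phi^*|^\gamma$, and $(1-t)^\gamma+t^\gamma>1$, so this goes the wrong way. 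The inequality is still true in that range, but not for the reason you give.

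For the lower bound, the obstacle you name is real and your proposed resolution does not close it. Any route that bounds $1-R$ by $t\,\bE\big[|M-1|\,|\Phi_{n,\infty}^{\lambda*}|^\gamma\big]/\bE|\Phi_{n,\infty}^*|^\gamma$ and then separates the two factors by H\"older inherits a loss of the form $\dd(z)^{-\gamma^2/(2\beta\epsilon)-\epsilon/(1+\epsilon)}$ (from $\bE|\mathfrak g_n|^{1+\epsilon}\lesssim\dd(z)^{-\epsilon}$ and $\bE|\Phi_\infty^*(z)|^{p}=\dd(z)^{-\hat p^2}$), which is not uniform in $z$ for any choice of $\epsilon$; and $|M-1|$ is large precisely where $|\Phi_{n,\infty}^{\lambda*}|$ is large, so the correlation cannot be ignored. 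Lemma~\ref{lem:decoup} gives finiteness of exponential moments at each fixed $z$ but no uniformity as $|z|\to1$, so the appeal to it does not supply the missing bound. The idea you are missing is the paper's rotation-averaging: since $(\Phi_{n,\infty}^{u*},\Psi_{n,\infty}^{u*})\equiv(\Phi_{n,\infty}^{*},\Psi_{n,\infty}^{*})$ for every $u\in\bU$ and \eqref{ulinearity} moves $u$ onto the difference term, one may integrate over $u\in\bU$ inside the conditional expectation and obtain
\[
\Theta_n(\rho;\cdot)=\bE_n\Big[\big|\tfrac{\Phi_{n,\infty}^{*}+\Psi_{n,\infty}^{*}}{2}\big|^\gamma\,\mathrm F_\gamma\big(\rho\,s\big)\Big],\qquad s=\big|\tfrac{\Phi_{n,\infty}^{*}-\Psi_{n,\infty}^{*}}{\Phi_{n,\infty}^{*}+\Psi_{n,\infty}^{*}}\big|<1,
\]
with $\mathrm F_\gamma(r)=\int_\bU|1+ur|^\gamma\dd u=1+\sum_{k\ge1}\binom{\gamma/2}{k}^2r^{2k}$. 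All coefficients are nonnegative, so $\mathrm F_\gamma$ is increasing (giving the upper bound for every $\gamma\ge0$ at once), and for $\gamma>0$ it satisfies $\mathrm F_\gamma(s)-\mathrm F_\gamma(\rho s)\le\mathrm F_\gamma'(1)(1-\rho)\,\mathrm F_\gamma(s)$. The crucial feature is that the error is controlled by $(1-\rho)$ times $\mathrm F_\gamma(s)$ itself, so it recombines into $c_\gamma(1-\rho)\Theta_n(1;\cdot)$ pointwise in the remaining randomness; no moment estimate on $M$ or $\mathfrak g_n$ is needed and the constant is automatically uniform in $z$. Without this (or an equivalent device) your argument does not yield a $z$-uniform $c_\gamma$.
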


\begin{proof}
Fix $\gamma\in\bR$ and $n\in\bN$. 
We define for $\rho\in[0,1]$ and $z\in\bD$,  
\[
\Theta_n(\rho;z) : = \bE_n \bigg| \frac{\Phi_{n,\infty}^{*}(z)+\Psi_{n,\infty}^{*}(z)}{2} + \rho \lambda_n(z)  
\frac{\Phi_{n,\infty}^{*}(z)-\Psi_{n,\infty}^{*}(z)}{2} \bigg|^\gamma 
\]
with  $\lambda_n$ as in \eqref{ratio} -- $\lambda_n$ is $\F_n$-measurable.
Then, by \eqref{decoup2}, it holds for $z\in\bD$, 
\begin{equation} \label{Phiinftyproj}
\bE_n |\Phi_{\infty}^*(z)|^\gamma =  |\Phi_n^*(z)|^\gamma \Theta_n(\rho_n(z);z) .
\end{equation}

Moreover, by rotation-invariance;
for every $u\in\bU$, $(\Phi_{k,n}^{u*},\Psi_{k,n}^{u*})_{n\ge 0}$ are  independent of $\F_k$ and
$(\Phi_{k,n}^{u*},\Psi_{k,n}^{u*})_{n\ge 0} \equiv (\Phi_{k,n}^{*}, \Psi_{k,n}^{*})_{n\ge 0}$ so that for $k\in\bN$, 
\[\begin{aligned}
\Theta_k(\rho;\cdot)
& = \bE_k \bigg| \frac{\Phi_{k,\infty}^{u*}+\Psi_{k,\infty}^{u*}}{2} + \rho \lambda_k  
\frac{\Phi_{k,\infty}^{u*}-\Psi_{k,\infty}^{u*}}{2} \bigg|^\gamma \\
&=   \bE_k \bigg| \frac{\Phi_{k,\infty}^{*}+\Psi_{k,\infty}^{*}}{2} + \rho \lambda_k u 
\frac{\Phi_{k,\infty}^{*}-\Psi_{k,\infty}^{*}}{2} \bigg|^\gamma 
\end{aligned}\]
using the relationships \eqref{ulinearity}. 
Hence, by Fact~\ref{fact:cara}, integrating this formula over $\bU$,  we obtain 
\begin{equation}\label{Theta}
\Theta_n(\rho;\cdot)
= \bE_n \bigg[\ \bigg| \frac{\Phi_{n,\infty}^{*}+\Psi_{n,\infty}^{*}}{2} \bigg|^\gamma \mathrm{F}_\gamma\Big(\rho\big|\tfrac{\Phi_{n,\infty}^{*}-\Psi_{n,\infty}^{*}}{\Phi_{n,\infty}^{*}+\Psi_{n,\infty}^{*}}\big|\Big) \bigg]
\end{equation}
where, for $\gamma\in\bR$,
\begin{equation*}
\mathrm{F}_\gamma : r\in [0,1)  \mapsto \int_{\bU} |1+ ur |^\gamma \dd u.
\end{equation*}
Using the binomial formula, we record that 
\begin{equation} \label{F1}
\mathrm{F}_\gamma(r) = 1+ \sum_{k\ge 1} {\gamma/2 \choose k}^2 r^{2k} , \qquad r\in[0,1). 
\end{equation}
The coefficients ${\gamma/2 \choose k}^2 \sim c_\gamma k^{-2-\gamma}$ as $k\to\infty$ where $c_\gamma = \Gamma(-\gamma/2)^{-2}$ for $\gamma \in \bR \setminus 2\bN_0$ or $c_\gamma=0$ else.

\medskip

We now specialize to $\gamma > 0$, in which case, $\mathrm{F}_\gamma : [0,1] \to [1,\infty)$ is increasing, continuous, convex with $\mathrm{F}_\gamma(1) , \mathrm{F}_\gamma'(1) <\infty$. 
This implies that for any $\rho, r\in [0,1]$,
\[
0\le \mathrm{F}_\gamma(r)- \mathrm{F}_\gamma(\rho r) \le \mathrm{F}_\gamma'(1) (1-\rho) \le  \mathrm{F}_\gamma'(1)(1-\rho)\mathrm{F}_\gamma(r) 
\]
since $\mathrm{F}_\gamma(r)\ge 1$.
Combining this bound and \eqref{Theta}, with $c_\gamma = F'_\gamma(1)$, we obtain that (almost surely) for $n\in\bN$, 
\[
\big(1- c_\gamma   (1-\rho) \big) \Theta_n(1;\cdot) \le \Theta_n(\rho;\cdot)\le  \Theta_n(1;\cdot)  .
\]

Using the relationship \eqref{ulinearity}, by definition of $\Theta_n$, one has
\begin{equation}\label{Theta1}
\Theta_n(1;\cdot) = \bE_n |\Phi_{n,\infty}^{*\lambda_n}|^\gamma  = \bE  |\Phi_{n,\infty}^{*}|^\gamma  
\end{equation}
where the last equality follows from the fact that conditionally on $\F_n$,
$\Phi_{n,\infty}^{*\lambda_n}$ has the same law as $\Phi_{n,\infty}^{*}$ (by rotation-invariance) and $\Phi_{n,\infty}^{*}$ is independent of $\F_n$. 
Using the previous estimates with $\rho=\rho_n(z)$ (which is $\F_n$-measurable),
by \eqref{Phiinftyproj}, we conclude that (almost surely),  for any $\gamma\ge 0$ and $z\in\bD$,
\[
\big(1-  c_\gamma   (1-\rho_n(z)) \big)
|\Phi_n^*(z)|^\gamma  \bE |\Phi_{n,\infty}^{*}|^\gamma  \le \bE_n |\Phi_{\infty}^*(z)|^\gamma \le   |\Phi_n^*(z)|^\gamma  \bE |\Phi_{n,\infty}^{*}|^\gamma. \qedhere
\]
\end{proof}

\begin{remark} \label{rk:UB}
If $\gamma<0$, by \eqref{F1}, the function $\mathrm{F}_\gamma$ is still increasing on $[0,1)$ so that by \eqref{Theta}, we have $\Theta_k(\rho;z)\le  \Theta_k(1;z)$ for any $\rho\in[0,1]$ and $z\in\bD$. 
Thus, by \eqref{Phiinftyproj} and \eqref{Theta1}, we still have the estimate for $\gamma>0$ and $n\in\bN$,  
\[
\bE_n |\Phi_{\infty}^*(z)|^{-\gamma} \le  |\Phi_n^*(z)|^{-\gamma} \bE  |\Phi_{n,\infty}^{*}(z)|^{-\gamma}, \qquad z\in\bD. 
\]
\end{remark}

To finish the proof of convergence, we will also need the following  estimates.

\begin{lemma} \label{lem:BD}
For every $n\in\bN$, $\max_{z\in\overline{\bD}}|\Im \varphi_n(z)| <2n$ and  $\max_{z\in\overline{\bD}} \Re \varphi_n(z)< n$. Moreover, for any $\gamma\in\bR$ with $|\hat\gamma|<1$,  one has for $z\in\overline\bD$, 
\[
\bE |\Phi^*_n(z)|^\gamma \le \bE  |\Phi^*_n(1)|^\gamma
={\textstyle \prod_{k=0}^{n-1}} \bE [ |1 - \alpha_k|^\gamma] <\infty.
\]

\end{lemma}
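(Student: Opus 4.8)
The plan is to exploit the multiplicative recursion \eqref{S3}, namely $\varphi_n(z) = \sum_{k=0}^{n-1}\log(1-\alpha_k B_k(z))$, together with the elementary estimate $|B_k(z)|\le|z|\le1$ for $z\in\overline\bD$. First I would treat the crude bounds. For the imaginary part, each summand satisfies $|\Im\log(1-\alpha_k B_k(z))|<\pi$ (the principal branch maps $\{\Re w<1\}$ into the strip $|\Im\log(1-w)|<\pi/2$, but using only $|\alpha_k B_k(z)|<1$ one has $1-\alpha_k B_k(z)$ in the right half-plane, so $|\Im\log(1-\alpha_k B_k(z))|<\pi/2$), hence $|\Im\varphi_n(z)|<n\pi/2<2n$; the bound $\max_{\overline\bD}|\Im\varphi_n|<2n$ follows. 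For the real part, $\Re\log(1-\alpha_k B_k(z)) = \log|1-\alpha_k B_k(z)| < \log 2 < 1$, since $|1-\alpha_k B_k(z)|<1+|\alpha_k B_k(z)|<2$; summing gives $\max_{\overline\bD}\Re\varphi_n(z)<n\log 2<n$.

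The substantive part is the moment estimate. The key structural point is that $\varphi_n$ and the absolute value of $\Phi_n^*$ are maximized (in the relevant sense) at $z=1$ after taking expectations, because of rotation-invariance. Concretely, I would argue: since $\alpha_k = |\alpha_k|e^{\i\eta_k}$ with $\eta_k$ i.i.d.\ uniform and independent of everything else, and $B_k(z)$ is $\sigma(\alpha_0,\dots,\alpha_{k-1})$-measurable with $|B_k(z)|\le 1$, conditioning on $\F_k$ and integrating over $\eta_k$ shows that the conditional law of $\alpha_k B_k(z)$ depends on $z$ only through $|B_k(z)|$, and by a convexity/monotonicity argument in $|B_k(z)|$ one gets $\bE[\,|1-\alpha_k B_k(z)|^\gamma\mid \F_k] \le \bE[\,|1-\alpha_k|^\gamma\,]$ (the worst case $|B_k(z)|=1$). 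Iterating this over $k=n-1,n-2,\dots,0$ via the tower property and \eqref{S3} yields
\[
\bE|\Phi_n^*(z)|^\gamma \le \prod_{k=0}^{n-1}\bE|1-\alpha_k|^\gamma = \bE|\Phi_n^*(1)|^\gamma,
\]
where the last equality is the case $|B_k(1)|=1$, which is exact since $B_k(1)\in\bU$ (Property~2 in Section~\ref{sec:CbE}) and rotation-invariance of $\alpha_k$ makes $\alpha_k B_k(1)$ uniform on its modulus circle. Finiteness of each factor $\bE|1-\alpha_k|^\gamma$ for $\gamma>-1$ follows because $|1-\alpha_k|^\gamma$ is integrable near the only possible singularity $\alpha_k=1$, using that $|\alpha_k|^2$ has a bounded density near $1$ (in fact density $\beta_k(1-r)^{\beta_k-1}$) so $1-|\alpha_k|$ has a density bounded near $0$, and $|1-\alpha_k|\gtrsim 1-|\alpha_k|$ uniformly; since $\hat\gamma<1$ forces $\gamma>-\sqrt{2\beta}$, and we separately need $\gamma>-1$ for the statement, the integral $\int_0^{\cdot} s^{\gamma}\,\dd s$ converges. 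For $\gamma\ge 0$ finiteness is trivial since $|1-\alpha_k|\le 2$.

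I expect the main obstacle to be the monotonicity claim $t\mapsto \bE_\eta|1-t\,e^{\i\eta}\zeta|^\gamma$ (for fixed $\zeta$ with $|\zeta|=|\alpha_k|$, averaging over the uniform phase) being nondecreasing on $[0,1]$ in $t=|B_k(z)|$, uniformly in $\gamma\in\bR$. For $\gamma\ge 0$ this is a standard subharmonicity/convexity fact ($w\mapsto|1-w|^\gamma$ is subharmonic, so its circular averages increase with radius). For $\gamma<0$ the function $w\mapsto|1-w|^\gamma$ is subharmonic on $\bD$ away from $w=1$ and its circular average $\mathrm F_\gamma$ from \eqref{F1} is still increasing in the radius, as recorded in Remark~\ref{rk:UB}; one must only check that the average over the smaller circle of radius $t|\zeta|<1$ is finite and dominated by that over radius $|\zeta|<1$, which again follows from \eqref{F1}. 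This is precisely the computation already isolated in \eqref{Theta}–\eqref{F1} and Remark~\ref{rk:UB}, so the lemma reduces to assembling those ingredients with the tower property.
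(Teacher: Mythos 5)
Your deterministic bounds and the comparison $\bE|\Phi_n^*(z)|^\gamma\le\bE|\Phi_n^*(1)|^\gamma$ are essentially right, and for the latter you take a mildly different route from the paper: you condition on $\F_k$, average the single factor $|1-\alpha_k B_k(z)|^\gamma$ over the uniform phase of $\alpha_k$ to get $\mathrm{F}_\gamma(|\alpha_k|\,|B_k(z)|)$, and use that $\mathrm{F}_\gamma$ is increasing (its coefficients in \eqref{F1} are squares) together with $|B_k(z)|\le 1$ and the tower property. The paper instead applies subharmonicity of $|\Phi_n^*|^\gamma=e^{\gamma\Re\varphi_n}$ in one stroke, bounding $|\Phi_n^*(z)|^\gamma$ by its Poisson average over $\bU$ and using rotation-invariance to identify the boundary average with $\bE|\Phi_n^*(1)|^\gamma$. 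Both are valid; yours is the one-step-at-a-time version of the same convexity and makes visible exactly where the inequality is strict. The identity $\bE|\Phi_n^*(1)|^\gamma=\prod_{k<n}\bE|1-\alpha_k|^\gamma$ via $B_k(1)\in\bU$ and rotation-invariance is the same as in the paper.

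The finiteness step, however, has a genuine error. The lemma assumes only $|\hat\gamma|<1$; the hypothesis $\gamma>-1$ belongs to the characteristic polynomial (where the extra factor $|1-e^{\i\eta}|^\gamma$ with $\eta$ uniform on $\bT$ forces it), not to $\Phi_n^*$. Your argument therefore fails to cover, e.g., $\gamma=-2$ with $\beta>2$, which is precisely the case needed for the Bernstein--Szeg\H{o} approximation and Corollary~\ref{cor:phi}. What you are missing is that the density of $|\alpha_k|^2$ carries the factor $(1-r)^{\beta_k-1}$ near $r=1$ (it is not ``bounded near $1$'' in general, and when $\beta_k>1$ it vanishes there), which supplies extra integrability: by the explicit moment formula \eqref{expmom}, $\bE|1-\alpha_k|^\gamma<\infty$ if and only if $\gamma>-1-\beta_k$, so the binding constraint is $k=0$, i.e.\ $\gamma>-1-\tfrac\beta2$, and this follows from $\hat\gamma>-1$ because $1+\tfrac\beta2\ge\sqrt{2\beta}$ (with equality at $\beta=2$). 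Note also that your reduction $|1-\alpha_k|\gtrsim 1-|\alpha_k|$ throws away the angular integration, which is what produces the $-1$ in the threshold $-1-\beta_k$; even with the correct radial density it would yield a strictly worse condition than the true one.
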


\begin{proof}
Here $n\in\bN$ is fixed. 
Recall that $\varphi_n$ is an analytic function in a neighborhood of $\bD$ and \eqref{S3} holds. 
Since $w\in\bD\mapsto |\Im\log(1+w)|$ is bounded by $\pi/2$, one has $\max_{z\in\overline{\bD}}|\Im \varphi_n(z)| < 2n$. 
Similarly,  $w\in\bD\mapsto  \log|1+w|$ is bounded above by $\log 2$ so $\max_{z\in\overline{\bD}} \Re \varphi_n(z)< n$. However, there is no  deterministic lower-bound for the real~part.

Let $\gamma\in\bR$. 
By subharmonicity (the function $x\in\bR\mapsto e^{\gamma x}$ is convex), one has for $z\in\bD$, 
\[
|\Phi^*_k(z)|^\gamma \le \int_{\bU} |\Phi^*_k(u)|^\gamma P(z;\dd u)
\]
where $P$ denotes Poisson kernel of $\bD$. Taking expectation, by rotation-invariance, we obtain 
\[
\bE |\Phi^*_k(z)|^\gamma \le \bE  |\Phi^*_k(1)|^\gamma , \qquad z\in\overline\bD .
\]
This estimate holds for all $\gamma\in\bR$, but the RHS may be $+\infty$ if $\gamma$ is too negative. 

For $k\in\bN_0$, $B_k(1)\in\bU$ is $\F_k$-measurable, independent of $\alpha_k$, so by rotation-invariance,
\[
\Phi^*_n(1) \equiv {\textstyle \sum_{j=0}^{n-1}} \log(1-\alpha_j) .
\]
This shows that for $\gamma\in\bR$, $\bE  |\Phi^*_n(1)|^\gamma
={\textstyle \prod_{j=0}^{n-1}} \bE [ |1 - \alpha_j|^\gamma] $. 
{\blue These quantities are finite if and only if $\gamma>-1-\beta_0$; see \eqref{expmom} in the Appendix and fact that $\{\beta_j\}$ are increasing. 
Note that $1+\frac\beta2 \ge \sqrt{2\beta}$ for any $\beta>0$, so the condition $\hat\gamma = \gamma/\sqrt{2\beta}>-1$ implies that $\gamma>-1-\beta_0$
with $\beta_0 = \frac\beta2$. 
Then, we conclude that $\bE  |\Phi^*_k(1)|^\gamma < \infty$ if $|\hat\gamma|<1$.} Finally, note that 
$1+\frac\beta2 - \sqrt{2\beta} = (1- \sqrt{\beta/2})^2$, so this inequality is sharp at the critical value~$\beta=2$.
\end{proof}

We now turn to the proof of \eqref{mart1}.  
First, taking expectation in Lemma~\ref{lem:bdreal}, one has for $n\in\bN$ and $r<1$, 
\[
\bE\big[ |\Phi_n^*(r)|^\gamma (1-c_\gamma(1-\rho_n(r)))\big] \le \frac{\bE |\Phi_{\infty}^*(r)|^\gamma}{\bE  |\Phi_{n,\infty}^{*}(r)|^\gamma}  \le \bE |\Phi_n^*(r)|^\gamma . 
\]
By Lemma~\ref{lem:BD}, $|\Phi_n^*(z)|\le e^n$ for $z\in\overline\bD$, and since $\rho_n(r) \in (0,1)$ and $\rho_n(r) \to 1$ as $r\to1$ (almost surely), by the dominated convergence theorem for $\gamma\ge 0$,
\[
\lim_{r\to 1}  \bE\big[ |\Phi_n^*(r)|^\gamma (1-c_\gamma(1-\rho_n(r)))\big] = \bE |\Phi_n^*(1)|^\gamma .
\]
Hence, for any $n\in\bN$, 
\begin{equation} \label{norm1}
\lim_{r\to 1} \frac{\bE |\Phi_{\infty}^*(r)|^\gamma}{\bE  |\Phi_{n,\infty}^{*}(r)|^\gamma} = \bE |\Phi_n^*(1)|^\gamma . 
\end{equation}
Using Lemma~\ref{lem:bdreal} again, with $|z|=r<1$, 
\[
\frac{|\Phi_n^*(z)|^\gamma}{\bE |\Phi_n^*(r)|^\gamma} \ge
\frac{\bE_n |\Phi_{\infty}^*(z)|^\gamma}{\bE |\Phi_\infty^*(r)|^\gamma} 
\frac{\bE |\Phi_\infty^*(r)|^\gamma}{\bE |\Phi_n^*(r)|^\gamma \bE  |\Phi_{n,\infty}^{*}(r)|^\gamma}
\]
so that by \eqref{norm1}, for any $f\in L^1(\bU,\bR_+)$, 
\[
\liminf_{r\to1}\int f(u) 
\frac{|\Phi_n^*(ru)|^\gamma}{\bE |\Phi_n^*(r)|^\gamma} \dd u \ge
\liminf_{r\to1} \bE_n \bigg( \int f(u) 
\frac{ |\Phi_{\infty}^*(ru)|^\gamma}{\bE |\Phi_\infty^*(r)|^\gamma} \dd u \bigg) .
\]
Using the (uniform) continuity of $\Phi_n^*$ on $\overline\bD$, the LHS limit exists and equals $\mu_{n}^{\gamma}(f)$. For the RHS, using the GMC convergence in $\bL^1$, see \eqref{GMC}, if $\hat\gamma<1$, 
\[
\lim_{r\to1} \bE_n \bigg( \int f(u) 
\frac{ |\Phi_{\infty}^*(ru)|^\gamma}{\bE |\Phi_\infty^*(r)|^\gamma} \dd u \bigg)  = \bE_n \mu^{\hat\gamma}(f) 
\]
almost surely. 
Thus, for $n\in\bN$, $f\in L^1(\bU,\bR_+)$ and  $0\le \hat\gamma<1$,  
\[
\mu_{n}^{\gamma}(f)\ge  \bE_n\mu^{\hat\gamma}(f) .
\] 
Since both random variables have the same expectation (equal to $f_0$), they are equal. 
This concludes the proof of Proposition~\ref{prop:phi} in case $\gamma\ge 0$. 

\subsection{Convergence for real part in case $ \gamma \le 0$}\label{sec:neg}
In this case, only the lower bound from Lemma~\ref{lem:bdreal} holds (see Remark~\ref{rk:UB}), so we need a different strategy to replace \eqref{norm1}. We rely on the following estimates.

\begin{proposition} \label{prop:asmom}
Recall Fact~\ref{fact:cara}.
Let  $ \gamma \in\bR $ with $|\hat\gamma|<1$ and $\delta>0$. It holds almost surely, for every $k\in\bN$, 
\[
\lim_{r\to 1} \dd(r)^{\hat\gamma^2} \sup_{|z|=r}\bE_k\big[ |\Phi^*_{k, \infty}(z)|^{\gamma} \1\{(1-\rho_k(z))|\mathfrak{g}_k(z)| >\delta \}\big] =0 .
\]
\end{proposition}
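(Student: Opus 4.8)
The plan is to bound the conditional expectation appearing in Proposition~\ref{prop:asmom} by splitting the indicator event according to whether $|\mathfrak{g}_k(z)|$ is large or moderate, and to exploit the fact that $1-\rho_k(z)\to 0$ as $|z|\to 1$ (by \eqref{Lip}, $(1-\rho_k(z))\le \mathcal C_k\dd(z)$) together with the moment control on $\mathfrak{g}_k$ from Lemma~\ref{lem:gmom}. Since $\rho_k$ is $\F_k$-measurable and $\mathcal C_k$ is an $\F_k$-measurable random constant, on the event $\{(1-\rho_k(z))|\mathfrak{g}_k(z)|>\delta\}$ we automatically have $|\mathfrak{g}_k(z)|>\delta/(\mathcal C_k\dd(z))$, i.e.\ $|\mathfrak{g}_k(z)|$ is forced to be of order at least $\dd(z)^{-1}$ — a rare event in view of Lemma~\ref{lem:gmom}. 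The renormalizing factor $\dd(r)^{\hat\gamma^2}$ in the statement is exactly $1/\bE|\Phi^*_{k,\infty}(z)|^\gamma$ up to a multiplicative constant (recall \eqref{mom} and the fact that $\Phi^*_{k,\infty}\equiv\Phi^*_\infty$ in law for each $k$), so the quantity to control is morally the conditional $L^\gamma$-mass of $|\Phi^*_{k,\infty}|$ restricted to a small-probability event.

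\emph{Key steps, in order.} First I would fix $k\in\bN$ (everything is ``almost surely'' in the realization of $\F_k$, which only affects the value of the $\F_k$-measurable constants $\mathcal C_k$, $\Phi^*_k$, etc.) and use the decomposition $\Phi^*_{k,\infty}=\Phi^{\lambda*}_{k,\infty}(1+\tfrac{1-\rho_k}{2}(2\mathfrak{g}_k^{\lambda_k}))$-type identity implicit in Lemma~\ref{lem:decoup}; more directly, on the event in question $(1-\rho_k(z))$ is small and $\Phi^*_{k,\infty}$ is comparable to $\Phi^{\lambda_k *}_{k,\infty}$ up to a factor $1+O((1-\rho_k)|\mathfrak{g}_k^{\lambda_k}|)$, whose $\Re>-1/2$ by Fact~\ref{fact:cara} so that powers stay controlled. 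Second, I would apply a Hölder / Cauchy–Schwarz split in the conditional expectation: write
\[
\bE_k\big[|\Phi^*_{k,\infty}(z)|^\gamma \1_{E_z}\big]\le \big(\bE_k|\Phi^*_{k,\infty}(z)|^{\gamma p}\big)^{1/p}\big(\bE_k\1_{E_z}\big)^{1/q}=\big(\bE|\Phi^*_\infty(z)|^{\gamma p}\big)^{1/p}\bP_k(E_z)^{1/q}
\]
for a conjugate pair $(p,q)$ chosen so that $\gamma p$ is still in the ``subcritical'' range $|\widehat{\gamma p}|<1$, i.e.\ $p$ slightly bigger than $1$; by \eqref{mom} the first factor is $\dd(z)^{-(\hat\gamma)^2 p/p}=\dd(z)^{-\hat\gamma^2}\cdot(\text{harmless})$, more precisely $\big(\dd(z)^{-(\gamma p)^2/(2\beta)}\big)^{1/p}=\dd(z)^{-p\hat\gamma^2}$, so one must be careful that the excess power $\dd(z)^{-(p-1)\hat\gamma^2}$ is absorbed. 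Third, bound the probability: $E_z\subset\{|\mathfrak{g}_k(z)|>\delta/(\mathcal C_k\dd(z))\}$, so by Markov applied to Lemma~\ref{lem:gmom} with exponent $1+\epsilon$,
\[
\bP_k(E_z)\le \Big(\tfrac{\mathcal C_k\dd(z)}{\delta}\Big)^{1+\epsilon}\bE_k|\mathfrak{g}_k(z)|^{1+\epsilon}\lesssim_k \dd(z)^{1+\epsilon}\dd(z)^{-\epsilon}=\dd(z)\,,
\]
which is $o(1)$ with a quantitative rate $\dd(z)^{1/q}$. Finally, combine: multiplying by $\dd(r)^{\hat\gamma^2}$, the product is $\lesssim_k \dd(z)^{\hat\gamma^2}\cdot\dd(z)^{-p\hat\gamma^2}\cdot\dd(z)^{1/q}=\dd(z)^{1/q-(p-1)\hat\gamma^2}$; since $p-1=1/(q-1)$ we can choose $q$ large (hence $p$ close to $1$) so that $1/q-(p-1)\hat\gamma^2=1/q-\hat\gamma^2/(q-1)>0$ — this holds for all $q$ large because $\hat\gamma^2<1$, and in fact the cleanest choice is to fix $p>1$ with $\gamma p$ still subcritical and verify $1/q>(p-1)\hat\gamma^2$, i.e.\ $(p-1)(1+\hat\gamma^2/? )<1$; a direct computation shows any $p-1$ small enough works. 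Taking $\sup_{|z|=r}$ is harmless since all bounds are uniform in $|z|=r$ and the sole $z$-dependence is through $\dd(z)=\dd(r)$.

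\emph{Main obstacle.} The delicate point is the interplay of exponents: one is simultaneously paying a price $\dd(z)^{-p\hat\gamma^2}$ from the moment of $|\Phi^*_{k,\infty}|^{\gamma p}$ (which forces $p$ close to $1$ to keep $\gamma p$ subcritical, and to keep the blow-up rate just above $\hat\gamma^2$) and gaining only $\dd(z)^{1/q}$ from the smallness of the event, where $1/q=1-1/p$ is itself small. One must check that the gain strictly beats the loss, i.e.\ that $1/q>(p-1)\hat\gamma^2$ is achievable together with $\gamma p\in(-\sqrt{2\beta},\sqrt{2\beta})$ (and, for $\gamma<0$, together with the finiteness condition $\gamma p>-1-\beta_0$ from Lemma~\ref{lem:BD}). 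Since $\hat\gamma^2<1$ strictly, the inequality $1/q>(p-1)\hat\gamma^2$ reads $(p-1)(1+\hat\gamma^2)<\hat\gamma^2\cdot 0+\dots$; concretely $1-1/p>(p-1)\hat\gamma^2$ becomes $(1-1/p)>(p-1)\hat\gamma^2$, i.e. $\frac{p-1}{p}>(p-1)\hat\gamma^2$, i.e. $1/p>\hat\gamma^2$, which holds for $p$ close enough to $1$ precisely because $\hat\gamma^2<1$. So the constraint is exactly $p<1/\hat\gamma^2$, which is compatible with $|\widehat{\gamma p}|<1$ (that needs $p<1/|\hat\gamma|$, a weaker constraint) and with $\gamma p>-1-\beta_0$ for $p$ near $1$. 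The secondary technical nuisance is handling the factor $1+\tfrac{1-\rho_k}{2}(\tfrac{\Psi^{\lambda_k*}_{k,\infty}}{\Phi^{\lambda_k*}_{k,\infty}}-1)$ relating $\Phi^*_\infty$ and $\Phi^{\lambda_k*}_{k,\infty}$, but on the complementary small-$\mathfrak{g}_k$ regime this factor is bounded above and below by deterministic constants (via Fact~\ref{fact:cara}), and on the large-$\mathfrak{g}_k$ regime it is subsumed into the Hölder estimate above; so this contributes only harmless constants depending on $k$.
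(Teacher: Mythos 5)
Your strategy is the same as the paper's: bound the conditional expectation by a H\"older/Markov split, estimate $\bP_k\{(1-\rho_k)|\mathfrak{g}_k|>\delta\}$ via Lemma~\ref{lem:gmom} and the Lipschitz bound \eqref{Lip}, and check that the gain from the small probability beats the loss from the higher moment of $|\Phi^*_{k,\infty}|$; your exponent arithmetic (the condition $1/p>\hat\gamma^2$, available precisely because $\hat\gamma^2<1$) matches the paper's choice of small $\epsilon_1,\epsilon_2,\epsilon_3$.

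There is, however, one genuine error in the middle step. You assert that $\Phi^*_{k,\infty}\equiv\Phi^*_\infty$ in law and hence evaluate $\bE_k|\Phi^*_{k,\infty}(z)|^{\gamma p}=\bE|\Phi^*_\infty(z)|^{\gamma p}=\dd(z)^{-p^2\hat\gamma^2}$ exactly via \eqref{mom}. This is false: $\Phi^*_{k,\infty}$ is built from the shifted Verblunsky coefficients $(\alpha_{k+j})_{j\ge0}$, whose Beta parameters $\beta_{k+j}$ differ from $\beta_j$, so the shifted limit does \emph{not} have the same distribution as $\Phi^*_\infty$ (only the rotation $\lambda\mapsto\Phi^{\lambda*}_{k,\infty}$ is distribution-preserving for fixed $k$). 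This is exactly why the paper proves Lemma~\ref{lem:bdshift} by a change of measure (the martingale $\mathcal{Q}_{k,n}$), obtaining only $\bE|\Phi^*_{k,\infty}(z)|^{\gamma}\le C_{k,\epsilon}\,\dd(z)^{-\hat\gamma^2(1+\epsilon)}$ rather than the sharp exponent. The gap is localized and repairable: substituting Lemma~\ref{lem:bdshift} into your H\"older step costs an extra factor $\dd(z)^{-p\hat\gamma^2\epsilon}$, which is absorbed since your final inequality $1/q>(p-1)\hat\gamma^2$ is strict. As written, though, the step rests on a distributional identity that does not hold.
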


We postpone the proof of Proposition~\ref{prop:asmom} and we now proceed to deduce that for 
$0\le \hat\gamma <1$ and $f\in L^1(\bU,\bR_+)$, 
\begin{equation} \label{mart2}
\bE_n\mu^{-\hat\gamma}(f)
\ge  \mu_{n}^{-\gamma}(f) =\int  \frac{|\Phi_n^*(u)|^{-\gamma}}{\bE |\Phi_n^*(1)|^{-\gamma}} f(u)\dd u 
\end{equation}
Observe that according to Lemma~\ref{lem:BD}, the RHS is well-defined if the condition $\hat\gamma <1$ holds. Moreover, like in the previous section, since both random variables have the same expectation, \eqref{mart2} suffices to prove of Proposition~\ref{prop:phi}. 

\medskip

Let $\delta\in (0,\frac12]$ and $n\in\bN$. We consider the events $\A(z;\delta):= \big\{(1-\rho_n(z))|\mathfrak{g}_n^{\lambda_n}(z)| \le\delta \big\}$ for $z\in\bD$. 
Since $\Re\mathfrak{g}_n^{\lambda_n}>-\frac12$, by \eqref{decoup2}, one has for $\gamma\ge 0$, 
\[\begin{aligned}
|\Phi^*_{\infty}|^{-\gamma} 
& =  |\Phi^*_n |^{-\gamma} |\Phi^{\lambda_n *}_{n,\infty}|^{-\gamma} \big|1+ (1-\rho_n)\mathfrak{g}_n^{\lambda_n} \big|^{-\gamma}  \\
&\ge (1+ \delta)^{-\gamma}  |\Phi^*_n |^{-\gamma} |\Phi^{\lambda_n *}_{n,\infty}|^{-\gamma}  \1_\A
\end{aligned}\]

Recall that $(\rho_n,\lambda_n)$ are $\F_n$ measurable and that, by rotation-invariance, conditionally on $\F_n$,  $(\Phi^{\lambda_n *}_{n,\infty}, \Psi^{\lambda_n*}_{n,\infty}) \equiv (\Phi^{*}_{n,\infty}, \Psi^{*}_{n,\infty})$ as processes. This implies that inside $\bD$, 
\[
\bE_n |\Phi^*_{\infty} |^{-\gamma} 
\ge (1+ \delta)^{-\gamma} |\Phi^*_n |^{-\gamma}  \bE_n\big[ |\Phi^{*}_{n,\infty} |^{-\gamma}  \1_{\A'}\big] 
\]
where $\A'(z;\delta):= \big\{(1-\rho_n(z))|\mathfrak{g}_n(z)| \le\delta \big\}$ for $z\in\bD$.
{\blue
Here, we used that by rotation-invariance ($\lambda_n\in\bU$ is $\F_n$-measurable), conditionally on $\F_n$, $(\Phi^{\lambda_n *}_{n,\infty},\mathfrak{g}_n^{\lambda_n})\equiv (\Phi^{*}_{n,\infty},\mathfrak{g}_n)$. 
Moreover, by Proposition~\ref{prop:asmom},
\[
\liminf_{r\to 1}\big( \dd(r)^{\hat\gamma^2}\bE_n\big[ |\Phi^{*}_{n,\infty} |^{-\gamma}  \1_{\A'}\big]  \big) = \liminf_{r\to 1}\big( \dd(r)^{\hat\gamma^2}\bE_n\big[ |\Phi^{*}_{n,\infty} |^{-\gamma}\big]  \big)=
\liminf_{r\to 1}\big( \dd(r)^{\hat\gamma^2}\bE|\Phi^{*}_{n,\infty} |^{-\gamma} \big)
\]
since $\Phi^{*}_{n,\infty}$ is independent of $\F_n$.}
Hence, as $|\Phi^*_n|$ is continuous and positive on $\overline{\bD}$, we obtain that almost surely; for $u\in\bU$, 
\begin{equation} \label{LBreal}
\liminf_{r\to 1}\big( \dd(r)^{\hat\gamma^2}\bE_n  |\Phi^*_{\infty}(ru)|^{-\gamma} \big)
\ge (1+ \delta)^{-\gamma} |\Phi^*_n(u) |^{-\gamma} \liminf_{r\to1}\big(  \dd(r)^{\hat\gamma^2} \bE |\Phi^{*}_{n,\infty}(r)|^{-\gamma}   \big).
\end{equation}
Now, by Remark~\ref{rk:UB}, for $\gamma\ge 0$ and $r<1$, 
\[
\dd(r)^{-\hat\gamma^2} =  \bE |\Phi_{\infty}^*(r)|^{-\gamma} \le \bE |\Phi_n^*(r)|^{-\gamma} \bE  |\Phi_{n,\infty}^{*}(r)|^{-\gamma} .
\]
Then, by Lemma~\ref{lem:BD},
\begin{equation} \label{UBreal}
 \bE  |\Phi^*_n(1)|^{-\gamma}
\liminf_{r\to1}\big(  \dd(r)^{\hat\gamma^2} \bE |\Phi^{*}_{n,\infty}(r)|^{-\gamma}   \big)  
\ge \liminf_{r\to1}\big(  \dd(r)^{\hat\gamma^2} \bE |\Phi^{*}_{n,\infty}(r)|^{-\gamma}  
\bE |\Phi_n^*(r)|^{-\gamma}  \big)  \ge 1 . 
\end{equation}
Combining \eqref{LBreal} (which holds for any $\delta>0$) with \eqref{UBreal}, this implies  that almost surely, for $u\in\bU$, 
\[
\liminf_{r\to 1}\big( \dd(r)^{\hat\gamma^2}\bE_n  |\Phi^*_{\infty}(ru)|^{-\gamma} \big)
\ge \frac{|\Phi^*_n(u) |^{-\gamma}}{ \bE  |\Phi^*_n(1)|^{-\gamma}} .
\]

Thus, by Fatou's Lemma, for $f\in L^1(\bT,\bR_+)$,  
\[
\liminf_{r\to 1} \bigg( \int 
\frac{\bE_n  |\Phi^*_{\infty}(ru)|^{-\gamma}}{\bE  |\Phi^*_{\infty}(r)|^{-\gamma}}  f(u) \dd u \bigg)
\ge \int \frac{|\Phi_n^*(u)|^{-\gamma}}{\bE |\Phi_n^*(1)|^{-\gamma}} f(\theta)\dd u
=\mu_{n}^{-\gamma}(f) .
\]
If $\hat\gamma <1$, using the GMC convergence \eqref{GMC},  the LHS equals $\bE_n \mu^{-\hat\gamma}(f) $ which concludes the proof of  \eqref{mart2}. 
\medskip

We now return to the proof of Proposition~\ref{prop:asmom}.
This relies on Lemma~\ref{lem:gmom} the following estimates for the moments of $\Phi_{k,\infty}^*$. 
The proof is based on a change of measure to \emph{shift the sequence of Verblunsky coefficients}. 
Compared to \eqref{mom}, we expect that a sharp estimates holds with $\epsilon=0$. 

\begin{lemma} \label{lem:bdshift}
Fix $\gamma\in\bR$. 
For any $k\in\bN$ and $\epsilon>0$, there exists $C_{k,\epsilon}$ so that for $z\in\bD$, 
\[
\bE |\Phi_{k,\infty}^*(z)|^{\gamma} \le C_{k,\epsilon} \dd(z)^{-\hat\gamma^2(1+\epsilon)}  . 
\]
\end{lemma}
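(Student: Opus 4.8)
The plan is to identify $\Phi_{k,\infty}^*$ with a fixed measurable functional of the \emph{shifted} Verblunsky sequence and to compare that sequence with the genuine C$\beta$E sequence via a change of measure, paying for the shift with an arbitrarily small loss in the exponent. Let $F$ be the map sending a sequence $(c_j)_{j\ge 0}\in\bD^{\bN_0}$ to $\lim_n\Phi_n^*$, the limiting starred OPUC associated with the coefficients $(c_j)$; by Lemma~\ref{lem:decoup} this limit exists almost surely, and by construction $\Phi_{k,\infty}^*=F\big((\alpha_{k+j})_{j\ge 0}\big)$ while $\Phi_\infty^*=\Phi_{0,\infty}^*=F\big((\alpha_{j})_{j\ge 0}\big)=e^{\varphi}$. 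Write $\bP_k$ for the law on $\bD^{\bN_0}$ of $(\alpha_{k+j})_{j\ge 0}$; it is the product measure whose $j$-th factor is the law of $\alpha_{k+j}$ (uniform phase, modulus-squared $\sim\mathrm{Beta}(\beta_{k+j},1)$ by \eqref{verb0}). Then $\bE|\Phi_{k,\infty}^*(z)|^\gamma=\int|F(\cdot)(z)|^\gamma\,\dd\bP_k$, whereas $\int|F(\cdot)(z)|^{\tau}\,\dd\bP_0=\bE e^{\tau\Re\varphi(z)}=\dd(z)^{-\tau^2/(2\beta)}$ for every $\tau\in\bR$ by \eqref{mom}.

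First I would establish that $\bP_k\ll\bP_0$ with an explicit density. Both are product measures of the same type, and by \eqref{verb0} the Radon--Nikodym derivative of their $j$-th marginals is $\frac{\beta_{k+j}}{\beta_j}(1-|\alpha_j|^2)^{\beta_{k+j}-\beta_j}=\frac{\beta_{k+j}}{\beta_j}(1-|\alpha_j|^2)^{\beta k/2}$. The Hellinger affinity of the $j$-th marginals equals $\frac{2\sqrt{\beta_j\beta_{k+j}}}{\beta_j+\beta_{k+j}}$, so $1-H_j=\frac{(\sqrt{\beta_{k+j}}-\sqrt{\beta_j})^2}{\beta_j+\beta_{k+j}}\le\frac{(\beta_{k+j}-\beta_j)^2}{(\beta_j+\beta_{k+j})^2}\le\frac{k^2}{4(j+1)^2}$, using $\beta_{k+j}-\beta_j=\beta k/2$ and $\beta_j+\beta_{k+j}\ge\beta(j+1)$; hence $\sum_j(1-H_j)<\infty$. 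By Kakutani's theorem on product measures, $\bP_k\sim\bP_0$ with density $M:=\prod_{j\ge 0}\frac{\beta_{k+j}}{\beta_j}(1-|\alpha_j|^2)^{\beta k/2}$, the product converging $\bP_0$-a.s.\ (alternatively one can avoid Kakutani by a direct truncation/martingale argument, but this is the cleanest route).

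Then I would change measure and apply H\"older's inequality: for an integer $q\ge 2$ with conjugate exponent $p=q/(q-1)$,
\[
\bE|\Phi_{k,\infty}^*(z)|^\gamma=\int|\Phi_\infty^*(z)|^\gamma\,M\,\dd\bP_0\le\big(\bE|\Phi_\infty^*(z)|^{\gamma p}\big)^{1/p}\big(\bE M^q\big)^{1/q}=\dd(z)^{-p\hat\gamma^2}\,\big(\bE M^q\big)^{1/q},
\]
where the last equality is \eqref{mom} with exponent $\gamma p$. Given $\epsilon>0$, picking the integer $q\ge(1+\epsilon)/\epsilon$ forces $p\le 1+\epsilon$, hence $\dd(z)^{-p\hat\gamma^2}\le\dd(z)^{-\hat\gamma^2(1+\epsilon)}$ because $\dd(z)\in(0,1]$ for $z\in\bD$. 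It remains to see $\bE M^q<\infty$. Since the factors of $M$ are independent under $\bP_0$ and $1-|\alpha_j|^2\sim\mathrm{Beta}(\beta_j,1)$ satisfies $\bE(1-|\alpha_j|^2)^t=\frac{\beta_j}{\beta_j+t}$, the $j$-th factor of $\bE M^q$ equals $\big(\frac{\beta_{k+j}}{\beta_j}\big)^q\frac{\beta_j}{\beta_j+q\beta k/2}=\frac{(1+k/(j+1))^q}{1+qk/(j+1)}=1+O_{k,q}\big((j+1)^{-2}\big)$, so, by Fatou's lemma on the partial products together with independence, $\bE M^q\le\prod_{j\ge 0}\big(1+O_{k,q}((j+1)^{-2})\big)<\infty$. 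This yields the claim with $C_{k,\epsilon}=(\bE M^q)^{1/q}$. The main obstacle is exactly this absolute-continuity and moment bookkeeping: the product $\prod_j\frac{\beta_{k+j}}{\beta_j}$ diverges on its own, and only the compensating weight $\prod_j(1-|\alpha_j|^2)^{\beta k/2}$ makes $M$ and all of its powers integrable --- the delicate point being the cancellation that turns $1+qk/(j+1)+O((j+1)^{-2})$ in the numerator against $1+qk/(j+1)$ in the denominator into $1+O((j+1)^{-2})$, which is what makes the infinite product converge.
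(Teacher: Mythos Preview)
Your proof is correct and follows essentially the same strategy as the paper: both identify $\Phi_{k,\infty}^*$ as $\Phi_\infty^*$ evaluated under the law of the shifted Verblunsky sequence, realize the shift as a change of measure with density $M=\prod_{j\ge 0}\frac{\beta_{k+j}}{\beta_j}(1-|\alpha_j|^2)^{\beta k/2}$ (this is exactly the paper's $\mathcal Q_{k,\infty}$), apply H\"older, and bound $\bE M^q$ by the same factor-by-factor computation yielding $1+O_{k,q}((j+1)^{-2})$. The only difference is cosmetic: the paper justifies existence and integrability of $M$ via an $L^2$-bounded martingale argument on the partial products, whereas you invoke Kakutani's theorem through a Hellinger-affinity estimate; both are standard routes to the same conclusion.
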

\begin{proof}
Let $\varkappa:=\sqrt{2/\beta}$ and fix $k\in\bN$. 
For C$\beta$E (Fact~\ref{fact:Verb}), for any $j\in\bN_0$, the random variable $( 1 - |\alpha_j|^2)^{1/\varkappa^2}$ has p.d.f. $x\in [0,1] \mapsto (j+1) x^j$.
Let for $n\in\bN_0$, 
\begin{equation} \label{Qmart}
\mathcal{Q}_{k,n} := \prod_{j=0}^{n-1} \frac{( 1 - |\alpha_j|^2)^{k/\varkappa^2}}{\bE( 1 - |\alpha_j|^2)^{k/\varkappa^2}}.
\end{equation}
Clearly $\big\{\mathcal{Q}_{k,n}\big\}_{n\in\bN_0}$ is a positive $\mathcal{F}_n$-martingale and  there is a numerical constant $c$ so that for any $\gamma \ge 0$, 
\[\begin{aligned} 
\bE \mathcal{Q}_{k,n}^\gamma &= \prod_{j=0}^{n-1} \bigg(\frac{j+k+1}{j+1} \bigg)^\gamma \bigg(\frac{j+1}{j+\gamma k+1} \bigg) \le  \prod_{j=0}^{n-1} \exp\bigg( \frac{\gamma^2k^2}{2(j+1)^2}\bigg) \le \exp \big( c \gamma^2 k^2 \big)  
\end{aligned}\]
using that by convexity, $(1+\lambda)^\gamma \le e^{\gamma\lambda}$ and $(1+\gamma\lambda)^{-1} \le e^{-\gamma\lambda+(\gamma\lambda)^2/2} $ for $              \lambda,\gamma\ge 0$. 

In particular, $\{\mathcal{Q}_{k,n}\}_{n\in\bN_0}$ is uniformly bounded in $L^2$ and  for $n\in\bN_0$, 
\[
\mathcal{Q}_{k,n} 
= \bE_{n} \mathcal{Q}_{k,\infty}  ,\qquad  
\mathcal{Q}_{k,\infty} = \lim_{n\to\infty} \mathcal{Q}_{k,n}  \, \text{(almost surely)}.
\]
Moreover, if $\gamma>1$, then as $n\to\infty$
\[
\bE \mathcal{Q}_{k,n}^\gamma \nearrow \bE \mathcal{Q}_{k,\infty}^\gamma  .
\]
The idea is to make a change of measure
\(
\frac{d\mathbb{Q}_k}{d\mathbb{P}} = \mathcal{Q}_{k,\infty} .
\)
The sequence $(\alpha_j)_{j\ge 0}$ under $\mathbb{Q}_k$  has the same law as $(\alpha_{j+k})_{j\ge 0}$ under $\mathbb{P}$. 
Then, for $\gamma\in\bR$ and $z\in\bD$, 
\[  \bE|\Phi^*_{k,\infty}(z) \big|^\gamma
= \bE\big[ |\Phi^*_{\infty}(z) \big|^\gamma \mathcal{Q}_{k,\infty} \big] .
\]
By H\"older's inequality and using that $\bE |\Phi^*_{\infty}(z)|^\gamma = \dd(z)^{-\hat\gamma^2}$, we have for any $\epsilon>0$, 
\[\begin{aligned}
\bE|\Phi^*_{k,\infty}(z) \big|^\gamma
& \le  \bE\big[ |\Phi^*_{\infty}(z) \big|^{\gamma(1+\epsilon)}\big]^{\frac{1}{1+\epsilon}} 
\bE\big[ \mathcal{Q}_{k,\infty}^{1+1/\epsilon} \big]^{\frac{\epsilon}{1+\epsilon}}  \\
& = C_{k,\epsilon} \dd(z)^{-\hat\gamma^2(1+\epsilon)} . \qedhere
\end{aligned}\]
\end{proof}

\medskip

\begin{proof}[Proof of Proposition~\ref{prop:asmom}]
We record a version of H\"older's inequality; if $Z$ is a non-negative random variable, for any event $\A$ and any $\epsilon>0$,
\begin{equation}\label{Holder0}
\bE[Z \1\{\A\}]^{1+\epsilon} \le \bE[Z^{1+\epsilon}] \bP[\A]^\epsilon . 
\end{equation}
Fix  $k\in\bN$, $\gamma\in\bR$,  $\delta>0$ and set $\Phi^*_{k, \infty}= \Phi^*_{k, \infty}(z)$, $\mathfrak{g}_k=\mathfrak{g}_k(z)$,
$\rho_k = \rho_k(z)$, $\dd=\dd(z)$  and $r=|z|$ for $z\in\bD$.\\
By \eqref{Holder0} and Markov's inequality, for any $\epsilon_1, \epsilon_2>0$, 
\[\begin{aligned}
\bE_k\big[ |\Phi^*_{k, \infty}|^\gamma \1\{(1-\rho_k)|\mathfrak{g}_k| >\delta \}\big]^{1+\epsilon_1}
&\le \bE_k\big[ |\Phi^*_{k, \infty}|^{\gamma(1+\epsilon_1)}\big]
\bP_k\big[ (1-\rho_k)|\mathfrak{g}_k| >\delta\big]^{\epsilon_1}\\
&\le \delta^{-(1+\epsilon_2)} \bE\big[ |\Phi^*_{k, \infty}|^{\gamma(1+\epsilon_1)}\big] \bE\big[ |\mathfrak{g}_k|^{1+\epsilon_2}\big]^{\epsilon_1} (1-\rho_k)^{(1+\epsilon_2)\epsilon_1}
\end{aligned}\]
where we used that $\rho_k$ is $\F_k$-measurable and  $(\Phi^*_{k, \infty},\mathfrak{g}_k)$ are independent of $\F_k$.
Then, using the estimates from Lemmas~\ref{lem:gmom} and~\ref{lem:bdshift}, there exists a constant $C=C_{k,\epsilon_1,\epsilon_2.\epsilon_3, \delta}$ so that for $\epsilon_1, \epsilon_2, \epsilon_3>0$, 
\[
\bE_k\big[ |\Phi^*_{k, \infty}|^\gamma \1\{(1-\rho_k)|\mathfrak{g}_k| >\delta \}\big]^{1+\epsilon_1}
\le C \dd^{-\hat\gamma^2(1+\epsilon_1)^2(1+\epsilon_3)- \epsilon_2 \epsilon_1}  (1-\rho_k)^{(1+\epsilon_2)\epsilon_1}.
\]
so that
\[
\dd^{\hat\gamma^2}\bE_k\big[ |\Phi^*_{k, \infty}|^\gamma \1\{(1-\rho_k)|\mathfrak{g}_k| >\delta \}\big]
\le C \dd^{-\hat\gamma^2 (\epsilon_1+\epsilon_3(1+ \epsilon_1))- \frac{\epsilon_2\epsilon_1}{1+\epsilon_1}}  (1-\rho_k)^{\frac{1+\epsilon_2}{1+\epsilon_1}\epsilon_1}.
\]

In the subcritical regime, $\hat\gamma^2<1$, we can choose small parameters $\epsilon_1, \epsilon_2, \epsilon_3>0$ so that
\[
\hat\gamma^2 (\epsilon_1+\epsilon_3(1+ \epsilon_1))+ \frac{\epsilon_2\epsilon_1}{1+\epsilon_1} < \frac{1+\epsilon_2}{1+\epsilon_1}\epsilon_1 .
\]
Hence, by \eqref{Lip} ($\rho_k$ is Lipschitz-continuous), we conclude that almost surely, 
\[
\lim_{r\to 1} \sup_{|z|=r} \Big\{\dd(r)^{-\hat\gamma^2 (\epsilon_1+\epsilon_3(1+ \epsilon_1))- \frac{\epsilon_2\epsilon_1}{1+\epsilon_1}}  (1-\rho_k(z))^{\frac{1+\epsilon_2}{1+\epsilon_1}\epsilon_1}\Big\} =0 . 
\]
This proves the claim. 
\end{proof}

\begin{remark}
The argument shows that one can take $\delta = \dd(r)^{\epsilon}$ for a small $\epsilon>0$ depending on $|\hat\gamma|$ in the statement of  Proposition~\ref{prop:asmom}. 
\end{remark}

\subsection{Convergence for imaginary part} \label{sec:imag}
In this section, we prove the second part of Proposition~\ref{prop:phi}. We rely on the same method as in Section~\ref{sec:real} and our goal is to show that, for $\gamma\in\bR$ with $|\hat\gamma|<1$ and $f\in L^1(\bU,\bR_+)$, 
\begin{equation} \label{mart3}
\bE_n\nu^{\hat\gamma}(f)
\ge  \nu_{n}^{\gamma}(f) =\int \frac{e^{\gamma \Im\varphi(u)}}{\bE e^{\gamma \Im\varphi_n(1)}} f(u)\dd u . 
\end{equation}
Since both random measures have the same expectation 
($\bE\nu^{\hat\gamma}(f)=\bE\nu_{n}^{\gamma}(f)= \nu^0(f)$ for $|\hat\gamma|<1$), this bound suffices to conclude that 
$\bE_n\nu^{\hat\gamma}(f) = \nu_{n}^{\gamma}(f)$ (almost surely). 

\medskip

We need the counterpart of Lemma~\ref{lem:bdreal} for the imaginary part. 

\begin{lemma} \label{lem:bdim}
There is a (deterministic) constant $C$ (depending only on $\beta$) so that for any $\gamma\in\bR$, with $|\hat\gamma|<1$, it holds (almost surely) for any $n\in\bN$ and $z\in\bD$, 
\[
0\le  e^{\gamma\Im\varphi_n(z)}  \bE\big[e^{\gamma \Im \varphi_{n,\infty}(z)}\big] -\bE_n\big[e^{\gamma\Im \varphi(z)}\big]  \le C \sqrt{1-\rho_n(z)}  \bE\big[e^{\gamma \Im \varphi_{n,\infty}(z)}\big]  
\]
where $\varphi_{n,\infty} := \log \Phi^*_{n,\infty}$. 
\end{lemma}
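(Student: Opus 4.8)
The plan is to mimic the proof of Lemma~\ref{lem:bdreal} but now tracking the imaginary part of the logarithm rather than the modulus. Starting from the decoupling identity \eqref{decoup2}, we have
\[
\Phi_{\infty}^* = \Phi^*_k \Phi^{\lambda_k*}_{k,\infty}\bigg(1+ \tfrac{1-\rho_k}2\Big(\tfrac{\Psi^{\lambda_k *}_{k,\infty}}{\Phi^{\lambda_k *}_{k, \infty}}-1\Big)\bigg),
\]
so taking imaginary parts of logarithms,
\(
\Im\varphi(z) = \Im\varphi_n(z) + \Im\varphi^{\lambda_n}_{n,\infty}(z) + \Im\log\big(1+(1-\rho_n(z))\mathfrak{g}_n^{\lambda_n}(z)\big),
\)
with $\mathfrak g_n^{\lambda}$ as in Fact~\ref{fact:cara} (so $\Re\mathfrak g_n^\lambda>-\tfrac12$, which keeps the argument of $\log$ away from the branch cut when $1-\rho_n\le 2$, i.e.\ always). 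The key structural point, exactly as in Section~\ref{sec:real}, is that $(\Phi_n^*,\rho_n,\lambda_n)$ are $\F_n$-measurable while $(\Phi_{n,\infty}^{\lambda_n*},\Psi_{n,\infty}^{\lambda_n*})$ are, conditionally on $\F_n$, distributed as $(\Phi_{n,\infty}^*,\Psi_{n,\infty}^*)$ and independent of $\F_n$ by rotation-invariance. Hence
\[
\bE_n\big[e^{\gamma\Im\varphi(z)}\big] = e^{\gamma\Im\varphi_n(z)}\,\bE\Big[e^{\gamma\Im\varphi_{n,\infty}(z)}\,\big|1+(1-\rho)\mathfrak g_n(z)\big|^{?}\cdots\Big]
\]
— more precisely, writing $w=(1-\rho_n(z))\mathfrak g_n(z)$, the extra factor is $e^{\gamma\Im\log(1+w)}$, and after integrating the phase $\lambda_n$ over $\bU$ the dependence is only through $|\mathfrak g_n(z)|$, producing an analogue of \eqref{Theta} with $\mathrm F_\gamma$ replaced by an averaged kernel $\mathrm G_\gamma(t) := \int_\bU e^{\gamma\,\Im\log(1+ u t)}\,\dd u$ for $t\in[0,1)$.

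The second step is to control $\mathrm G_\gamma$. One has $\mathrm G_\gamma(0)=1$ and $\mathrm G_\gamma$ is smooth on $[0,1)$; the essential input is that $\mathrm G_\gamma$ extends continuously to $t=1$ with finite one-sided derivative $\mathrm G_\gamma'(1)<\infty$. Here the gain over the real-part case is that $\Im\log(1+ut)$ stays bounded by $\pi/2$ uniformly in $t\in[0,1]$ and $u\in\bU$, so $\mathrm G_\gamma(1)\le e^{|\gamma|\pi/2}<\infty$ trivially and no subcriticality is needed merely for finiteness; the condition $|\hat\gamma|<1$ enters only through the normalization $\bE e^{\gamma\Im\varphi_{n,\infty}}$. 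Using $0\le \mathrm G_\gamma(t)-\mathrm G_\gamma(\rho t)\le \mathrm G_\gamma'(1)(1-\rho)\,\sup_{[0,1]}\mathrm G_\gamma \le C(1-\rho)$ — and I expect in fact a $\sqrt{1-\rho}$ bound is what is really available, because near $t=1$ the map $u\mapsto\Im\log(1+u)$ has a logarithmic-type singularity that, after the $u$-average, yields Hölder-$\tfrac12$ rather than Lipschitz behaviour of $\mathrm G_\gamma$ in $t$ at the endpoint; this is precisely why the stated bound carries $\sqrt{1-\rho_n(z)}$ and not $(1-\rho_n(z))$. Combining with \eqref{Theta1}-type identities, conditioning on $\F_n$ and using that $\bE e^{\gamma\Im\varphi_{n,\infty}^{\lambda_n}}=\bE e^{\gamma\Im\varphi_{n,\infty}}$ by rotation-invariance, gives the chain
\[
\big(1-C\sqrt{1-\rho_n(z)}\big)\,e^{\gamma\Im\varphi_n(z)}\,\bE e^{\gamma\Im\varphi_{n,\infty}(z)}\ \le\ \bE_n e^{\gamma\Im\varphi(z)}\ \le\ e^{\gamma\Im\varphi_n(z)}\,\bE e^{\gamma\Im\varphi_{n,\infty}(z)},
\]
which is the assertion after rearranging. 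The upper bound follows just from $\mathrm G_\gamma(\rho t)\le \mathrm G_\gamma(t)$ when $\mathrm G_\gamma$ is monotone in the relevant range, or more robustly from Jensen after writing $e^{\gamma\Im\log(1+w)}$ as a harmonic function of $w$ evaluated along $\rho$.

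The main obstacle I anticipate is the endpoint regularity of $\mathrm G_\gamma$ and getting the correct $\sqrt{1-\rho}$ rate: one must carefully analyze $\int_\bU e^{\gamma\Im\log(1+ut)}\,\dd u$ as $t\uparrow 1$, isolating the neighbourhood of $u=-1$ where $1+ut$ is small, splitting the $u$-integral there, and showing the contribution is $O(\sqrt{1-t})$ rather than $O(1-t)$ because $\Im\log(1+ut)$ jumps across $u=-1$ as $t\to1$. A clean way is to note $\Im\log(1+ut) = \Im\log(1+u) + \Im\log\big(1-\tfrac{(1-t)u}{1+u}\big)$ when both logs are defined, then Taylor-expand the second term and estimate $\bE_\bU\big|\tfrac{u}{1+u}\big|^p$, which is finite for $p<1$ — this is what forces the Hölder exponent and hence the square root. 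A secondary point, needed to pass from the pointwise-in-$z$ estimate to something usable in Section~\ref{sec:imag}, is that the constant $C$ is deterministic (depends only on $\beta$), which holds because $\mathrm G_\gamma'(1)$ and $\sup\mathrm G_\gamma$ are deterministic functions of $\gamma$; the only randomness, namely $\rho_n(z)$ and the processes $\Phi_{n,\infty}^*,\Psi_{n,\infty}^*$, is either pulled out as the $\F_n$-measurable prefactor or absorbed into the (deterministic) expectation $\bE e^{\gamma\Im\varphi_{n,\infty}(z)}$ via independence.
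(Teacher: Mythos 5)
Your proposal follows essentially the same route as the paper: the decoupling identity, rotation-invariance to pull out the $\F_n$-measurable factor, averaging the phase over $\bU$ to reduce everything to the kernel $\mathrm{G}_\gamma(r)=\int_{\bU}e^{\gamma\Im\log(1+ru)}\,\dd u$, and then monotonicity for the upper bound plus endpoint regularity of $\mathrm{G}_\gamma$ for the lower bound. The only real divergence is in how that regularity is obtained — the paper expands $\mathrm{G}_\gamma$ as a power series with uniformly bounded nonnegative coefficients and deduces $\alpha$-H\"older continuity for \emph{every} $\alpha<1$ (so the $\sqrt{1-\rho_n}$ in the statement is a convenient choice of $\alpha=\tfrac12$, not a genuine H\"older-$\tfrac12$ barrier near $u=-1$ as you suggest) — but your direct analysis of the integral would also suffice.
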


\begin{proof}
Here $n\in\bN$ is fixed. 
We start by taking the $\log(\cdot)$ of \eqref{decoup2}, we claim that inside $\bD$, 
\[
\varphi = \varphi_n +
\log\big( \tfrac{\Phi_{n,\infty}^{*}+\Psi_{n,\infty}^{*}}{2} + \rho_n \lambda_n  
\tfrac{\Phi_{n,\infty}^{*}-\Psi_{n,\infty}^{*}}{2}  \big) 
\]
with the interpretation that, 
\begin{equation} \label{decoup3}
\log\big( \tfrac{\Phi_{n,\infty}^{*}+\Psi_{n,\infty}^{*}}{2} + \rho_n \lambda_n  \tfrac{\Phi_{n,\infty}^{*}-\Psi_{n,\infty}^{*}}{2}  \big) 
=  \log \Phi^*_{n,\infty}
+ \log\big(1+\mathfrak{g}_n\big)
+ \log\big(1+  \rho_n \lambda_n \tfrac{\Phi_{n,\infty}^{*}-\Psi_{n,\infty}^{*}}{\Phi_{n,\infty}^{*}+\Psi_{n,\infty}^{*}}\big) . 
\end{equation}
In \eqref{decoup3}, by Fact~\ref{fact:cara}, all $\log(\cdot)$ are well-defined and vanish at $z=0$. 

\medskip

Fix $\gamma\in\bR$. 
We define for $\rho\in[0,1]$ and $z\in\bD$, 
\[
\Upsilon_n (\rho;z) : = \bE_n \Big[\exp\Big(\gamma \Im \log\big( \tfrac{\Phi_{n,\infty}^{*}(z)+\Psi_{n,\infty}^{*}(z)}{2} + \rho \lambda_n(z)  
\tfrac{\Phi_{n,\infty}^{*}(z)-\Psi_{n,\infty}^{*}(z)}{2}  \big)\Big)\Big] 
\]
where the $\log(\cdot)$ is interpreted as in \eqref{decoup3}. 
In particular, since $\rho_n$ is $\F_n$ measurable, it holds for $z\in\bD$, 
\begin{equation}  \label{Upsilon0}
\bE_ne^{\gamma\Im \varphi(z)} = e^{\gamma\Im\varphi_n(z)}
\Upsilon_n (\rho_n(z);z) .
\end{equation}

Observe that using the relationships \eqref{ulinearity}, by rotation-invariance, one has
\begin{equation} \label{Upsilon1}
\Upsilon_n (1;\cdot) = \bE_n\Big[\exp\big(\gamma \Im \log \Phi_{n,\infty}^{ \lambda_n *}\big) \Big]
=  \bE_n\Big[\exp\big(\gamma \Im \log \Phi_{n,\infty}^{*}\big) \Big]
= \bE\Big[\exp\big(\gamma \Im \varphi_{n,\infty}\big) \Big] .
\end{equation}
At last, we used $\Phi_{n,\infty}^{*}$ is independent from $\F_n$ and 
$\log \Phi_{n,\infty}^{*} = \varphi_{n,\infty}$ is well-defined in $\bD$. 

Similarly, for every $u\in\bU$, $(\Phi_{k,n}^{u*},\Psi_{k,n}^{u*})_{n\ge 0}$ are  independent of $\F_k$ and
$(\Phi_{k,n}^{u*},\Psi_{k,n}^{u*})_{n\ge 0} \equiv (\Phi_{k,n}^{*}, \Psi_{k,n}^{*})_{n\ge 0}$ so that for $k\in\bN$, 
\[\begin{aligned}
\Upsilon_k(\rho;\cdot)
&= \bE_k\Big[\exp\big(\gamma \Im \log\big( \tfrac{\Phi_{k,\infty}^{u*}+\Psi_{k,\infty}^{u*}}{2} + \rho \lambda_k
\tfrac{\Phi_{k,\infty}^{u*}-\Psi_{k,\infty}^{u*}}{2}  \big)\big)\Big]  \\
& =  \bE_k\Big[\exp\Big(\gamma \Im \log\big( \tfrac{\Phi_{k,\infty}^{*}+\Psi_{k,\infty}^{*}}{2}\big) + \Im \log\big( 1+ \rho \lambda_nu
\tfrac{\Phi_{k,\infty}^{*}-\Psi_{k,\infty}^{*}}{\Phi_{k,\infty}^{*}+\Psi_{k,\infty}^{*}} \big)\Big)\Big] 
\end{aligned}\]
using \eqref{decoup3} with 
$\log\big( \tfrac{\Phi_{k,\infty}^{*}+\Psi_{k,\infty}^{*}}{2}\big) 
= \log \Phi^*_{k,\infty}+ \log\big(1+\mathfrak{g}_k\big)$. 
Then, averaging over $u\in\bU$, we obtain
\begin{equation} \label{Upsilon2}
\Upsilon_n (\rho;\cdot)
=  \bE_n\Big[\exp\big(\gamma \Im \log\big( \tfrac{\Phi_{n,\infty}^{*}+\Psi_{n,\infty}^{*}}{2}\big) \mathrm{G}_\gamma\Big(\rho
\big|\tfrac{\Phi_{n,\infty}^{*}-\Psi_{n,\infty}^{*}}{\Phi_{n,\infty}^{*}+\Psi_{n,\infty}^{*}}\big|\Big)\Big]
\end{equation}
where, for $\gamma\in\bR$,
\begin{equation}\label{G1}
\mathrm{G}_\gamma : r\in [0,1)  \mapsto \int_{\bU} \exp\big(\gamma  \Im \log( 1+ ru)  \big) \dd u . 
\end{equation}
We record the following properties of the function $\mathrm{G}$. 

\begin{fact} \label{fact:G}
For $\gamma\in\bR$, the function $\mathrm{G}_\gamma : [0,1) \to\bR_+$, is smooth, bounded, increasing, convex, with $\mathrm{G}_\gamma(0)=1$. 
For $\alpha<1$ and $|\hat\gamma|<1$,  $\mathrm{G}_\gamma$ is $\alpha$-H\"older continuous with a constant $C$ depending only on $\alpha,\beta$. 
\end{fact}

We postpone the (elementary) proof of Fact~\ref{fact:G} to the end of this section and we record the following consequences for \eqref{Upsilon2}. 
Since $\mathrm{G}_\gamma$ is increasing, by \eqref{Upsilon0}, one has for $z\in\bD$
\[
\bE_ne^{\gamma\Im \varphi (z)} \le e^{\gamma\Im\varphi_n(z)}
\Upsilon_n (1;z) 
\]
Moreover, since $\mathrm{G}_\gamma$ is H\"older-continuous, one has for $\rho\in[0,1]$, 
\[
\mathrm{G}_\gamma\Big(\rho
\big|\tfrac{\Phi_{k,\infty}^{*}-\Psi_{k,\infty}^{*}}{\Phi_{k,\infty}^{*}+\Psi_{k,\infty}^{*}}\big|\Big) \ge \mathrm{G}_\gamma\Big(
\big|\tfrac{\Phi_{k,\infty}^{*}-\Psi_{k,\infty}^{*}}{\Phi_{k,\infty}^{*}+\Psi_{k,\infty}^{*}}\big|\Big) - C\sqrt{1-\rho} .
\]
Since $\mathrm{G}_\gamma(0)=1 \le \mathrm{G}_\gamma(1) $, this implies that for some deterministic constant $C$, 
\[
\Upsilon_n (\rho_n;\cdot) \ge \Upsilon_n (1;\cdot)\big(1 - C\sqrt{1-\rho_n} \big)
\]
and, by \eqref{Upsilon0}, one has for $z\in\bD$, 
\[
\bE_n e^{\gamma\Im \varphi(z)} \ge e^{\gamma\Im\varphi_n(z)}
 \Upsilon_n (1;z)\big(1-C\sqrt{1-\rho_n(z)}\big). 
\]
Combining these bounds, according to \eqref{Upsilon1}, this completes the proof.
\end{proof}

We return to the proof of \eqref{mart3}, which is straightforward using 
Lemma~\ref{lem:bdim}. 
Recall that for $\gamma\in\bR$ and $r<1$, $\bE e^{\gamma\Im \varphi(r)}  = \dd(r)^{-\hat\gamma}$. 
Using the lower-bound and taking expectation, we obtain
\begin{equation} \label{LBim}
1 \le  \liminf_{r\to\infty} \big( \dd(r)^{\hat\gamma}\bE\big[e^{\gamma \Im \varphi_{n,\infty}(r)}\big] \bE\big[e^{\gamma\Im\varphi_n(r)}\big] \big)  . 
\end{equation} 

Then, using the upper-bound and that 
$\bE\big[e^{\gamma\Im\varphi_n(r)}\big] \ge 1$ (by Jensen's inequality since $\bE \varphi_n(z) = \varphi_0(z) =0$ for $z\in\bD$), one has  for $f\in L^1(\bU,\bR_+)$,
\[
\int  \frac{\bE_n e^{\gamma\Im \varphi(ru)}}{\bE e^{\gamma\Im \varphi(r)}} f(u) \dd u 
\ge  \dd(r)^{\hat\gamma}\bE\big[e^{\gamma \Im \varphi_{n,\infty}(r)}\big] \bE\big[e^{\gamma\Im\varphi_n(r)}\big]\bigg( \int  \frac{ e^{\gamma\Im \varphi_n(ru)}}{\bE e^{\gamma\Im \varphi_n(r)}} f(u) \dd u 
-C\int \sqrt{1-\rho_n(ru)}  f(u) \dd u\bigg) .
\]
If $\hat\gamma<1$, using the GMC convergence (see \eqref{GMC}), the RHS converges almost surely to $\bE_n\nu^{\hat\gamma}(f)$ as $r\to 1$. 
Moreover, since $\varphi_n$ is (uniformly) continuous on $\overline\bD$, one has almost surely, as $r\to1$,
\[
\int  \frac{ e^{\gamma\Im \varphi_n(ru)}}{\bE e^{\gamma\Im \varphi_n(r)}} f(u) \dd u  \to \nu_n^\gamma(f). 
\] 
Thus, since $\rho_n(ru) \to 1$ uniformly for $u\in \bU$ (almost surely) as $r\to1$, the error converges to 0. By \eqref{LBim}, we conclude that 
\[
\bE_n\nu^{\hat\gamma}(f) \ge \int  \frac{ e^{\gamma\Im \varphi_n(u)}}{\bE e^{\gamma\Im \varphi_n(1)}} f(u) \dd u
\]
as required.  This completes the whole proof of Proposition~\ref{prop:phi}. 

\medskip

To finish this section, we prove Fact~\ref{fact:G} which is a simple analysis exercise. 

\begin{proof}[Proof of Fact~\ref{fact:G}]
Let $\gamma\in\bR$ and $\kappa = \gamma/2\i$. 
According to \eqref{G1}, one has for $r<1$, 
\[\begin{aligned}
\mathrm{G}_\gamma(r)  &=\int_{\bU} \exp\big(\kappa\log( 1 + ru)\big)\exp\big(-\kappa\log( 1 + r\overline{u})\big) \dd u \\
&=\int_{\bU} \frac{(1+ru)^\kappa}{(1+r\overline{u})^{\kappa}}\, \dd u . 
\end{aligned}\]
Using the binomial formula $(1+z)^\kappa = \sum_{k=0}^\infty {\kappa \choose k} z^k$, for $\kappa\in\bC$,  where both sides are analytic for $z\in\bD$, and  rotation-invariance of the uniform measure on $\bU$, we obtain for $r<1$, 
\[
\mathrm{G}_\gamma(r) = \sum_{k\ge 0}  {\kappa \choose k}   {-\kappa \choose k} r^{2k} .
\]
The coefficients ${\kappa \choose 0}=1$, {\blue ${\kappa \choose 1}=\kappa$ and for $k\in\bN_{\ge 2}$,}
\[
{\kappa \choose k} {-\kappa \choose k}
=  - \frac{\kappa(1-\kappa)\cdots(1-\kappa/(k-1))\kappa(1+\kappa)\cdots(1+\kappa/(k-1))}{k^2}
= - \frac{\kappa^2(1-\kappa^2)\cdots(1-\kappa^2/(k-1)^2)}{k^2} .
\]
Here $\kappa^2 = -\gamma^2/4 = -\hat\gamma^2 \frac\beta2$, so that for $r\in[0,1]$
\begin{equation}\label{G2}
\mathrm{G}_\gamma(r) = 1+ \sum_{k\ge 1}  \frac{c_{k-1}(\gamma)}{k^2} r^{2k} , \qquad 
\begin{cases}
c_0(\gamma) = \frac{\beta\hat\gamma^2}{2} \\
c_k(\gamma)= \frac{\beta\hat\gamma^2}{2}(1+\frac{\beta\hat\gamma^2}{2})(1+\frac{\beta\hat\gamma^2}{2\cdot4})\cdots(1+\frac{\beta\hat\gamma^2}{2\cdot k^2}) , &k\ge 1
\end{cases} .
\end{equation}
In particular, $0\le c_k(\gamma) \le c_\beta =\frac\beta2 \exp \frac\beta2$ for all $\gamma\in\bR$ with $|\hat\gamma|\le 1$. 

Formula \eqref{G2} implies that $\mathrm{G}_\gamma$, is smooth on $[0,1)$, increasing, convex, bounded on $[0,1]$  with $\mathrm{G}_\gamma(0)=1$. 
The fact that $\mathrm{G}_\gamma$ is $\alpha$-H\"older continuous is a consequence of the following observation. 
{\blue If $\alpha\in[0,1]$ and $r\in[0,1]$, one has for any  $k\in\bN$,
\begin{equation*} 
1- r^k \le \min( k(1-r),1) \le \min( k(1-r),1)^\alpha \le k^\alpha (1-r)^\alpha .  
\end{equation*}
Then, if $\alpha<1$, 
\[
0\le \mathrm{G}_\gamma(r) - \mathrm{G}_\gamma(\rho r) =   \sum_{k\ge 1}  \frac{c_{k-1}(\gamma)}{k^2} r^{2k}(1-\rho^{2k})  
\le (1-\rho)^\alpha \sum_{k\ge 1}  \frac{c_\beta}{k^{2-\alpha}} r^{2k}  \lesssim r^2 (1-\rho)^\alpha
\]
where the implied constant depends only on $\alpha,\beta$.}
\end{proof}

\section{Convergence for the characteristic polynomials} \label{sec:char}

\subsection{Proof of Theorem~\ref{thm:charpoly}} 
The main goal of this section will be to prove the following results. 

\begin{proposition} \label{prop:charpoly}
Let $f\in L^1(\bT,\bR)$ and $\gamma \in\bR$ with $|\hat\gamma|<1$. For a small $\delta>0$ $($depending on $|\hat\gamma|<1$$)$,
it holds in $\bL^{1+\delta}$ as $n\to\infty$, 
\[
\int_{\bT}\frac{e^{\gamma \mathcal{Y}_n(\theta)}}{\bE e^{\gamma \mathcal{Y}_n(0)}} f(\theta)\frac{\dd\theta}{2\pi}\to \nu^{\hat\gamma}(f)   
\qquad\text{and}\qquad
\int_{\bT} \frac{|\mathcal{X}_n(e^{\i\theta})|^\gamma}{\bE|\mathcal{X}_n(1)|^\gamma} f(\theta) \frac{\dd\theta}{2\pi} \to \mu^{\hat\gamma}(f) \quad\text{if $\gamma \ge 0$}. 
\]
The second limit still holds in probability if $\gamma >-1$. 
\end{proposition}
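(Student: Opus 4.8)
The plan is to reduce Proposition~\ref{prop:charpoly} to the martingale convergence of Theorem~\ref{thm:phi} (equivalently \eqref{cvg}) together with the Riemann--Lebesgue type decay \eqref{Fourier}, which I take for granted here in the form of Proposition~\ref{prop:RL}: for every $\kappa\in\bZ\setminus\{0\}$ and $g\in L^1(\bT)$, $\int_{\bT} e^{\i\kappa\varpi_n(\theta)}g(\theta)\,\mu_n^{\gamma}(\dd\theta)\to 0$ in probability as $n\to\infty$, and likewise with $\nu_n^{\gamma}$ in place of $\mu_n^{\gamma}$. Working with index $n+1$ (harmless for the limit), \eqref{charpoly3} gives $\mathcal{X}_{n+1}(e^{\i\theta})=e^{\varphi_n(e^{\i\theta})}\bigl(1-e^{\i\eta+\i\varpi_n(\theta)}\bigr)$, and since $\mathrm{h}(\phi)=\lim_{r\to1}\Im\log(1-re^{\i\phi})$ this yields $\mathcal{Y}_{n+1}=\psi_n+\mathrm{h}(\eta+\varpi_n)$; writing $\mathrm f(\phi)=|1-e^{\i\phi}|^\gamma$ one obtains
\[
\frac{|\mathcal{X}_{n+1}(e^{\i\theta})|^\gamma}{\bE|\mathcal{X}_{n+1}(1)|^\gamma}\,\frac{\dd\theta}{2\pi}
=\frac{\mathrm f(\eta+\varpi_n(\theta))}{\mathrm f_0}\,\mu_n^{\gamma}(\dd\theta),
\qquad
\frac{e^{\gamma\mathcal{Y}_{n+1}(\theta)}}{\bE e^{\gamma\mathcal{Y}_{n+1}(0)}}\,\frac{\dd\theta}{2\pi}
=\frac{e^{\gamma\mathrm{h}(\eta+\varpi_n(\theta))}}{\mathrm{h}_0}\,\nu_n^{\gamma}(\dd\theta),
\]
with $\mathrm f_0=\int_\bT \mathrm f\,\tfrac{\dd\phi}{2\pi}\in(0,\infty)$ for $\gamma>-1$ and $\mathrm{h}_0=\int_\bT e^{\gamma\mathrm{h}}\,\tfrac{\dd\phi}{2\pi}\in(0,\infty)$. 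These identities, and the normalisations $\bE|\mathcal{X}_{n+1}(1)|^\gamma=\mathrm f_0\,\bE|\Phi_n^*(1)|^\gamma$ and $\bE e^{\gamma\mathcal{Y}_{n+1}(0)}=\mathrm{h}_0\,\bE e^{\gamma\psi_n(0)}$ (finite by Lemma~\ref{lem:BD}, resp.\ by boundedness of $\psi_n$), follow by conditioning on $\F_n$: $\varpi_n(\theta)$ is $\F_n$-measurable while $\eta$ is uniform and independent of $\F_n$, so $\eta+\varpi_n(\theta)$ is uniform on $\bT$ and hence $\bE_n[\mathrm f(\eta+\varpi_n(\theta))]=\mathrm f_0$, $\bE_n[e^{\gamma\mathrm{h}(\eta+\varpi_n(\theta))}]=\mathrm{h}_0$ for all $\theta$.

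Thus it suffices to prove the following claim, in which $\lambda_n\in\{\mu_n^{\gamma},\nu_n^{\gamma}\}$, $\lambda^{\hat\gamma}$ is the corresponding GMC and $|\hat\gamma|<1$ (together with $\gamma>-1$ in the $\mu$ case): for $m\in L^1(\bT,\bR_+)$ with $m_0:=\int_\bT m\,\tfrac{\dd\phi}{2\pi}>0$ and $f\in L^1(\bT,\bR)$,
\[
m_0^{-1}\int_{\bT} m(\eta+\varpi_n(\theta))\,f(\theta)\,\lambda_n(\dd\theta)\ \longrightarrow\ \lambda^{\hat\gamma}(f)\quad\text{in probability as }n\to\infty,
\]
and the convergence holds in $\bL^{1+\delta}$ for some $\delta>0$ if moreover $m\in L^\infty(\bT)$. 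To prove the convergence in probability I would fix $\epsilon>0$, choose a trigonometric polynomial $P=\sum_{|\kappa|\le M}p_\kappa e^{\i\kappa\cdot}$ with $\|m-P\|_{L^1(\bT)}<\epsilon$ (density of trigonometric polynomials, e.g.\ a Fej\'er mean), note $|p_0-m_0|<\epsilon$, and split $m=P+(m-P)$. The polynomial part is $\sum_{|\kappa|\le M}p_\kappa e^{\i\kappa\eta}\int_\bT e^{\i\kappa\varpi_n(\theta)}f(\theta)\,\lambda_n(\dd\theta)$, whose $\kappa=0$ term tends to $p_0\,\lambda^{\hat\gamma}(f)$ by \eqref{cvg} and whose finitely many $\kappa\ne0$ terms tend to $0$ by Proposition~\ref{prop:RL}. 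For the remainder, conditioning on $\F_n$ gives $\bE_n\bigl[\,|m-P|(\eta+\varpi_n(\theta))\,\bigr]=\|m-P\|_{L^1(\bT)}$ for every $\theta$, so, using $\bE\lambda_n(\dd\theta)=\tfrac{\dd\theta}{2\pi}$,
\[
\bE\Bigl|\int_{\bT}(m-P)(\eta+\varpi_n(\theta))\,f(\theta)\,\lambda_n(\dd\theta)\Bigr|
\le\|m-P\|_{L^1(\bT)}\,\bE\,\lambda_n(|f|)
=\|m-P\|_{L^1(\bT)}\,\|f\|_{L^1(\bT)}<\epsilon\,\|f\|_{L^1(\bT)}.
\]
Combining these bounds with Markov's inequality and the a.s.\ finiteness of $\lambda^{\hat\gamma}(f)$, one gets that for every $\delta'>0$ the quantity $\limsup_n\bP\bigl(\,|m_0^{-1}\int m(\eta+\varpi_n)f\lambda_n-\lambda^{\hat\gamma}(f)|>\delta'\bigr)$ vanishes as $\epsilon\downarrow0$; hence the stated convergence in probability.

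For the $\bL^{1+\delta}$ upgrade, when $m\in L^\infty$ one has $|m_0^{-1}\int m(\eta+\varpi_n)f\lambda_n|\le m_0^{-1}\|m\|_\infty\,\lambda_n(|f|)=m_0^{-1}\|m\|_\infty\,\bE_n\lambda^{\hat\gamma}(|f|)$ by Proposition~\ref{prop:phi}, and conditional Jensen together with the $\bL^q$-bound of \eqref{GMC} (valid for $q<\hat\gamma^{-2}$) shows this is bounded in $\bL^{1+\delta}$ uniformly in $n$ for any $1+\delta<\hat\gamma^{-2}$; combined with the convergence in probability and uniform integrability this gives $\bL^{1+\delta'}$ convergence for every $\delta'<\delta$. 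It then remains to specialise: for the counting field $m=e^{\gamma\mathrm{h}}$ is bounded by $e^{|\gamma|\pi/2}$, so the convergence to $\nu^{\hat\gamma}(f)$ holds in $\bL^{1+\delta}$ for all $|\hat\gamma|<1$; for the modulus $m=\mathrm f$ is bounded by $2^\gamma$ when $\gamma\ge0$ (hence $\bL^{1+\delta}$), while for $-1<\gamma<0$ one only has $\mathrm f\in L^1(\bT)$ with $\mathrm f_0>0$, giving convergence in probability. Relabelling $n+1\rightsquigarrow n$ yields Proposition~\ref{prop:charpoly}, and Theorem~\ref{thm:charpoly} then follows by the usual separability argument for weak convergence of random measures on $\bT$ (convergence of $\lambda_n(f)$ for $f$ in a countable dense subset of $C(\bT)$ plus tightness of the total masses).

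The genuinely hard ingredient is Proposition~\ref{prop:RL} itself, i.e.\ the vanishing of the oscillatory integrals $\int e^{\i\kappa\varpi_n(\theta)}g(\theta)\,\mu_n^{\gamma}(\dd\theta)$; this is proved separately by a second moment estimate exploiting the branching structure of the Pr\"ufer phases, as outlined in Subsection~\ref{sec:strat}, and is the real obstacle. Within the reduction above the only slightly delicate points are the uniform-in-$n$ $\bL^{1+\delta}$ bound underpinning the $\bL^{1+\delta}$ statements (which needs both $m\in L^\infty$ and $1+\delta<\hat\gamma^{-2}$) and keeping careful track of which integrability is actually available — this is exactly what forces one to settle for convergence in probability in the regime $-1<\gamma<0$, where $\mathrm f$ is merely integrable.
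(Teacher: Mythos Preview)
Your reduction is correct and matches the paper's own argument essentially line by line: the same factorisation via \eqref{charpoly3} and \eqref{Y1}, the same density argument approximating the multiplier $\mathrm f$ (resp.\ $e^{\gamma\mathrm h}$) by trigonometric polynomials with the remainder controlled by averaging over $\eta$, and the same $\bL^{1+\delta}$ upgrade via $\lambda_n(|f|)=\bE_n\lambda^{\hat\gamma}(|f|)$ and conditional Jensen. One small caveat: the paper states Proposition~\ref{prop:RL} only for $f\in L^\infty$, not $L^1$ as you assume, so strictly speaking the extension to $f\in L^1$ requires an additional (easy) approximation in $f$ using the bound $\bE\,\lambda_n(|f|)=\|f\|_{L^1}$---the paper glosses over this too.
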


Proposition~\ref{prop:charpoly} imply Theorem~\ref{thm:charpoly} by standard arguments, we refer to \cite[Appendix B]{Lac} for instance.
As we explain in the introduction the conditions $|\hat\gamma|<1$ and  $\gamma>-1$ are necessary and sufficient and the main step is to obtain the following properties.

\begin{proposition} \label{prop:RL}
Recall that $(\varpi_k)_{k\ge 0}$ denotes the Pr\"ufer phases and the notations \eqref{mm}. 
Let $f\in L^\infty(\bT,\bR)$, $\gamma \in\bR$ with $|\hat\gamma|<1$ and $\kappa\in\bN$. 
It holds in probability as $n\to\infty$, 
\[
\int_{\bT}  \mathrm e^{i\kappa\varpi_{n}(\theta)} f(\theta)\mu^{\gamma}_n(\dd\theta) \to 0 
\qquad\text{and}\qquad
\int_{\bT}  \mathrm e^{i\kappa\varpi_{n}(\theta)} f(\theta)\nu^{\gamma}_n(\dd\theta) \to 0.
\]
\end{proposition}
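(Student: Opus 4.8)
The plan is to prove the statement for $\mu_n^\gamma$; the case of $\nu_n^\gamma$ is identical after replacing $\Re\varphi_n$ by $\psi_n$ and the function $\mathrm{F}_\gamma$ by $\mathrm{G}_\gamma$ of Fact~\ref{fact:G}, and is in fact slightly easier since $\nu^{\hat\gamma}$ is subcritical for every $|\hat\gamma|<1$. Set $I_n:=\int_\bT e^{\i\kappa\varpi_n(\theta)}f(\theta)\,\mu_n^\gamma(\dd\theta)$, and recall from \eqref{phase0} that $\varpi_n(\theta)=(n+1)\theta-2\psi_n(\theta)$ with $\psi_n$ periodic, and that $\varpi_n,\psi_n,\Phi_n^*$ (hence $\mu_n^\gamma$) are $\F_n$-measurable. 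I would run a second moment estimate on $I_n$, exploiting the \emph{branching structure} of the Pr\"ufer phases: for $m\le n$, conditionally on $\F_m$ the continuations $(\varpi_k-\varpi_m)_{m\le k\le n}$ and $(\varphi_k-\varphi_m)_{m\le k\le n}$ are obtained by running a fresh OPUC chain driven by the rotation-invariant coefficients $\alpha_m,\dots,\alpha_{n-1}$ from the $\F_m$-measurable starting phase $\varpi_m$; by the rotation-invariance manipulations available for this model, the $\F_m$-conditional law of such a continuation at $e^{\i\theta}$ (with the deterministic drift $(n-m)\theta$ removed) does not depend on $\theta$ nor on $\varpi_m(\theta)$, the two points $e^{\i\theta},e^{\i\vartheta}$ with $|\theta-\vartheta|\asymp 1/m$ decouple asymptotically, and the increment $\varpi_n-\varpi_m$ obeys a quantitative central limit theorem with variance of order $\beta^{-1}\log(n/m)$, by the same analysis as in \cite{KS09,CMN}.

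Because $\hat\gamma^2$ may exceed $\tfrac12$, $I_n$ need not be bounded in $\bL^2$, so I would first truncate along a barrier. Fix $\epsilon>0$, let $r_j=1-1/j$, and let $\tilde G_n\subset\bT$ be the set of $\theta$ for which $\operatorname{sgn}(\gamma)\,\Re\varphi_j(r_j e^{\i\theta})\le |\gamma|\,\beta^{-1}\log j+q\sqrt{\log j\,\log\log j}$ for all $2\le j\le n$, where $q=q(\epsilon)$ is large enough that, by the Girsanov/Kolmogorov--Erd\H{o}s estimate for the log-correlated martingale $(\Re\varphi_j)$, $\bE[\mu^{\hat\gamma}(\bT\setminus\tilde G_n)]\le\epsilon$ for every $n$. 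Put $\tilde I_n:=\int_\bT e^{\i\kappa\varpi_n(\theta)}f(\theta)\,\1_{\tilde G_n}(\theta)\,\mu_n^\gamma(\dd\theta)$. Since $\bE_n\mu^{\hat\gamma}=\mu_n^\gamma$ also for $\F_n$-measurable integrands (a standard approximation extending Proposition~\ref{prop:phi}), $\bE\,\mu_n^\gamma(\bT\setminus\tilde G_n)=\bE\,\mu^{\hat\gamma}(\bT\setminus\tilde G_n)\le\epsilon$, so $|I_n-\tilde I_n|\le\|f\|_\infty\mu_n^\gamma(\bT\setminus\tilde G_n)$ is $\le\sqrt\epsilon$ off an event of probability $\lesssim\sqrt\epsilon$, uniformly in $n$. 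Hence it suffices to show $\bE|\tilde I_n|^2\to0$ for each fixed $\epsilon$, and then let $\epsilon\downarrow0$.

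Writing $Z_n=\bE|\Phi_n^*(1)|^\gamma\asymp n^{\hat\gamma^2}$ (Lemma~\ref{lem:BD} and \eqref{mom}), one has
\[
\bE|\tilde I_n|^2=\iint_{\bT^2}f(\theta)f(\vartheta)\,\bE\Big[e^{\i\kappa(\varpi_n(\theta)-\varpi_n(\vartheta))}\1_{\tilde G_n}(\theta)\1_{\tilde G_n}(\vartheta)\tfrac{|\Phi_n^*(e^{\i\theta})\Phi_n^*(e^{\i\vartheta})|^\gamma}{Z_n^2}\Big]\tfrac{\dd\theta\,\dd\vartheta}{(2\pi)^2},
\]
which I would split at $|\theta-\vartheta|=1/n$. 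On the diagonal block, bound the oscillating factor by $1$; the two points lie within the mollification scale, the barrier being below the peak of $x\mapsto e^{2\gamma x}\times(\text{Gaussian density at scale }n)$ forces $\bE[\1_{\tilde G_n}(\theta)\,|\Phi_n^*(r_ne^{\i\theta})|^{2\gamma}]\lesssim n^{3\hat\gamma^2+o(1)}$, and since the block has area $O(1/n)$ this contribution is $\lesssim n^{\hat\gamma^2-1+o(1)}\to0$. On the off-diagonal block $|\theta-\vartheta|=\varrho\in(1/n,1)$, condition on $\F_{m_\varrho}$ with $m_\varrho\asymp1/\varrho$: by branching the integrand factorises (up to a decoupling error) into an $\F_{m_\varrho}$-measurable part and two one-sided conditional expectations, each carrying $e^{\pm\i\kappa(\varpi_n-\varpi_{m_\varrho})}$ against the chaos weight $|\Phi_n^*/\Phi_{m_\varrho}^*|^\gamma$; a joint quantitative CLT for the pair (real increment, Pr\"ufer increment), with the weight's conditional normalisation made exact up to constants via the Verblunsky-shift change of measure of Lemma~\ref{lem:bdshift}, yields a gain $(n\varrho)^{-c\kappa^2}$ on top of the normalised weight $\asymp(n\varrho)^{\hat\gamma^2}$. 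Collecting this with the $\F_{m_\varrho}$-part (a barrier-truncated second moment at scale $m_\varrho$, $\asymp\varrho^{-3\hat\gamma^2}$) and dividing by $Z_n^2$, the off-diagonal contribution is $\lesssim n^{-c\kappa^2}\int_{1/n}^1\varrho^{-\hat\gamma^2-c\kappa^2}\dd\varrho\lesssim n^{-c\kappa^2}+n^{\hat\gamma^2-1}\to0$ because $\hat\gamma^2<1$. This gives $\bE|\tilde I_n|^2\to0$ and hence the proposition.

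The hard part will be the off-diagonal estimate: one needs a central limit theorem for the Pr\"ufer increments $\varpi_n-\varpi_{m_\varrho}$ that is \emph{quantitative and uniform} in the random starting phase and in $\varrho$, together with a precise control of the asymptotic decoupling of the $\theta$- and $\vartheta$-continuations (which are driven by the \emph{same} coins $\alpha_k$, $k\ge m_\varrho$, and separate only through the phase-mixing of $\varpi_k(\theta)$ vs.\ $\varpi_k(\vartheta)$); and — most delicately — this oscillatory estimate must survive being tilted by the multiplicative-chaos weight $|\Phi_n^*/\Phi_{m_\varrho}^*|^\gamma$ and localised by the barrier, while the normalisations $Z_n,Z_{m_\varrho}$ are tracked exactly enough for the exponents above to close. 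Carrying out this scale-by-scale bookkeeping is the heart of the argument.
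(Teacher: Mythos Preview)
Your proposal is sound in outline and follows the same second-moment philosophy as the paper, but the implementation differs in two substantive ways.

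\textbf{Truncation.} You introduce a pointwise barrier event $\tilde G_n(\theta)$ (Kolmogorov--Erd\H{o}s type) and then bound $\bE|\tilde I_n|^2$ directly. The paper instead works with the global event $\A_n=\{\max_\theta|\chi_n(\theta)|,\max_\theta|\psi_n(\theta)|\le\tfrac{2+\delta}{\sqrt{2\beta}}\log n\}$, which has probability $\to1$ by the already-available leading-order maximum (Corollary~\ref{thm:max}). On $\A_n$ the density of $\mu_n^\gamma$ is bounded by $n^{1+\delta-(1-|\hat\gamma|)^2}$, and the paper uses the concavity trick $\bE[Z_n\wedge1]\le\bP[\A_n^c]+\bE[|Z_n^1|\1_{\A_n}]+\bE[\1_{\A_n}|\bE_mZ_n^2|\wedge1]$, so no $\bL^2$ control of the full integral is ever needed. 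This bypasses your barrier machinery entirely.

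\textbf{Conditioning scale.} You condition at a \emph{scale-dependent} level $m_\varrho\asymp1/\varrho$ and integrate over $\varrho$, which is the classical GMC multi-scale bookkeeping. The paper conditions at a \emph{single} scale $m=\lfloor n^{1-\delta/8}\rfloor$ close to~$n$ (with $\epsilon_n=n^{\delta-1}$ so that $m\gg\epsilon_n^{-1}$). After the Gaussian coupling and linearization (Propositions~\ref{prop:approx1}--\ref{prop:approx2}), an explicit Gaussian calculation (Lemma~\ref{lem:Gauss}) gives $\mathcal W_n(\theta,\vartheta)\lesssim n^{-c_\beta\delta}$ uniformly for $|\theta-\vartheta|\ge\epsilon_n$; the remaining $\F_m$-measurable factor is just $\mu_m^\gamma(f)^2$, which is tight by the already-proven Theorem~\ref{thm:phi}. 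No barrier-truncated two-point moment at intermediate scales is required.

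In short: both routes work, but the paper's is shorter because it recycles Theorem~\ref{thm:phi} (for tightness of the $\F_m$-part) and Corollary~\ref{thm:max} (for the diagonal) and only needs a tiny oscillatory gain from the last $\log n^{\delta/8}$ scales, whereas your route re-derives scale-by-scale moment bounds that are stronger than necessary here. The ``hard part'' you flag --- a tilted quantitative CLT with uniform decoupling --- is exactly what the paper isolates as Proposition~\ref{prop:W1}, proved via the explicit i.i.d.\ complex-Gaussian coupling of the Verblunsky coefficients (Fact~\ref{fact:Gamma}) rather than via a general CLT; this is where the integrability of the model is used most directly.
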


Recall that $\eta \in \bT$ is a uniform random variable independent of $\F_\infty$. 
Proposition~\ref{prop:RL} implies that for any trigonometric polynomial $g :\bT\to \bR$ with $g_0 =0$,
\[
\int_{\bT} f(\theta) g(\varpi_{n}(\theta)+\eta)   \mu_{n}^{\gamma}(d\theta ) \to 0 \qquad\text{in probability as $n\to\infty$} . 
\]
Moreover, if $g\in L^1(\bT)$, by averaging over $\eta$ first ($\eta$ is independent of $\F_\infty$), 
, 
\[
\bE \bigg| \int_{\bT} f(\theta) g(\varpi_{n}(\theta)+\eta)   \mu_{n}^{\gamma}(d\theta) \bigg| \le \|f\|_{L^1}\|g\|_{L^1}
\]
since $\bE\mu_{n}^{\gamma}(d\theta)= \frac{\dd\theta}{2\pi}$ for $n\in\bN$, $\gamma\in\bR$. 

Consequently, by density (of trigonometric polynomials in  $L^1(\bT)$) and \eqref{cvg}, for any  $f, g\in L^1(\bT)$, if $|\hat\gamma|<1$, 
\begin{equation} \label{mart5}
\int_{\bT} f(\theta) g(\varpi_{n}(\theta)+\eta)   \mu_{n}^{\gamma}(d\theta ) \to g_0\cdot\mu^{\hat\gamma}(f)  \qquad\text{in probability as $n\to\infty$} . 
\end{equation}
An analogous result holds for the measures $\nu_{n}^{\gamma}$ and $\nu^{\hat\gamma}$.

\medskip

Let  $\mathrm f :\theta \in\bT \mapsto |1+ e^{\i\theta}|^\gamma$. 
$\mathrm f \in L^1(\bT)$ if and only if $\gamma >-1$ (it is positive with mean $\mathrm f_0 = F_\gamma(1)$, see \eqref{F1})
Then, according to \eqref{charpoly2}, for $\gamma>-1$ and $n\in\bN_0$, 
with $u=e^{\i\theta}$ for $\theta\in\bT$, 
\begin{equation} \label{mart6}
\bE_n |\mathcal{X}_{n+1}(u)|^\gamma=  |\Phi_n^*(u)|^\gamma \bE_n |1-e^{\i \eta+\i  \varpi_{n}(\theta)}|^\gamma = |\Phi_n^*(u)|^\gamma \mathrm f_0 .
\end{equation}
Thus, for $\gamma>-1$  and $n\in\bN_0$, 
\[
 \frac{|\mathcal{X}_{n+1}(u)|^\gamma}{\bE|\mathcal{X}_{n+1}(1)|^\gamma}
 = \mathrm{f}_0^{-1}\cdot \mathrm{f}(\varpi_{n}(\theta)+\eta)    \mu^{\gamma}_n(\dd u) .
\]
By \eqref{mart5}, we conclude that if $\gamma>-1$ with $|\hat\gamma|<1$, for any $f \in L^1(\bT,\bR_+)$, 
\begin{equation*} \label{charpolycvg}
\int \frac{|\mathcal{X}_n(e^{\i\theta})|^\gamma}{\bE|\mathcal{X}_n(1)|^\gamma} f(\theta)\frac{\dd\theta}{2\pi} \to \mu^{\hat\gamma}(f) \qquad\text{in probability as $n\to\infty$} .
\end{equation*}
In case $\mathrm f \le C$ for some constant, we can upgrade this convergence in $\bL^{1+\delta}$. 
Suppose that $f\ge 0$, $\gamma\ge 0$ and let $\mathrm{X}_n = \int \frac{|\mathcal{X}_{n+1}(e^{\i\theta})|^\gamma}{\bE|\mathcal{X}_{n+1}(1)|^\gamma} f(\theta)\frac{\dd\theta}{2\pi}$
for $n\in\bN$. 
Observe that according to Proposition~\ref{prop:phi}, 
\[
 \mathrm{X}_n \le C \mu_{n}^{\gamma}(f) = C \bE_n\mu^{\hat\gamma}(f).
\]
By \eqref{GMC}, since the limit $\mu^{\hat\gamma}(f)\in \bL^{1+\delta}$, by Jensen's inequality, 
\[
\bE\big[ \mathrm{X}_n^{1+\delta}\big]
\lesssim  \bE\big[ \mu^{\hat\gamma}(f)^{1+\delta}\big]<\infty .
\]
This yields the required uniform integrability condition. 

\medskip

For the imaginary part of the characteristic polynomial, according to \eqref{charpoly1}, for $z\in\bD$, 
\[
\Im\log\mathcal{X}_n(z) =  \Im\varphi_{n-1}(z) + \Im\log(1-e^{\i\eta}B_{n-1}(z)) . 
\]
The previous $\log(\cdot)$ is well-defined since $|B_{n}(z)|<1$ for $z\in\bD$ and $c\in\bN_0$.
Recall that $\mathcal{Y}_n(\theta)= \Im\log\mathcal{X}_n(e^{\i\theta})$ for $\theta\in\bT$ defined as in  \eqref{Y0}. 
Since $B_{n} = e^{\i\varpi_n(\theta)}$ for $\theta\in\bT$ with 
$\varpi_n(\theta) = (n+1)\theta  - 2\psi_n(\theta)$ for $\theta\in[0,2\pi]$, by continuity, we obtain
\begin{equation} \label{Y1}
\mathcal{Y}_{n+1}(\theta) = \psi_n(\theta) + \mathrm{h}(\varpi_{n}(\theta)+\eta)  
\end{equation}
with $\mathrm{h}$ as in \eqref{Y2}. 
Thus
\[
e^{\gamma \mathcal{Y}_{n+1}} = e^{\gamma\psi_n} \mathrm{g}(\varpi_{n}(\theta)+\eta)
\]
where $\mathrm{g}: \bT \mapsto  e^{\gamma\mathrm{h}(\theta)}$, satisfies 
$\mathrm{g}\in L^\infty(\bT)$ with $\mathrm{g}_0>0$.   
Again, $\bE_n e^{\gamma \mathcal{Y}_{n+1}} = \mathrm{g}_0\cdot e^{\gamma\psi_n} $, so that by \eqref{mart5},   if  $|\hat\gamma|<1$, for any $f \in L^1(\bT,\bR_+)$, 
\[
\int \frac{e^{\gamma \mathcal{Y}_n(\theta)}}{\bE e^{\gamma \mathcal{Y}_n(0)}}f(\theta)\frac{\dd\theta}{2\pi}\to \nu^{\hat\gamma}(f) . 
\qquad\text{in probability as $n\to\infty$} .
\]
This completes the proof of Proposition~\ref{prop:charpoly}. \qed 

\begin{remark}\label{rek:impart}
To check that formulae \eqref{Y0} and \eqref{Y1} are consistent, observe that both functions are smooth on $\bT \setminus\mathfrak{Z}_{n}$ and they jump by $-\pi$ on $\mathfrak{Z}_{n}$; see Proposition~\ref{prop:phase}. 
Moreover, taking a derivative, one checks that for $n\in\bN$, 
\[
\mathcal{Y}_{n}'(\theta) = \tfrac{n}{2}  =
\psi_{n-1}' + \varpi_{n-1}'(\theta)/2 , \qquad  \theta \notin \mathfrak{Z}_{n}, 
\]
where the previous equality follows from \eqref{phase0}. 
The value of both functions at $\theta=0$ satisfies $\mathcal{Y}_{n}(0) = \displaystyle\lim_{r\to 1}\Im\log \mathcal{X}_n(r)$ (almost surely). 
Formula  \eqref{Y1} implies that for any $n\in\bN$, 
\[\begin{cases}
\mathcal{Y}_{n}(\theta) = \psi_{n-1}(\theta) &\theta\in \mathfrak{Z}_{n} \\
|\mathcal{Y}_{n}(\theta) - \psi_{n-1}(\theta)|<\pi/2 &\theta\in\bT .
\end{cases}\]
In this sense, $\mathcal{Y}_{n}$ is a linear interpolation of the smooth function $\psi_{n-1}$ on $\bT$. 
\end{remark}

\subsection{Main steps}\label{sec:strat}
In this section, we go over the proof of Proposition~\ref{prop:RL}. 
We fix $f\in L^1(\bT,[0,1])$, $\gamma \in\bR$ with $|\hat\gamma|<1$ and $\kappa\in\bN$. The method is  the same for both $\mu^{\gamma}_n$ (real part) and $\nu^{\gamma}_n$ (imaginary part), so we focus on the real part. 
We introduce a small parameter $\epsilon_n>0$ and, by expansing the square, we split
\[
Z_n  : = \bigg| \int_{\bT} e^{\i\kappa\varpi_{n}(\theta)} f(\theta)\mu^{\gamma}_n(\dd\theta)  \bigg|^2  \le  Z_n^1 + Z_n^2
\]
where
\begin{equation} \label{split1}
 Z_n^1 :=  \iint_{{|\theta-\vartheta| < \epsilon_n}} \hspace{-.8cm}
\mu^{\gamma}_n(\dd\theta)  \mu^{\gamma}_n(\dd\vartheta) \, ,
\qquad
 Z_n^2 := \iint_{{|\theta-\vartheta| \ge \epsilon_n}} \hspace{-.8cm}
e^{\i\kappa(\varpi_{n}(\theta)-\varpi_{n}(\vartheta))} f(\theta)f(\vartheta)\mu^{\gamma}_n(\dd\theta)  \mu^{\gamma}_n(\dd\vartheta) \, . 
\end{equation}
Then, for any event $\A_n$ and $m\in\bN$,  by Jensen's inequality ($x\in\bR_+\mapsto x\wedge 1$ is concave and subadditive),
\begin{equation} \label{split2}
\bE\big[ Z_n  \wedge 1 \big]  \le \bP[\A_n^c]+
\bE\big[ |Z_n^1| \1\{\A_n\} \big] +\bE\big[\1\{\A_n\}  |\bE_mZ_n^2|\wedge 1  \big] . 
\end{equation}
In particular, if $ \bP[\A_n] \to 1$,  and we show that both
$\bE\big[ Z_n^1 \1\{\A_n\} \big] \to 0$ and $\bE_mZ_n^2 \to 0$ in probability, as $n\to\infty$. 
Then, we conclude that also $Z_n\to0$  in probability as $n\to\infty$, as required.

\begin{itemize}[leftmargin=*] \setlength\itemsep{.5em}
\item The \emph{local part} $ Z_n^1$ will be controlled on the event that the maximum of the field $\chi_n$ on $\bT$ is typical, in which case the density $\mu_n^\gamma$ is uniformly bounded. 
On this event, $ Z_n^1\to0$ provided that $\epsilon_n \le  n^{\delta-1}$
for a small $\delta>0$ (depending on $|\hat\gamma|<1$). 
\item For the part  $Z_n^2$, choosing a sequence $m(n)$ such that $n\ge m(n) \gg \epsilon_n^{-1}$, for $|\theta-\vartheta| \ge \epsilon_n$, the increments
\[
\big\{\varphi_n(\theta)-\varphi_m(\theta), \varphi_n(\vartheta)-\varphi_m(\vartheta)\big\}
\]
are approximately independent complex Gaussian random variables, with mean 0 and variance $\tfrac2\beta\log\frac{n}{m}$. 
To explain this property, if $m\gg 1$, one can approximate the C$\beta$E Verblunsky coefficients
$\alpha_k \approx G_k (1+\beta_k)^{-1/2}$ where $G_k$ are independent standard complex Gaussians.
Then, linearizing the recursion \eqref{S3}, one obtains a toy model
\[
\varphi_n(\theta) \approx \varphi_m(\theta) - {\textstyle \sum_{k=m}^{n-1}} \beta_k^{-1} G_k e^{\i(k-m)\theta+\i \varpi_m(\theta) } .
\]
Conditionally on $\F_m$, the RHS is a Gaussian process for all $\theta
\in\bT$ and its increments are independent if $m\gg |\theta-\vartheta|^{-1}$ because of the \emph{variation of the phase}. 
To control the approximation errors,  we introduce  events $\cB_m$ so that for $(\theta,\vartheta)\in\bT^2$ with $|\theta-\vartheta| \ge \epsilon_n$, 
\[
\big| \bE_m\big[\1\{\cB_m(\theta),\cB_m(\vartheta)\} e^{\i\kappa(\varpi_{n}(\theta)-\varpi_{n}(\vartheta))} \mu^{\gamma}_n(\theta)  \mu^{\gamma}_n(\vartheta)\big]\big| \lesssim
\bE\big[e^{2\i\kappa G\sqrt{\beta^{-1}\log\frac{n}{m}}}\big]^2 \mu^{\gamma}_m(\theta)  \mu^{\gamma}_m(\vartheta)
\]
where $G$ is a standard real Gaussian, and there is a $\varkappa>0$ so that $ \bP_m\big[\cB_m^{\rm c}(\theta)\big] \lesssim e^{-n^{-\varkappa}}$  uniformly for $\theta\in\bT$. Then  
\[
|\bE_mZ_n^2| \lesssim (\tfrac nm)^{-8\hat\kappa^2}  \mu^{\gamma}_m(f)^2 .
\]
Based on this heuristic, we conclude that $\bE_mZ_n^2\to 0$ (almost surely) as $n\to\infty$ provided that $|\hat\gamma|<1$ and we choose $m(n)$ appropriately within the range $\epsilon_n^{-1} \ll m(n) \ll n$. 
\end{itemize}

\medskip

In the rest of the subsection, we elaborate on the details of the method. 

\paragraph{\bf Control of $Z_n^1$} 
For $\theta\in\bT$, let $\chi_n(\theta) := \Re\varphi_n(e^{\i\theta})$ and $\psi_n(\theta) :=\Im\varphi_n(e^{\i\theta})$.
We need the following bounds. 

\begin{lemma} \label{lem:mom}
Let $\gamma\in\bR$. 
It holds for all  $n \in \bN$ and for any $\theta\in\bT$, 
\[
\bE[e^{\gamma \psi_n(\theta)}] = \exp\big(\hat\gamma^2 \log n + \O(1)\big) \qquad\text{and}\qquad
\bE[e^{\gamma \chi_n(\theta)}] = \exp\big(\hat\gamma^2 \log n + \O(1)\big) ,
\]
where the implied constants depend only on $(\gamma,\beta)$. 
\end{lemma}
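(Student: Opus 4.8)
The plan is to first show that, for every $\theta\in\bT$, one has the distributional identity $\varphi_n(e^{\i\theta})\equiv\sum_{k=0}^{n-1}\log(1-\alpha_k)$, by the same sequential-conditioning argument already used for $\Phi_n^*(1)$ in the proof of Lemma~\ref{lem:BD}. Indeed, in \eqref{S3} each factor $B_k(e^{\i\theta})\in\bU$ is $\F_k$-measurable while $\alpha_k$ is rotation-invariant and independent of $\F_k$, so conditionally on $\F_k$ the increment $\log(1-\alpha_kB_k(e^{\i\theta}))$ has the law of $\log(1-\alpha_k)$; peeling off the increments one at a time (from $k=n-1$ down to $k=0$) yields the identity. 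Consequently the two moment generating functions factorize as
\[
\bE[e^{\gamma\chi_n(\theta)}]=\prod_{k=0}^{n-1}\bE\big[|1-\alpha_k|^\gamma\big],\qquad \bE[e^{\gamma\psi_n(\theta)}]=\prod_{k=0}^{n-1}\bE\big[e^{\gamma\Im\log(1-\alpha_k)}\big],
\]
and it remains to estimate each factor as $k\to\infty$.

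\textbf{One-step asymptotics.} Writing $\alpha_k=|\alpha_k|e^{\i\eta_k}$ with $\eta_k$ uniform and independent of $|\alpha_k|$ (Fact~\ref{fact:Verb}) and averaging over $\eta_k$, one gets $\bE[|1-\alpha_k|^\gamma]=\bE[\mathrm F_\gamma(|\alpha_k|)]$ and $\bE[e^{\gamma\Im\log(1-\alpha_k)}]=\bE[\mathrm G_\gamma(|\alpha_k|)]$, with $\mathrm F_\gamma,\mathrm G_\gamma$ the functions from \eqref{F1} and \eqref{G1}--\eqref{G2} (up to the harmless substitution $u\mapsto-u$ in the defining integrals). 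Both power series start as $1+\tfrac{\gamma^2}{4}r^2+\O(r^4)$: the $r^2$-coefficient of $\mathrm F_\gamma$ is $\binom{\gamma/2}{1}^2=\gamma^2/4$, and that of $\mathrm G_\gamma$ is $c_0(\gamma)=\tfrac{\beta\hat\gamma^2}{2}=\gamma^2/4$. Combining this with the Beta moments from \eqref{verb0}, namely $\bE|\alpha_k|^2=(1+\beta_k)^{-1}$ and $\bE|\alpha_k|^{2j}\le\bE|\alpha_k|^4=2/[(\beta_k+1)(\beta_k+2)]\lesssim k^{-2}$ for $j\ge2$ (using $|\alpha_k|\le1$), the target estimate is
\[
\bE[|1-\alpha_k|^\gamma]=1+\frac{\gamma^2}{4(1+\beta_k)}+\O(k^{-2})=1+\frac{\hat\gamma^2}{k+1+2/\beta}+\O(k^{-2}),
\]
and identically for $\bE[\mathrm G_\gamma(|\alpha_k|)]$. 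For the imaginary part this is immediate since $\mathrm G_\gamma$ is bounded on $[0,1]$ (Fact~\ref{fact:G}), so $\mathrm G_\gamma(r)-1-\tfrac{\gamma^2}{4}r^2=\sum_{j\ge2}c_{j-1}(\gamma)j^{-2}r^{2j}\lesssim r^4$ with a $(\gamma,\beta)$-dependent constant; for $\mathrm F_\gamma$ with $\gamma\ge0$ one argues the same way using $\mathrm F_\gamma(1)<\infty$.

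\textbf{Summation.} Taking logarithms and summing over $k$: for all but finitely many $k$ the factor equals $1+\O(1/k)$, so $\log\bE[|1-\alpha_k|^\gamma]=\tfrac{\hat\gamma^2}{k+1+2/\beta}+\O(k^{-2})$, and $\sum_{k<n}\tfrac{\hat\gamma^2}{k+1+2/\beta}=\hat\gamma^2\log n+\O(1)$ while the $\O(k^{-2})$ remainders sum to $\O(1)$; the finitely many exceptional small-$k$ factors are positive and finite, hence contribute $\O(1)$. This gives $\bE[e^{\gamma\chi_n(\theta)}]=\exp(\hat\gamma^2\log n+\O(1))$ and likewise for $\psi_n$, with implied constants depending only on $(\gamma,\beta)$. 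Note that for $\chi_n$ this presupposes $\bE[|1-\alpha_k|^\gamma]<\infty$ for all $k$, i.e. $\gamma>-1-\beta_0=-1-\beta/2$ (see \eqref{expmom}); since $\sqrt{2\beta}\le1+\beta/2$, this holds on the whole subcritical range $|\hat\gamma|<1$ relevant in the sequel (for $\gamma\le-1-\beta/2$ both sides of the real-part identity are $+\infty$), while the imaginary-part statement is unconditional because $|\Im\log(1-\alpha_k)|<\pi/2$.

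\textbf{Main obstacle.} The only step that is not a routine computation is controlling, for $\gamma<0$, the contribution of $\{|\alpha_k|$ near $1\}$, where $\mathrm F_\gamma$ is unbounded. The plan is to split at $|\alpha_k|=\tfrac12$: on $\{|\alpha_k|\le\tfrac12\}$ one has $\mathrm F_\gamma(r)-1-\tfrac{\gamma^2}{4}r^2=\sum_{j\ge2}\binom{\gamma/2}{j}^2r^{2j}\lesssim r^4$ (the series converges at $r=\tfrac12$ for every $\gamma$), contributing $\O(\bE|\alpha_k|^4)=\O(k^{-2})$; on the complement, H\"older's inequality with a finite negative moment $\bE[|1-\alpha_k|^{\gamma p}]<\infty$ (legitimate for $k$ large, or for $p$ close to $1$) together with the exponential bound $\bP[|\alpha_k|>\tfrac12]=(3/4)^{\beta_k}$ from \eqref{verb0} shows this part is $\O(e^{-ck})$, hence negligible in the sum. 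The imaginary part requires no such care since $\mathrm G_\gamma$ is uniformly bounded.
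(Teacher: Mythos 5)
Your proof is correct. The reduction to the i.i.d.\ product $\prod_{k<n}\bE[\,\cdot\,(1-\alpha_k)]$ via rotation-invariance and sequential conditioning is exactly the paper's first step (proof of Lemma~\ref{lem:expmom} in Appendix~\ref{A:model}). Where you diverge is in the per-factor estimate: the paper invokes the closed-form Beta/Gamma identities \eqref{expmom}, so each factor is an explicit ratio of Gamma functions, and then applies the Tricomi--Erd\'elyi expansion of Gamma ratios to extract $1+\tfrac{\gamma^2}{2\kappa}+\O(\kappa^{-2})$ (Lemma~\ref{lem:exp} with $m=0$). You instead expand the angular averages $\mathrm F_\gamma$, $\mathrm G_\gamma$ as power series and use only $\bE|\alpha_k|^2=(1+\beta_k)^{-1}$, $\bE|\alpha_k|^4=\O(k^{-2})$ and the exponential tail of $|\alpha_k|$. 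Your route is more elementary (no special-function asymptotics) and more robust, since it only needs the first two moments of $|\alpha_k|^2$ plus tail decay rather than the exact Beta law; the price is the extra H\"older/truncation argument required for $\gamma<0$ near $|\alpha_k|=1$, which the exact formula handles for free on the whole range $\gamma>-1-\tfrac\beta2$. Your remark that the real-part statement implicitly requires $\gamma>-1-\tfrac\beta2$ (automatic when $|\hat\gamma|<1$) correctly identifies the restriction that the paper's own appendix version imposes but the main-text statement of Lemma~\ref{lem:mom} omits.
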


Lemma~\ref{lem:mom} is proved in the Appendix~\ref{A:model} using the explicit distribution of the C$\beta$E Verblunsky coefficients. 
An alternative proof, albeit more technical, follows from the Gaussian approximations of Proposition~\ref{prop:approx1} below. 

\medskip

We fix a small $\delta>0$ and let $\epsilon_n = n^{\delta-1}$ in \eqref{split1}. 
We introduce the following events,
\[
\A_n(\delta) := \left\{  \max_{\theta\in\bT} |\chi_{n}(\theta)| , \max_{\theta\in\bT} |\psi_{n}(\theta)|   \le \tfrac{2+\delta}{\sqrt{2\beta}} \log n   \right\}. 
\]
By Theorem~\ref{thm:max}, $\bP[\A_n(\delta)] \to 1$ as $n\to\infty$. 
On this event, by Lemma~\ref{lem:mom}, we can bound the density of the random measure at hand,  
\[
\max\big\{ \mu_{n}^{\gamma}(\theta) : \theta\in\bT\big\} \lesssim  n^{(2+\delta)\hat\gamma  -\hat\gamma^2} \le n^{1+\delta -(1-\hat\gamma)^2}
\]
using that $|\hat\gamma|<1$ -- the last bound holds for $n$ sufficiently large. 

This implies that  
\begin{equation*}
|Z_n^1| \1\{\A_n\} \le  n^{1+\delta -(1-\hat\gamma)^2} \underset{|\theta-\vartheta| < \epsilon_n}{\iint_{\bT^2}}    \mu_{n}^{\gamma}(d\vartheta) \frac{d\theta}{2\pi}   \le n^{2\delta -(1-\hat\gamma)^2}  
\mu_{n}^{\gamma}(\bT) . 
\end{equation*}
In the subcritical regime ($\bE\mu_{n}^{\gamma}(\bT)=1$),  choosing $\delta>0$ sufficiently small (depending on $|\hat\gamma|<1$),  
we conclude that 
\begin{equation} \label{Z1}
\lim_{n\to\infty}\bE[ |Z_n^1| \1\{\A_n\}] =0 .
\end{equation}

\medskip

\paragraph{\bf Control of $Z_n^2$}
Recall \eqref{S3}--\eqref{phase0}. We  set for $n\ge m\ge 0$, 
\begin{equation}  \label{inc}
\begin{cases}
\varphi_{n,m}(u) :=  {\textstyle \sum_{k=m}^{n-1}}   \log(1-\alpha_k  e^{\i\varpi_k(\theta)}) \\
\chi_{n,m}(\theta)  = \Re\varphi_{n,m}(u) , \quad \psi_{n,m}(\theta)  = \Im \varphi_{n,m}(u)
\end{cases} \qquad u = e^{\i\theta},    \theta\in\bT , 
\end{equation}
where $\varpi_k(\theta) = (k+1)\theta-2 \psi_k(\theta)$ for $k\in\bN_0$.
Then, one decompose for any $n\ge m\ge 1$, 
\[
\chi_n = \chi_{n,0} = \chi_{n,m} + \chi_{m} , \qquad 
\psi_n = \psi_{n,0} = \psi_{n,m} + \psi_{m} . 
\] 
In particular, $(\chi_{m},\psi_{m})$ are $\F_m$-measurable and 
$\bE[e^{\gamma\chi_n(0)}] = \bE[e^{\gamma\chi_{n,m}(0)}]\bE[e^{\gamma\chi_m(0)}]$ (by independence of the Verblunsky coefficient), so that for $(\theta,\vartheta) \in\bT^2$ with $\theta\neq \vartheta$, 
\[
\big| \bE_m\big[\1\{\cB_n(\theta)\}\1\{\cB_n(\vartheta)\} e^{\i\kappa(\varpi_{n}(\theta)-\varpi_{n}(\vartheta))} \mu_{n}^{\gamma}(\theta) \mu_{n}^{\gamma}(\vartheta) \big] \big| \le 
\mathcal{W}_n(\vartheta,\vartheta)   \mu_{m}^{\gamma}(\theta) \mu_{m}^{\gamma}(\vartheta) 
\]
where, with $m\le n$, 
\begin{equation} \label{W1}
\mathcal{W}_n(\theta,\vartheta) : =
\frac{\big|\bE_m\big[\1\{\cB_n(\theta)\}\1\{\cB_n(\vartheta)\} e^{\i2\kappa(\psi_{m,n}(\theta)-\psi_{m,n}(\vartheta))}e^{\gamma \chi_{m,n}(\theta)+\gamma \chi_{m,n}(\vartheta)}\big]\big|}{\bE[e^{\gamma \chi_{m,n}(0)}]^2} .
\end{equation}
Then, by \eqref{split1}, 
\[
 |\bE_mZ_n^2|   \le \max\big\{ \mathcal{W}_n(\vartheta,\vartheta)  ; (\theta,\vartheta)\in\bT^2 : |\theta-\vartheta| \ge \epsilon_n \big\} 
\mu_{m}^{\gamma}(f)^2 + \mathcal E_n
\]
where the error satisfies on $\A_n$ (using a crude bound for the density $\mu_{n}^{\gamma} \le n$ if $|\hat\gamma|<1$), 
\[
\mathcal E_n \le 2 n^2 \bP_m[\cB_n^{\rm c}(\theta)] . 
\]

We will prove the following estimates.
\begin{proposition} \label{prop:W1}
Let $\epsilon_n = n^{\delta-1}$ for a small $\delta>0$ and let $m=m(n)= \lfloor n^{1-\frac\delta8}\rfloor $.
Then, almost surely, uniformly over $\big\{(\theta,\vartheta) \in\bT^2 ; |\theta-\vartheta| \ge \epsilon_n\big\}$,  
\[
\mathcal{W}_n(\theta,\vartheta) \lesssim n^{-\tfrac{\delta/5}{1+\beta}} .
\]
Moreover,  there is a $\varkappa>0$ (depending on $(\delta,\beta)$) so that almost surely $\bP_m[\cB_n^{\rm c}(\theta)] \lesssim e^{-n^\varkappa}$ uniformly for $\theta\in\bT$. 
\end{proposition}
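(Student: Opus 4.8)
The plan is to establish both assertions on an $\F_n$-measurable good event. Fix a small constant $\epsilon_0>0$ (chosen at the end in terms of $\delta$) and, with $m=m(n)=\lfloor n^{1-\delta/8}\rfloor$, let $\cB_n(\theta)$ be the intersection of the $\theta$-independent event $\big\{\sup_{m\le k<n}|\alpha_k|\le n^{-1/2+\epsilon_0}\big\}$ with $\big\{\sup_{m\le k\le n}\big(|\chi_{k,m}(\theta)|+|\psi_{k,m}(\theta)|\big)\le n^{\epsilon_0}\big\}$, the increment fields $\chi_{k,m},\psi_{k,m}$ being as in \eqref{inc}. On the first event the Taylor expansion $\log(1-\alpha_k e^{\i\varpi_k(\theta)})=-\sum_{\ell\ge1}\ell^{-1}\alpha_k^\ell e^{\i\ell\varpi_k(\theta)}$ converges with a geometrically small remainder; the second gives a crude a priori control of the increment field and, through $\varpi_k(\theta)=(k+1)\theta-2\psi_k(\theta)$, of the Pr\"ufer phases.

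For the bound $\bP_m[\cB_n^{\rm c}(\theta)]\lesssim e^{-n^\varkappa}$, the contribution of $\{|\alpha_k|>n^{-1/2+\epsilon_0}\}$ is controlled via \eqref{verb0}: since $\beta_k=\beta\tfrac{k+1}2\gtrsim\beta m$ for $m\le k<n$, one has $\bP[|\alpha_k|^2\ge n^{-1+2\epsilon_0}]=(1-n^{-1+2\epsilon_0})^{\beta_k}\le\exp(-c\,n^{2\epsilon_0-\delta/8})$, which after a union bound over $k\in[m,n]$ is $\le e^{-n^\varkappa}$ as soon as $2\epsilon_0>\delta/8$. The field event is handled by applying a Bernstein/Freedman-type inequality for martingales with sub-exponential increments (Appendix~\ref{A:conc}) to the $\F_k$-martingales $k\mapsto\chi_{k,m}(\theta)$ and $k\mapsto\psi_{k,m}(\theta)$, whose increments $\Re\log(1-\alpha_k e^{\i\varpi_k(\theta)})$ and $\Im\log(1-\alpha_k e^{\i\varpi_k(\theta)})$ have sub-exponential conditional tails and predictable quadratic variation $\sum_k\bE_k[|\alpha_k|^2]=\O(\log n)$: this gives $\bP_m[|\chi_{k,m}(\theta)|>n^{\epsilon_0}]\le\exp(-c\,n^{2\epsilon_0}/\log n)$, and Doob's maximal inequality together with a union bound over $k$ yields the claim for any $\varkappa<2\epsilon_0$.

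For the bound on $\mathcal{W}_n$, we expand the conditional expectation in the numerator of \eqref{W1} by conditioning successively on $\F_{n-1},\dots,\F_m$, the restriction to $\cB_n(\theta)\cap\cB_n(\vartheta)$ being incorporated by a telescoping of the conditional expectations whose error is controlled by the previous step. Writing $c:=\gamma/2$, $a_k:=e^{\i\varpi_k(\theta)}$, $b_k:=e^{\i\varpi_k(\vartheta)}$ and using $\alpha_k=|\alpha_k|e^{\i\eta_k}$ with $\eta_k$ uniform and independent of $\F_k$, the average over $\eta_k$ at step $k$ annihilates the odd terms and produces the factor $1+\bE[|\alpha_k|^2]\,U_k\overline{V_k}+\O(k^{-3/2})$ with $U_k=(c+\kappa)a_k+(c-\kappa)b_k$, $V_k=(c-\kappa)a_k+(c+\kappa)b_k$, so that $\Re(U_k\overline{V_k})=2(c^2-\kappa^2)+2(c^2+\kappa^2)\cos(\varpi_k(\theta)-\varpi_k(\vartheta))$; the same computation at a single point evaluates the denominator $\bE[e^{\gamma\chi_{n,m}(0)}]=\prod_{k=m}^{n-1}(1+c^2\bE[|\alpha_k|^2]+\O(k^{-3/2}))$. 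Since $\bE[|\alpha_k|^2]=(1+\beta_k)^{-1}=\tfrac2{\beta k}(1+\O(k^{-1}))$, $\hat\gamma^2=\gamma^2/(2\beta)$ and $8\hat\kappa^2=4\kappa^2/\beta$ (with $\hat\kappa:=\kappa/\sqrt{2\beta}$), taking logarithms, summing, and absorbing the accumulated Taylor, moment and telescoping errors (which are $\O(1)$ on the good event) leads to
\[
\mathcal{W}_n(\theta,\vartheta)\lesssim\Big(\tfrac nm\Big)^{-8\hat\kappa^2}\,\bE_m\Big[\1\{\cB_n(\theta)\cap\cB_n(\vartheta)\}\,e^{c_0 S_n(\theta,\vartheta)}\Big],\qquad c_0:=2\hat\gamma^2+8\hat\kappa^2,
\]
uniformly over the far pairs, where $S_n(\theta,\vartheta):=\sum_{k=m}^{n-1}k^{-1}\cos(\varpi_k(\theta)-\varpi_k(\vartheta))$. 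As $n/m=n^{\delta/8+o(1)}$, it thus suffices to show that this exponential moment is $\le(n/m)^{8\hat\kappa^2-\varkappa'}$ for some fixed $\varkappa'>0$ — which follows, with room to spare, from a sub-exponential tail bound for $S_n$ under $\bE_m[\,\cdot\mid\cB_n(\theta)\cap\cB_n(\vartheta)\,]$ with the appropriate rate; bookkeeping the constants then yields the stated exponent $\tfrac{\delta/5}{1+\beta}$.

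The crux — and the step I expect to be the main obstacle — is this tail bound on the oscillatory phase sum $S_n$. Conditionally on $\F_m$, the process $w_k:=\varpi_k(\theta)-\varpi_k(\vartheta)$ is, by \eqref{Phase} and the rotation-invariance of $\alpha_k$, a Markov chain started from $w_m=\varpi_m(\theta)-\varpi_m(\vartheta)$ with increment $w_{k+1}-w_k=(\theta-\vartheta)-2[\Im\log(1-\alpha_k)-\Im\log(1-\alpha_k e^{\i w_k})]$ in law — a deterministic drift $\theta-\vartheta$ of size $\ge\epsilon_n$, perturbed by a centred noise whose conditional variance is $\asymp k^{-1}(1-\cos w_k)$. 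Two effects make $S_n$ much smaller than the trivial bound $\log(n/m)$: the drift forces $w_k$ to wind (about $(n-m)|\theta-\vartheta|\ge n^{7\delta/8}$ times over $[m,n]$), producing cancellation in $\sum_k k^{-1}e^{\i w_k}$ by summation by parts against $e^{\i(k-m)(\theta-\vartheta)}$ precisely because $m\gg\epsilon_n^{-1}$; and, where the drift alone is too weak (when $|\theta-\vartheta|$ is close to $\epsilon_n$), the accumulated noise forces $\bE_k[e^{\i w_{k'}}]$ to decay polynomially in $k'/m$, the winding keeping $1-\cos w_k$ bounded away from $0$ on average. The plan is to isolate the oscillatory linear part through the decomposition $\varpi_k(\theta)-\varpi_k(\vartheta)=(k-m)(\theta-\vartheta)+(\varpi_m(\theta)-\varpi_m(\vartheta))-2(\psi_{k,m}(\theta)-\psi_{k,m}(\vartheta))$, to control the increment field $\psi_{k,m}$ via the exponential martingale inequalities of Appendix~\ref{A:conc}, and to pass from a polynomial net of pairs $(\theta,\vartheta)\in\bT^2$ to all pairs with $|\theta-\vartheta|\ge\epsilon_n$ using the Lipschitz dependence of $\mathcal{W}_n$ and $\bP_m[\cB_n^{\rm c}]$ on $(\theta,\vartheta)$, the $e^{-n^\varkappa}$ estimate absorbing the union bound. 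Making the interplay between the deterministic drift and the diffusive cancellation uniform over the whole range $\epsilon_n\le|\theta-\vartheta|\lesssim1$ is the principal difficulty; by comparison, the expansion bookkeeping and the good-event probability estimate are routine.
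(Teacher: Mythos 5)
Your setup (the choice of good events, the sub-exponential tail bound for $\bP_m[\cB_n^{\rm c}(\theta)]$, and the second-order expansion of $\bE_k Z_k$ after averaging over $\eta_k$, whose algebra is correct) is sound, but the proof has a genuine gap at exactly the point you flag, and the reduction you propose does not survive scrutiny. First, the telescoping itself: to iterate the conditioning you must at some stage take a modulus inside $\bE_m$, and once you replace $\prod_{k<j}Z_k$ by $\prod_{k<j}|Z_k|$ the phases $e^{\pm\i w_k}$ for $k<j$ are gone — the factors $\bE_k|Z_k|/|\bE_kZ_k|\approx\exp\big(2\kappa^2\,\bE|\alpha_k|^2(1-\cos w_k)\big)\ge1$ accumulate to $(n/m)^{8\hat\kappa^2}e^{-c\,S_n'}$ and cancel the very gain $(n/m)^{-8\hat\kappa^2}$ you are after. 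If instead you keep the complex product and peel off $\bE_kZ_k=1+\bE|\alpha_k|^2U_k\overline{V_k}+\O(k^{-2})$ one index at a time, the oscillatory part $(c\pm\kappa)^2e^{\pm\i w_k}\bE|\alpha_k|^2$ is of the \emph{same} order $1/k$ as the deterministic part, so the one-step error is $\O(1/k)$ times the crude bound and sums to $\O(\log\tfrac nm)(n/m)^{2\hat\gamma^2}$, which is useless. Second, even granting the reduction, the required estimate $\bE_m[\1_{\cB}e^{c_0S_n}]\le(n/m)^{8\hat\kappa^2-\varkappa'}$ with $c_0=2\hat\gamma^2+8\hat\kappa^2$ is not easier than the original statement (the trivial bound gives $(n/m)^{c_0}$, so you must extract a gain $(n/m)^{-2\hat\gamma^2-\varkappa'}$ with $\hat\gamma^2$ arbitrarily close to $1$ from the oscillation of the Markov chain $w_k$), and you only offer a heuristic for it. Your global decomposition $w_k=(k-m)\Delta+w_m-2(\psi_{k,m}(\theta)-\psi_{k,m}(\vartheta))$ cannot close this, because $\psi_{k,m}$ has conditional variance of order $\log(n/m)\to\infty$ over the whole range, so the phase perturbation is not small and the summation by parts against $e^{\i(k-m)\Delta}$ breaks down.

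The paper's proof is built precisely to circumvent this obstruction, by a multiscale freezing of the phase. After the Gaussian coupling (Fact~\ref{fact:Gamma}, Lemma~\ref{lem:trunc1}, Proposition~\ref{prop:approx1}), the interval $[m,n]$ is cut into $L$ geometric blocks $[n_j,n_{j+1})$ with ratio $e^{\eta}$, $\eta=n^{-\delta/2}\to0$, and $\varpi_k$ is replaced on each block by the linearization $\hpi_k(\theta)=(k-n_j)\theta+\varpi_{n_j}(\theta)$; the replacement error is controlled by the martingale maximal inequality \eqref{phasetb} because over a \emph{single short block} the martingale $\psi_{k,n_j}$ has quadratic variation only $\O(\eta)$ (Proposition~\ref{prop:approx2}) — this is what your single decomposition over all of $[m,n]$ lacks. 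For the resulting block-Gaussian model the conditional expectation of each block is computed exactly (Lemma~\ref{lem:Gauss}): it equals $\exp\big((\tfrac{\gamma^2}2-2\kappa^2)\sigma_\ell+\Re(\zeta e^{\i\omega_\ell}q_\ell)\big)$, where the frozen random phase $\omega_\ell$ enters only through the \emph{deterministic} geometric sum $q_\ell=\sum_{k=n_\ell}^{n_{\ell+1}-1}\beta_k^{-1}e^{\i k\Delta}$, and $|q_\ell|\lesssim(\beta n_\ell\Delta)^{-1}\ll\sigma_\ell$ because the block length $\eta n_\ell\gg\Delta^{-1}$. The recursion of Section~\ref{sec:Gauss} then peels off one block at a time \emph{keeping the complex product}, with a per-block error $\O(|q_j|)$ times the crude bound \eqref{varbd}, and $\sum_\ell|q_\ell|\lesssim n^{-\delta/4}$ closes the argument. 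In short: the blocking converts the random oscillatory sum $S_n$ you cannot control into deterministic sums that cancel by elementary geometric summation, and it is the block structure — not a tail bound on $S_n$ — that delivers the gain.
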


Hence, by Proposition~\ref{prop:W1}, we conclude that for a small $c_\beta>0$ and large $C>0$, 
\[
\bE\big[\1\{\A_n\}  |\bE_mZ_n^2|\wedge 1  \big]  \le \O(n^{-c_\beta}) + \bP\big[\mu_{m}^{\gamma}(f)\ge C\big] .
\]
Note that the contribution from $\mathcal E_n $ is asymptotically negligible. Consequently, by Theorem~\ref{thm:phi}, for every $C>0$,
\[
\limsup_{n\to\infty}\bE\big[\1\{\A_n\}  |\bE_mZ_n^2|\wedge 1  \big]  \le  \bP\big[\mu^{\hat\gamma}(f)\ge C\big] .
\]
Taking $C\to\infty$, both sides vanish, then  going back to \eqref{split1} and \eqref{Z1}, we conclude that
\[
\lim_{n\to\infty}\bE\bigg[ \bigg| \int_{\bT} e^{\i\kappa\varpi_{n}(\theta)} f(\theta)\mu^{\gamma}_n(\dd\theta)  \bigg|^2 \wedge 1 \bigg] =0.
\]
This completes the proof of Proposition~\ref{prop:RL} for the real part.
The method for the imaginary part is exactly the same, one only needs to change \eqref{W1} to  
\[
\mathcal{W}_n(\theta,\vartheta)  =
\frac{\big|\bE_m\big[\1\{\cB_n(\theta)\}\1\{\cB_n(\vartheta)\} e^{\zeta\psi_{m,n}(\theta)+\overline\zeta\psi_{m,n}(\vartheta))}\big]\big|}{\bE[e^{\gamma \chi_{m,n}(0)}]^2} 
\]
where $\zeta=\gamma+2\i\kappa$, $m\le n$ and $(\theta,\vartheta) \in\bT^2$ with $\theta\neq \vartheta$. 
Then, the estimates from Proposition~\ref{prop:W1} hold true with the same events $\{B_n\}$. These events control both the real and imaginary part of $\varphi_{n,m}(\theta)$ for a fixed $\theta\in\bT$; see Section~\ref{sec:W}. 

\medskip

The rest of this section is organized as follows; 
\begin{itemize}[leftmargin=*] \setlength\itemsep{.5em}
\item In Section~\ref{sec:lin}, we linearize the recurrence \eqref{S3} and develop the necessary Gaussian approximations. This section is independent from the rest of the paper and we rely on the convention from the Appendix~\ref{A:conc}. 
\item In Section~\ref{sec:W}, we deduce Proposition~\ref{prop:W1} from these approximations.  
\item In Section~\ref{sec:Gauss}, we prove some Gaussian estimates which are required in the proof. 
\end{itemize}

\subsection{Gaussian coupling and Linearization.} \label{sec:lin}
The first step of the proof of Proposition~\ref{prop:W1} is to show that one can \emph{linearize} the recursion \eqref{S4} and then replace the Verblunsky coefficients $\{\alpha_k\}_{k\ge m}$ by independent (complex) Gaussian random variables, up to a small error, with overwhelming probability.
To perform such approximations, we rely on the following property of the C$\beta$E model which has been observed in \cite{CMN}. 
In this section, $c>0$ denotes a numerical constant and $C>0$ a constant which depends on the parameter $\beta>0$ of the model.
Recall Fact~\ref{fact:Verb}  and that $\beta_k = \tfrac\beta2 (k+1)$ for $k\in\bN_0$.  

\begin{fact} \label{fact:Gamma}

We can enlarge our probability space and the filtration \eqref{filt} so that for $k\in\bN_0$, 
\[
\F_{k} := \sigma\big(\alpha_{0},\Gamma_0,\cdots, \alpha_{k-1},\Gamma_{k-1}\big) ,  
\]
where  $\{\Gamma_k\}$ is a sequence independent random variables, independent of $\{\alpha_k\}$, and $\Gamma_k$ is Gamma-distributed  with p.d.f. $x\in\bR_+ \mapsto \Gamma(1+\beta_k)^{-1} x^{\beta_k} e^{-x}$ for $k\in\bN_0$. 
Then 
$G_k := \alpha_k \sqrt{\Gamma_k}  $ are i.i.d.~complex Gaussian with 
\[
\bE G_k = 0 , \quad  \bE G_k^2 = 0, \quad
\bE  |G_k|^2 = 1.
\]
\end{fact}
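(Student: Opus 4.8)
The plan is to verify the stated distributional identity directly via the classical \emph{Beta--Gamma algebra}. First I would enlarge $(\Omega,\F,\bP)$ so that it carries a sequence $\{\Gamma_k\}_{k\in\bN_0}$ of independent random variables, independent of $\{\alpha_k\}$ and of $\eta$, with $\Gamma_k$ distributed as stated, i.e.\ $\Gamma_k\sim\mathrm{Gamma}(1+\beta_k,1)$; then redefine the filtration by $\F_k=\sigma(\alpha_0,\Gamma_0,\dots,\alpha_{k-1},\Gamma_{k-1})$. Since $\Gamma_j>0$ almost surely and $G_j=\alpha_j\sqrt{\Gamma_j}$, the pairs $(\alpha_j,\Gamma_j)$ and $(G_j,\Gamma_j)$ generate the same $\sigma$-algebra, so this enlargement is purely cosmetic and leaves all previously constructed objects ($\alpha_k$, $\varphi_k$, $\varpi_k$, the characteristic polynomials, \dots) unchanged.

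The core computation is the following. Recalling Fact~\ref{fact:Verb}, write $\alpha_k=|\alpha_k|e^{\i\eta_k}$ with $\eta_k$ uniform on $\bT$ and, by \eqref{verb0}, $B_k:=|\alpha_k|^2$ having density $\beta_k(1-r)^{\beta_k-1}\1_{[0,1]}(r)$, i.e.\ $B_k\sim\mathrm{Beta}(1,\beta_k)$; these are independent across $k$ and of $\{\eta_j\}$. Then $G_k=\alpha_k\sqrt{\Gamma_k}$ has argument $\eta_k$ and modulus $|G_k|=\sqrt{B_k\Gamma_k}$, with $|G_k|$ and $\eta_k$ independent. The key point is $B_k\Gamma_k\sim\mathrm{Exp}(1)$: conditioning on $B_k=b\in(0,1)$, the density of $B_k\Gamma_k$ at $y>0$ is $\tfrac1b f_{\Gamma_k}(y/b)$, and integrating against $f_{B_k}$, then substituting $u=1/b$ and $v=u-1$, collapses the integral to $\tfrac{\beta_k\Gamma(\beta_k)}{\Gamma(1+\beta_k)}e^{-y}=e^{-y}$. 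Equivalently this is the identity $\mathrm{Beta}(1,\beta_k)\cdot\mathrm{Gamma}(1+\beta_k,1)\equiv\mathrm{Gamma}(1,1)$ coming from the splitting $X=\tfrac{X}{X+Y}\cdot(X+Y)$ for independent Gamma variables $X\sim\mathrm{Gamma}(1,1)$, $Y\sim\mathrm{Gamma}(\beta_k,1)$.

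It then remains to package this into the statement. A complex random variable whose squared modulus is $\mathrm{Exp}(1)$ independent of a uniform argument is exactly a standard complex Gaussian: in polar coordinates the Jacobian of $(x,y)\mapsto(x^2+y^2,\,\arg(x+\i y))$ equals $2$, so the joint density of $(\Re G_k,\Im G_k)$ is $\pi^{-1}e^{-(x^2+y^2)}$, the law of two independent $\mathcal N(0,1/2)$. Hence $\bE G_k=\bE|G_k|\,\bE e^{\i\eta_k}=0$, $\bE G_k^2=\bE|G_k|^2\,\bE e^{2\i\eta_k}=0$, and $\bE|G_k|^2=\bE[\mathrm{Exp}(1)]=1$. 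Finally each $G_k$ is a measurable function of $(\alpha_k,\Gamma_k)$, and these pairs are independent across $k$, so the $G_k$ are i.i.d.\ as claimed.

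There is no genuine obstacle here; the only points requiring a little care are pinning down the exact Gamma shape parameter $1+\beta_k$ so that the Beta--Gamma product is \emph{exactly} standard exponential rather than exponential up to scale, and checking that the construction truly reproduces a standard complex Gaussian via its polar decomposition (equivalently, verifying the three moment identities together with Gaussianity, which characterize it).
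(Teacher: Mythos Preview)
Your argument is correct. The paper does not actually prove this fact: it is stated without proof as ``a property of the C$\beta$E model which has been observed in \cite{CMN}.'' Your verification via the Beta--Gamma algebra (so that $|\alpha_k|^2\Gamma_k\sim\mathrm{Beta}(1,\beta_k)\cdot\mathrm{Gamma}(1+\beta_k,1)\equiv\mathrm{Gamma}(1,1)=\mathrm{Exp}(1)$, combined with the uniform phase $\eta_k$ to recover the standard complex Gaussian) is exactly the natural route and matches what one would find in \cite{CMN}.
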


Then to linearize the recursion \eqref{S4}, we can work on the following event.

\begin{lemma} \label{lem:trunc1}
For any $0 \le \eta<1/2$, there is a $\epsilon=\epsilon(\eta)$ so that if $m$ is sufficiently large, 
\begin{equation} \label{trunc1}
\bP\big[ |\alpha_k| \le  k^{-\eta} \text{ for all }k\ge m\big]
\ge 1- \exp\big( -\beta m^{-\epsilon}\big) . 
\end{equation}
Moreover, there is a numerical constant $c$ so that
\(
\big\| \alpha_k - G_k/\sqrt{\beta_k} \big\|_{\Psi1} \le c   \beta_k^{-1}
\)
for all $k\in\bN_0$. 
\end{lemma}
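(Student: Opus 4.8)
The statement has two independent parts, so the plan is to treat them separately. For the concentration bound \eqref{trunc1}, the key observation is that by Fact~\ref{fact:Verb} the events $\{|\alpha_k|^2 \ge r\}$ have probability $(1-r)^{\beta_k}$, so $\bP[|\alpha_k| > k^{-\eta}] = (1-k^{-2\eta})^{\beta_k} \le \exp(-\beta_k k^{-2\eta}) = \exp(-\tfrac{\beta}{2}(k+1) k^{-2\eta})$. Since $\eta < 1/2$, the exponent $(k+1)k^{-2\eta}$ grows like $k^{1-2\eta}$, a positive power of $k$. The plan is then to union-bound over $k \ge m$:
\[
\bP\big[\exists\, k\ge m:\ |\alpha_k| > k^{-\eta}\big] \le \sum_{k\ge m} \exp\!\big(-\tfrac{\beta}{2}(k+1)k^{-2\eta}\big).
\]
Because the summand decays like $\exp(-c k^{1-2\eta})$, the tail sum from $m$ onwards is dominated by its first term up to a constant factor, and a standard estimate (comparing the sum to an integral, or bounding the ratio of consecutive terms) gives that the whole sum is at most $\exp(-\tfrac{\beta}{2} m^{1-2\eta}(1+o(1))) \le \exp(-\beta m^{-\epsilon})$ for any fixed $\epsilon>0$ once $m$ is large — indeed one can take $\epsilon$ to be any positive number, e.g. $\epsilon = 1-2\eta$ would even work with the sign of the exponent, but matching the stated form only requires $m^{-\epsilon}$, which is weaker than $m^{1-2\eta}$. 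The only mild care needed is to absorb the subexponential prefactor from summing the series into the constant, which is routine.

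For the sub-exponential norm bound, recall $G_k = \alpha_k\sqrt{\Gamma_k}$ with $\Gamma_k$ Gamma$(1+\beta_k)$ (Fact~\ref{fact:Gamma}), so $G_k/\sqrt{\beta_k} - \alpha_k = \alpha_k\big(\sqrt{\Gamma_k/\beta_k} - 1\big)$. Since $|\alpha_k| \le 1$ deterministically, it suffices to control $\|\sqrt{\Gamma_k/\beta_k} - 1\|_{\Psi_1}$. Write $\sqrt{\Gamma_k/\beta_k} - 1 = \dfrac{\Gamma_k/\beta_k - 1}{\sqrt{\Gamma_k/\beta_k}+1}$; the denominator is bounded below by $1$, so $|\sqrt{\Gamma_k/\beta_k}-1| \le |\Gamma_k/\beta_k - 1|$. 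Now $\Gamma_k = \sum_{j} E_j$ is, in distribution for integer $\beta_k$ (and comparably in general), a sum of $\beta_k$ i.i.d.\ standard exponentials, so $\Gamma_k/\beta_k - 1$ is an average of $\beta_k$ centered sub-exponential variables; by a Bernstein-type bound its $\Psi_1$-norm (in the convention of Appendix~\ref{A:conc}) is $O(\beta_k^{-1/2})$. This only gives $\beta_k^{-1/2}$, not the claimed $\beta_k^{-1}$, so the plan must be sharper: one should not bound $|\alpha_k(\sqrt{\Gamma_k/\beta_k}-1)|$ crudely but instead exploit the exact law of $\alpha_k$. The cleaner route is to compute directly with the joint density: $|\alpha_k|^2$ has density $\beta_k(1-r)^{\beta_k-1}$ on $[0,1]$, so $\bE|\alpha_k|^2 = 1/(1+\beta_k)$, and more generally $|\alpha_k|$ concentrates at scale $\beta_k^{-1/2}$ but $\alpha_k - G_k/\sqrt{\beta_k}$ should be compared moment by moment. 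The main obstacle — and the step I expect to require the most care — is getting the exponent right: the factor $\beta_k^{-1}$ rather than $\beta_k^{-1/2}$ must come from the fact that the \emph{difference} $\alpha_k - G_k/\sqrt{\beta_k}$ is a higher-order correction, i.e.\ one should expand $\sqrt{\Gamma_k/\beta_k} = 1 + \tfrac12(\Gamma_k/\beta_k - 1) - \tfrac18(\Gamma_k/\beta_k-1)^2 + \cdots$ and check that after multiplying by $\alpha_k$ (whose typical size is $\beta_k^{-1/2}$) the leading term's contribution to the relevant $\Psi_1$-norm is genuinely $O(\beta_k^{-1})$; this presumably uses that $G_k$ is \emph{defined} as $\alpha_k\sqrt{\Gamma_k}$ so the difference is $\alpha_k(\sqrt{\Gamma_k/\beta_k}-1)$ and one uses $\|\alpha_k\|_{\Psi_2} \asymp \beta_k^{-1/2}$ together with $\|\sqrt{\Gamma_k/\beta_k}-1\|_{\Psi_2}\asymp \beta_k^{-1/2}$ and a product bound $\|XY\|_{\Psi_1}\le \|X\|_{\Psi_2}\|Y\|_{\Psi_2}$ from Appendix~\ref{A:conc}. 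That product structure is what turns two $\beta_k^{-1/2}$ factors into the advertised $\beta_k^{-1}$.

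In summary: part one is an elementary union bound over a geometrically-dominated tail using the explicit Beta tails of the $|\alpha_k|^2$; part two writes the difference as $\alpha_k(\sqrt{\Gamma_k/\beta_k}-1)$, bounds each factor in $\Psi_2$ by $O(\beta_k^{-1/2})$ using respectively the Beta law of $|\alpha_k|^2$ and a Gamma concentration estimate, and multiplies. The anticipated sticking point is verifying the Gamma concentration at the right scale and confirming that the $\Psi_2 \times \Psi_2 \to \Psi_1$ product inequality from the appendix applies with the numerical constant $c$ independent of $k$ and $\beta$ (the constant should depend only on absolute quantities, with the $\beta$-dependence entirely inside $\beta_k$).
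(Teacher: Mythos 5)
Your plan is correct and follows essentially the same route as the paper: the first part is the same union bound using the explicit Beta tails $\bP[|\alpha_k|^2\ge t]\le e^{-\beta_k t}$, and for the second part your final decomposition $G_k/\sqrt{\beta_k}-\alpha_k=\alpha_k\big(\sqrt{\Gamma_k/\beta_k}-1\big)$ with $\|\alpha_k\|_{\Psi 2}\lesssim\beta_k^{-1/2}$, $\|\sqrt{\Gamma_k/\beta_k}-1\|_{\Psi 2}\lesssim\beta_k^{-1/2}$ and the product inequality $\|XY\|_{\Psi 1}\le\|X\|_{\Psi 2}\|Y\|_{\Psi 2}$ is exactly the paper's argument (the paper verifies the sub-Gaussian bound for $\gamma_k=\sqrt{\Gamma_k/\beta_k}-1$ by the direct density estimate $(x+1)^{\beta k}e^{-(1+\beta_k)(x+1)^2}\le e^{-\beta_k x^2}$, which settles the step you flagged as the sticking point).
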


\begin{proof}
First, we record that by \eqref{verb0}, for $k\in\bN_0$, 
\begin{equation} \label{exptb}
\bP\left[ |\alpha_k|^2 \ge t\right] \le  e^{-\beta_k t} ,\qquad t\in[0,1] .
\end{equation}
Equivalently, $\|\alpha_k\|_{\Psi2} \le c   \beta_k^{-1/2}$  for all $k\in\bN_0$. 
By a union bound, this implies that for some numerical constant $c>0$, 
\[
\bP\big[ |\alpha_k| > k^{-\eta}  \text{ for a }k\ge m\big]
\le  2 \sum_{k\ge m} \exp\big(- c\beta k^{1-2\eta} \big)
\]
which proves \eqref{trunc1}.

Let $\gamma_k := \sqrt{\Gamma_k/\beta_k} -1$ for $k\in\bN$.  
$\gamma_k$ has a p.d.f.~$\propto (x+1)^{\beta k}e^{-\beta_k(x+1)^2}$ on $[-1,\infty)$, using that $2\beta_k-1 =\beta k$. 
One has for $x\ge -1$, 
\[
(x+1)^{\beta k}e^{-(1+\beta_k)(x+1)^2} \le e^{- \beta_k x^2} . 
\]
By scaling, this shows that for some numerical constant, $\|\gamma_k\|_{\Psi2} \le c   \beta_k^{-1/2}$ for all $k\in\bN_0$. 
Now, we have 
\[
G_k/\sqrt{\beta_k}-\alpha_k =
\alpha_k\gamma_k  
\]
and the second claim follows from the bound
\(
\| \alpha_k\gamma_k \|_{\Psi1}
\le \|\alpha_k\|_{\Psi2}\|\gamma_k\|_{\Psi2} 
\).
\end{proof}

\begin{definition} \label{def:Gauss1}
Let $m\in\bN$. 
For $k\in\bN_{\ge m}$, let $X_k := -G_k/\sqrt{\beta_k}$ where $G_k$ are the i.i.d.~complex Gaussians  from Fact~\ref{fact:Gamma}.
In terms of the Pr\"ufer phase $\{\varpi_k\}_{k\in\bN_0}$,  we define
\[
\bphi_{m,n}(\theta):= {\textstyle \sum_{k=m}^{n-1}}  X_k e^{\i\varpi_k(\theta)} , \qquad \theta\in\bT,\quad n\ge m . 
\]
The process $\{ \bphi_{m,n}\}_{n\ge m}$ is a continuous $\{\F_n\}$ martingale.
\end{definition}

Using  Lemma~\ref{lem:trunc1}, one can show that for a given $\theta\in\bT$, the processes
$\{\psi_{m,n}(\theta)\}_{n\ge m}$ from \eqref{S4} and $\{\bphi_{m,n}(\theta)\}_{n\ge m}$ are close with overwhelming probability as $m\to\infty$.
In particular, for a fixed $\theta\in\bT$, the new process $\{\bphi_{m,n}(\theta)\}_{n\ge m}$ has independent (complex) Gaussian increments and quadratic variation $[\bphi_{m,n}(\theta)] = \sum_{k=m}^{n-1} (1+\beta_k)^{-1} \simeq \frac2\beta \log(\frac nm) $ for $n\ge m$. 

\begin{proposition}[Linearization] \label{prop:approx1}
For any $0<\eta<1/2$, there is a $\epsilon=\epsilon(\eta)$ so that  for $\theta\in\bT$ and $m\in\bN$, 
\[
\bP_m\left[  \sup_{n\ge m} |\varphi_{m,n}(\theta)-\bphi_{m,n}(\theta)|  \ge  m^{-\eta}\right] \lesssim \exp\big( -\beta m^{-\epsilon}\big) . 
\]
\end{proposition}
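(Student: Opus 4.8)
The plan is to write the difference as $A_{m,n}(\theta)-B_{m,n}(\theta)$, where $A$ records the error of linearising the logarithm and $B$ the error of replacing $\alpha_k$ by its Gaussian proxy, to observe that both are martingales in $n$ (conditionally on $\F_m$), and to control each by a Bernstein-type inequality for martingales with sub-exponential increments. Precisely, fix $\theta\in\bT$; using \eqref{inc}, Definition~\ref{def:Gauss1} and inserting $\pm\alpha_k e^{\i\varpi_k(\theta)}$ in each summand,
\[
\varphi_{m,n}(\theta)-\bphi_{m,n}(\theta)= A_{m,n}(\theta)-B_{m,n}(\theta),
\]
where $A_{m,n}(\theta):=\sum_{k=m}^{n-1}\bigl(\log(1-\alpha_k e^{\i\varpi_k(\theta)})+\alpha_k e^{\i\varpi_k(\theta)}\bigr)$ and $B_{m,n}(\theta):=\sum_{k=m}^{n-1}\bigl(\alpha_k-G_k/\sqrt{\beta_k}\bigr)e^{\i\varpi_k(\theta)}$, with $G_k=\alpha_k\sqrt{\Gamma_k}$ as in Fact~\ref{fact:Gamma}. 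Since $\varpi_k(\theta)$ is $\F_k$-measurable while $\alpha_k=|\alpha_k|e^{\i\eta_k}$ and $\Gamma_k$ are independent of $\F_k$ with $\eta_k$ uniform on $\bT$, rotation-invariance gives $\bE[\log(1-\alpha_k e^{\i\varpi_k(\theta)})+\alpha_k e^{\i\varpi_k(\theta)}\mid\F_k]=0$ and $\bE[(\alpha_k-G_k/\sqrt{\beta_k})e^{\i\varpi_k(\theta)}\mid\F_k]=0$, so $\{A_{m,n}(\theta)\}_{n\ge m}$ and $\{B_{m,n}(\theta)\}_{n\ge m}$ are $\F_m$-conditional $\{\F_n\}$-martingales with centred increments.

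Next I would bound, for both martingales, the conditional variances and sub-exponential norms of the increments. For $B_{m,n}$ this is immediate from Lemma~\ref{lem:trunc1} and its proof: writing $\alpha_k-G_k/\sqrt{\beta_k}=-\alpha_k\gamma_k$ with $\gamma_k=\sqrt{\Gamma_k/\beta_k}-1$ independent of $\alpha_k$, one has $\|\alpha_k-G_k/\sqrt{\beta_k}\|_{\Psi1}\lesssim\beta_k^{-1}$ and $\bE[|\alpha_k-G_k/\sqrt{\beta_k}|^2\mid\F_k]=\bE|\alpha_k|^2\,\bE|\gamma_k|^2\lesssim\beta_k^{-2}$. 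For $A_{m,n}$ set $d_k:=\log(1-\alpha_k e^{\i\varpi_k(\theta)})+\alpha_k e^{\i\varpi_k(\theta)}$; the Taylor bound $|\log(1-w)+w|\le|w|^2$ for $|w|\le\tfrac12$ gives $|d_k|\le|\alpha_k|^2$ on $\{|\alpha_k|\le\tfrac12\}$, while in general $|d_k|\le-\log(1-|\alpha_k|)+C$. Splitting on $\{|\alpha_k|\le\tfrac12\}$ and its complement and invoking the explicit Verblunsky law \eqref{verb0} — equivalently \eqref{exptb} — one finds that $|\alpha_k|^2\1\{|\alpha_k|\le\tfrac12\}$ is sub-exponential of scale $\lesssim\beta_k^{-1}$ and that $\{|\alpha_k|>\tfrac12\}$ has probability $\le e^{-3\beta_k/4}$ on which $\bE[(1-|\alpha_k|)^{-\lambda}]<\infty$ for $\lambda\lesssim\beta_k$; hence $\|d_k\|_{\Psi1}\lesssim\beta_k^{-1}$, and likewise the conditional variance $\bE[|d_k|^2\mid\F_k]$ — which is in fact deterministic, since $\alpha_k$ is independent of $\F_k$ and the $\varpi_k(\theta)$-dependence integrates out — is $\lesssim\bE|\alpha_k|^4+e^{-3\beta_k/4}\lesssim\beta_k^{-2}$.

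Since $\beta_k\asymp k$, for each of $A,B$ the increments $\xi_k$ ($k\ge m$) satisfy $\sum_{k\ge m}\|\xi_k\|_{\Psi1}^2\lesssim\sum_{k\ge m}k^{-2}\lesssim m^{-1}$, $\sup_{k\ge m}\|\xi_k\|_{\Psi1}\lesssim m^{-1}$, and $\sum_{k\ge m}\bE[|\xi_k|^2\mid\F_k]\lesssim m^{-1}$. Applying, conditionally on $\F_m$ and separately to real and imaginary parts (whose $\Psi1$-norms and conditional variances are dominated by those of $\xi_k$), the maximal Bernstein-type inequality for martingales with sub-exponential increments from Appendix~\ref{A:conc} with threshold $t=\tfrac14 m^{-\eta}$ and $0<\eta<\tfrac12$, both $t^2/\sum_k\|\xi_k\|_{\Psi1}^2$ and $t/\sup_k\|\xi_k\|_{\Psi1}$ are $\gtrsim m^{1-2\eta}$; hence $\bP_m(\sup_{n\ge m}|A_{m,n}(\theta)|\ge\tfrac12 m^{-\eta})$ and $\bP_m(\sup_{n\ge m}|B_{m,n}(\theta)|\ge\tfrac12 m^{-\eta})$ are both $\lesssim\exp(-c\,m^{1-2\eta})$, which for $0<\eta<\tfrac12$ is $\lesssim\exp(-\beta m^{-\epsilon})$ for any fixed $\epsilon>0$ once $m$ is large, the moderate range of $m$ being absorbed in the implied constant. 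A union bound over $A$ and $B$ via the triangle inequality then yields the claim. (Alternatively, restricting first to the event $\{|\alpha_k|\le k^{-\eta}\text{ for all }k\ge m\}$ of probability $\ge 1-\exp(-\beta m^{-\epsilon})$ from Lemma~\ref{lem:trunc1} makes the $A$-increments bounded and reduces matters to a Bernstein inequality for bounded martingale increments together with a routine truncation of $\gamma_k$.)

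The step I expect to be the main obstacle is the sub-exponential control of $d_k$ near $|\alpha_k|=1$: because $\log(1-\alpha_k e^{\i\varpi_k(\theta)})$ is unbounded one cannot bound $|A_{m,n}|$ pathwise by $\sum_k|\alpha_k|^2$ — and indeed $\bE\sum_{k\ge m}|\alpha_k|^2=\sum_{k\ge m}(1+\beta_k)^{-1}$ diverges — so the martingale cancellation is unavoidable, and making the Bernstein estimate work requires precisely the tail bound supplied by the explicit Beta law of the Verblunsky coefficients.
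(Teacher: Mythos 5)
Your proposal is correct and follows the same overall strategy as the paper: decompose $\varphi_{m,n}-\bphi_{m,n}$ into centred $\{\F_n\}$-martingales (the log-linearization error and the Gaussian-replacement error), verify that the increments are sub-exponential of scale $\beta_k^{-1}$, and invoke the maximal Bernstein inequality of Appendix~\ref{A:conc}, which yields a bound of order $\exp(-c\,m^{1-2\eta})$, comfortably within the stated rate. The one genuine difference is in how the logarithm's remainder is handled. The paper first restricts to the event $\{|\alpha_k|\le k^{-\eta}\ \forall k\ge m\}$ of Lemma~\ref{lem:trunc1}, expands $\log(1-\alpha_k e^{\i\varpi_k})$ to \emph{second} order with a deterministic, summable cubic remainder $O(k^{-3\eta})$ (whence the restriction $1/3<\eta<1/2$ there, the smaller $\eta$ being recovered by monotonicity of the event in the threshold), and then treats $\sum_k\alpha_k^2e^{2\i\varpi_k}$ as a separate martingale with $\|\alpha_k^2\|_{\Psi1}\lesssim(\beta k)^{-1}$. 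You instead keep the entire remainder $d_k=\log(1-\alpha_ke^{\i\varpi_k})+\alpha_ke^{\i\varpi_k}$ as a single martingale increment and establish $\|d_k\|_{\Psi1}\lesssim\beta_k^{-1}$ directly, paying for the unboundedness of the logarithm near $|\alpha_k|=1$ with the explicit Beta tail \eqref{verb0}; this is valid (the negative moments $\bE(1-|\alpha_k|^2)^{-\lambda}$ are finite for $\lambda$ up to order $\beta_k$, and the rare event $\{|\alpha_k|>\tfrac12\}$ is exponentially unlikely) and works for all $0<\eta<1/2$ at once, at the cost of the slightly more delicate Orlicz-norm computation that you correctly identify as the crux. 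Your parenthetical alternative at the end is, in effect, the paper's route. One cosmetic remark: for the Bernstein inequality as stated in Proposition~\ref{prop:conc1} only the conditional $\Psi_1$-norms enter, so your separate estimate of the conditional variances (where the bound $\bE[|d_k|^2\1\{|\alpha_k|>\tfrac12\}]\le e^{-3\beta_k/4}$ as written needs a Cauchy--Schwarz step, since $|d_k|$ is not bounded on that event) is not actually needed.
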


\begin{proof}
On the event \eqref{trunc1}, it holds for $k\ge m$, 
\[
\big|\log(1-\alpha_k e^{\i\varpi_k(\theta)}) -  \alpha_k e^{\i\varpi_k(\theta)} + \tfrac12 \alpha_k^2 e^{2\i\varpi_k(\theta)} \big|
\le  |\alpha_k|^3 \le k^{-3\eta} .
\]
Choosing $1/3<\eta<1/2$, these error are summable.
By \eqref{inc}, this implies that for any $n\ge m$,  
\[\begin{aligned}
\varphi_{m,n}(\theta) &=  {\textstyle \sum_{k=m}^{n-1}}  \log(1-\alpha_k e^{\i\varpi_k(\theta)}) \\
&= -{\textstyle \sum_{k=m}^{n-1}}  \alpha_k e^{\i\varpi_k(\theta)} 
+\tfrac12 {\textstyle \sum_{k=m}^{n-1}} \alpha_k^2 e^{2\i\varpi_k(\theta)}  
+O(m^{1-3\eta}) .
\end{aligned}\]
The error control (deterministically) on the event \eqref{trunc1}, uniformly for all $\theta\in\bT$. 
Since the process $\{\varpi_k\}_{k\in\bN_0}$ is adapted and $\bE_k \alpha_k^2=0$ (by rotation invariance),
$\big\{{\textstyle \sum_{k=m}^{n-1}} \alpha_k^2 e^{2\i\varpi_k(\theta)}\big\}_{n\ge m}$ is a complex-valued martingale whose increments satisfy by \eqref{exptb}, 
\[
\big\|\alpha_k^2\big\|_{\Psi1} \le c/\beta k , \qquad k\in\bN. 
\] 
Hence, by Proposition~\ref{prop:conc1}, there is a numerical constant $c$ so that for any $\theta\in\bT$ and any $\lambda \le \beta^{-1}$, 
\begin{equation*}
\bP_m\left[\sup_{n\ge m}\big|{\textstyle \sum_{k=m}^{n-1}} \alpha_k^2 e^{2\i\varpi_k(\theta)}\big|   \ge \lambda\right] \le 2 \exp\left(-c\beta^2 m \lambda^2 \right). 
\end{equation*}
Taking $\lambda=m^{-\eta}$, this implies that for any $0<\eta<1/2$, 
there is a $\epsilon = \epsilon(\eta)>0$ so that for a fixed $\theta\in\bT$,
\[
\psi_{m,n}(\theta) = -{\textstyle \sum_{k=m}^{n-1}}  \alpha_k e^{\i\varpi_k(\theta)}  
+O(m^{-\eta}) \qquad\text{uniformly for all $n\ge m$},  
\]
with probability $1- \exp\big( -\beta m^{-\epsilon}\big)$ if $m$ is sufficiently large. 
This implies that for $n\ge m$, 
\[
\bphi_{m,n}(\theta)-\varphi_{m,n}(\theta) = {\textstyle \sum_{k=m}^{n-1}}  \big(\alpha_k -G_k\beta_k^{-1/2}\big) e^{\i\varpi_k(\theta)}  
+O(m^{-\eta}) .
\]
The main term is again a  martingale whose increments are controlled by Lemma~\ref{lem:trunc1} -- its quadratic variation is summable and bounded by $\beta_m^{-1}$.   
Hence, as above, $\sup_{n\ge m}\big|{\textstyle \sum_{k=m}^{n-1}}  \big(\alpha_k -G_k\beta_k^{-1/2}\big) e^{\i\varpi_k(\theta)}\big| \lesssim m^{-\eta}$ with probability at least $1- \exp\big( -\beta m^{-\epsilon}\big)$. This proves the claim. 
\end{proof}

We now focus on the process from Definition~\ref{def:Gauss1}

\begin{definition} \label{def:Gauss2}
Recall that $m=m(n)= \lfloor n \Lambda^{-1}\rfloor $ where $\Lambda(n)\to\infty$ as $n\to\infty$. 
We write $\Lambda = e^{\eta L}$
where  $\eta(n) \to0$ as $n\to\infty$ is a parameter to be chosen later on and $L(n) \le n$.   
The relevant condition will be that $\eta \gg \Lambda  n^{-\delta}$ and it will be convenient to choose  $\Lambda := n^{\frac\delta8}$ and $\eta  := n^{-\frac{\delta}2}= \Lambda n^{-\frac{3\delta}4} $ and
Then, we let
 \[
 n_\ell := \lfloor n  e^{\eta(\ell- L)} \rfloor , \qquad \ell\in[0,L]
 \]
so that $n_0=m$, $n_L=n$ and $n_\ell$ is a geometric progression at rate $e^\eta$. 
We introduce a new process;
\[\begin{cases}
\hvarphi_{m,n}(\theta):= \sum_{k=m}^{n-1}  X_k e^{\i\hpi_k(\theta)} ,   \qquad \theta\in\bT,\quad n\ge m   \\
\hpi_k(\theta) = (k-n_j)\theta + \varpi_{n_j}(\theta) , \qquad k\in[n_j,n_{j+1}) , \, j\in\bN_0
\end{cases} . \]
Again $\{\hvarphi_{m,n}\}_{n\ge m}(\theta)$ is a Gaussian process, with a piecewise continuous phase, and also a continuous $\{\F_n\}$ martingale. 
\end{definition}  

\begin{proposition}[Approximation] \label{prop:approx2}
For any $R\ge 1$ sufficiently large, and for any $\theta\in\bT$, with  $\epsilon=R \sqrt{\eta \log \Lambda}$,
\[
\bP_m\bigg[  \max_{k\in[m,n]} |\hvarphi_{m,k}(\theta)-\bphi_{m,k}(\theta)|  \ge  \epsilon \bigg] \lesssim  n e^{-c\beta R}
\]
Using the conventions from Definition~\ref{def:Gauss2}, take $R= \frac{n^{\varkappa}}{\sqrt{\log n}}$ with $0<\varkappa< \frac\delta4$, then 
$\epsilon \le  n^{\varkappa-\frac \delta4}$. 
\end{proposition}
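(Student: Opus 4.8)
\textbf{Proof plan for Proposition~\ref{prop:approx2}.}
The plan is to control the difference process $\Delta_{m,k}(\theta) := \hvarphi_{m,k}(\theta) - \bphi_{m,k}(\theta)$, which is a continuous $\{\F_n\}$-martingale with $\Delta_{m,m}(\theta)=0$, by a uniform martingale concentration bound applied on each of the $L$ geometric blocks $[n_j,n_{j+1})$. On such a block the increment of $\Delta$ is
\[
\Delta_{m,n_{j+1}}(\theta) - \Delta_{m,n_j}(\theta) = \sum_{k=n_j}^{n_{j+1}-1} X_k \big( e^{\i\hpi_k(\theta)} - e^{\i\varpi_k(\theta)} \big),
\]
where $X_k = -G_k/\sqrt{\beta_k}$ is a complex Gaussian of variance $\beta_k^{-1}$. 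The key point is that on $[n_j,n_{j+1})$ the ``frozen'' phase $\hpi_k(\theta)= (k-n_j)\theta + \varpi_{n_j}(\theta)$ differs from the true Pr\"ufer phase $\varpi_k(\theta)$ by $\varpi_k(\theta)-\varpi_{n_j}(\theta)-(k-n_j)\theta = -2(\psi_k(\theta)-\psi_{n_j}(\theta))$, which is small because $\psi$ barely moves over a multiplicative window of size $e^\eta$: indeed $|\psi_k(\theta)-\psi_{n_j}(\theta)| \lesssim \sum_{\ell=n_j}^{k-1}|\alpha_\ell| \lesssim n_j^{1/2-\eta}\cdot$(window length)$/n_j = O(\eta \, n_j^{1/2-\eta})$ on the truncation event of Lemma~\ref{lem:trunc1}, hence $O(\eta)$ for our choice of parameters, after being more careful with the $|\alpha_\ell|\lesssim \ell^{-\eta}$ bound and the block length $\sim \eta n_j$. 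Consequently $|e^{\i\hpi_k(\theta)}-e^{\i\varpi_k(\theta)}| \lesssim \eta$ on each block, uniformly in $k$ in that block and in $\theta$.

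Next I would estimate the $\Psi1$-norm of the martingale increments of $\Delta_{m,k}(\theta)$. Conditionally on $\F_k$ we have $\|X_k(e^{\i\hpi_k}-e^{\i\varpi_k})\|_{\Psi2}\lesssim \eta \beta_k^{-1/2}$ on the truncation event, so the conditional quadratic variation over the whole range $[m,n]$ is bounded by
\[
\sum_{k=m}^{n-1} \eta^2 \beta_k^{-1} \lesssim \eta^2 \log(\tfrac nm) = \eta^2 \log \Lambda ,
\]
and the individual increments are sub-Gaussian with parameter $\lesssim \eta\beta_k^{-1/2} \le \eta \beta_m^{-1/2}$, which is negligible. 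Applying the uniform martingale concentration inequality from Appendix~\ref{A:conc} (Proposition~\ref{prop:conc1}) to $\{\Delta_{m,k}(\theta)\}_{k\ge m}$, intersected with the high-probability truncation event \eqref{trunc1}, gives for a numerical constant $c>0$
\[
\bP_m\Big[ \max_{k\in[m,n]} |\Delta_{m,k}(\theta)| \ge \lambda \Big] \lesssim \exp\!\Big(-\frac{c\lambda^2}{\eta^2\log\Lambda}\Big) + \exp(-\beta m^{-\epsilon'}),
\]
valid in the regime $\lambda \lesssim \eta\sqrt{\log\Lambda}$ up to the cutoff coming from the largest increment. Choosing $\lambda = \epsilon = R\sqrt{\eta\log\Lambda}$ makes the exponent $-cR^2/\eta$; since $\eta = n^{-\delta/2}\to 0$ this is at most $-cR$ for $R\ge1$ (in fact far stronger), and absorbing the lower-order truncation error and any polynomial-in-$n$ prefactors yields the stated bound $\lesssim n\, e^{-c\beta R}$. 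Finally, substituting $R = n^{\varkappa}/\sqrt{\log n}$ with $0<\varkappa<\delta/4$ and $\Lambda = n^{\delta/8}$, $\eta = n^{-\delta/2}$ gives $\epsilon = R\sqrt{\eta\log\Lambda} \asymp n^{\varkappa}/\sqrt{\log n}\cdot n^{-\delta/4}\sqrt{\log n} \asymp n^{\varkappa-\delta/4}$, as claimed.

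The main obstacle I anticipate is the phase-variation estimate $|\hpi_k(\theta)-\varpi_k(\theta)|\lesssim\eta$ on each block: one must show that $\psi_k(\theta)$ varies by $O(\eta)$ over $k\in[n_j,n_{j+1})$ \emph{uniformly in $\theta\in\bT$}, not just pointwise, since the final statement is for fixed $\theta$ but the constants in the concentration bound should not secretly depend on $\theta$. This requires combining the deterministic bound $|\psi_k - \psi_{n_j}| \le \sum_{\ell=n_j}^{k-1}|\alpha_\ell|$ from the recursion \eqref{S4} with the truncation event and the precise relation $\eta \gg \Lambda n^{-\delta}$ (equivalently $\eta n_j \cdot n_j^{-\eta} \lesssim \eta$, which dictates the parameter choices in Definition~\ref{def:Gauss2}). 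Once that geometric-window bound is in place, the martingale concentration step is routine given Appendix~\ref{A:conc}.
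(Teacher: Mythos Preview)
Your proof has a genuine gap at precisely the point you flag as the ``main obstacle'': the phase-variation estimate $|\hpi_k(\theta)-\varpi_k(\theta)|\lesssim\eta$ on each block is simply false with the deterministic bound you propose. You write $|\psi_k-\psi_{n_j}|\le\sum_{\ell=n_j}^{k-1}|\alpha_\ell|$ and then, on the truncation event $|\alpha_\ell|\le\ell^{-\eta'}$ of Lemma~\ref{lem:trunc1} (here $\eta'<\tfrac12$ is a \emph{different} parameter from the geometric-block parameter $\eta=n^{-\delta/2}$ of Definition~\ref{def:Gauss2}; you conflate the two), the sum over a block of length $\sim\eta n_j$ is of order $\eta n_j\cdot n_j^{-\eta'}=\eta\,n_j^{1-\eta'}$, which \emph{grows} polynomially in $n_j$. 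Even with $\eta'$ close to $\tfrac12$ this is $\gtrsim\eta\sqrt{n_j}\to\infty$, not $O(\eta)$. No deterministic bound on $\sum|\alpha_\ell|$ over such a block can be $O(1)$, because already $\bE|\alpha_\ell|\asymp\ell^{-1/2}$.

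The paper's fix is to control the phase variation \emph{probabilistically}: $\{\psi_{k,n_j}\}_{k\ge n_j}$ is itself a martingale, and by the concentration bound \eqref{phasetb} one has $\max_{n_j<k\le n_{j+1}}|\psi_{k,n_j}|\le\sqrt{R\eta}$ with $\bP_{n_j}$-probability at least $1-2e^{-c\beta R}$ (note the scale is $\sqrt{\eta}$, not $\eta$). To feed this into the concentration argument for $\Delta$, the paper introduces the stopping time $\tau=\inf\{k\ge m:|\varpi_k-\hpi_k|>2\sqrt{R\eta}\}$ and works with the stopped martingale $(\hvarphi-\bphi)_{m,\,k\wedge\tau}$; on $\{k<\tau\}$ the increment has $\|\cdot\|_{\Psi2,\bP_k}^2\lesssim R\eta/\beta_k$, so Proposition~\ref{prop:conc2} yields the exponent $-c\lambda^2/(R\eta\log\Lambda)$, which with $\lambda=R\sqrt{\eta\log\Lambda}$ gives exactly $-cR$ (not your $-cR^2/\eta$). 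A union bound over the $L\le n$ blocks controls $\bP[\tau\le n]$ and produces the prefactor $n$ in the statement.
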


\begin{proof}
Here we fix $\theta\in\bT$ (by rotation-invariance, one can also assume that $\theta=0$), so we omit the $\theta$-dependence. 
Let $\epsilon>0$ be a small parameter. We introduce a stopping time
\[
\tau : = \inf\big\{ k\ge m ; |\varpi_k -\hpi_k| >2\sqrt{\epsilon}\big\} . 
\]
Recall that by \eqref{S4}, for any $j\in\bN_0$
\[
{\varpi_k -\hpi_k } = -2 \psi_{k,n_j} , \qquad  k\in[n_j,n_{j+1}).  
\]
For $j\in\bN_0$, the process $\{\psi_{k,n_j}\}_{k\ge n_j}$ is a martingale and according to \eqref{phasetb}, 
\[
\bP_{n_j}\Big[ \max_{n_j<k\le n_{j+1}}|\psi_{k,n_j} |\ge \sqrt\epsilon\Big] \le 2 \exp\big(- c\beta \epsilon\eta^{-1}\big) . 
\]
using that $\log\frac{n_{j+1}}{n_j} \le \eta $. Then, one has
\[\begin{aligned}
\bP\big[\tau > n_{j+1}\big] &= \bP\Big[ \max_{n_j<k\le n_{j+1}}|\psi_{k,n_j} |\le \sqrt\epsilon ; \tau > n_{j} \Big] \\
& \ge \bP\big[\tau > n_{j}\big] \big(1- 2 \exp\big(- c\beta \epsilon\eta^{-1}\big) \big) \\
&\ge  \big(1- 2 \exp\big(- c\beta \epsilon\eta^{-1}\big) \big)^j
\ge  1- 2j \exp\big(- c\beta \epsilon\eta^{-1}\big) 
\end{aligned}\]
where the last bound follows from convexity. 
Since $n_L =n$ and $L\le n$, this shows that 
\begin{equation}\label{ST}
\bP\big[\tau \le n\big] \le 2n\exp\big(- c\beta \epsilon\eta^{-1}\big)  .
\end{equation}

Let $\Delta_{m,n} := (\hvarphi- \bphi)_{m,n\wedge \tau}$ for $m\ge n$.  
The process  $\{\Delta_{m,n}\}_{n\ge m}$ is also a martingale, whose increments are given by 
\[
\1\{k<\tau\}X_k(e^{\i\hpi_k}-e^{\i\varpi_k})=
X_k e^{\i \frac{\varpi_k+\hpi_k}{2}}\sin\big(\tfrac{\hpi_k-\varpi_k}{2}\big)\1\{k<\tau\}  
\qquad \text{for }k\ge m . 
\]
On the event $\{k<\tau\}$, 
\[
\big\|X_k e^{\i \frac{\hpi_k+\varpi_k}{2}}\sin\big(\tfrac{\hpi_k-\varpi_k}{2}\big)\big\|_{\Psi2,\bP_k}^2 \le \epsilon \big\|X_k  \big\|_{\Psi2}^2 \lesssim \frac{\epsilon}{\beta(k+1)}
\]
since $X_k$  is a complex Gaussian, independent of $\F_k$, with variance $\beta_k^{-1} = \frac{2}{\beta(k+1)}$. 

Then, by Proposition~\ref{prop:conc2} with $\sigma_k^2 =\frac{\epsilon}{\beta(k+1)}$, we obtain for $\lambda>0$
\[
\bP_m\Big[  \max_{k\le n\wedge \tau}|\Delta_{m,k}| \ge \lambda\Big]
\le 2 \exp\bigg(- \frac{c\beta\lambda^2}{\epsilon\log \Lambda}\bigg)
\]
where $\Lambda = \frac nm$. 
Choosing $\lambda = \epsilon \sqrt{\eta^{-1}\log \Lambda}$, by \eqref{ST}, this implies that 
\[\begin{aligned}
\bP_m\Big[  \max_{k\le n}|\Delta_{m,k}| \ge \lambda\Big]
 & \le 2 \exp\big(- c\beta \epsilon\eta^{-1}\big)  + \bP\big[\tau \le n\big]  \\
 &\le 2(n+1)\exp\big(- c\beta \epsilon\eta^{-1}\big) 
\end{aligned}\]
Using that $L\le n$ and choosing $\epsilon= R \eta$, this completes the proof. 
\end{proof}

\subsection{Proof of Proposition~\ref{prop:W1}} \label{sec:W}
We use the convention from Definition~\ref{def:Gauss2}. Define the events, for $\theta\in\bT$ 
\[
B_n(\theta):= \left\{\max_{k\in[m,n]} |\hvarphi_{m,k}(\theta)-\varphi_{m,k}(\theta)|  \le n^{\varkappa-\frac\delta5}\right\}  .  
\]
Combining Proposition~\ref{prop:approx1} (with e.g.~$\varkappa=\frac14$) and Proposition~\ref{prop:approx2}, there is a $\varkappa>0$ so that for any fixed $\theta\in\bT$, 
\begin{equation} \label{PB1}
\bP_m\big[B_n^{\rm c}(\theta)\big] \lesssim e^{-\beta n^{-\varkappa}} 
\end{equation}
and, according to \eqref{W1}, one has for $(\theta,\vartheta) \in\bT^2$ with $\theta\neq \vartheta$, 
\[
\mathcal{W}_n(\theta,\vartheta) =
\frac{\big|\bE_m\big[\1\{\cB_n(\theta)\}\1\{\cB_n(\vartheta)\} e^{\i2\kappa(\hpsi_{m,n}(\theta)-\hpsi_{m,n}(\vartheta)) + \gamma (\hchi_{m,n}(\theta)+\hchi_{m,n}(\vartheta))+\O(n^{\varkappa-\delta/4})}\big]\big|}{\bE[e^{\gamma \chi_{m,n}(0)}]^2} 
\end{equation*}
where the implies constants are deterministic. 
Define
$\Sigma :=\sum_{k= m}^{n -1}\beta_k^{-1}$ and for $(\theta,\vartheta) \in\bT^2$ with $\theta\neq \vartheta$, 
\begin{equation*}
\widehat{\mathcal W}_n^\kappa(\theta,\vartheta)  :=  \exp\big(-\tfrac{\gamma^2}2 \Sigma\big)\big| \bE_m\big[ e^{\i2\kappa(\hpsi_{m,n}(\theta)-\hpsi_{m,n}(\vartheta))}e^{\gamma \hchi_{m,n}(\theta)+\gamma \hchi_{m,n}(\vartheta)}\big] \big| . 
\end{equation*}

\begin{proposition} \label{prop:W2}
Almost surely, $\widehat{\mathcal W}_n^0(\theta,\vartheta) \lesssim 1$ uniformly for $(\theta,\vartheta)\in\bT^2$ and, for any $\kappa\ge 1$,  
\[
\widehat{\mathcal W}_n^\kappa(\theta,\vartheta) \lesssim n^{-\tfrac{\delta/4}{1+\beta}} \qquad\text{uniformly for $|\theta-\vartheta| \ge \epsilon_n = n^{\delta-1}$}.  
\]
\end{proposition}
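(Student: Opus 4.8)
The plan is to reduce the bound on $\widehat{\mathcal W}_n^\kappa$ to an explicit Gaussian computation using the fact that, conditionally on $\F_m$, the process $\hvarphi_{m,n}$ has independent complex Gaussian increments with a phase that is piecewise linear on the dyadic-type blocks $[n_j,n_{j+1})$. First I would record that, conditionally on $\F_m$, $\hvarphi_{m,n}(\theta)$ is a sum over $j\in[0,L)$ of contributions $\sum_{k\in[n_j,n_{j+1})} X_k e^{\i((k-n_j)\theta+\varpi_{n_j}(\theta))}$, and these blocks are \emph{independent} of each other given $\F_m$ (the $X_k$ are the i.i.d.\ Gaussians from Fact~\ref{fact:Gamma}, and the block-$j$ phase is $\F_{n_j}$-measurable, hence $\F_m$-measurable only through $\varpi_{n_j}$ — wait, more carefully: $\hpi_k$ on block $j$ uses $\varpi_{n_j}$ which is \emph{not} $\F_m$-measurable, so one must be slightly careful). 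The correct statement is: conditionally on $\F_{n_j}$, the block-$j$ increment is a centered complex Gaussian vector in $(\theta,\vartheta)$, and one peels off blocks one at a time from $j=L-1$ down to $j=0$, using the tower property $\bE_m[\cdots]=\bE_m[\bE_{n_1}[\cdots[\bE_{n_{L-1}}[\cdots]]]]$. Then I would compute each conditional expectation of the form $\bE_{n_j}\big[\exp(\zeta \hvarphi_{m,n}(\theta)+\overline\zeta'\,\overline{\hvarphi_{m,n}(\vartheta)}+\cdots)\big]$ with $\zeta=\gamma+2\i\kappa$ type weights, which is a Gaussian Laplace transform and produces a factor $\exp(\tfrac12\operatorname{Var})$ with a variance that splits into a diagonal part $\gamma^2\beta_k^{-1}$ (which cancels against $\exp(-\tfrac{\gamma^2}2\Sigma)$ up to the $\hat\gamma^2$ normalization, since $\beta_k^{-1}\simeq \tfrac2{\beta k}$) and a cross/off-diagonal part carrying the factor $e^{\i(k-n_j)(\theta-\vartheta)}$.

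The key point, which I expect to be the heart of the estimate, is the cancellation in the off-diagonal sums $\sum_{k\in[n_j,n_{j+1})} e^{\i(k-n_j)(\theta-\vartheta)}$: this is a geometric sum of length $n_{j+1}-n_j\simeq \eta n_j$, and since $|\theta-\vartheta|\ge\epsilon_n=n^{\delta-1}$ while $\eta = n^{-\delta/2}$, we have $\eta n_j |\theta-\vartheta| \gtrsim n^{-\delta/2}\cdot n\cdot n^{\delta-1}= n^{\delta/2}\to\infty$ on the later blocks, so the geometric sum is bounded by $|1-e^{\i(\theta-\vartheta)}|^{-1}\lesssim |\theta-\vartheta|^{-1}\le n^{1-\delta}$ rather than by its length $\eta n_j$. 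The ratio of the "bad'' geometric bound to the trivial length bound is then $\lesssim \frac{|\theta-\vartheta|^{-1}}{\eta n_j}$, summed (in the exponent) against $\beta_k^{-1}\sim\tfrac2{\beta k}$ over $k\in[n_j,n_{j+1})$ gives roughly $\frac1{\beta |\theta-\vartheta| n_j}$, and summing this over the $L$ blocks is $\lesssim \frac1{\beta|\theta-\vartheta| m}\lesssim \frac{n^{1-\delta}}{m}$, which since $m=\lfloor n^{1-\delta/8}\rfloor$ is $\lesssim n^{-7\delta/8}$; this is the source of the power saving. More precisely, the real part $\hchi$ contributes the term that near-exactly reconstructs $\exp(\hat\gamma^2\Sigma + 2\hat\gamma^2 \Re(\text{cross}))$ and hence, after dividing by $\bE[e^{\gamma\chi_{m,n}(0)}]^2 \simeq \exp(2\hat\gamma^2\Sigma)$, leaves a factor $\exp(2\hat\gamma^2\Re(\text{cross}))$; the imaginary part with the $2\kappa$ weight contributes $\exp(-2\hat\kappa^2 \beta \Re(\text{cross}))$ analogously (with $\hat\kappa=\kappa\sqrt{2/\beta}$ in the right normalization), and the net is a product of $\exp((\text{const}\cdot\hat\gamma^2 - \text{const}\cdot\hat\kappa^2)\cdot\text{cross})$; the surviving negative contribution, coming from the $\kappa\neq0$ imaginary weight and the genuine variance reduction, yields the decay $n^{-\frac{\delta/4}{1+\beta}}$. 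The $(1+\beta)$ in the exponent tracks that the slowest blocks (smallest $k\sim m$) dominate and there $\beta_k^{-1}=\tfrac2{\beta(k+1)}$ with the geometric-sum threshold $\eta m|\theta-\vartheta|\gtrsim n^{\delta/2}$ giving savings of order $\frac1{1+\beta}$ after optimizing the split between "early blocks'' (trivial bound) and "late blocks'' (geometric bound).

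I would organize the write-up as: (i) a lemma giving the exact conditional Gaussian Laplace transform of the block-$j$ increment of $(\hchi_{m,n},\hpsi_{m,n})$ evaluated at $(\theta,\vartheta)$ — this is just $\bE[e^{aG+\bar a'\bar G}]$ for complex Gaussians, with the covariance matrix read off from $\operatorname{Var}\big(\sum X_k e^{\i(\cdots)}\big)$; (ii) the tower-property peeling, which reduces $\widehat{\mathcal W}_n^\kappa$ to $\exp$ of a sum over blocks of quadratic forms in $(\gamma,\kappa)$ whose coefficients involve $S_j:=\sum_{k\in[n_j,n_{j+1})}\beta_k^{-1}e^{\i(k-n_j)(\theta-\vartheta)}$ and $|S_j'|$-type quantities; (iii) the case $\kappa=0$: here everything cancels except a nonnegative cross term bounded by a constant, giving $\widehat{\mathcal W}_n^0\lesssim1$ uniformly — this needs the elementary bound $|S_j|\le \sum\beta_k^{-1}$ and Cauchy--Schwarz; (iv) the case $\kappa\ge1$: insert the geometric-sum bound $|S_j|\lesssim \min(\eta, \tfrac{1}{m_j|\theta-\vartheta|})\cdot\beta^{-1}$ where $m_j=n_j$, sum over $j$, optimize to get the $\frac{\delta/4}{1+\beta}$ exponent. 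The main obstacle is bookkeeping: getting the signs right so that the $\hat\gamma^2$ terms cancel against the normalization $\bE[e^{\gamma\chi_{m,n}(0)}]^{-2}$ and the $\exp(-\tfrac{\gamma^2}2\Sigma)$ prefactor while a strictly negative remainder survives when $\kappa\neq0$, and controlling the $\O(n^{\varkappa-\delta/4})$ linearization error (from Proposition~\ref{prop:approx2}) so that multiplying by it only changes $\widehat{\mathcal W}$ by a factor $e^{\O(n^{\varkappa-\delta/4})}=1+o(1)$ once $\varkappa<\delta/4$, which is why the final exponent in Proposition~\ref{prop:W1} is $\frac{\delta/5}{1+\beta}$ rather than $\frac{\delta/4}{1+\beta}$ — the extra slack absorbs both this error and the $\bP_m[B_n^{\rm c}]$ term via \eqref{PB1}. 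Verifying $\bE[e^{\gamma\chi_{m,n}(0)}]=\exp(\hat\gamma^2\Sigma+\O(1))$ directly (rather than via Lemma~\ref{lem:mom}, which is about $\chi_n$ not $\chi_{m,n}$) is a short independent computation using Fact~\ref{fact:Verb} and \eqref{expmom}.
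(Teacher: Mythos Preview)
Your overall architecture --- block decomposition along $[n_j,n_{j+1})$, tower-property peeling $\bE_m=\bE_m\bE_{n_1}\cdots\bE_{n_{L-1}}$, and an exact Gaussian Laplace transform for each block --- is exactly what the paper does. The paper's Lemma~\ref{lem:Gauss} is your step (i), and the induction on $j$ is your step (ii). But your account of \emph{where the decay comes from} is wrong, and if you follow your own narrative you will not close the argument.

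Concretely: the block-wise Gaussian computation gives
\[
\bE_{n_\ell}\big[\bZ_\ell\big]=\exp\Big(\big(\tfrac{\gamma^2}{2}-2\kappa^2\big)\sigma_\ell+\Re(\zeta e^{\i\omega_\ell}q_\ell)\Big),
\qquad \sigma_\ell=\sum_{k=n_\ell}^{n_{\ell+1}-1}\beta_k^{-1},\quad q_\ell=\sum_{k=n_\ell}^{n_{\ell+1}-1}\beta_k^{-1}e^{\i k\Delta}.
\]
The $-2\kappa^2\sigma_\ell$ is a \emph{diagonal} term (it comes from $\bE[\X_\ell(\theta)^2]$, not from the $\theta$--$\vartheta$ interaction), and after summing over $\ell$ and cancelling $\exp(\tfrac{\gamma^2}{2}\Sigma)$ against the prefactor, the main term is $\exp(-2\kappa^2\Sigma)$. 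The cross terms $q_\ell$ are \emph{errors}: the geometric-sum bound gives $\sum_\ell|q_\ell|\lesssim \frac{1}{\beta\,\Delta\, m\,\eta}\lesssim \beta^{-1}n^{-\delta/4}$ (you dropped the factor $\eta^{-1}$ from summing the geometric progression $\sum_\ell n_\ell^{-1}\sim(m\eta)^{-1}$, which is why your exponent $-7\delta/8$ is off). The induction then yields $\widehat{\mathcal W}_n^\kappa=\exp(-2\kappa^2\Sigma)+\O(n^{-\delta/4})$, and the stated bound $n^{-\frac{\delta/4}{1+\beta}}$ is just a crude common upper bound for both $\exp(-2\Sigma)\simeq\Lambda^{-4/\beta}=n^{-\delta/(2\beta)}$ and $n^{-\delta/4}$ --- there is no ``optimization between early and late blocks'' and the $(1+\beta)$ is not tracking any threshold phenomenon. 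So: keep your plan (i)--(iv), but replace your paragraph about ``surviving negative cross contributions'' with the diagonal mechanism above; otherwise your step (iv) will not produce a decaying bound.
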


These estimates are proved in Section~\ref{sec:Gauss}.
Moreover, according to Lemma~\ref{lem:exp}, one has 
$\bE[e^{\gamma \chi_{m,n}(0)}]^2 \simeq \exp(-\tfrac{\gamma^2}2 \Sigma)\simeq {\Lambda^{2\hat\gamma^2}}$, so that
\[
\mathcal{W}_n(\theta,\vartheta) \simeq 
\exp\big(-\tfrac{\gamma^2}2 \Sigma\big)\big| \bE_m\big[\1\{\cB_n(\theta)\}\1\{\cB_n(\vartheta)\} e^{\i2\kappa(\hpsi_{m,n}(\theta)-\hpsi_{m,n}(\vartheta))}e^{\gamma \hchi_{m,n}(\theta)+\gamma \hchi_{m,n}(\vartheta)}\big] \big|  
+ \O\big(\widehat{\mathcal W}_n^0(\theta,\vartheta) n^{-\delta/5}\big)
\]
By \eqref{PB1}, one can remove the events $\cB_n$ up to a negligible error so that (almost surely),  for $(\theta,\vartheta) \in\bT^2$ with $\theta\neq \vartheta$, 
\[
\mathcal{W}_n(\theta,\vartheta) \simeq \widehat{\mathcal W}_n^\kappa(\theta,\vartheta) + \O\big(n^{-\delta/5}\big)
\]
Hence, by Proposition~\ref{prop:W2}, we conclude that uniformly for $|\theta-\vartheta| \ge \epsilon_n = n^{\delta-1}$, 
\[
\mathcal{W}_n(\theta,\vartheta) \le n^{-\tfrac{\delta/5}{1+\beta}}. \qed
\]

\subsection{Gaussian calculations; proof of  Proposition~\ref{prop:W2}}
\label{sec:Gauss}
Recall that $\kappa \ge 0$, $\gamma\in\bR$ are fixed and $\Sigma =\sum_{k= m}^{n -1}\beta_k^{-1}$. 
Here $(\theta,\vartheta) \in\bT^2$, we let $\Delta := |\theta-\vartheta|_{\bT}$ and we assume that $\Delta \ge \epsilon_n =n^{\delta-1}$. 

For $\ell\in\bN$, we write for $(\theta,\vartheta) \in\bT^2$, 
\[
\X_{\ell}(\theta): = \gamma \hchi_{n_{\ell},n_{\ell+1}}(\theta) + \i2\kappa \hpsi_{n_{\ell},n_{\ell+1}}(\theta) , \qquad 
\bZ_\ell := \exp\big(\X_{\ell}(\theta)+ \overline{\X_{\ell}}(\vartheta)\big). 
\]
Here, $\{\X_{\ell}(\theta) :\theta\in\bT\}$ are Gaussian fields with obvious independence properties. 
We do not emphasize that $\bZ_\ell $ depends on $(\gamma,\varkappa)$ and $(\theta,\vartheta) \in\bT^2$ as the only relevant quantity for the analysis is  $\Delta = |\theta-\vartheta|_{\bT}$. 
In particular, one has
\begin{equation} \label{W2}
\widehat{\mathcal W}_n^\kappa 
= \exp\big(-\tfrac{\gamma^2}2 \Sigma\big)  \bE_m\big[ {\textstyle\prod_{\ell=0}^{L-1}} \bZ_\ell \big] .
\end{equation}

\medskip
We rely on the following basic computations.
\begin{lemma}\label{lem:Gauss}
Let  $\zeta=(\tfrac{\gamma}{\sqrt 2}+\i \sqrt 2 \kappa)^2$, 
$\sigma_\ell := {\textstyle \sum_{k=n_\ell}^{n_{\ell+1}-1}} \beta_{k}^{-1}$ and $q_\ell := {\textstyle \sum_{k=n_\ell}^{n_{\ell+1}-1}} \beta_{k}^{-1} e^{\i k\Delta}$, for  $\ell\in\bN_0$.
From Definition~\ref{def:Gauss2}, one has $\Sigma = {\textstyle\sum_{\ell=0}^{L-1}} \sigma_\ell$ and  
$\sum_{\ell=0}^{L-1}|q_\ell| \lesssim  \beta^{-1} n^{-\delta/4}$ if $\Delta =|\theta-\vartheta|_{\bT}\ge n^{\delta-1}$. \\
For every $\ell\in\bN_0$, there is a random $\omega_\ell \in \bT$, $\F_{n_\ell}$-measurable, so that 
\[
 \bE_{n_\ell}[\bZ_\ell] = \exp\big( \big(\tfrac{\gamma^2}2-2\kappa^2\big)\sigma_\ell + \Re(\zeta e^{\i \omega_\ell} q_\ell)\big)
\]
and 
\begin{equation} \label{Zbd}
 \bE_{n_\ell}[|\bZ_\ell|] \le \exp\big(\tfrac{\gamma^2}2 \sigma_\ell + \tfrac{\gamma^2}2 |q_\ell|\big) . 
\end{equation}
\end{lemma}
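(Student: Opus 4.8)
The plan is to handle the three assertions separately; each is bookkeeping once the conditional Gaussian structure of the blocks $[n_\ell,n_{\ell+1})$ is isolated. First, $\Sigma=\sum_{\ell=0}^{L-1}\sigma_\ell$ is immediate, since by Definition~\ref{def:Gauss2} the intervals $[n_\ell,n_{\ell+1})$, $\ell=0,\dots,L-1$, partition $\{m,\dots,n-1\}$ (as $n_0=m$, $n_L=n$), so summing $\sigma_\ell=\sum_{k=n_\ell}^{n_{\ell+1}-1}\beta_k^{-1}$ recovers $\sum_{k=m}^{n-1}\beta_k^{-1}$. For $\sum_\ell|q_\ell|$ I would use Abel summation: since $\beta_k^{-1}=\tfrac2{\beta(k+1)}$ is positive decreasing and $\big|\sum_{k=n_\ell}^{j}e^{\i k\Delta}\big|\le 2|1-e^{\i\Delta}|^{-1}\lesssim\Delta^{-1}$ uniformly in $j$ (using $|1-e^{\i\Delta}|=2|\sin(\Delta/2)|\ge 2\Delta/\pi$ for $\Delta=|\theta-\vartheta|_\bT\in[0,\pi]$), one gets $|q_\ell|\lesssim(\beta n_\ell\Delta)^{-1}$. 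The $n_\ell$ form, up to rounding, a geometric progression of ratio $e^\eta$ from $m$ to $n$, so $\sum_{\ell=0}^{L-1}n_\ell^{-1}\lesssim\tfrac1n\sum_{j=1}^{L}e^{\eta j}\lesssim\tfrac{\Lambda}{n\eta}$ (using $e^{\eta L}=\Lambda$ and $e^\eta-1\ge\eta$), and plugging in $\Lambda=n^{\delta/8}$, $\eta=n^{-\delta/2}$, $\Delta\ge n^{\delta-1}$ gives $\sum_\ell|q_\ell|\lesssim\beta^{-1}\tfrac{\Lambda}{n\eta\Delta}\lesssim\beta^{-1}n^{-3\delta/8}\le\beta^{-1}n^{-\delta/4}$.

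Next I would set up the conditional Gaussian structure. For $k\in[n_\ell,n_{\ell+1})$ the phase $\hpi_k(\theta)=(k-n_\ell)\theta+\varpi_{n_\ell}(\theta)$ is $\F_{n_\ell}$-measurable, so $\hvarphi_{n_\ell,n_{\ell+1}}(\theta)=e^{\i\varpi_{n_\ell}(\theta)}W_\ell(\theta)$ with $W_\ell(\theta):=\sum_{k=n_\ell}^{n_{\ell+1}-1}X_ke^{\i(k-n_\ell)\theta}$, while by Fact~\ref{fact:Gamma} the $(X_k)_{k\ge n_\ell}$ are i.i.d.\ proper complex Gaussians ($\bE X_k=\bE X_k^2=0$, $\bE|X_k|^2=\beta_k^{-1}$) independent of $\F_{n_\ell}$. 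Hence, conditionally on $\F_{n_\ell}$, the pair $Y:=\hvarphi_{n_\ell,n_{\ell+1}}(\theta)=\hchi_{n_\ell,n_{\ell+1}}(\theta)+\i\hpsi_{n_\ell,n_{\ell+1}}(\theta)$ and $Y':=\hvarphi_{n_\ell,n_{\ell+1}}(\vartheta)$ is a centered complex Gaussian with $\bE_{n_\ell}[Y^2]=\bE_{n_\ell}[Y'^2]=\bE_{n_\ell}[YY']=0$, $\bE_{n_\ell}[|Y|^2]=\bE_{n_\ell}[|Y'|^2]=\sigma_\ell$, and $w:=\bE_{n_\ell}[Y\overline{Y'}]=e^{\i(\varpi_{n_\ell}(\theta)-\varpi_{n_\ell}(\vartheta))}\sum_{k=n_\ell}^{n_{\ell+1}-1}\beta_k^{-1}e^{\i(k-n_\ell)(\theta-\vartheta)}$; in particular $|w|=|q_\ell|$ and $\arg w=\varpi_{n_\ell}(\theta)-\varpi_{n_\ell}(\vartheta)-n_\ell\Delta+\arg q_\ell\pmod{2\pi}$ (up to swapping $\Delta$ for $-\Delta$). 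From $\bE_{n_\ell}[Y^2]=0$ one reads off $\bE_{n_\ell}[(\Re Y)^2]=\bE_{n_\ell}[(\Im Y)^2]=\tfrac12\sigma_\ell$, $\bE_{n_\ell}[\Re Y\,\Im Y]=0$, and likewise $\bE_{n_\ell}[\Re Y\,\Re Y']=\tfrac12\Re w$.

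With this in hand the two moment statements are short Gaussian computations. For \eqref{Zbd}, $|\bZ_\ell|=\exp(\gamma(\Re Y+\Re Y'))$, and $\Re Y+\Re Y'$ is conditionally centered Gaussian of variance $\sigma_\ell+\Re w$, so $\bE_{n_\ell}[|\bZ_\ell|]=\exp(\tfrac{\gamma^2}2(\sigma_\ell+\Re w))\le\exp(\tfrac{\gamma^2}2(\sigma_\ell+|q_\ell|))$. For the first identity I would write $V:=\X_\ell(\theta)+\overline{\X_\ell}(\vartheta)=pY+q\bar Y+qY'+p\bar{Y'}$ with $p=\tfrac\gamma2+\kappa$, $q=\tfrac\gamma2-\kappa$, note that $V$ is conditionally a centered complex Gaussian so $\bE_{n_\ell}[\bZ_\ell]=\bE_{n_\ell}[e^V]=\exp(\tfrac12\bE_{n_\ell}[V^2])$, and expand $V^2$: only the pairings $Y\bar Y,\,Y'\bar{Y'},\,Y\bar{Y'},\,\bar Y Y'$ survive, giving $\tfrac12\bE_{n_\ell}[V^2]=2pq\,\sigma_\ell+p^2w+q^2\bar w$. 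Here $2pq=\tfrac{\gamma^2}2-2\kappa^2$, while $p^2w+q^2\bar w=(p^2+q^2)\Re w+\i(p^2-q^2)\Im w$ has real part $(\tfrac{\gamma^2}2+2\kappa^2)\Re w=|\zeta|\,\Re w$ since $|\zeta|=|\tfrac\gamma{\sqrt2}+\i\sqrt2\kappa|^2=\tfrac{\gamma^2}2+2\kappa^2$; choosing the $\F_{n_\ell}$-measurable angle $\omega_\ell:=\arg w-\arg\zeta-\arg q_\ell\in\bT$ yields $\Re(\zeta e^{\i\omega_\ell}q_\ell)=|\zeta|\,\Re w$, which is the stated exponent (the residual imaginary term $2\i\gamma\kappa\,\Im w$ only multiplies $\bE_{n_\ell}[\bZ_\ell]$ by a unit-modulus factor and plays no role, since only $|\widehat{\mathcal W}_n^\kappa|$ is used afterwards).

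The computation is elementary throughout; the main point requiring care is the second-moment step — verifying $\bE_{n_\ell}[Y^2]=0$ (the properness of the $X_k$, which is exactly what produces the coefficient $\tfrac{\gamma^2}2-2\kappa^2$ in front of $\sigma_\ell$ instead of a $\kappa$-free one) and tracking the phase of $w$ through the factor $e^{\i\varpi_{n_\ell}}$ and the lattice shift $n_\ell\Delta$ so that $\omega_\ell$ comes out $\F_{n_\ell}$-measurable. The only other thing needing attention is the scale arithmetic ($\Lambda=n^{\delta/8}$, $\eta=n^{-\delta/2}$) reducing the $\sum_\ell|q_\ell|$ bound to $n^{-\delta/4}$.
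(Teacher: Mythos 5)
Your proof is correct and follows essentially the same route as the paper: a block-by-block conditional Gaussian moment computation (exploiting that the phases $\hpi_k$ are $\F_{n_\ell}$-measurable and the $X_k$ are proper complex Gaussians independent of $\F_{n_\ell}$), plus the same $|q_\ell|\lesssim(\beta n_\ell\Delta)^{-1}$ bound summed along the geometric progression. Your bookkeeping is in fact more careful than the paper's: as you note, the exact identity for $\bE_{n_\ell}[\bZ_\ell]$ must carry a residual unimodular factor $e^{2\i\gamma\kappa\Im w}$ (the left-hand side is genuinely complex when $\gamma\kappa\neq0$), and the real part of the cross term comes with coefficient $\tfrac{\gamma^2}{2}+2\kappa^2=|\zeta|$ rather than $\Re\zeta$; both discrepancies with the stated formula are harmless since the downstream argument in Proposition~\ref{prop:W2} only uses $|\bE_{n_\ell}[\bZ_\ell]|=\exp\big((\tfrac{\gamma^2}2-2\kappa^2)\sigma_\ell+\O(|q_\ell|)\big)$ together with \eqref{Zbd}.
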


\begin{proof}
For a (standard) complex Gaussian $X$ and deterministic $\omega_1,\omega_2 \in\bT$,
\[
\bE\big[\Re(X e^{\i\omega_1})\Re(X e^{\i\omega_2})\big] = \bE\big[\Im(X e^{\i\omega_1})\Im(X e^{\i\omega_2})\big] =\frac{\cos(\omega_1-\omega_2)}{2} , \qquad
\bE\big[\Re(X e^{\i\omega_1})\Im(X e^{\i\omega_2})\big] 
= \frac{\sin(\omega_1-\omega_2)}{2} . 
\]
Then, since $\{X_k\}$ are independent complex Gaussians, one has for $(\theta,\vartheta)\in\bT^2$,
\[
\bE_{n_\ell}\X_{\ell}(\theta)^2
=  \big(\tfrac{\gamma^2}2 - 2\kappa^2\big) \sigma_\ell  
\]
and
\[
\bE_{n_\ell}\X_{\ell}(\theta)\overline{\X_{\ell}}(\vartheta) =  \sum_{n_\ell\le k< n_{\ell+1}}  \beta_k^{-1}\big(\big(\tfrac{\gamma^2}2 - 2\kappa^2\big)  \cos \Delta_k
- 2\i\gamma\kappa \sin\Delta_k \big)
= \Re\bigg( \zeta \sum_{n_\ell\le k< n_{\ell+1}}  \beta_k^{-1} e^{\i\Delta_k}\bigg)
\]
 where $\Delta_k :=\hpi_k(\theta)- \hpi_k(\vartheta) = (k-n_j)(\theta-\vartheta)+ \varpi_{n_\ell}(\theta)-\varpi_{n_\ell}(\vartheta)$. 

This implies that
\[
 \bE_{n_\ell}\big[ \exp \X_{\ell}(\theta)\exp \overline{\X_{\ell}}(\vartheta)\big] = \exp \tfrac12 \bE_{n_\ell}( \X_{\ell}(\theta)+\overline{\X_{\ell}}(\vartheta))^2
 = \exp\big( \big(\tfrac{\gamma^2}2-2\kappa^2\big)\sigma_\ell + \Re(\zeta e^{\i \omega_\ell} q_\ell)\big)
\]
with $\omega_\ell = \pm(\varpi_{n_\ell}(\theta)-\varpi_{n_\ell}(\vartheta)) [2\pi]$ -- $\omega_\ell \in \bT$ is $\F_{n_\ell}$-measurable. 
The estimate \eqref{Zbd} follows from the previous formula with $\kappa=0$ and using that  $|\Re(e^{\i \omega_\ell} q_\ell)|\le |q_\ell|$. 
Finally, one has 
\[
|q_\ell| \le \frac2\beta
\bigg| \sum_{n_\ell\le k< n_{\ell+1}}  \frac{e^{\i k\Delta}}{k+1} \bigg|
\le \frac2\beta\bigg(\bigg| \sum_{n_\ell\le k< n_{\ell+1}}  \frac{e^{\i k\Delta}}{n_\ell} \bigg| + \frac{n_{\ell+1}}{n_\ell^2}\bigg)\lesssim \frac{\beta^{-1}}{n_\ell\Delta} . 
\]
using that $n_{\ell+1} = n_\ell e^\eta$ with  $e^\eta\le \frac1\pi \le  \Delta^{-1}$. 
Consequently, $\sum_{\ell=0}^{L-1}|q_\ell| \lesssim 
\frac{1}{\beta\Delta m} \sum_{\ell=0}^{L-1} e^{-\eta\ell}
\lesssim \frac{1}{\beta\Delta m\eta}$. 
With the convention from Definition~\ref{def:Gauss2},  $\eta  = \Lambda n^{-\frac{3\delta}4}$, $m\simeq n \Lambda^{-1} $ so that
$\sum_{\ell=0}^{L-1}|q_\ell| \lesssim \frac{1}{\beta n^{\delta/4}}$ 
if  $\Delta \ge n^{\delta-1}$. 
\end{proof}

Lemma~\ref{lem:Gauss} allows us to compute $\widehat{\mathcal W}_n^\kappa$ recursively using \eqref{W2}. 

First, using the estimate \eqref{Zbd}, one has for any $j\in\bN_{< L}$,  
\begin{equation} \label{varbd}
\widehat{\mathcal W}_n^0= \bE_m\big[ {\textstyle\prod_{\ell=0}^{j}}|Z_\ell|\big] \lesssim \exp\big(\tfrac{\gamma^2}2 \Sigma\big).
\end{equation}

Then, one has for $j\in\bN_{<L}$, 
\[\begin{aligned}
 \bE_m\big[ {\textstyle\prod_{\ell=0}^{j}} Z_\ell\big]
&=  \bE_m\big[ {\textstyle\prod_{\ell=0}^{j-1}} Z_\ell \bE_{n_{j}} Z_j\big] \\
& =  \exp\big( \big(\tfrac{\gamma^2}2-2\kappa^2\big)\sigma_j\big)
  \bE_m\big[ {\textstyle\prod_{\ell=0}^{j-1}} Z_\ell \exp\Re(\zeta e^{\i \omega_\ell} q_j)\big]\\
&=\exp\big( \big(\tfrac{\gamma^2}2-2\kappa^2\big)\sigma_j\big)
  \bE_m\big[ {\textstyle\prod_{\ell=0}^{j-1}} Z_\ell \big]
+ \O\big(|q_j| \bE_m\big[ {\textstyle\prod_{\ell=0}^{j-1}} |Z_\ell| \big] \exp\big(\tfrac{\gamma^2}2 \sigma_j\big)\big) \\
&=\exp\big( \big(\tfrac{\gamma^2}2-2\kappa^2\big)\sigma_j\big)
  \bE_m\big[ {\textstyle\prod_{\ell=0}^{j-1}} Z_\ell \big]
+ \O\big(|q_j| \exp\big(\tfrac{\gamma^2}2 \Sigma\big)\big) .
\end{aligned}\]
Consequently, by induction, using that $\Sigma = {\textstyle\sum_{\ell=0}^{L-1}} \sigma_\ell$ and
$\sum_{\ell=0}^{L-1}|q_\ell| \lesssim   n^{-\delta/4}$, 

\[\begin{aligned}
 \bE_m\big[ {\textstyle\prod_{\ell=0}^{L-1}} \bZ_\ell\big]
 & = \exp\big( \big(\tfrac{\gamma^2}2-2\kappa^2\big)\Sigma\big)
+ \O\big(n^{-\delta/4}  \exp\big(\tfrac{\gamma^2}2 \Sigma\big)\big)  \\
\widehat{\mathcal W}_n^\kappa &= \exp\big(-2\kappa^2 \Sigma\big) + \O(n^{-\delta/4}) . 
\end{aligned}\]
Recall that we choose  $\Lambda= n^{\frac\delta 8}$ and $\Sigma\simeq \frac2\beta \log \Lambda$, so we conclude that for $\kappa\ge 1$ \[
|\widehat{\mathcal W}_n | \lesssim n^{-\frac{\delta/4}{1+\beta}}. \qed
\]

\appendix 
\section{Properties of the C$\beta$E model}\label{A:model}

\noindent
In this section, we collect \emph{known facts} about the circular $\beta$-ensembles which are relevant in our context.

\medskip

\paragraph{\bf Selberg formulae} 
We record the following explicit formulae for the C$\beta$E model (Section~\ref{sec:CbE}). 

\begin{lemma} \label{lem:expmom}
Recall that $\beta_k := \beta \tfrac{k+1}{2}$ for $k\in\bN_0$,   $\chi_n : \theta\in\bT\mapsto \Re\varphi_n(e^{\i\theta})$ and $\psi_n: \theta\in\bT\mapsto \Im\varphi_n(e^{\i\theta})$ for $n\in\bN$.
It holds for  $\gamma\in\bR$, 
\[
\bE e^{\gamma \psi_{n}(\theta)} =\prod_{k<n} \frac{\Gamma(1+\beta_k)^2}{|\Gamma(1+\beta_k-\gamma/2\i)|^2} , \qquad 
\bE e^{\gamma \chi_{n}(\theta)} = \prod_{k< n} \frac{\Gamma(1+\beta_k)\Gamma(1+\gamma+\beta_k)}{\Gamma(1+\gamma/2+\beta_k)^2} \quad\text{for }\gamma>-1-\tfrac\beta2. 
\qquad
\]
\end{lemma}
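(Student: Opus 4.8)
The plan is to turn both identities into products over the Verblunsky coefficients and then evaluate each single factor by Gauss's hypergeometric summation theorem; nothing beyond the explicit laws of Fact~\ref{fact:Verb} is needed.

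First I would reduce to one coefficient at a time. Fix $\theta\in\bT$ and $\gamma\in\bR$. By \eqref{S3} and \eqref{phase0}, $\varphi_n(e^{\i\theta})=\sum_{k=0}^{n-1}\log\big(1-\alpha_k e^{\i\varpi_k(\theta)}\big)$, and by \eqref{Phase} the phase $\varpi_k(\theta)$ is $\F_k$-measurable. Since $\alpha_k=|\alpha_k|e^{\i\eta_k}$ with $\eta_k$ uniform on $\bT$ and independent of $\F_k$, conditionally on $\F_k$ the variable $\alpha_k e^{\i\varpi_k(\theta)}$ has the same law as $\alpha_k$ and is independent of $\F_k$, so $\bE\big[e^{\gamma\Im\log(1-\alpha_ke^{\i\varpi_k(\theta)})}\mid\F_k\big]=\bE e^{\gamma\Im\log(1-\alpha_k)}$, a deterministic number (bounded by $e^{|\gamma|\pi/2}$, since $\Re(1-w)>0$ for $|w|<1$), and likewise for the real part. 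Conditioning successively on $\F_{n-1},\dots,\F_0$, with $\psi_0=\chi_0=0$, gives for every $\theta$
\[
\bE e^{\gamma\psi_n(\theta)}=\prod_{k=0}^{n-1}\bE\,e^{\gamma\Im\log(1-\alpha_k)},\qquad
\bE e^{\gamma\chi_n(\theta)}=\prod_{k=0}^{n-1}\bE\,|1-\alpha_k|^{\gamma}\in[0,\infty],
\]
both independent of $\theta$; a real-part factor is finite exactly when $\gamma>-1-\beta_k$.

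Then I would evaluate a single factor and apply Gauss's theorem. Write $\alpha_k=\rho e^{\i\phi}$ with $\phi$ uniform on $\bT$ and $s:=\rho^2$, which by \eqref{verb0} has density $\beta_k(1-s)^{\beta_k-1}$ on $[0,1]$, hence $\bE s^j=\Gamma(j+1)\Gamma(1+\beta_k)/\Gamma(1+\beta_k+j)=(1)_j/(1+\beta_k)_j$ in Pochhammer notation. Averaging over $\phi$ first, the rotation $\phi\mapsto\phi+\pi$ identifies $\bE_\phi|1-\rho e^{\i\phi}|^\gamma=\mathrm F_\gamma(\rho)=\sum_{j\ge0}\binom{\gamma/2}{j}^2\rho^{2j}$ and, with $\kappa:=\gamma/2\i$ as in the proof of Fact~\ref{fact:G}, $\bE_\phi e^{\gamma\Im\log(1-\rho e^{\i\phi})}=\mathrm G_\gamma(\rho)=\sum_{j\ge0}\binom{\kappa}{j}\binom{-\kappa}{j}\rho^{2j}$; these power series have nonnegative coefficients by \eqref{F1} and \eqref{G2}, so Tonelli lets us exchange the sum with $\bE_\rho$. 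Cancelling the $(1)_j$ from $\bE s^j$ against $(j!)^2=(1)_j^2$ in the coefficients exhibits the result as a Gauss series, ${}_2F_1(-\tfrac\gamma2,-\tfrac\gamma2;1+\beta_k;1)$ respectively ${}_2F_1(\kappa,-\kappa;1+\beta_k;1)$, and $ {}_2F_1(a,b;c;1)=\Gamma(c)\Gamma(c-a-b)/(\Gamma(c-a)\Gamma(c-b))$ applies once $\Re(c-a-b)>0$. For the imaginary part $c-a-b=1+\beta_k>0$ for every $k$ and all $\gamma\in\bR$, and since $1+\beta_k\mp\kappa$ are complex conjugates this gives $\bE e^{\gamma\Im\log(1-\alpha_k)}=\Gamma(1+\beta_k)^2/|\Gamma(1+\beta_k-\gamma/2\i)|^2$. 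For the real part $c-a-b=1+\beta_k+\gamma$, positive for all $k\ge0$ exactly when $\gamma>-1-\beta_0=-1-\tfrac\beta2$, giving $\bE|1-\alpha_k|^\gamma=\Gamma(1+\beta_k)\Gamma(1+\beta_k+\gamma)/\Gamma(1+\beta_k+\gamma/2)^2$. Multiplying over $k=0,\dots,n-1$ yields both formulae.

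There is no real obstacle; the two points to handle carefully are the conditional rotation-invariance used in the reduction step (the genuine input of Fact~\ref{fact:Verb}) and the bookkeeping of the range of $\gamma$ on which the hypergeometric series at $1$ converges. An alternative for the real part is to read off the formula from a Morris/Selberg evaluation of $\bE|\mathcal X_{n+1}(1)|^\gamma$ together with \eqref{mart6}, but the recursion above keeps the argument self-contained.
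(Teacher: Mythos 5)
Your proof is correct and follows the same route as the paper: reduce to a product over $k$ of single-coefficient moments by exploiting that the Pr\"ufer phase $\varpi_k$ is $\F_k$-measurable while $\alpha_k$ is independent of $\F_k$ and rotation-invariant (your successive-conditioning version of this step is if anything more careful than the paper's one-line distributional identity). The only difference is that the paper simply cites \cite[Lem~3.1]{BNR} for the single-coefficient formulae \eqref{expmom}, whereas you rederive them via the binomial expansions \eqref{F1}, \eqref{G2} and Gauss's ${}_2F_1$ summation at $1$; your evaluation of the parameters ($c-a-b=1+\beta_k+\gamma$ for the modulus, $1+\beta_k$ for the phase, and the conjugate-pair identification giving $|\Gamma(1+\beta_k-\gamma/2\i)|^2$) checks out, so this just makes the lemma self-contained.
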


\begin{proof}
By \eqref{S1}, one has $\varphi_n(e^{\i\theta}) = \sum_{k=0}^{n-1} \log(1-\alpha_k e^{\i\varpi_k(\theta)}) $ where the phase $\{\varpi_k\}$ is adapted. Then, by independence and rotation-invariance of the Verblunsky coefficients $\{\alpha_k\}$, for a fixed $\theta\in\bT$,
$\varphi_n(e^{\i\theta}) \equiv \sum_{k=0}^{n-1} \log(1-\alpha_k) $.
Thus these formulae follow from  and the facts (see e.g.~\cite[Lem~3.1]{BNR}), 
\begin{equation} \label{expmom}
\begin{aligned}
&\bE|1- \alpha_k|^\gamma= \frac{\Gamma(1+\beta_k)\Gamma(1+\gamma+\beta_k)}{\Gamma(1+\gamma/2+\beta_k)^2}
\qquad\text{for }\Re\gamma>-1-\beta_k , \\
&\bE \exp\big(\gamma  \Im \log( 1-\alpha_k)  \big) =\frac{\Gamma(1+\beta_k)^2}{|\Gamma(1+\beta_k-\gamma/2\i)|^2} \qquad\gamma\in\bC . 
\end{aligned}
\end{equation}
In particular, the formulae extend naturally for $\gamma\in\bC$. 
\end{proof}

Recall that the C$\beta$E characteristic polynomial satisfies \eqref{charpoly2}, where $\eta$ is a uniform random variable in $\bT$, independent of $\{\alpha_k\}$. Then
\[
\bE|1- e^{\i\eta}|^\gamma = \sum_{k\ge 0} {\gamma/2 \choose k}^2 =\frac{\Gamma(1+\gamma)}{\Gamma(1+\gamma/2)^2} 
\qquad \text{for } \gamma>-1.
\]
This corresponds to \eqref{charpoly2} with $\beta_k=0$, see also \eqref{F1} with $r=1$. Then by lemma~\ref{lem:expmom}, for  $n\in\bN_0$  and $\gamma>-1$, 
\[
\bE |\mathcal{X}_{n+1}(1)|^\gamma = \frac{\Gamma(1+\gamma)}{\Gamma(1+\gamma/2)^2} \prod_{k< n} \frac{\Gamma(1+\beta_k)\Gamma(1+\gamma+\beta_k)}{\Gamma(1+\gamma/2+\beta_k)^2} .
\]
Similarly, with $\Im \log( 1- e^{\i\eta}) = \mathrm{h}(\eta)$, \eqref{Y2}, one has for $\gamma\in\bR$ with $\kappa = \gamma/2\i$, 
\[
\bE \exp\big(\gamma  \mathrm{h}(\eta)  \big) = \sum_{k\ge 0}  {\kappa \choose k}   {-\kappa \choose k} = \frac1{\Gamma(1+\kappa) \Gamma(1-\kappa)} =  |\Gamma(1-\kappa)|^{-2} . 
\]
Then, by  \eqref{Y1}, the imaginary part of the characteristic polynomial satisfies for $\gamma\in\bR$
\[
\bE e^{\gamma \mathcal{Y}_{n+1}(1)} = \frac1{|\Gamma(1-\gamma/2\i)|^2}
 \prod_{k<n} \frac{\Gamma(1+\beta_k)^2}{|\Gamma(1+\beta_k-\gamma/2\i)|^2} .
\]

\paragraph{\bf Moment estimates}
We need the following estimates. In particular, the case $m=0$ corresponds to Lemma~\ref{lem:mom}. 
Recall that $\chi_{m,n} = \chi_n- \chi_m$ and $\psi_{m,n} = \psi_n- \psi_m$  for $n>m\ge 0$. 

\begin{lemma} \label{lem:exp}
Let $\gamma\in\bR$, $\hat\gamma=\gamma/\sqrt{2\beta}$ with $|\hat\gamma|<1$. 
It holds for all  $n,m \in \bN_0$ with $n>m$ and for any $\theta\in\bT$, 
\[
\bE_m[e^{\gamma \psi_{n,m}(\theta)}] = \exp\big(\hat\gamma^2 \log \tfrac n{m+1} + \O\big(\tfrac1{m+1}\big)\big) \qquad\text{and}\qquad
\bE_m[e^{\gamma \chi_{n,m}(\theta)}] = \exp\big(\hat\gamma^2 \log \tfrac n{m+1} + \O\big(\tfrac1{m+1}\big)\big) ,
\]
where the implied constants depend only on $(\gamma,\beta)$. 
\end{lemma}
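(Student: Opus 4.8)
The plan is to reduce the conditional Laplace transforms to explicit finite products of ratios of Gamma functions, exactly as in the proof of Lemma~\ref{lem:expmom}, and then extract the asymptotics from the Stirling/digamma expansion.

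First I would establish the conditional analogue of the identity $\varphi_n(e^{\i\theta}) \equiv \sum_{k<n}\log(1-\alpha_k)$ used for Lemma~\ref{lem:expmom}. Writing $\varphi_{n,m}(e^{\i\theta}) = \sum_{k=m}^{n-1}\log(1-\alpha_k e^{\i\varpi_k(\theta)})$ and $\tilde\alpha_k := \alpha_k e^{\i\varpi_k(\theta)}$, an induction on $k\ge m$ shows that, conditionally on $\F_m$, the sequence $(\tilde\alpha_m,\tilde\alpha_{m+1},\dots)$ is i.i.d.\ with the same law as $(\alpha_m,\alpha_{m+1},\dots)$: indeed $\varpi_m(\theta)$ is $\F_m$-measurable while $\alpha_m$ is independent of $\F_m$ and rotation-invariant, so conditionally $\tilde\alpha_m\equiv\alpha_m$; then $\varpi_{m+1}(\theta)$ is $\sigma(\F_m,\alpha_m)$-measurable by \eqref{Phase}, $\alpha_{m+1}$ is independent of it and rotation-invariant, and so on. Hence, by independence of $\{\alpha_k\}$ and the formulae \eqref{expmom},
\[
\bE_m\big[e^{\gamma\psi_{n,m}(\theta)}\big] = \prod_{k=m}^{n-1}\bE\big[e^{\gamma\Im\log(1-\alpha_k)}\big] = \prod_{k=m}^{n-1}\frac{\Gamma(1+\beta_k)^2}{|\Gamma(1+\beta_k-\gamma/2\i)|^2},
\]
and likewise $\bE_m[e^{\gamma\chi_{n,m}(\theta)}] = \prod_{k=m}^{n-1}\Gamma(1+\beta_k)\Gamma(1+\gamma+\beta_k)\Gamma(1+\gamma/2+\beta_k)^{-2}$; both products are finite when $|\hat\gamma|<1$, since then $\gamma>-\sqrt{2\beta}\ge -1-\tfrac\beta2\ge -1-\beta_k$.

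Next I would take logarithms and treat each factor as a second-order finite difference of $\log\Gamma$. Writing $\psi=(\log\Gamma)'$, a Taylor expansion of $t\mapsto\log\Gamma(1+\beta_k+t)$ about its midpoint gives, for the $\chi$-factor, $\log\Gamma(1+\beta_k)+\log\Gamma(1+\gamma+\beta_k)-2\log\Gamma(1+\tfrac\gamma2+\beta_k)=\tfrac{\gamma^2}{4}\psi'\!\big(1+\tfrac\gamma2+\beta_k\big)+O\!\big(\psi'''(\beta_k)\big)$, and for the $\psi$-factor (using $\gamma\in\bR$, so $\overline{\Gamma(1+\beta_k-\gamma/2\i)}=\Gamma(1+\beta_k+\gamma/2\i)$) $2\log\Gamma(1+\beta_k)-\log\Gamma(1+\beta_k-\tfrac{\gamma}{2\i})-\log\Gamma(1+\beta_k+\tfrac{\gamma}{2\i})=\tfrac{\gamma^2}{4}\psi'(1+\beta_k)+O\!\big(\psi'''(\beta_k)\big)$. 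Since $\psi'(x)=x^{-1}+O(x^{-2})$ and $\psi'''(x)=O(x^{-3})$ for $x$ bounded away from $0$, and $\beta_k=\tfrac\beta2(k+1)$, each factor with $k$ larger than a threshold $M_0=M_0(\beta)$ contributes $\tfrac{\gamma^2}{4\beta_k}+O((k+1)^{-2})=\tfrac{\hat\gamma^2}{k+1}+O((k+1)^{-2})$; the finitely many factors with $k\le M_0$ (which occur only when $m\le M_0$) contribute a bounded amount, which is $O((m+1)^{-1})$ because then $m+1\le M_0+1$. Summing over $k=m,\dots,n-1$, and using $\sum_{k=m}^{n-1}(k+1)^{-1}=\log\tfrac{n}{m+1}+O((m+1)^{-1})$ together with $\sum_{k\ge m}(k+1)^{-2}=O((m+1)^{-1})$,
\[
\log\bE_m\big[e^{\gamma\psi_{n,m}(\theta)}\big]=\hat\gamma^2\log\tfrac{n}{m+1}+O\big(\tfrac1{m+1}\big),
\]
and identically for $\chi_{n,m}$, which is the assertion; the case $m=0$ recovers Lemma~\ref{lem:mom}.

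Nothing here is deep: the only points requiring care are the uniformity in $m$ of the Gamma-function asymptotics — handled by isolating the bounded set of small indices $k\le M_0(\beta)$ and absorbing their $O(1)$ contribution into $O((m+1)^{-1})$ in the range $m\le M_0(\beta)$ — and the routine bookkeeping of the harmonic-sum and remainder estimates above.
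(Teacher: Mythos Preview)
Your proof is correct and follows essentially the same approach as the paper: reduce the conditional Laplace transform to a deterministic product of Gamma ratios via rotation-invariance (the paper does this implicitly, citing Lemma~\ref{lem:expmom}), then expand each factor asymptotically and sum using the harmonic-sum estimate. The only cosmetic difference is that the paper obtains the per-factor asymptotic $1+\hat\gamma^2/(k+1)+O(k^{-2})$ directly from the Tricomi--Erd\'elyi expansion of Gamma ratios, whereas you reach the same conclusion via a second-order Taylor expansion of $\log\Gamma$ through the digamma function; note also that your threshold $M_0$ should in principle depend on $\gamma$ as well as $\beta$, and that ``i.i.d.'' should read ``independent with the same joint law,'' but neither affects the argument.
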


\begin{proof}
For $\gamma\in\bC$ fixed, by e.g.~\cite{Tricomi}, one has as $\kappa\to\infty$, 
\[
\frac{\Gamma(1+\frac\kappa2)\Gamma(1+\gamma+\frac\kappa2)}{\Gamma(1+\gamma/2+\frac\kappa2)^2} = \bigg(1-\frac{\gamma^2}{4(\kappa+1)} +\O(\kappa^{-2})\bigg)\bigg(1+\frac{3\gamma^2}{4(\kappa+1)} +\O(\kappa^{-2})\bigg)
= 1+ \frac{\gamma^2}{2 \kappa }+\O(\kappa^{-2}).
\]
Similarly
\[
\frac{\Gamma(1+\frac\kappa2)^2}{\Gamma(1+\gamma/2+\frac\kappa2) \Gamma(1-\gamma/2+\frac\kappa2)}= \bigg(1-\frac{\gamma^2}{4(\kappa+1)} +\O(\kappa^{-2})\bigg)\bigg(1-\frac{\gamma^2}{4(\kappa+1)} +\O(\kappa^{-2})\bigg) = 1- \frac{\gamma^2}{2 \kappa }+\O(\kappa^{-2}).
\]
Then, by \eqref{expmom}, we obtain for $\gamma\in\bR$  with $|\hat\gamma|<1$ (this condition guarantees that $\gamma>-1-\tfrac\beta2$, so this expectation is well defined for any $m\in\bN_0$), 
\[
\bE e^{\gamma \chi_{n,m}} = \prod_{m\le k< n}\bigg(  1+ \frac{\hat\gamma^2}{k+1}+\O(k^{-2})\bigg) = \exp\big( \hat\gamma^2 \log \tfrac n{m+1} + \O\big(\tfrac1{m+1}\big)\big). 
\]
We used that the harmonic sum $\sum_{k=m}^{n-1} (k+1)^{-1} = \log \tfrac n{m+1} + \O\big(\tfrac1{m+1}\big)$. A similar expansion holds for 
$\bE e^{\gamma \psi_{n}} $.
\end{proof}

\paragraph{\bf Leading order of the maximum}
As a consequence of Theorem~\ref{thm:phi} and the bounds of Lemma~\ref{lem:exp} (with $m=0$) one controls the asymptotics of the maximums of the fields $\chi_n(\theta)=\Re\varphi_n(e^{\i\theta})$ and $\psi_n(\theta)=\Im\varphi_n(e^{\i\theta})$ on $\bT$. 

\begin{cor} \label{thm:max}
For any $\delta>0$,  as $n\to\infty$,
\[
\bP \left[   \tfrac{2-\delta}{\sqrt{2\beta}} \log n   \le  \max_{\theta\in\bT} |\chi_{n}(\theta)|    \le \tfrac{2+\delta}{\sqrt{2\beta}} \log n   \right] \to 1. 
\]
An analogous result holds for $ \max_{\theta\in\bT} |\psi_{n}(\theta)| $. 
\end{cor}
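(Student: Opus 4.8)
The plan is to deduce the leading order of the maximum from the GMC convergence of Theorem~\ref{thm:phi} together with the moment estimates of Lemma~\ref{lem:exp} (with $m=0$), i.e.\ from Lemma~\ref{lem:mom}. This is the standard ``first-moment upper bound / GMC lower bound'' dichotomy for log-correlated fields, and since the excerpt only asks for the leading constant $\tfrac{2}{\sqrt{2\beta}}$ (not the full Fyodorov--Hiary--Keating refinement), neither direction requires the $-\tfrac34\log\log n$ correction nor a delicate multiscale analysis.

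For the \emph{upper bound}, I would use a union bound over a suitably fine net of $\bT$ together with a chaining/continuity estimate. Fix $\delta>0$ and set $\lambda_n = \tfrac{2+\delta}{\sqrt{2\beta}}\log n = (2+\delta)\hat\gamma^{-1}_{*}\cdots$; more concretely, pick $\gamma>0$ with $\hat\gamma = \gamma/\sqrt{2\beta}$ close to $1$, say $\hat\gamma = 1 - \delta'$ for small $\delta'$. By Markov's inequality and Lemma~\ref{lem:mom},
\[
\bP\big[\chi_n(\theta) \ge \tfrac{2+\delta}{\sqrt{2\beta}}\log n\big]
\le \frac{\bE e^{\gamma\chi_n(\theta)}}{e^{\gamma(2+\delta)\log n/\sqrt{2\beta}}}
= \exp\big(\hat\gamma^2\log n - \hat\gamma(2+\delta)\log n + \O(1)\big)
= n^{\hat\gamma^2 - (2+\delta)\hat\gamma + \O(1/\log n)} .
\]
Choosing $\hat\gamma$ close to $1$ makes the exponent $\hat\gamma^2 - (2+\delta)\hat\gamma = \hat\gamma(\hat\gamma - 2 - \delta)$ close to $-1-\delta$, hence $<-1$. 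Since $\chi_n$ is piecewise smooth with derivative bounded by $\tfrac n2$ in absolute value (recall $\varphi_n$ is a trigonometric polynomial of degree $n$ / use a Bernstein-type bound, or bound $|\chi_n'|$ via \eqref{S3} and $|B_k'|\le k+1$), a net of $\bT$ of mesh $n^{-2}$ has $\O(n^2)$ points and controls the oscillation of $\chi_n$ between net points by $\O(n^{-1})$, which is negligible. A union bound over this net then gives $\bP[\max_\theta \chi_n(\theta) \ge \tfrac{2+\delta}{\sqrt{2\beta}}\log n] \le n^2 \cdot n^{-1-\delta + o(1)} \to 0$. The same argument applied to $-\chi_n$ (using $\bE e^{-\gamma\chi_n}$, still controlled by Lemma~\ref{lem:mom}) and to $\pm\psi_n$ yields the full upper bound; here one must check $|\hat\gamma|<1$ remains available for the negative-power moment, which it is.

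For the \emph{lower bound}, I would argue by contradiction using Theorem~\ref{thm:phi}. Fix $\delta>0$ and suppose, on an event of non-vanishing probability along a subsequence, that $\max_\theta \chi_n(\theta) \le \tfrac{2-\delta}{\sqrt{2\beta}}\log n$. Take $\gamma>0$ with $\hat\gamma = \gamma/\sqrt{2\beta} < 1$ but $\hat\gamma > 1 - \tfrac\delta4$, and consider the total mass
\[
\mu_n^\gamma(\bT) = \int_\bT \frac{e^{\gamma\chi_n(\theta)}}{\bE e^{\gamma\chi_n(0)}}\,\frac{\dd\theta}{2\pi}
\le \frac{e^{\gamma\max_\theta\chi_n(\theta)}}{\bE e^{\gamma\chi_n(0)}}
\le \exp\big(\gamma\tfrac{2-\delta}{\sqrt{2\beta}}\log n - \hat\gamma^2\log n + \O(1)\big)
= n^{(2-\delta)\hat\gamma - \hat\gamma^2 + o(1)} .
\]
The exponent is $\hat\gamma(2 - \delta - \hat\gamma)$; choosing $\hat\gamma$ close to $1$ makes it close to $1 - \delta < 1$, but more importantly with $\hat\gamma > 1 - \delta/4$ one gets $(2-\delta)\hat\gamma - \hat\gamma^2 < (2-\delta)(1) - (1-\delta/4)^2 < \delta/2 - \delta/2 \cdots$; the point is simply to arrange the exponent to be \emph{negative}, which holds once $\hat\gamma > 2 - \delta$ is impossible so instead one uses that $\hat\gamma(2-\delta-\hat\gamma)$ as a function of $\hat\gamma$ on $(0,1)$ — we need it negative, i.e.\ $\hat\gamma > 2-\delta$, which fails; the correct choice is rather $\hat\gamma$ near $1$ giving exponent near $1-\delta$, which is positive. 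So instead I reverse the logic: on the supposed event, $\mu_n^\gamma(\bT) \le n^{(2-\delta)\hat\gamma - \hat\gamma^2}$, and I want this to force $\mu_n^\gamma(\bT)\to 0$, contradicting $\mu_n^\gamma(\bT)\to\mu^{\hat\gamma}(\bT) > 0$ a.s.\ from Theorem~\ref{thm:phi} and Proposition~\ref{prop:GMC}. For that I need the exponent negative: $(2-\delta)\hat\gamma < \hat\gamma^2$, i.e.\ $\hat\gamma > 2-\delta$, which is incompatible with $\hat\gamma<1$ when $\delta<1$. Hence this crude bound is \emph{not} enough — and this is the main obstacle.

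\textbf{The main obstacle} is exactly that a bare sup bound on $\mu_n^\gamma(\bT)$ cannot detect a maximum below $\tfrac{2}{\sqrt{2\beta}}\log n$; one must instead localize. The fix is the standard one: show that the GMC mass concentrates on thick points. Concretely, one uses that for $|\hat\gamma|<1$ the measure $\mu^{\hat\gamma}$ gives full mass to the set where $\chi_n(\theta) \approx \hat\gamma\log n$ up to $o(\log n)$ — equivalently, $\mu_n^\gamma$-almost every $\theta$ has $\chi_n(\theta) \ge (\hat\gamma - \epsilon)\tfrac{\sqrt{2\beta}}{1}\cdots\log n$; more precisely, by Lemma~\ref{lem:mom} and a Paley--Zygmund / second-moment argument on $\mu_n^\gamma(\{\chi_n < (\hat\gamma-\epsilon)\sqrt{2\beta}\log n\})$, this set has $\mu_n^\gamma$-mass tending to $0$ in probability, so on the complement $\chi_n$ attains values $\ge (\hat\gamma-\epsilon)\sqrt{2\beta}\log n$. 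Letting $\hat\gamma\uparrow 1$ and $\epsilon\downarrow 0$ gives $\max_\theta\chi_n(\theta) \ge (2-\delta)\tfrac{1}{\sqrt{2\beta}}\log n$ with probability $\to 1$ — here I am writing $(\hat\gamma-\epsilon)\sqrt{2\beta} = (\hat\gamma-\epsilon)\cdot\sqrt{2\beta}$ and using $\tfrac{\gamma}{\sqrt{2\beta}} = \hat\gamma$ so $\gamma(\hat\gamma-\epsilon) \approx \hat\gamma^2$, matching the $n^{\hat\gamma^2}$ normalization; the arithmetic works out to the threshold $\tfrac{2\hat\gamma^2}{\sqrt{2\beta}\hat\gamma}\cdot\cdots$, and after tidying one recovers precisely $\tfrac{2}{\sqrt{2\beta}}\log n$ as $\hat\gamma\to1$. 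The same argument with $-\chi_n$ and with $\pm\psi_n$ handles the absolute value and the imaginary part. I would carry the steps out in the order: (i) upper bound via Markov + net (using Lemma~\ref{lem:mom}); (ii) record the derivative bound $\|\chi_n'\|_\infty,\|\psi_n'\|_\infty \lesssim n$; (iii) lower bound via the thick-point localization of $\mu_n^\gamma$ (second moment + Theorem~\ref{thm:phi}); (iv) optimize over $\hat\gamma\uparrow 1$. The delicate point throughout is keeping the $\O(1)$ errors in Lemma~\ref{lem:mom} uniform in $\theta$, which is exactly what that lemma provides.
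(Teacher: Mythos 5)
Your overall strategy is the one the paper intends (the paper itself only sketches this corollary and defers to \cite[Sec 3]{CFLW}): upper bound from the exponential moment asymptotics of Lemma~\ref{lem:mom} plus a union bound, lower bound from the GMC convergence of Theorem~\ref{thm:phi} via localization of $\mu_n^\gamma$ on thick points. Your lower-bound argument, once untangled, is correct: a Chernoff/tilting bound gives $\bE\,\mu_n^\gamma(\{\chi_n < a\log n\})\to 0$ for every $a<2\hat\gamma/\sqrt{2\beta}$, and since $\mu_n^\gamma(\bT)\to\mu^{\hat\gamma}(\bT)>0$ in probability, the set $\{\chi_n\ge a\log n\}$ is nonempty w.h.p.; letting $\hat\gamma\uparrow 1$ gives the claim. (No Paley--Zygmund is needed; Theorem~\ref{thm:phi} already supplies the positivity of the limit mass.)

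The upper bound as written has two genuine gaps. First, the continuity input is false: $\varphi_n=\log\Phi_n^*$ is \emph{not} a trigonometric polynomial, Bernstein's inequality does not apply to it, and the bound $|B_k'|\le k+1$ on $\bU$ fails pointwise --- $\tfrac{\dd}{\dd\theta}\varpi_k(\theta)$ is a sum of Poisson kernels evaluated at the zeros of $\Phi_k$, which integrates to $2\pi(k+1)$ but is unbounded pointwise when zeros of $\Phi_n^*$ approach $\bU$. So there is no deterministic bound $\|\chi_n'\|_{L^\infty(\bT)}\lesssim n$, and the oscillation control between net points is unjustified. Second, even granting it, your bookkeeping does not close: with an $n^{-2}$-mesh the union bound gives $n^2\cdot n^{\hat\gamma^2-(2+\delta)\hat\gamma+o(1)}$, and since $\hat\gamma^2-(2+\delta)\hat\gamma>-1-\delta$ for all $\hat\gamma<1$, this is at best $n^{1-\delta+o(1)}$, which diverges for $\delta<1$; you need a mesh of only $O(n^{1+\delta/2})$ points, which makes the missing continuity estimate essential rather than cosmetic. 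The standard repairs are: for $\psi_n$, use that $\varpi_n$ is increasing (Proposition~\ref{prop:phase}), so $\psi_n(\theta)-\psi_n(\theta')\le\tfrac{n+1}{2}(\theta-\theta')$ for $\theta>\theta'$, a one-sided Lipschitz bound that suffices for the union bound in both directions; for $\chi_n$, use subharmonicity of $|\Phi_n^*|^\gamma$ to replace the pointwise value by an average over a disc of radius $\asymp 1/n$ (whose expectation is still controlled by Lemma~\ref{lem:mom} and \eqref{mom}), or first bound $\max_\theta\Re\varphi_n((1-\tfrac1n)e^{\i\theta})$ and separately control the radial increment --- this is exactly the content of \cite[Sec 3]{CFLW}.
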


We refer to \cite[Sec 3]{CFLW} for details.
As explained in Section~\ref{sec:rmt}, following from \cite{CMN,PZ},  precise asymptotics for the maximums are available. 
The upper-bound from Corollary~\ref{thm:max} is used in Section~\ref{sec:strat} to prove Theorem~\ref{thm:charpoly}. 

\medskip

\paragraph{\bf OPUC convergence} 
We are interested in the asymptotics of $\{ \varphi_k = \log \Phi_{k}^*\}_{k\ge 0}$ where $\{ \Phi_{k}^*\}_{k\ge 0}$ are the Szeg\H{o} polynomial, \eqref{S0}. 
Recall that $\{ \varphi_k\}_{k\ge 0}$ is a  $\F_{k}$-martingale and we will need the following non-trivial result from \cite[Proposition 3.6]{CN}.  

\begin{proposition} \label{prop:cvg}
Assume that $\{\alpha_n\}_{n\in\bN_0}$  are independent, rotation-invariant, random variables such that for any $\gamma>0$,
\begin{equation} \label{mom3}
\bE \exp\big(\gamma{\textstyle\sum_{n\in\bN_0}} |\alpha_n|^3 \big) <\infty .
\end{equation}
Then, almost surely, $\varphi_k \to \varphi_{\infty}$ locally uniformly on $\bD$ as $k\to\infty$. 
Consequently, the function
$\varphi_{\infty}$  analytic  in $\bD$ and for any $\gamma>0$, $z\in\bD$,
\[
\sup_{n\in\bN}\bE \exp \big(\gamma |\varphi_{n}(z)|\big)  <\infty .
\]
\end{proposition}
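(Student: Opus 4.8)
Fix $z\in\bD$, write $r=|z|$ and $g_j(z):=\log(1-\alpha_jB_j(z))$, so that $\varphi_n(z)=\sum_{j<n}g_j(z)$. The first step is to record the martingale structure: since $\alpha_j$ is rotation-invariant and independent of $\F_j$ while $B_j(z)$ is $\F_j$-measurable with $|B_j(z)|\le r<1$, all moments $\bE[\alpha_j^m]$, $m\ge 1$, vanish, hence $\bE_j g_j(z)=\bE_j[\alpha_jB_j(z)]=\bE_j[\alpha_j^2B_j(z)^2]=0$; in particular $(\varphi_n(z))_n$ is a martingale with bounded increments ($|g_j(z)|\le -\log(1-r)+\pi$). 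The plan is then to decompose, using the Taylor expansion of $\log(1-w)$,
\[
\varphi_n(z)=S_n(z)+T_n(z)+U_n(z),\qquad S_n(z):=-\!\!\sum_{j<n}\!\alpha_jB_j(z),\ \ T_n(z):=-\tfrac12\!\sum_{j<n}\!\alpha_j^2B_j(z)^2,
\]
with $U_n(z):=\sum_{j<n}\big(g_j(z)+\alpha_jB_j(z)+\tfrac12\alpha_j^2B_j(z)^2\big)$. Here $|U_n(z)|\le\frac{r^3}{3(1-r)}\sum_j|\alpha_j|^3$, so by \eqref{mom3} the series defining $U_\infty(z)$ converges absolutely a.s.\ and $\sup_n|U_n(z)|$ has all exponential moments, uniformly for $|z|\le r$; and $S_n,T_n$ are martingales.

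The key new input, and the step I expect to be the main obstacle, is an $\ell^2$-type estimate for the Blaschke iterates $B_j(z)$. From \eqref{S0} one derives $B_{j+1}(z)=z\,\frac{B_j(z)-\overline{\alpha_j}}{1-\alpha_jB_j(z)}$, whence $|B_{j+1}(z)|^2=r^2\bigl(1-\frac{(1-|\alpha_j|^2)(1-|B_j(z)|^2)}{|1-\alpha_jB_j(z)|^2}\bigr)$; conditioning on $\F_j$ and averaging the Poisson kernel over the phase of $\alpha_j$ (using $\int\frac{\dd\theta}{2\pi}|1-ce^{\i\theta}|^{-2}=(1-|c|^2)^{-1}$ for $|c|<1$ and $\frac{1-x}{1-xt}\ge 1-x$ for $x,t\in[0,1)$) I would obtain
\[
\bE_j\bigl[|B_{j+1}(z)|^2\bigr]\le r^2|B_j(z)|^2+r^2\,\bE|\alpha_j|^2 .
\]
Iterating from $B_0(z)=z$ gives $\bE|B_j(z)|^2\le r^{2(j+1)}+r^2\sum_{i<j}r^{2(j-1-i)}\bE|\alpha_i|^2$; summing this against $\bE|\alpha_j|^2$, a Cauchy--Schwarz step together with $|\alpha_i|^4\le|\alpha_i|^3$ (valid since $|\alpha_i|<1$) and $\sum_i\bE|\alpha_i|^4\le\sum_i\bE|\alpha_i|^3\le\bE\exp\big(\sum_i|\alpha_i|^3\big)<\infty$ yields that $V(z):=\sum_j\bE|\alpha_j|^2\,|B_j(z)|^2$ has $\bE V(z)<\infty$, and likewise $\sum_j\bE|\alpha_j|^4<\infty$. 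Consequently $\bE|S_n(z)|^2=\sum_{j<n}\bE|\alpha_j|^2\bE|B_j(z)|^2\le\bE V(z)$ and $\bE|T_n(z)|^2=\tfrac14\sum_{j<n}\bE|\alpha_j|^4\bE|B_j(z)|^4\le\tfrac14\sum_j\bE|\alpha_j|^4$ are bounded uniformly in $n$, so $S_n(z),T_n(z)$ are $L^2$-bounded martingales and converge a.s.\ and in $L^2$; with the $U_n(z)$ piece this gives $\varphi_n(z)\to\varphi_\infty(z)$ a.s.\ for each $z\in\bD$.

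To pass to locally uniform convergence I would note that all the above bounds are uniform for $|z|\le r'$, any $r<r'<1$, and invoke the sub-mean-value bound $\sup_{|z|\le r}|f|^2\le(\pi\delta^2)^{-1}\int_{|w|\le r'}|f|^2\,\dd A$ ($\delta=r'-r$) for analytic $f$ on $\overline{\bD}$: applied to $f=S_k-S_l$ it gives $\bE\sup_{|z|\le r}|S_k(z)-S_l(z)|^2\lesssim\int_{|w|\le r'}\sum_{l\le j<k}\bE|\alpha_j|^2\bE|B_j(w)|^2\,\dd A(w)\to 0$ as $l\to\infty$, uniformly in $k$, so $(S_k)$ is Cauchy in $L^2\big(\Omega;C(\overline{\bD}_r)\big)$; similarly for $(T_k)$, while $(U_k)$ converges uniformly on $\overline{\bD}_r$ via its deterministic majorant. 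Combined with the a.s.\ pointwise limit, $\varphi_k\to\varphi_\infty$ uniformly on each $\overline{\bD}_r$ a.s., and by Weierstrass $\varphi_\infty$ is analytic on $\bD$.

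For the uniform exponential moment, fix $\gamma>0$ and $z\in\bD$; by $|\varphi_n(z)|\le|\Re S_n|+|\Im S_n|+|\Re T_n|+|\Im T_n|+|U_n|$ and H\"older it suffices to bound each of these exponential moments. The $U_n$ term is immediate from \eqref{mom3}. For $T_n$, the increments are bounded by $\tfrac12r^2$ and the predictable bracket is $\le\tfrac14\sum_j\bE|\alpha_j|^4<\infty$ \emph{deterministically}, so the usual exponential supermartingale bound gives $\sup_n\bE e^{\pm\lambda\Re T_n},\sup_n\bE e^{\pm\lambda\Im T_n}<\infty$ for all $\lambda$. For $S_n$, the increments are bounded by $r$ and $\bE_j[(\Re\alpha_jB_j(z))^2]=\tfrac12|B_j(z)|^2\bE|\alpha_j|^2$, so $e^{\lambda\Re S_n-c_\lambda\sum_{j<n}|B_j(z)|^2\bE|\alpha_j|^2}$ is a supermartingale and, after a Cauchy--Schwarz split, $\sup_n\bE e^{\lambda\Re S_n}\le\bE\big[e^{2c_{2\lambda}V(z)}\big]^{1/2}$ (and likewise for the other sign/part choices). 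The remaining point is that $\bE\big[e^{cV(z)}\big]<\infty$ for every $c>0$: writing $v_j:=|B_j(z)|^2$ and $\xi_j:=v_{j+1}-\bE_j v_{j+1}$, the recursion above gives $v_j\le D_j+\sum_{i<j}r^{2(j-1-i)}\xi_i$ with $D_j$ deterministic and $\sum_jD_j\bE|\alpha_j|^2<\infty$, so $V(z)\le(\mathrm{const})+\sum_i c_i\xi_i$ with $c_i:=\sum_{k>i}r^{2(k-1-i)}\bE|\alpha_k|^2$, a deterministic sequence with $\sum_ic_i^2<\infty$; the martingale $\sum_i c_i\xi_i$ has increments bounded by $r^2/(1-r^2)$ and bracket $\le r^4\sum_ic_i^2<\infty$ \emph{deterministically}, hence its limit is sub-Gaussian and $\bE e^{cV(z)}<\infty$. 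This closes the proof. Throughout, the only non-routine ingredient is the contraction estimate $\bE_j|B_{j+1}(z)|^2\le r^2|B_j(z)|^2+r^2\bE|\alpha_j|^2$ — it is what makes the merely finite-mean bracket $V(z)$ summable and, in refined form, sub-Gaussian, and it is the sole place the OPUC (Blaschke) structure enters rather than bare martingale theory.
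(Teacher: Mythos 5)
The paper does not actually prove this proposition: it is imported verbatim as \cite[Proposition 3.6]{CN}, and the only thing the paper verifies is that the hypothesis \eqref{mom3} holds for the C$\beta$E Verblunsky coefficients. So there is no in-paper proof to compare against; your proposal is a self-contained reconstruction, and it is essentially correct. The decomposition $\varphi_n=S_n+T_n+U_n$ with the cubic remainder controlled deterministically by $\sum_j|\alpha_j|^3$, and $L^2$-martingale theory for $S_n,T_n$, is the natural route, and you have correctly identified the one genuinely non-routine input: without the contraction estimate $\bE_j|B_{j+1}(z)|^2\le r^2\big(|B_j(z)|^2+\bE|\alpha_j|^2\big)$ one only has $|B_j(z)|\le r$, and then $\bE|S_n(z)|^2\le r^2\sum_j\bE|\alpha_j|^2$, which diverges for the C$\beta$E (where $\bE|\alpha_j|^2\sim 2/(\beta j)$). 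Your derivation of that estimate from the M\"obius form $B_{j+1}=z(B_j-\overline{\alpha_j})/(1-\alpha_jB_j)$, the identity $1-|{(b-\bar a)}/{(1-ab)}|^2=(1-|a|^2)(1-|b|^2)/|1-ab|^2$, and the Poisson-kernel average over the phase of $\alpha_j$ is correct, and it does give $\bE|B_j(z)|^2\lesssim 1/j$ for the C$\beta$E, which is the right order (consistent with $\bE|\varphi(z)|^2=\tfrac2\beta\log(1-|z|^2)^{-1}<\infty$).

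One step deserves a patch. In the exponential-moment part you write $V(z)\le(\mathrm{const})+\sum_ic_i\xi_i$ by substituting $v_j\le D_j+\sum_{i<j}r^{2(j-1-i)}\xi_i$ into $V=\sum_j\bE|\alpha_j|^2v_j$ and interchanging the two sums. Under \eqref{mom3} alone one may have $\sum_i c_i=\infty$ (e.g.\ $c_i\sim C/i$ for the C$\beta$E) and $\sum_i|\xi_i|=\infty$, so the interchange is not justified by absolute convergence. It is easily repaired: work with the partial sums $V_N\le\sum_{j<N}\bE|\alpha_j|^2D_j+\sum_{i<N-1}c_i^{(N)}\xi_i$ where $c_i^{(N)}:=\sum_{i<j<N}\bE|\alpha_j|^2r^{2(j-1-i)}$ (a finite rearrangement), and note $|\sum_{i<N-1}(c_i-c_i^{(N)})\xi_i|\le\tfrac{2r^2}{1-r^2}\sum_{i<N-1}r^{2(N-1-i)}\le\tfrac{2r^4}{(1-r^2)^2}$, a deterministic constant; letting $N\to\infty$ then gives $V(z)\le(\mathrm{const})+M_\infty$ with $M_\infty$ the a.s.\ limit of the $L^2$-bounded martingale $\sum_{i<n}c_i\xi_i$, and the Azuma bound you invoke (with increment bounds $2r^2c_i$ and $\sum_ic_i^2<\infty$) finishes the argument. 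The remaining quibbles are cosmetic (factors of $2$ in the increment bounds, and the upgrade from Cauchy in $L^2(\Omega;C(\overline{\bD}_r))$ to a.s.\ uniform convergence, which follows from Doob's inequality applied to the submartingale $\|S_k-S_l\|_\infty$). With the patch above, the proof is complete and in fact more self-contained than the paper, which relies on an external reference for this step.
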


The condition \eqref{mom3} is satisfied for the C$\beta$E model. 
For $\gamma>0$, there is a constant $c_\gamma>0$ so that for every $n\in\bN_0$, 
\[
\bE e^{\gamma|\alpha_n|^3} \le \int_0^1 (1+c_\gamma t^{3/2}) e^{-\beta_n t} \dd t \le 1+ c_\gamma' \beta_n^{-3/2} . 
\]
Thus, for any $\beta, \gamma>0$, 
\[
\bE \exp\big(\gamma{\textstyle\sum_{n\in\bN_0}} |\alpha_n|^3 \big)  \le \exp\big(c_\gamma' {\textstyle\sum_{n\in\bN_0}} \beta_n^{-3/2} \big) 
\le \exp\big(c_\gamma''\beta^{-3/2}\big). 
\]
This guarantees that the limits introduced in Lemma~\ref{lem:decoup} are well-defined (analytic in $\bD$) functions. 

\medskip

Next, we explain the connection with the central limit theorem for the C$\beta$E eigenvalue statistics. 

\begin{proof}[Proof of Proposition~\ref{prop:GAF}]
By \cite[Thm~1.7.4]{Sim04}, since the C$\beta$E Verblunsky coefficients $\alpha_n \to0$ almost surely as $k\to\infty$, then $B_k(z)  = z  \Phi_{k}(z)/ \Phi_{k}^*(z) \to 0$ as $n\to\infty$ locally uniformly for $z\in\bD$. 
Then, according to \eqref{charpoly3},  
\begin{equation} \label{ascvg}
\log \mathcal{X}_n(z) \to\varphi(z) \qquad\text{almost surely as $n\to\infty$, locally uniformly for $z\in\bD$}.  
\end{equation}

On the other hand, we deduce from \eqref{Joh} (taking $f : \theta\in\bT \mapsto \sum_{k=1}^\infty \frac{z^k}k e^{\i k\theta}$ for a fixed $z\in\bD$), that for any $z_1,\dots, z_k \in\bD$, 
\begin{equation} \label{fdcvg}
\big\{ \tfrac1\pi  \log \mathcal{X}_n(z_j) \big\}_{j=1}^k   \Rightarrow \mathcal N\big(0,\boldsymbol\Sigma\big) \qquad\text{in distribution as $n\to\infty$}, 
\end{equation}
where the RHS is a multivariate complex Gaussian distribution\footnote{\blue
Let $M_k(\mathbb X)$ be the set of $k\times k$ matrices with entries in $\mathbb X$. 
If $X\in \bC^k$ is a random vector with $X_j\in L^2(\bP)$, its covariance matrix $\boldsymbol\Sigma\in M_k(M_2(\bR))$ is defined by $\boldsymbol\Sigma_{ij} = \Big(\begin{smallmatrix} \bE( \Re X_i \Re X_j) & \bE( \Re X_i \Im X_j)  \\ \bE( \Im X_i \Re X_j)  & \bE( \Im X_i \Im X_j) \end{smallmatrix}\Big)$.
} 
with covariance matrix $\boldsymbol\Sigma_{ij} = \tfrac2\beta \log(1-z_i\overline{z_j})^{-1}\big(\begin{smallmatrix} 1 & 0 \\ 0 &1\end{smallmatrix}\big)$.
Combining \eqref{ascvg} and the finite dimensional distribution convergence \eqref{fdcvg}, we conclude that the field $\{\varphi(z) : z\in\bD\}$ is a GAF with covariance structure \eqref{GAF}. 
\end{proof}

\begin{remark}[GAF in $\bD$] \label{rk:traces}
The field $\{\varphi(z) : z\in\bD\}$  can be realized as follows; for a fixed $\beta>0$, there is a sequence of i.i.d.~complex Gaussian random variables $\{\boldsymbol{\mathcal N}_k\}_{k\in\bN}$ so that for $z\in\bD$, 
\[
\varphi(z) = \sum_{k\ge 1} \frac{\boldsymbol{\mathcal N}_k}{\sqrt k} {z^k} \qquad \bE|\boldsymbol{\mathcal N}_k|^2 =\tfrac2\beta, \quad 
\bE\boldsymbol{\mathcal N}_k^2 =0. 
\]
Observe that by \eqref{charpoly0}, the log characteristic polynomial has an expansion for $z\in\bD$, 
\[
\log \mathcal{X}_n(z) = -\sum_{k\ge 1} \frac{z^k}{k} \big( {\textstyle\sum_{j=1}^n} e^{-\i k\theta_j}\big) .
\]
Hence, within the C$\beta$E coupling, almost surely for every $k\in\bN$, 
\begin{equation} \label{trace}
\operatorname{Tr}\big[\mathcal U_\alpha^{(n)k}\big] = {\textstyle\sum_{j=1}^n} e^{\i k\theta_j} \to {\sqrt k} \boldsymbol{\mathcal N}_k \qquad\text{ as $n\to\infty$}. 
\end{equation}
Remarkably, for CUE $(\beta=2)$, the rate of convergence in \eqref{trace} is super-exponential. We refer to \cite{JL21,CJL24} for quantitative results. 

\end{remark}

\paragraph{\bf Pr\"ufer phases}
The (relative) Pr\"ufer phases have been introduced in \cite[Sec.~2]{KS09} to study the microscopic landscape of the C$\beta$E. 

\begin{proof}[Proof of Proposition~\ref{prop:phase}]
Since the Szeg\H{o} polynomials $\Phi_{n}^*$ have no zero in $\overline{\bD}$, it is a direct consequence of \eqref{charpoly3}--\eqref{charpoly2} that, within the C$\beta$E coupling,   the eigenvalues lie on the unit circle and for any $n\in\bN$,
\begin{equation*} \label{eig}
\mathfrak{Z}_n 
= \big\{ u\in\bU : \mathcal{X}_n(u)=0\big\}
= \big\{ \theta \in \bT : \varpi_{n-1}(\theta)  = -\eta [2\pi] \big\}  .
\end{equation*}
where $B_n(e^{\i\theta}) = e^{\i \varpi_n(\theta)}$ for $\theta\in\bT$. 
For $\theta\in\bT$, one has $\Phi_{n}(u)= u^n \overline{\Phi_n^*(u)} = u^n e^{\overline{\varphi_{n}(u)}}$
so that 
$B_n(u) : = u \Phi_{n}(u)/ \Phi_{n}^*(u)= u^{n+1}e^{-2\Im{\varphi_{n}(u)}}$. This yield for $n\in\bN_0$, $\varpi_n(\theta) = (n+1)\theta-2 \psi_n(\theta)$ for $\theta\in[0,2\pi]$, see \eqref{phase0}, and the recursion \eqref{Phase} is a direct consequence of \eqref{S1}.  
In particular, the determination of the Pr\"ufer phases are consistent with $\varpi_0(\theta) =\theta$ and \eqref{Phase} and with this choice, 
one has $\bE\varpi_n(\theta) = (n+1)\theta$ for all $n\in\bN$. 

\medskip

It remains to show that the functions $\varpi_n$ are continuously increasing. 
We consider the relative phase $\varpi_{n}'(\theta) := \varpi_n(\theta)- \varpi_n(0)$.
We will now show by induction that (almost surely) for any $n\in\bN $,  $\varpi_{n-1}' : [0,2\pi] \nearrow [0,2\pi n]$. Then, it follows that the eigenvalues
\[
\mathfrak{Z}_n = \big\{  \varpi_{n-1}'(\theta_j) =  2\pi j -\eta_n ;\, j\in[n] \big\}   
\]
where $\eta_n = \{\varpi_{n-1}(0)+\eta\}_{[2\pi]}$ for $n\in\bN$. In particular, 
$\mathfrak{Z}_n$ consists of $n$ ordered points $0<\theta_1<\cdots <\theta_n<2\pi$. 

For $\alpha \in\bD$, we consider the map 
\[
\Upsilon_\alpha : \omega \in[0,2\pi] \mapsto \omega - 2\Im\log(1-\alpha e^{\i \omega}) +2\Im\log(1-\alpha) . 
\]
This is a diffeomorphism $\Upsilon_\alpha : [0,2\pi] \nearrow [0,2\pi] $. 
Indeed its derivative is $\Upsilon_\alpha '(\omega )=\Re\big(\frac{1+\alpha e^{\i \omega}}{1+\alpha e^{\i \omega}}\big) >0$, that is the Poisson kernel $P_\alpha$ for $\bD$. 
Moreover, one can rewrite the recursion \eqref{Phase} in terms of the relative phase, for $n\in\bN$, 
\[
\varpi_{n}'(\theta)  = \theta + \Upsilon_{\alpha_n'}(\varpi_{n-1}'(\theta)) , \qquad \theta\in[0,2\pi] ,
\]
with $\varpi_0'(\theta) =\theta$ and $\alpha_n' = \alpha_n e^{\i \varpi_{n-1}(0)}$. 
Hence, if $\varpi_{n-1} : [0,2\pi] \nearrow [0,2\pi n]$, we can extend 
$\Upsilon_{\alpha_n'}(\varpi_{n-1}') : [0,2\pi] \nearrow [0,2\pi n]$ continuously and we conclude that $\varpi_{n}' : [0,2\pi] \nearrow [0,2\pi (n+1)]$. \end{proof}

\section{Concentration bounds}\label{A:conc}

We require different tail bounds for martingale sums in this paper. In this section, we review some relevant results. 

\medskip

\paragraph{\bf Sub-exponential distributions} 
We refer to \cite[Chap~2]{Vershynin} for some background on concentration estimates. 
For a complex-valued random variable $X$, define for $p\ge 1$, 
\[
\|X\|_{\Psi p} = \inf\left\{ t \geq 0 : \bE e^{|X|^p/t^p} \leq 2\right\}.
\]
$\|X\|_{\Psi 2}$ is called  the \emph{sub-gaussian norm}, and 
$\|X\|_{\Psi 1}$  the  \emph{sub-exponential norm} of the random variable $X$. One has 
\[
\|X\|_{\Psi 1}\lesssim \|X\|_{\Psi 2} \lesssim \|X\|_{\bL^\infty} .
\]
In fact, $\|XZ\|_{\Psi 1}\le \|X\|_{\Psi 2} \|Z\|_{\Psi 2}$ for any random variables $(X,Z)$. Moreover, Prop~2.6.1
\[
\big\|{\textstyle \sum_{k=1}^n}X_k \big\|_{\Psi 2}^2 \lesssim {\textstyle \sum_{k=1}^n}\|X_k\|_{\Psi 2}^2
\]

Generally, the \emph{Orlicz norm} controls the tails of a distribution. For $p\ge 1$, if $\|X\|_{\Psi p}^{-p} =c>0$, then (by Markov inequality), 
\begin{equation}\label{Xtail}
\bP \left[ |X| \geq t \right] \leq 2\exp( -ct^p) , \qquad t\ge 0.
\end{equation}
Conversely, if \eqref{Xtail} holds, then $\|X\|_{\Psi p}<\infty$. 
Moreover, if $\|X\|_{\Psi p}\le 1$, then (by Holder inequality), 
\[
\bE e^{|X|^p} \le 2^{\|X\|_{\Psi p}^p}
\]

\paragraph{\bf Concentration inequality for (discrete) martingales} 
We consider a martingale sequence $\{M_n\}_{n\in\bN_0}$ of the type, for $n\ge 0$, 
\[
M_n = {\textstyle \sum_{k=0}^{n-1} X_k} , \qquad  \bE |X_k|^2<\infty \text{ and }\bE(X_k|\F_k)=0  \text{ for $k\in\bN_0$}. 
\]
Here, $M_0=0$ and $\F_k= \sigma\big(X_0,\cdots, X_{k-1}\big)$  for $k\in\bN_0$. 
Moreover, in the framework of the article, the increments $X_k$ are generally complex-valued.  
%
%
%
%
%
%
For a  sequence of martingale increments $\{X_n\}_{n\in\bN_0}$, we define for $p\in\{1,2\}$, 
\[
\sigma_n^p :=  \left\| \inf\left\{ t \geq 0 :  \bE\big( e^{|X_n|^p/t} | \F_n\big) \leq 2\right\} \right\|_{\bL^\infty } . 
\]
 The infimum corresponds to the \emph{Orlicz norm} $\| X \|_{\Psi p,\bP_n}^p $ with respect to the conditional measure $\bP_n =\bP(\cdot|\F_n)$, so it is a random variable. However, if there is a sequence of independent random variables $\{Z_n\}_{n\in\bN_0}$ such that,  conditioning on $\F_n$, $|X_n| \lesssim Z_n$, then $\sigma_n^p \lesssim \| Z_n \|_{\Psi p}^p$ for all $n\in\bN_0$.


\medskip

The next Proposition is a  Hoeffding-type inequality for a complex-valued martingale with sub-gaussian increments. 

\begin{proposition} \label{prop:conc2}
Suppose that the martingale increments satisfy $ \sigma_n^2 <\infty$  all  for $n\in\bN_0$. 
Then, for any $n\in\bN$, 
\[
\big\| \max_{k\le n}|M_k| \big\|_2 \lesssim \sqrt{\textstyle \sum_{k=0}^{n-1} \sigma_k^2}  . 
\]
In particular, there is a numerical constant $c>0$, so that for any $n\in\bN$ and $\lambda>0$, 
\[
\bP\left[\max_{k\le n}|M_k|   \ge \lambda\right] \le 2 \exp\left(-\frac{c \lambda^2}{\sum_{k=0}^{n-1} \sigma_k^2} \right). 
\]
\end{proposition}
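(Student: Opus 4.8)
The plan is to prove Proposition~\ref{prop:conc2} by a standard exponential martingale (Azuma--Hoeffding) argument adapted to complex-valued increments with sub-Gaussian conditional tails. First I would reduce the complex case to the real case: writing $M_k = \Re M_k + \i \Im M_k$, it suffices to prove the tail bound for each of the real martingales $\{\Re M_k\}$ and $\{\Im M_k\}$ separately, since $\max_{k\le n}|M_k| \le \max_{k\le n}|\Re M_k| + \max_{k\le n}|\Im M_k|$, and the real and imaginary parts of $X_k$ are themselves martingale increments with conditional sub-Gaussian norm controlled by $\sigma_k$ (using $\|\Re X_k\|_{\Psi 2,\bP_k}, \|\Im X_k\|_{\Psi 2,\bP_k} \lesssim \|X_k\|_{\Psi 2,\bP_k}$). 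So from now on assume the increments are real.

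The key step is the conditional moment-generating-function bound: for a real random variable $Y$ with $\bE(Y\mid\F_k)=0$ and $\|Y\|_{\Psi 2,\bP_k}^2 \le \sigma^2$, one has $\bE(e^{tY}\mid \F_k) \le e^{C t^2 \sigma^2}$ for all $t\in\bR$, with $C$ a numerical constant. This is the classical equivalence between sub-Gaussian tails, boundedness of the $\Psi_2$-norm, and sub-Gaussian MGF for centered variables; I would cite \cite[Chap.~2]{Vershynin} (Proposition~2.5.2 / 2.6.1 there) rather than reprove it. Applying this inductively, the process $N_k := \exp\big(t M_k - C t^2 \sum_{j=0}^{k-1}\sigma_j^2\big)$ is a supermartingale with $N_0 = 1$; hence so is $N_{k\wedge\tau}$ for any stopping time, and in particular $\bE N_k \le 1$ for all $k$. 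Then Doob's maximal inequality applied to the nonnegative submartingale $\{e^{tM_k}\}_{k\le n}$ gives
\[
\bP\Big[\max_{k\le n} M_k \ge \lambda\Big] \le e^{-t\lambda}\,\bE\, e^{tM_n} \le \exp\big(-t\lambda + C t^2 \Sigma_n\big),
\qquad \Sigma_n := {\textstyle\sum_{j=0}^{n-1}}\sigma_j^2,
\]
and optimizing over $t>0$ (take $t = \lambda/(2C\Sigma_n)$) yields $\bP[\max_{k\le n} M_k \ge \lambda] \le \exp(-\lambda^2/(4C\Sigma_n))$. Running the same argument for $\{-M_k\}$ and combining via a union bound produces $\bP[\max_{k\le n}|M_k| \ge \lambda] \le 2\exp(-c\lambda^2/\Sigma_n)$ with $c = 1/(4C)$, which is the claimed inequality (after adjusting the numerical constant to absorb the factor picked up in the complex-to-real reduction).

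Finally, the $\bL^2$ bound $\|\max_{k\le n}|M_k|\|_2 \lesssim \sqrt{\Sigma_n}$ follows from the tail estimate by the layer-cake formula: $\bE[\max_{k\le n}|M_k|^2] = \int_0^\infty 2\lambda\,\bP[\max_{k\le n}|M_k|\ge\lambda]\,\dd\lambda \le 4\int_0^\infty \lambda e^{-c\lambda^2/\Sigma_n}\,\dd\lambda = 2\Sigma_n/c$. (Alternatively one could quote Doob's $\bL^2$ inequality together with the orthogonality of martingale increments, $\bE|M_n|^2 = \sum_{j=0}^{n-1}\bE|X_j|^2 \le \sum_j \sigma_j^2$ up to the constant relating $\|X_j\|_{\bL^2}$ to $\|X_j\|_{\Psi 2}$; but the layer-cake route avoids worrying about whether the stated bound is on $\max_k$ or $M_n$.) The only mild subtlety — and the step I would be most careful about — is the complex-to-real reduction and the bookkeeping of numerical constants through it, together with making sure the conditional sub-Gaussian MGF bound is invoked in a form valid for \emph{all} real $t$ (not just $t\ge 0$), which is needed to handle $\pm M_k$ simultaneously; none of this is genuinely hard, it is just where an error would most plausibly creep in.
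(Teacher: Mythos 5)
Your argument is correct: the reduction to real martingale increments, the conditional sub-Gaussian MGF bound, the exponential supermartingale combined with Doob's maximal inequality, the Chernoff optimization, and the layer-cake integration for the $\bL^2$ bound are all sound, and the constant-absorption issues you flag (the factor from the complex-to-real split and the union bound over $\pm M_k$) are handled correctly by weakening the constant $c$. The paper itself does not prove Proposition~\ref{prop:conc2}; it defers to the reference \cite{LP} (listed as in preparation), so there is no in-paper argument to compare against --- your proof is the standard Azuma--Hoeffding-type argument one would expect that reference to contain.
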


We will also need a Bernstein-type inequality for a complex-valued martingale with sub-exponential increments. 

\begin{proposition} \label{prop:conc1}
Suppose that the martingale increments satisfy $ \sigma_n^1 <\infty$  all  for $n\in\bN_0$.  
Then, there is a numerical constant $c>0$, so that for any $n\in\bN$ and $\lambda>0$,
\[
\bP\left[\max_{k\le n}|M_k|   \ge \lambda \right] \le 2 \exp\left(-\frac{c \lambda^2}{ \sum_{k=1}^n (\sigma_k^1)^2+ \lambda \max_{k\le n} \sigma_k} \right). 
\]
\end{proposition}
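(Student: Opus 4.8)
The plan is to run the classical exponential-supermartingale (Freedman/Bernstein) argument after reducing the complex-valued martingale to real-valued ones. First I would write $M_n = A_n + \i B_n$ with $A_n = \sum_{k<n}\Re X_k$ and $B_n = \sum_{k<n}\Im X_k$; since $\bE_k\Re X_k = \Re\bE_k X_k = 0$ and likewise for $B$, these are real $\{\F_k\}$-martingales, and because $|\Re X_k|\le|X_k|$ pointwise one has $\bE_k e^{|\Re X_k|/t}\le \bE_k e^{|X_k|/t}$, so the conditional sub-exponential norm of $\Re X_k$ with respect to $\bP_k$ is a.s.\ at most $\sigma^1_k$ (and similarly for $\Im X_k$). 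From $|M_k|^2 = A_k^2 + B_k^2$ we get $\{\max_{k\le n}|M_k|\ge\lambda\}\subseteq\{\max_{k\le n}|A_k|\ge\lambda/\sqrt2\}\cup\{\max_{k\le n}|B_k|\ge\lambda/\sqrt2\}$, and $\{\max_{k\le n}|A_k|\ge\mu\}=\{\max_{k\le n}A_k\ge\mu\}\cup\{\max_{k\le n}(-A_k)\ge\mu\}$; a union bound thus reduces the statement, at the cost of adjusting the numerical constant, to the one-sided bound $\bP[\max_{k\le n}A_k\ge\mu]$ for a real martingale whose increments have conditional sub-exponential norms at most $\sigma^1_k$.

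For that one-sided bound I would use the standard conditional moment-generating estimate: expanding $\bE_k e^{|X_k|/\sigma^1_k}\le 2$ in power series gives $\bE_k|X_k|^p\le 2\,p!\,(\sigma^1_k)^p$ a.s.\ for $p\ge1$, which together with $\bE_k X_k = 0$ yields, for $0<t<1/\sigma^1_k$,
\[
\bE_k e^{tX_k}\le 1+\frac{2t^2(\sigma^1_k)^2}{1-t\sigma^1_k}\le\exp\!\Big(\frac{2t^2(\sigma^1_k)^2}{1-t\sigma^1_k}\Big).
\]
Restricting to $0<t\le\tfrac12\big(\max_{k\le n}\sigma^1_k\big)^{-1}$ forces $1-t\sigma^1_k\ge\tfrac12$, hence $\bE_k e^{tX_k}\le\exp\big(4t^2(\sigma^1_k)^2\big)$ simultaneously for all $k\le n$; this uniform range of validity of the MGF bound is precisely why $\max_{k\le n}\sigma^1_k$ appears in the final denominator. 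It follows that, with $V:=\sum_{k<n}(\sigma^1_k)^2$, the process $L_m:=\exp\big(tA_m-4t^2\sum_{k<m}(\sigma^1_k)^2\big)$ is a nonnegative $\{\F_m\}$-supermartingale with $L_0=1$ for every such $t$.

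Finally, Ville's maximal inequality gives $\bP\big[\sup_m L_m\ge e^{t\mu-4t^2V}\big]\le e^{-(t\mu-4t^2V)}$, and on $\{\max_{k\le n}A_k\ge\mu\}$, stopping at the first time $A_\tau\ge\mu$ shows $\sup_m L_m\ge e^{t\mu-4t^2V}$; hence $\bP[\max_{k\le n}A_k\ge\mu]\le\exp(-t\mu+4t^2V)$ for $0<t\le\tfrac12(\max_{k\le n}\sigma^1_k)^{-1}$. Optimizing — taking $t=\mu/(8V)$ when $\mu\max_{k\le n}\sigma^1_k\le 4V$ and $t=\tfrac14(\max_{k\le n}\sigma^1_k)^{-1}$ otherwise — bounds the right-hand side in both cases by $\exp\big(-c\,\mu^2/(V+\mu\max_{k\le n}\sigma^1_k)\big)$ for a numerical $c>0$; substituting $\mu=\lambda/\sqrt2$ and absorbing the (at most four) terms of the union bound into the prefactor $2$, after a further adjustment of $c$, yields the stated inequality. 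The same scheme proves Proposition~\ref{prop:conc2}, using instead the sub-gaussian conditional bound $\bE_k e^{tX_k}\le e^{Ct^2(\sigma^2_k)^2}$ valid for all real $t$. I do not expect any genuine obstacle here: the only delicate points are bookkeeping ones, namely dominating the a priori random conditional Orlicz norms $\|X_k\|_{\Psi 1,\bP_k}$ by the deterministic $\sigma^1_k$ so that $L_m$ is truly a supermartingale, and replacing the per-step radius $t<1/\sigma^1_k$ by the uniform radius $t\le\tfrac12(\max_{k\le n}\sigma^1_k)^{-1}$ before multiplying the one-step estimates.
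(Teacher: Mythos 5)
Your argument is correct: the reduction of the complex martingale to four one-sided real maxima, the conditional moment bound $\bE_k|X_k|^p\le 2\,p!\,(\sigma^1_k)^p$ extracted from the conditional Orlicz norm, the resulting exponential supermartingale valid uniformly for $t\le\tfrac12(\max_{k\le n}\sigma^1_k)^{-1}$, Ville's inequality, and the two-regime optimization in $t$ together give exactly the stated Bernstein-type bound (with the union-bound factor absorbed into $c$). There is nothing in the paper to compare against: the authors do not prove Proposition~\ref{prop:conc1} here but defer to the reference [LP], which is listed as ``in preparation,'' so your write-up is the standard Freedman-type proof that the paper omits, and I see no gap in it.
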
 

We refer to \cite{LP} for proofs of Propositions~\ref{prop:conc2} and \ref{prop:conc1}.

\medskip

To illustrate these techniques, we consider the martingale $\psi_{n,m}(\theta) =  {\textstyle \sum_{k=m}^{n-1}} \Im\log(1-\alpha_k e^{\i\varpi_k(\theta)})$ for a fixed $\theta\in\bT$. 
Modulo a deterministic shift, this process corresponds to the Pr\"ufer phase and $\{\psi_{n,m}\}_{n\ge m}$ is a martingale by independence and rotation-invariance of the Verblunsky coefficients $\{\alpha_k\}$.
Observe that for a deterministic constant, 
\[
|\Im\log(1-\alpha_k e^{\i\varpi_k})| \lesssim |\alpha_k| .
\]
Then, by \eqref{exptb}, $\sigma^2_k \lesssim \|\alpha_k\|_{\Psi2}^2 \lesssim \tfrac{1}{\beta(k+1)} $  for any $k\in\bN_0$.
Thus, by Proposition~\ref{prop:conc2}, for any $n,m \in\bN_0$ with $n>m$, any $\lambda>0$ and  any fixed $\theta\in\bT$,  
\begin{equation} \label{phasetb}
\bP\Big[ \max_{m<k\le n}|\psi_{k,m}(\theta)|\ge \lambda\Big] \le 2 \exp\bigg(- \frac{c \beta \lambda^2}{\log\frac nm}\bigg) . 
\end{equation}

\medskip

\end{document}